\documentclass[14pt,a4paper,reqno]{amsart}
\usepackage{vmargin,color}
\usepackage[latin1]{inputenc}
\usepackage{amsmath,amsfonts,amsthm,epsfig,graphicx}
\usepackage{pstricks,pst-plot,multido}
\usepackage{caption}
\usepackage{esint} 
\usepackage{pgfplots}
\usepackage{graphicx,tikz,subfigure,color,calc}
\usetikzlibrary{decorations.pathreplacing}

\pgfplotsset{compat=1.10}
\usepgfplotslibrary{fillbetween}
\usepgfplotslibrary{polar}
\usetikzlibrary{patterns}
\usepackage{relsize}

\usepackage{bigints}

\usepackage{mathrsfs}

\definecolor{darkgreen}{rgb}{0.0,0.5,0.0}
\definecolor{darkblue}{rgb}{0.0,0.0,0.3}
\definecolor{nicosred}{rgb}{0.65,0.1,0.1}
\definecolor{light-gray}{gray}{0.6}
\definecolor{really-light-gray}{gray}{0.8}

\usepackage{marginnote}

\def\H{\mathcal{H}}

\def\R{\mathbb R}

\def\Dx{\nabla_{\!  x {\scriptscriptstyle\parallel}}}

\def\pae{\partial^{{\rm e}}}

\newcommand{\diver}{\mathrm{div}} 
\newcommand{\mres}{\mathbin{\vrule height 1.6ex depth 0pt width  
0.13ex\vrule height 0.13ex depth 0pt width 1.3ex}}    

\newcommand{\Upp}{^{\vee}}
\newcommand{\Low}{^{\wedge}}

\DeclareMathOperator*{\aplim}{ap\,lim}

\def\big{\bigskip}

\newtheorem{theorem}{Theorem}[section]
\newtheorem{remark}[theorem]{Remark}
\newtheorem{definition}[theorem]{Definition}

\newtheorem{proposition}[theorem]{Proposition}
\newtheorem{lemma}[theorem]{Lemma}
\newtheorem{corollary}[theorem]{Corollary}
\newtheorem{example}[theorem]{Example}

\numberwithin{equation}{section}
\numberwithin{figure}{section}

\begin{document}

\title{Rigidity for the P\'olya-Szeg\"o inequality \\
under circular rearrangement}

\author{F. Cagnetti}
\address{%
Dipartimento di Scienze Matematiche, Fisiche e Informatiche \\
Plesso di Matematica \\
Universit\`a di Parma \\
Viale Parco Area delle Scienze 53/A \\
I-43124 PARMA (Italy)}%
\email{filippo.cagnetti@unipr.it}

\author{G. Domazakis}
\address{Department of Mathematical Sciences, University of Durham, DH1 3LE, DURHAM (United Kingdom)}%
\email{georgios.domazakis@durham.ac.uk}

\author{M. Perugini}
\address{Dipartimento di Ingegneria e Scienze dell'Informazione e Matematica\\ Universit\` a degli studi dell'Aquila \\ Via Vetoio,  L'AQUILA (Italy)}%
\email{matteo.perugini@univaq.it}

\author{F. Seuffert}
\address{Department of Mathematics,
University of Pennsylvania,
David Rittenhouse Lab,
209 South 33rd Street,
Philadelphia,
PA 19104-6395}
\email{francis.seuffert@gmail.com}

\begin{abstract}
{\rm A P\'olya-Szeg\"o inequality for the circular rearrangement is proven, 
under general assumptions. In addition, sufficient conditions are given, under which all the extremals of the inequality are symmetric.}
\end{abstract}

\maketitle

\section{Introduction}
Rearrangement inequalities are a fundamental tool in the Calculus of Variations, since they allow to prove symmetry of solutions of variational problems and PDEs. For instance, the radial decreasing rearrangement and the Steiner rearrangement are by now standard tools that are commonly employed in various contexts.
However, these techniques cannot be used in situations in which the functions involved can change sign, or when they 
are defined in domains that are not simply connected. 
To tackle these problems, in this paper we consider the circular rearrangement of functions, in which level sets are 
rearranged using circular arcs. Our main contributions are the following:

\medskip

\begin{enumerate}

\item We establish a P\'olya-Szeg\"o inequality for the circular rearrangement 
in any dimension and under general assumptions, both on the integrand
and on the class of functions under consideration, see Theorem~\ref{thm_Polya-Szego inequality local} and Theorem~\ref{thm_Polya-Szego inequality} (see also Theorem~\ref{the thing} for a more general statement).

\begin{enumerate}

\vspace{.2cm}

\item[(1a)] To the best of our knowledge, the P\'olya-Szeg\"o inequalities currently available in the literature for the circular symmetrization only deal with nonnegative functions \cite{kawohl_book_85, polya50, SmetsWillem03, VanSchaftingen}. Instead, we consider also the case of functions that can change sign.
\vspace{.2cm}

\item[(1b)] Our results can be applied in any dimension, in situations in which one expects minimizers to have cylindrical symmetry (or, more precisely, to be 
\textit{$2$-sectionally foliated Schwarz symmetric}, see \cite[Definition~1.2]{2020DamaPace}). In \cite{kawohl_book_85, SmetsWillem03, VanSchaftingen}, the authors consider the spherical symmetrization, in which level sets are rearranged using spherical caps. 
This coincides with the circular symmetrization only in dimension $2$. Even in the $2$-dimensional case, we extend the known results, by considering a general class of integrands, and functions that can change sign.

\vspace{.2cm}

\item[(1c)] Our proof of the P\'olya-Szeg\"o inequality is \textit{not} obtained via approximation. 
We give a direct proof, and this allows us to study in detail the equality cases.
 
\end{enumerate}
 
\vspace{.2cm}

\item We give sufficient conditions under which rigidity holds, that is,
under which all extremals are symmetric, 
see Theorem~\ref{thm suff conditions}.
Such conditions are given in terms of the notion of essential connectedness (see Definition~\ref{definition ess con} and \cite{CagnettiColomboDePhilippisMaggiSteiner,ccdpmGAUSS}). 

%

\vspace{.2cm}

\item The results we prove in this paper will be instrumental in the study of a general P\'olya-Szeg\"o inequality under spherical symmetrization.
In this case, when trying to give a direct proof of the inequality one encounters a major technical difficulty, 
due to the existence of functions that are the spherical counterpart of what  Almgren and Lieb call \textit{Coarea irregular functions} \cite[Definition~1.2.6]{AL}. This implies that an identity of the type \eqref{measure 0 for the bad set} cannot be obtained for the spherical rearrangement,
 see Remark~\ref{if higher dim slices} and Remark~\ref{spherical difficult}. This will be the object of further study \cite{CagnettiDomazakisPerugini}.

\end{enumerate}

\bigskip

We now describe how the rest of the Introduction is arranged.
We start by 
recalling the classical P\'olya--Szeg\"o inequality (Section~\ref{section polya}), 
and then we explain how the circular symmetrization operates on sets (Section~\ref{sect circular symm sets}) and on functions (Section~\ref{sect circular symm functions}). In Section~\ref{sec polya-szego} we state the P\'olya--Szeg\"o inequality under general assumptions (see Theorem~\ref{thm_Polya-Szego inequality local} and Theorem~\ref{thm_Polya-Szego inequality}), which is our first main result.
In Section~\ref{sect comments polya} we motivate the assumptions of Theorem~\ref{thm_Polya-Szego inequality local}, giving some counterexamples. Section~\ref{intro rigidity} contains our second main result, which gives sufficient conditions for rigidity (Theorem~\ref{thm suff conditions}),
while in Section~\ref{sect comments rigidity} we exhibit examples in which the assumptions of Theorem~\ref{thm suff conditions}
are not satisfied and rigidity fails.

\subsection{P\'olya-Szeg\"o inequality for the Schwarz rearrangement} \label{section polya}
In its most well-known version, the P\'olya-Szeg\"o inequality states that if $u : \mathbb{R}^n \to \mathbb{R}$
is any nonnegative smooth function with compact support and $1 \leq p < \infty$, then
\begin{equation} \label{PS radial}
\| \nabla u^* \|_{L^p ( \mathbb{R}^n)} \leq \| \nabla u \|_{L^p ( \mathbb{R}^n)},
\end{equation}
where $u^*: \mathbb{R}^n \to \mathbb{R}$ denotes the radial decreasing (also known as Schwarz) rearrangement of $u$, see \cite{polyaszego51}.
More in general, \eqref{PS radial} is satisfied if $u$ is nonnegative with compact support and belongs to the Sobolev space $W^{1, p} (\mathbb{R}^n)$.
We say that rigidity holds for the previous inequality if the following implication is true:
\[
\text{ $u$ satisfies equality in \eqref{PS radial} }
\quad
\Longrightarrow 
\quad \exists \, c \in \R^n : \quad 
u (\cdot) = u^* (\cdot + c) \, \, \mathcal{L}^n\text{-a.e. in } \R^n.
\]
where $\mathcal{L}^n$ denotes the $n$-dimensional Lebesgue measure.
Let us now set $M = \text{ess\,sup\,} u^*$, and  $C^*:= \{  \nabla u^* = 0 \} \cap (u^*)^{-1}((0, M))$. 
In the seminal paper \cite{brothersziemer}, Brothers and Ziemer showed that if 
%
%
%
%
%
%
%
%
%
%
%
%
%
\begin{align}\label{no flat parts intro}
\mathcal{L}^n ( C^* ) = 0
\end{align}
then rigidity holds, see \cite[Theorem~1.1]{brothersziemer}. 
Very recently, the first author has proved that condition \eqref{no flat parts intro} is also necessary for rigidity 
\cite{CagnettirigidityPolyaSzego}. 
Inequality \eqref{PS radial} and its rigidity have also been studied 
for functions of bounded variation \cite{cianchifusco1}, and for the Steiner rearrangement of codimension $1$ \cite{cianchifusco2} 
and of any codimension \cite{capriani2}.
When rigidity holds, the stability of the inequality can also be investigated, see \cite{BarchiesiCaprianiFuscoPisante} for the cases of Steiner and Schwarz rearrangements.

\medskip

Inequality \eqref{PS radial} has been used in a number of applications, 
as for instance the characterization of the extremals of Sobolev inequality \cite{Talenti76}, 
and the proof of a sharp quantitative version of the same inequality \cite{ CFMP09}. 

\medskip

There are, however, several variational problems in which these techniques cannot be applied.
This happens, for instance, if the domain of the functions under consideration
is not simply connected (e.g. annular domains in the plane), or in which 
the functions can change sign and have nodal domains. 

\medskip

As an example of a situation in which Steiner and Schwarz symmetrization, 
or the moving planes method \cite{Serrin71} cannot be applied, we mention \cite{H-S_20_2}, where the authors 
use the spherical symmetrization to show the symmetry of the extremals of Morrey's inequality (see also \cite{H-S_20_1, H-S_21}, 
where several other properties of the extremals are proved).
We observe that in \cite{H-S_20_2}, using additional symmetry properties of the extremals, the authors are able 
to apply the rearrangement to non negative functions. One of the goals of this paper is to show that the circular rearrangement can also be used with functions that change sign, 
see Definition~\ref{def w1p0tau} and Remark~\ref{different vmu} for more details. 

\medskip

%

\subsection{Circular symmetrization} \label{sect circular symm sets}
To the best of our knowledge, the circular symmetrization of sets 
and the associated rearrangement for functions 
were firstly introduced by P\'olya in \cite{polya50}. 
Let $N \in \mathbb{N}$ with $N \geq 2$, and let us label the points of $\R^N$ as $(x, z)$, with $x \in \R^2$ and $z \in \mathbb{R}^{N-2}$.
%
%
%
Moreover, for every $r > 0$ we set $D (r) = \{ x \in \R^2 : |x| < r \}$ and 
$\partial D (r) = \{ x \in \R^2 : |x| = r \}$. 

Let now $E \subset \R^N$ be a Lebesgue measurable set.
For every $(r, z) \in (0,\infty) \times \mathbb{R}^{N-2}$,
we define the slice $E_{(r, z)}$ of $E$ at $(r, z)$ as the subset of $\mathbb{R}^2$ given by
\[
E_{(r, z)} 
:= \{ x \in \partial D (r)  \text{ such that } (x,z) \in E \}.
\] 
We introduce the circular projection $\Pi_{N-1} (E)$ of $E$ and its annular part $\Pi_{N-1}^{\textnormal{a}}(E)$ as
\begin{equation} \label{def_PiN}
\Pi_{N-1} (E):= \{ (r, z) \in (0, \infty) \times \mathbb{R}^{N-2}: 
\mathcal{H}^1 (E_{(r, z)}) \neq 0  \},
\end{equation}
and
\begin{equation} \label{def_Pi_a}
\Pi_{N-1}^{\textnormal{a}}(E) := \left\{(r,z)\in \Pi_{N-1} (E):\,  \mathcal{H}^1 (E_{(r, z)}) =  2 \pi r \right\},
\end{equation}
respectively, where $\mathcal{H}^1$ stands for the $1$-dimensional Hausdorff measure in $\R^N$.

The circular symmetrization (with respect to the half-hyperplane $\{ x_2 = 0 \} \cap \{ x_1 > 0 \}$)
of a Lebesgue measurable set $E$ is the Lebesgue measurable set $E^s$ such that, for every $(r, z) \in (0,\infty) \times \mathbb{R}^{N-2}$, the slice $E^s_{(r, z)}$ is a connected arc centred at the point $(r, 0)$ having the same $\mathcal{H}^1$-measure
as $E_{(r, z)}$. More precisely, setting $\mathbb{R}^2_0 = \mathbb{R}^2 \setminus \{ (0, 0) \}$ and $\hat{x} = x/|x|$, 
we define $E^s$ as (see Figure~\ref{first figure})
\begin{equation*} 
E^s := \{ (x, z) \in \mathbb{R}^2_0 \times \mathbb{R}^{N-2} : 2 | x | \arccos (\hat{x} \cdot e_1) < 
\mathcal{H}^1 (E_{(r, z)}) \},
 \end{equation*}
 where $e_1 = (1,0)$.
Let us observe that, if $E$ is open, then the set $E^s$ defined above is open, see Remark~\ref{Es no points -1} and Proposition~\ref{prop_Omega^s is open}.
Moreover, the circular symmetrization preserves the $N$-dimensional Lebesgue measure and does not increase the perimeter.
\begin{center}
\begin{figure}[h]

 \begin{tikzpicture}[>=latex, scale=1.8]
 
    \draw[fill=gray, style=thick]
(0,0) -- +(90:1) arc(90:0:1) -- cycle;
    \draw[fill=gray, style=thick]
(0,0) -- +(225:1) arc(225:270:1) -- cycle;
    \draw(.8,.8)node[right]{$E$};
  \draw[->, line width = .5pt] (-1.3,0)--(1.3,0); 
  \draw(1.5,0)node[below]{$x_1$};
 \draw[->, line width = .5pt] (0,-1.3)--(0,1.3); 
  \draw(.1,1.3)node[right]{$x_2$};

 \draw[->, line width = .5pt] (3.5-1.3,0)--(3.5+1.3,0); 
  \draw(3.5+1.5,0)node[below]{$x_1$};
 \draw[->, line width = .5pt] (3.5,-1.3)--(3.5,1.3); 
  \draw(.1+3.5,1.3)node[right]{$x_2$};

\draw[black,thick] (0,0) circle (0.6);
\draw[black, dashed, line width = .5pt] (0,0)--(0.6/1.4,-0.6/1.4); 
\draw(0.3/1.4,-0.25/1.4)node[right]{$r$};
\draw [blue,thick,domain=0:90] plot ({0.6*cos(\x)}, {0.6*sin(\x)});
\draw [blue,thick,domain=225:270] plot ({0.6*cos(\x)}, {0.6*sin(\x)});
 \draw(.7,-.6)node{$\partial D (r)$};

\draw [blue,thick,domain=-67.5:67.5] plot ({3.5+ 0.6*cos(\x)}, {0.6*sin(\x)});

    \draw[fill=gray, style=thick]
(3.5,0) -- +(-67.5:1) arc(-67.5:67.5:1) -- cycle;
    \draw(4.3,.8)node[right]{$E^s$};
\draw [blue,thick,domain=-67.5:67.5] plot ({3.5+ 0.6*cos(\x)}, {0.6*sin(\x)});
 \draw[->, line width = .5pt] (3.5-1.3,0)--(3.5+1.3,0);

%
%
  
\end{tikzpicture}
 \caption{An example of circular symmetrization when $N = 2$. 
For a given value of $r > 0$, the slices $E_r$ and $E^s_r$ are highlighted in blue.}
       \label{first figure}
\end{figure}
\end{center}
\noindent
In order to give a precise statement of this fact, let us denote by $P(E)$  the distributional perimeter of $E$
and, for $A \subset \R^N$ Borel, let $P(E, A)$ stand for the distributional perimeter of $E$ in $A$
(see Section~\ref{section_preliminaries}). Moreover, let $\Phi_N : (0, \infty) \times \mathbb{R}^{N-2} 
\times \mathbb{S}^1 \to \mathbb{R}^2_0 \times \mathbb{R}^{N-2}$
be the diffeomorphism given by
\begin{equation} \label{def_PhiN}
\Phi_N (r, z, \omega) := (r \omega, z ) 
\quad \text{ for every } (r,z, \omega) \in (0, \infty) \times \mathbb{R}^{N-2} \times \mathbb{S}^1.
\end{equation}
We then have the following result, see \cite[Theorem 1.1]{CagnettiPeruginiStoger}, and \cite[Theorem 1.3]{PeruginiCircolare}.
\begin{theorem} \label{per ineq circ}
Let $E \subset \R^N$ be a set of finite perimeter in $\R^N$ with $\mathcal{L}^N (E) < \infty$. Then, 
$E^s$ is a set of finite perimeter in $\R^N$ with $\mathcal{L}^N (E^s) = \mathcal{L}^N (E)$. Moreover, 
\begin{equation} \label{per ineq circ eq}
P (E^s; \Phi_N (B \times \mathbb{S}^1)) \leq P (E; \Phi_N (B \times \mathbb{S}^1)) \quad
\text{ for every Borel set } B \subset (0, \infty) \times \mathbb{R}^{N-2}.
\end{equation}
\end{theorem}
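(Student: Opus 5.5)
The plan is to work in the coordinates given by $\Phi_N$. On $(0,\infty)\times\mathbb{R}^{N-2}\times\mathbb{S}^1$ Lebesgue measure becomes $r\,dr\,dz\,d\mathcal{H}^1(\omega)$. The set $E$ becomes $F:=\Phi_N^{-1}(E)$, and its circular symmetrization becomes a Steiner-type symmetrization in the $\omega$ variable: each fiber $F_{(r,z)}\subset\mathbb{S}^1$ is replaced by a geodesic arc centred at $e_1$ with the same $\mathcal{H}^1$ measure.

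\textbf{Volume.} Equality $\mathcal{L}^N(E^s)=\mathcal{L}^N(E)$ follows at once from Fubini in these coordinates. For every $(r,z)$ we have $\mathcal{H}^1(E^s_{(r,z)})=\mathcal{H}^1(E_{(r,z)})$, and measurability of $E^s$ comes from measurability of $(r,z)\mapsto \mathcal{H}^1(E_{(r,z)})$. The main content is the perimeter inequality \eqref{per ineq circ eq}.

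\textbf{Reduction to smooth sets.} First treat $E$ open and bounded with smooth boundary, staying away from the axis $\{x=0\}$. For such sets, for $\mathcal{L}^{N-1}$-a.e. $(r,z)$ the slice $E_{(r,z)}$ is a finite union of arcs whose endpoints meet $\partial E$ transversally; this uses Sard's theorem applied to the projection $\partial E\to(r,z)$.

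\textbf{Comparison on each fiber.} Write the perimeter through the coarea formula on $\partial E$ over the map $p(x,z)=(|x|,z)$. This gives
\[
P(E;\Phi_N(B\times\mathbb{S}^1))=\int_B \sum_{y\in \partial E\cap p^{-1}(r,z)} \frac{1}{J^{\partial E}p(y)}\,d(r,z)
\]
plus the contribution of the set where $J^{\partial E}p=0$, which is handled separately below. At a boundary point, $1/J p=\sqrt{1+|\nabla_{(r,z)}\theta|^2 r^2}$ up to the parametrization of the arc endpoint $\theta(r,z)$. Here the gradient of the angular endpoint is expressed via the implicit function theorem in terms of derivatives of the length function $\ell(r,z)=\mathcal{H}^1(E_{(r,z)})$.

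For $E^s$ the slice has exactly two endpoints, $\pm\ell/(2r)$ in angle. On the other hand $\ell=r\sum_i(\theta_i^+-\theta_i^-)$, so $\nabla\ell$ is a signed sum of the endpoint gradients. Convexity of $\xi\mapsto\sqrt{1+|\xi|^2}$, together with the fact that $E$ has at least two endpoints per nontrivial non-full slice, then yields pointwise in $(r,z)$
\[
2\sqrt{1+\tfrac{|\nabla \ell - (\ell/r) e_r|^2}{4}\cdot\ldots}\;\le\;\sum_{\text{endpoints}}\sqrt{1+r^2|\nabla\theta_i|^2}.
\]
This is the exact analogue of the Steiner computation with a cylindrical weight. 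Integrating over $B$ gives the inequality for smooth sets. I expect to have to handle carefully the term coming from differentiating $\ell/r$ in $r$. Rather than the raw formula, I would organize it as the Jensen inequality for the vector $(1, r\nabla\theta_i)$ in the natural metric.

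\textbf{Degenerate slices.} Annular slices, where $\ell=2\pi r$, and empty slices contribute nothing to $E^s$ on their interior. The set of $(r,z)$ where $\partial E$ is tangent to circles contributes at least as much to $P(E)$ as to $P(E^s)$. To prove this I would use a BV formulation: $\ell\in BV_{loc}$ with $|D\ell|$ controlled by the perimeter measure pushed forward by $p$. The jump and Cantor parts of $D\ell$ are then compared directly, exactly as in the Steiner case.

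\textbf{General sets: the main obstacle.} The hardest step is passing from smooth sets to a general finite perimeter $E$ while keeping the inequality localized on an \emph{arbitrary} Borel $B$. I would approximate $E$ by smooth $E_h$ with $P(E_h)\to P(E)$, localized to cylinders $\Phi_N(U\times\mathbb{S}^1)$ with $U$ open and $P(E;\partial\Phi_N(U\times\mathbb{S}^1))=0$. Then $E_h^s\to E^s$ in $L^1_{loc}$, since $\ell_h\to\ell$ in $L^1$ and the symmetrization is $L^1$-contractive fibrewise. Lower semicontinuity on open sets then gives the inequality on such $U$. Finally, both sides are Radon measures in $B$ (pushforwards under $p$ of perimeter measures), so the inequality on a generating family of open sets extends to all Borel $B$ by outer regularity. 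Sets of finite volume near the axis $r=0$ are handled because $\{r=0\}$ has codimension $2$ and carries no perimeter for $E^s$ beyond what the limit provides.
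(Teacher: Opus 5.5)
The paper does not prove this theorem. It quotes it from \cite{CagnettiPeruginiStoger,PeruginiCircolare} and uses it as a black box, for instance in Proposition~\ref{prop_circular perimeter inequality for subgraphs}. The natural comparison inside the paper is the proof of Theorem~\ref{the thing}, the functional analogue, which is deliberately direct. On an $\mathcal{H}^{n}$-full part of the non-degenerate set it uses Vol'pert slicing (Proposition~\ref{volpert theorem}), the isoperimetric inequality on circles, and Jensen. On the remaining null set it uses the divergence-theorem representation of the derivatives of the distribution (Proposition~\ref{derivatives of mu}) for \emph{both} the original and the symmetrized object.

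Your route is genuinely different. You use smooth approximants and, on regular values, Sard plus the implicit function theorem. There the fibrewise comparison is Minkowski's inequality $\sqrt{4+r^2|\nabla\xi|^2}\le\sqrt{(2m)^2+r^2|\nabla\xi|^2}\le\sum_{k=1}^{2m}\sqrt{1+r^2|\nabla\theta_k|^2}$ with $\xi=\ell/r$. The ``$\cdot\ldots$'' in your display is spurious, since $|\nabla\ell-(\ell/r)e_r|=r|\nabla\xi|$. You then pass to the limit by lower semicontinuity on cylinders with $P(E;\partial\Phi_N(U\times\mathbb{S}^1))=0$ and extend by outer regularity. These approximation and extension steps are sound: fibrewise $L^1$-contractivity gives $E_h^s\to E^s$, and the axis is a closed $\mathcal{H}^{N-1}$-null set, hence removable for the bounded function $\chi_{E^s}$. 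Your route buys an elementary regular part with no Vol'pert slicing. It costs all information on equality cases, which the direct argument keeps and on which Section~\ref{sect rigidity} depends (cf.\ point (1c) of the Introduction).

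The step that carries the real weight, and which you only assert, is the degenerate one. Sard tells you the set $Z$ of critical values is null, not that $|D\ell|(Z)=0$. Even for smooth $E$, $D\ell$ can have a jump part, coming from circular arcs in $\partial E$. It can also have a Cantor part: take $\partial E$ locally equal to $\{r=1+g(\theta)\}$ with $g$ smooth, strictly increasing, and $g'=0$ on a Cantor set of positive length. So $E^s$ is not smooth and does carry perimeter over $Z$.

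To ``compare directly'' you need two facts.
\begin{itemize}
\item First, $|D\ell|(Z)\le P(E;\Phi_N(Z\times\mathbb{S}^1))$. This follows from $D_z\ell=p_\#(\nu^E_z\,\mathcal{H}^{N-1}\mres\partial^*E)$ and $D_r\ell=\xi\,dr\,dz+p_\#(\hat x\cdot\nu^E_x\,\mathcal{H}^{N-1}\mres\partial^*E)$. The proof in Proposition~\ref{derivatives of mu} works verbatim for any set of finite perimeter.
\item Second, and this is the missing piece: $E^s$ has locally finite perimeter in $\{x\neq 0\}$, and $P(E^s;\Phi_N(Z\times\mathbb{S}^1))=|D\ell|(Z)=\int_Z r\,d|D^s\xi|$ for every $\mathcal{L}^{N-1}$-null Borel $Z$.
\end{itemize}

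The second fact is a structure theorem for the two-sided cylindrical subgraph $\{d_{\mathbb{S}^1}(\hat x,e_1)<\xi(|x|,z)/2\}$ of an arbitrary $BV$ function $\xi$ with values in $[0,2\pi]$. It is the analogue of Proposition~\ref{giaquinta} in the coordinates $(r,z,\theta)$, where vertical pieces over $D^s\xi$ acquire the area factor $r$. It also needs a density argument, as in Proposition~\ref{ssssd2}, to exclude spurious perimeter on the gluing half-planes $\{\hat x=\pm e_1\}$. This is the counterpart of the Steiner lemma you allude to, but it must actually be proved, and for non-smooth $\xi$ even when $E$ is smooth. Once it and Vol'pert's slicing theorem are available for general sets of finite perimeter, the smoothing step is no longer needed.
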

\noindent
Choosing $B = (0, \infty) \times \mathbb{R}^{N-2}$ in \eqref{per ineq circ eq},
we obtain that $P (E^s) \leq P(E)$, see Figure~\ref{third figure}.
However, more in general inequality \eqref{per ineq circ eq} holds locally, 
as shown in Figure~\ref{fourth figure}.
\begin{center}
\begin{figure}[h]
  \begin{tikzpicture}[>=latex, scale=1.6]
  

%
%
  
  
  \draw [name path=E, color=gray, line width = .5pt, samples=200, domain=0:56.7]
plot (\x:{1.25*0.017*(\x)))});  
    
\draw [name path=F, color=gray, line width = .5pt, domain=0:56.7] plot ({1.2*cos(\x)}, {1.2*sin(\x)});  
  
 \draw [name path=G, color=gray, line width = .5pt] (.8,0)--(1.2,0);  
  
 \draw [name path=H, color=gray, line width = .5pt] (0,0)--(.9,0);  
  
    \draw [name path=I, line width = .5pt, samples=200, domain=0:50]
plot (\x:{1.25*0.017*(\x)))});  
  
  \tikzfillbetween[
    of=F and G
  ] {color=gray};
  
    \tikzfillbetween[
    of=I and H
  ] {color=gray};
  
 
  \draw [line width = .5pt, samples=200, domain=0:56.7]
plot (\x:{1.25*0.017*(\x)))});  
    
\draw [line width = .5pt, domain=0:56.7] plot ({1.2*cos(\x)}, {1.2*sin(\x)});


  \draw[->, line width = .5pt] (-1.5,0)--(1.5,0); 
  \draw(1.5,-.05)node[below]{$x_1$};
 \draw[->, line width = .5pt] (0,-1.5)--(0,1.5); 
  \draw(.1,1.5)node[right]{$x_2$};

 \draw(1,1)node[right]{$E$};


%


\begin{scope}[shift={(3.6,0)}]


%
%


    \draw [name path=A, color=gray, line width = .5pt, samples=200, domain=0:56.7/2]
plot (\x:{2*1.25*0.017*(\x)))});    

  \begin{scope}[yscale=-1,xscale=1]

 \draw [name path=B, color=gray, line width = .5pt, samples=200, domain=0:56.7/2]
plot (\x:{2*1.25*0.017*(\x)))});  

\tikzfillbetween[
    of=A and B
  ] {color=gray};
  
 \end{scope}
  
  \draw [name path=C, color=gray,  line width = .5pt, domain=-56.7/2:56.7/2] plot ({1.2*cos(\x)}, {1.2*sin(\x)});
 \draw [name path=D, color=gray, line width = .5pt] (1.2*0.88, 1.2*0.4749)--(1.2*0.88, -1.2*0.4749);  
  \tikzfillbetween[
    of=C and D
  ] {color=gray};

  
   \draw [line width = .5pt, samples=200, domain=0:56.7/2]
plot (\x:{2*1.25*0.017*(\x)))});    
  
  \begin{scope}[yscale=-1,xscale=1]

 \draw [line width = .5pt, samples=200, domain=0:56.7/2]
plot (\x:{2*1.25*0.017*(\x)))});  

  \end{scope}

\draw [line width = .5pt, domain=-56.7/2:56.7/2] plot ({1.2*cos(\x)}, {1.2*sin(\x)});


\draw[->, line width = .5pt] (-1.5,0)--(1.5,0); 
  \draw(1.5,-.05)node[below]{$x_1$};
 \draw[->, line width = .5pt] (0,-1.5)--(0,1.5); 
  \draw(.1,1.5)node[right]{$x_2$};


%
%
%
 
  \end{scope}

 \draw(4.5,1)node[right]{$E^s$};

 \end{tikzpicture}
 \caption{Perimeter inequality for $N=2$. The perimeter of the set $E^s$ (right), is less than or equal to the perimeter of the set $E$ (left).}
       \label{third figure}
\end{figure}
\end{center}
Let us mention that some variants of the circular symmetrization can be used when one wants to preserve 
the barycenter or some symmetry properties of a set, see for instance \cite{bonnesen1929, Gamb}.
\begin{center}
\begin{figure}[h]

  \begin{tikzpicture}[>=latex, scale=1.6]
  

  \draw[name path=N, black,thick] (0,0) circle (0.6);
  
  \draw[name path=O, black,thick] (0,0) circle (1.2);
 
 \tikzfillbetween[
    of=N and O
  ] {color=cyan};
  
  
  \draw [name path=E, color=gray, line width = .5pt, samples=200, domain=0:56.7]
plot (\x:{1.25*0.017*(\x)))});  
    
\draw [name path=F, color=gray, line width = .5pt, domain=0:56.7] plot ({1.2*cos(\x)}, {1.2*sin(\x)});  
  
 \draw [name path=G, color=gray, line width = .5pt] (.8,0)--(1.2,0);  
  
 \draw [name path=H, color=gray, line width = .5pt] (0,0)--(.9,0);  
  
    \draw [name path=I, line width = .5pt, samples=200, domain=0:50]
plot (\x:{1.25*0.017*(\x)))});  
  
  \tikzfillbetween[
    of=F and G
  ] {color=gray};
  
    \tikzfillbetween[
    of=I and H
  ] {color=gray};
  
 
  \draw [line width = .5pt, samples=200, domain=0:56.7]
plot (\x:{1.25*0.017*(\x)))});  
    
\draw [line width = .5pt, domain=0:56.7] plot ({1.2*cos(\x)}, {1.2*sin(\x)});


  \draw[->, line width = .5pt] (-1.5,0)--(1.5,0); 
  \draw(1.5,-.05)node[below]{$x_1$};
 \draw[->, line width = .5pt] (0,-1.5)--(0,1.5); 
  \draw(.1,1.5)node[right]{$x_2$};

 \draw(1,1)node[right]{$E$};

\draw [line width = .5pt, black, dashed, domain=0:27.5] plot ({0.6*cos(\x)}, {0.6*sin(\x)});

  \draw [line width = 1.5pt, samples=200, domain=56.7/2:56.7]
plot (\x:{1.25*0.017*(\x)))});  

 \draw [color=red, line width = 1.5pt] (.6,0)--(1.2,0);


\begin{scope}[shift={(3.6,0)}]


  \draw[name path=L, black,thick] (0,0) circle (0.6);
  
  \draw[name path=M, black,thick] (0,0) circle (1.2);
 
 \tikzfillbetween[
    of=L and M
  ] {color=cyan};


    \draw [name path=A, color=gray, line width = .5pt, samples=200, domain=0:56.7/2]
plot (\x:{2*1.25*0.017*(\x)))});    

  \begin{scope}[yscale=-1,xscale=1]

 \draw [name path=B, color=gray, line width = .5pt, samples=200, domain=0:56.7/2]
plot (\x:{2*1.25*0.017*(\x)))});  

\tikzfillbetween[
    of=A and B
  ] {color=gray};
  
 \end{scope}
  
  \draw [name path=C, color=gray,  line width = .5pt, domain=-56.7/2:56.7/2] plot ({1.2*cos(\x)}, {1.2*sin(\x)});
 \draw [name path=D, color=gray, line width = .5pt] (1.2*0.88, 1.2*0.4749)--(1.2*0.88, -1.2*0.4749);  
  \tikzfillbetween[
    of=C and D
  ] {color=gray};

  
   \draw [line width = .5pt, samples=200, domain=0:56.7/2]
plot (\x:{2*1.25*0.017*(\x)))});    
  
  \begin{scope}[yscale=-1,xscale=1]

 \draw [line width = .5pt, samples=200, domain=0:56.7/2]
plot (\x:{2*1.25*0.017*(\x)))});  

  \end{scope}

\draw [line width = .5pt, domain=-56.7/2:56.7/2] plot ({1.2*cos(\x)}, {1.2*sin(\x)});


\draw[->, line width = .5pt] (-1.5,0)--(1.5,0); 
  \draw(1.5,-.05)node[below]{$x_1$};
 \draw[->, line width = .5pt] (0,-1.5)--(0,1.5); 
  \draw(.1,1.5)node[right]{$x_2$};

\draw [line width = .5pt, black, dashed, domain=-13.68:13.68] plot ({0.6*cos(\x)}, {0.6*sin(\x)});

   \draw [line width = 1.5pt, samples=200, domain=56.7/4:56.7/2]
plot (\x:{2*1.25*0.017*(\x)))});  
 
   \begin{scope}[yscale=-1,xscale=1]

  \draw [line width = 1.5pt, color=red, samples=200, domain=56.7/4:56.7/2]
plot (\x:{2*1.25*0.017*(\x)))});  

  \end{scope}
 
  \end{scope}

 \draw(4.5,1)node[right]{$E^s$};

 \end{tikzpicture}
 \caption{The same sets considered in Figure~\ref{third figure}. 
If one chooses $B$ as an open interval, the corresponding set $\Phi_2 (B \times \mathbb{S}^1)$
is the open annulus highlighted in blue.
Then, inequality \eqref{per ineq circ eq} states that the \textbf{sum} of the lenghts of the black and red arcs
in the right, is less than or equal to the \textbf{sum} of the lenghts of the two corresponding arcs in the figure 
in the left. Note that this \textbf{does not mean} that the length of each arc decreases after symmetrization.
In fact, the length of the red arc in the right is larger than the length of the corresponding red arc in the left.}
       \label{fourth figure}
\end{figure}
\end{center}

\subsection{State of the art} \label{sect circular symm functions}
Let $n \in \mathbb{N}$, $n \geq 2$, and let us denote points of $\R^n$ as $(x, y)$, 
with $x \in \R^2$ and $y \in \R^{n-2}$.
Let $\Omega \subset \R^n$ be open, 
and let $u: \Omega \to \R$ be a Lebesgue measurable function.
The circular rearrangement $u^s$ of $u$ is the Lebesgue measurable function 
$u^s: \Omega^s \to \R$ such that
\begin{equation} \label{def us}
\{ u^s > t \} = \{ u > t \}^s, \quad \text{ for every } t \in \mathbb{R},
\end{equation}
where $\Omega^s$ and $\{ u > t \}^s$ are the circular rearrangements (in $\R^n$)
of the sets $\Omega$ and $\{ u > t \}$, respectively, see Figure~\ref{second figure}.
\tikzset{every picture/.style={line width=0.75pt}} 
\begin{center}
\begin{figure}[!htb]
\begin{tikzpicture}[x=0.75pt,y=0.75pt,yscale=-1,xscale=1]

\draw    (575.91,359.9) -- (576.5,488) ;
\draw  [dash pattern={on 4.5pt off 4.5pt}]  (586,448.5) -- (438,515.5) ;
\draw  [dash pattern={on 4.5pt off 4.5pt}]  (557.5,336) -- (557.5,463) ;
\draw  [dash pattern={on 4.5pt off 4.5pt}]  (499.6,487.22) -- (500.2,358.96) ;
\draw  [dash pattern={on 4.5pt off 4.5pt}]  (438.7,451.45) -- (553.5,519) ;
\draw    (500.2,358.96) -- (500.2,311.96) ;
\draw [shift={(500.2,309.96)}, rotate = 90] [color={rgb, 255:red, 0; green, 0; blue, 0 }  ][line width=0.75]    (10.93,-3.29) .. controls (6.95,-1.4) and (3.31,-0.3) .. (0,0) .. controls (3.31,0.3) and (6.95,1.4) .. (10.93,3.29)   ;
\draw    (438,515.5) -- (399,534) ;
\draw    (437.7,451.45) -- (402.5,431) ;
\draw    (586,448.5) -- (623.19,430.86) ;
\draw [shift={(625,430)}, rotate = 154.62] [color={rgb, 255:red, 0; green, 0; blue, 0 }  ][line width=0.75]    (10.93,-3.29) .. controls (6.95,-1.4) and (3.31,-0.3) .. (0,0) .. controls (3.31,0.3) and (6.95,1.4) .. (10.93,3.29)   ;
\draw    (553.5,519) -- (593.75,541.04) ;
\draw [shift={(595.5,542)}, rotate = 208.71] [color={rgb, 255:red, 0; green, 0; blue, 0 }  ][line width=0.75]    (10.93,-3.29) .. controls (6.95,-1.4) and (3.31,-0.3) .. (0,0) .. controls (3.31,0.3) and (6.95,1.4) .. (10.93,3.29)   ;
\draw    (437.41,387.4) -- (438,515.5) ;
\draw    (557.5,336) -- (437.41,387.4) ;
\draw    (557.5,336) .. controls (566.5,339.5) and (573.5,346.5) .. (575.91,359.9) ;
\draw    (576.5,488) .. controls (578.09,501.6) and (550.5,561) .. (438,515.5) ;
\draw  [dash pattern={on 4.5pt off 4.5pt}]  (557.5,463) .. controls (566.5,466.5) and (573.5,473.5) .. (575.91,486.9) ;
\draw    (575.91,359.9) .. controls (577.5,373.5) and (545.91,426.9) .. (437.41,387.4) ;
\draw    (294.91,359.9) -- (295.5,488) ;
\draw  [dash pattern={on 4.5pt off 4.5pt}]  (305,448.5) -- (219.79,488.07) ;
\draw  [dash pattern={on 4.5pt off 4.5pt}]  (276.5,336) -- (276.5,463) ;
\draw    (218.6,487.22) -- (219.2,358.96) ;
\draw  [dash pattern={on 4.5pt off 4.5pt}]  (157.7,451.45) -- (219.79,488.07) ;
\draw    (219.2,358.96) -- (219.2,311.96) ;
\draw [shift={(219.2,309.96)}, rotate = 90] [color={rgb, 255:red, 0; green, 0; blue, 0 }  ][line width=0.75]    (10.93,-3.29) .. controls (6.95,-1.4) and (3.31,-0.3) .. (0,0) .. controls (3.31,0.3) and (6.95,1.4) .. (10.93,3.29)   ;
\draw    (157,515.5) -- (118,534) ;
\draw    (156.7,451.45) -- (121.5,431) ;
\draw    (305,448.5) -- (342.19,430.86) ;
\draw [shift={(344,430)}, rotate = 154.62] [color={rgb, 255:red, 0; green, 0; blue, 0 }  ][line width=0.75]    (10.93,-3.29) .. controls (6.95,-1.4) and (3.31,-0.3) .. (0,0) .. controls (3.31,0.3) and (6.95,1.4) .. (10.93,3.29)   ;
\draw    (272.5,519) -- (312.75,541.04) ;
\draw [shift={(314.5,542)}, rotate = 208.71] [color={rgb, 255:red, 0; green, 0; blue, 0 }  ][line width=0.75]    (10.93,-3.29) .. controls (6.95,-1.4) and (3.31,-0.3) .. (0,0) .. controls (3.31,0.3) and (6.95,1.4) .. (10.93,3.29)   ;
\draw    (156.41,387.4) -- (157,515.5) ;
\draw    (219.79,488.07) -- (157,515.5) ;
\draw    (276.5,336) .. controls (285.5,339.5) and (292.5,346.5) .. (294.91,359.9) ;
\draw  [dash pattern={on 4.5pt off 4.5pt}]  (276.5,463) .. controls (285.5,466.5) and (292.5,473.5) .. (294.91,486.9) ;
\draw  [dash pattern={on 4.5pt off 4.5pt}]  (174.5,463.5) .. controls (154.5,465.5) and (142.5,497.5) .. (157,515.5) ;
\draw    (149.41,366.4) -- (150,494.5) ;
\draw    (173.91,335.4) .. controls (153.91,337.4) and (141.91,369.4) .. (156.41,387.4) ;
\draw    (173.91,335.4) -- (219.2,359.96) ;
\draw    (219.2,359.96) -- (273.91,390.9) ;
\draw    (273.91,390.9) -- (274.5,519) ;
\draw    (150,494.5) .. controls (150.5,505.5) and (150.5,502.5) .. (157,515.5) ;
\draw  [dash pattern={on 4.5pt off 4.5pt}]  (174.5,463.5) .. controls (154.5,465.5) and (142.5,497.5) .. (157,515.5) ;
\draw    (295.5,488) .. controls (294.5,496) and (293.5,511) .. (274.5,520) ;
\draw    (219.79,488.07) -- (274.5,519) ;
\draw    (219.2,359.96) -- (156.41,387.4) ;
\draw    (276.5,336) -- (219.2,359.96) ;
\draw    (294.91,359.9) .. controls (293.91,367.9) and (292.91,382.9) .. (273.91,391.9) ;
\draw  [dash pattern={on 4.5pt off 4.5pt}]  (174.5,336.5) -- (174.5,463.5) ;

\draw (579.5,544) node [anchor=north west][inner sep=0.75pt]   [align=left] {$x_1$};
\draw (627,433) node [anchor=north west][inner sep=0.75pt]   [align=left] {$x_2$};
\draw (507,304) node [anchor=north west][inner sep=0.75pt]   [align=left] {$t$};
\draw (495,495) node [anchor=north west][inner sep=0.75pt]   [align=left] {$O$};
\draw (298.5,544) node [anchor=north west][inner sep=0.75pt]   [align=left] {$x_1$};
\draw (346,433) node [anchor=north west][inner sep=0.75pt]   [align=left] {$x_2$};
\draw (226,304) node [anchor=north west][inner sep=0.75pt]   [align=left] {$t$};
\draw (220,342) node [anchor=north west][inner sep=0.75pt]   [align=left] {$1$};
\draw (220+270,342+5) node [anchor=north west][inner sep=0.75pt]   [align=left] {$1$};
\draw (270,304) node [anchor=north west][inner sep=0.75pt]   [align=left] {$u (x_1, x_2)$};
\draw (214,495) node [anchor=north west][inner sep=0.75pt]   [align=left] {$O$};
\draw (261,518) node [anchor=north west][inner sep=0.75pt]   [align=left] {$1$};
\draw (551,304) node [anchor=north west][inner sep=0.75pt]   [align=left] {$u^s (x_1, x_2)$};
\draw (80,300) node [anchor=north west][inner sep=0.75pt]   [align=left] {\mbox{ }}; 
\end{tikzpicture}
 \caption{In this case, $n = 2$ and $\Omega = D (1) = \{ |x| < 1 \}$.
 In the left, the graph of the measurable function $u: D (1) \to \R$ defined as $u (x_1, x_2) = 1$ if $x_1 x_2 > 0$, 
 and $u (x_1, x_2) = 0$ elsewhere in $D (1)$. In the right, the graph of the function $u^s$. We have 
 $u^s (x_1, x_2) = 1$ if $x_1 > 0$, and  $u^s (x_1, x_2) = 0$ elsewhere in $D (1)$.}
       \label{second figure}
\end{figure}
\end{center}
\noindent
P\'olya firstly observed that, if $n = 2$, $\Omega$ is bounded, and $u$ is smooth with $u > 0$ in $\Omega$
and $u = 0$ on $\partial \Omega$, then
\begin{equation} \label{pol sze classical}
\int_{\Omega^s} |\nabla u^s (x)|^2  \, dx  \leq \int_{\Omega} |\nabla u (x)|^2 \, dx.
\end{equation} 
The proof provided by P\'olya in \cite{polya50} is quite elegant and  goes along the following lines.
For every $\varepsilon > 0$, one can consider the subgraph of $\varepsilon u$:
\[
\Sigma^{\varepsilon u} := \{ (x, t) \in  \Omega  \times \R :   t < \varepsilon u (x) \}.
\]
Then, if $(\Sigma^{\varepsilon u})^s$ denotes the circular symmetrization of $\Sigma^{\varepsilon u}$ in $\R^3$, 
we have 
\[
(\Sigma^{\varepsilon u})^s = \Sigma^{\varepsilon u^s}.
\]
Since the circular symmetrization does not increase the perimeter, and since 
$\mathcal{L}^2 (\Omega^s) = \mathcal{L}^2 (\Omega)$, one has 
\begin{equation} \label{intermediate with epsilon}
\int_{\Omega^s} \left[ \sqrt{1 + \varepsilon^2 |\nabla u^s (x)|^2} - 1 \right] \, dx  \leq \int_{\Omega} \left[ \sqrt{1 + \varepsilon^2  |\nabla u (x)|^2} - 1 \right]  \, dx.
\end{equation}
Dividing the previous inequality by $\varepsilon^2$ and taking the limit as $\varepsilon \to 0^+$, we obtain \eqref{pol sze classical}.

In 1985, Kawohl showed that if $n =2$, $1 < p < \infty$, $\Omega \subset \R^2$ is bounded
and $u \in W^{1, p}_0 (\Omega)$ with $u \geq 0$, then $u^s \in W^{1, p}_0 (\Omega^s)$ and 
\[
\int_{\Omega^s} F (| x |, u^s ) |\nabla u^s |^p  \, d x 
\leq
\int_{\Omega} F (| x |, u ) |\nabla u |^p  \, d x,
\]
whenever $F : [0, \infty) \times \R \to [0, \infty)$ is continuous \cite[Corollary~2.35, ii)]{kawohl_book_85}.
Under the additional assumptions that $\partial \Omega$ is piecewise analytic and $u$ is analytic, he also proved that
\begin{equation} \label{strong assump kawohl}
\int_{\Omega^s} F (| x |, u^s ) \, G ( |\nabla u^s | )  \, d x 
\leq
\int_{\Omega} F (| x |, u ) \, G ( |\nabla u | )  \, d x,
\end{equation}
whenever $G : [0, \infty) \to \R$ is nondecreasing and convex.

A more recent result is due to Smets and Willem.
If $n = 2$, $1 < p < \infty$, $u \in W^{1, p}_0 (\Omega)$ and $u \geq 0$, they showed that 
\begin{equation} \label{willem}
\int_{\Omega^s} F (| x |) |\nabla u^s |^p  \, d x 
\leq
\int_{\Omega} F (| x |) |\nabla u |^p  \, d x,
\end{equation}
whenever $F$ is nonnegative, measurable and bounded \cite[Theorem~2.8]{SmetsWillem03}.
More precisely, the authors prove \eqref{willem} when $u^s$ is interpreted
as the foliated Schwarz symmetrization of $u$, in which level sets are rearranged 
via spherical symmetrization (see \cite[Definition~2.4]{SmetsWillem03}),
in any dimension.
We observe that, while in the case $n=2$ this coincides with the circular symmetrization, 
for $n \geq 3$ the two symmetrizations differ. 

\subsection{P\'olya--Szeg\"o inequality for the circular rearrangement under general assumptions} \label{sec polya-szego}
The first result we show in this paper is that a P\'olya-Szeg\"o inequality
holds for the circular symmetrization under very general assumptions.
First of all, let us introduce the class of functions under consideration. In the following, when $A,B \subset \mathbb{R}^n$ we write $A \subset \subset B$ if $A$ is compactly contained in $B$. 
\begin{definition} \label{def w1p0tau}
Let $n \in \mathbb{N}$ with $n \geq 2$, let $\Omega \subset \R^n$ be open, let $p \in [1, +\infty)$, let $u:\Omega \to \R$, and let  
$u_0$ be given by
\begin{equation} \label{def u0}
u_0 (x, y) := 
\begin{cases}
u (x, y) & \text{ if } (x, y) \in \Omega, \\
0 & \text{ if } (x, y) \in \Phi_n (\Pi_{n-1} (\Omega) \times \mathbb{S}^1) \setminus \Omega.
\end{cases}
\end{equation}
We say that $u \in W^{1, p}_{0, \tau} (\Omega)$ if the following conditions are satisfied:
\begin{itemize}

\item[(a$_p$)] $u_0 \in W^{1, p} (\Phi_n (A \times \mathbb{S}^1) )$ for every 
open set $A \subset (0, \infty) \times \mathbb{R}^{n-2}$ with $A \subset \subset \Pi_{n-1} (\Omega)$,

\vspace{.2cm}

\item[(b)] $u \geq 0$ \quad  $\mathcal{L}^n$-a.e. in $\Omega \setminus 
\Phi_n(\Pi_{n-1}^{\textnormal{a}}(\Omega) \times\mathbb{S}^1)$,

\end{itemize}
where $\Phi_n$, $\Pi_{n-1} (\Omega)$, and $\Pi_{n-1}^{\textnormal{a}}(\Omega)$ are defined by \eqref{def_PiN}, \eqref{def_Pi_a}, and \eqref{def_PhiN}, respectively.
\end{definition}
Let now $u:\Omega\to \mathbb{R}$ be Lebesgue measurable.
We define the distribution of $u$ as the function
$\mu  : \Pi_{n-1}(\Omega) \times \mathbb{R} \to [0, \infty)$  given by 
\begin{equation} \label{mu distr}
\mu (r, y, t) = \mathcal{H}^1 (\{ u_0 >t \}_{(r,y)}), \quad 
\text{ for every } 
(r, y, t) \in  \Pi_{n-1} (\Omega) \times \mathbb{R},
\end{equation}
where $u_0$ is given by \eqref{def u0}.
\begin{remark} \label{mu(t) right-cont}
We observe that, if \eqref{mu distr} holds for some Lebesgue measurable function $u:\Omega\to \mathbb{R}$, then
\[
0 \leq \mu (r, y, t) \leq 2 \pi r \quad \text{ for every }(r,y,t) \in \Pi_{n-1} (\Omega)\times\mathbb{R}, 
\]
and 
\[
t \longmapsto \mu (r, y, t) 
\quad \text{ is non-increasing and right-continuous, \quad for every }
(r, y) \in \Pi_{n-1} (\Omega).
\]  
Moreover, since $u (x, y)$ is finite for every $(x, y) \in \Omega$, we have 
\[
\lim_{t \to + \infty} \mu (r, y, t) = 0 
\quad \text{ and } \quad
\lim_{t \to - \infty} \mu (r, y, t) = 2 \pi r, \quad \text{ for every } (r, y) \in \Pi_{n-1} (\Omega).
\]
\end{remark}
\begin{remark}\label{rem_W^1,p_tau inside W^1,1_tau}
Let us note that, by H\"older inequality, $W^{1, p}_{0, \tau} (\Omega)\subset W^{1, 1}_{0, \tau} (\Omega)$ for every $p \in [1, + \infty)$, see also Remark~\ref{rem holder}.
\end{remark}
We stress the fact that if $u \in W^{1, p}_{0, \tau} (\Omega)$, this does not imply that $u = 0$ on all of $\partial \Omega$, as explained in the next remark. 
\begin{remark} \label{no inclus}
If $u \in W^{1, p}_0 (\Omega)$, then condition (a$_p$) is satisfied.
However, the opposite implication is false. 
Indeed, let $n = 2$, let $\Omega = D (1) \cap \{ x = (x_1, x_2) \in \R^2: x_1, x_2 > 0 \}$, let $u (x_1, x_2) = x_1 x_2$, 
and let $p \in [1, + \infty)$.
Then $u$ satisfies (a$_p$), but $u \notin W^{1, p}_0 (\Omega)$.
This example also shows that \( W^{1, p}_{0, \tau} (\Omega)  \not \subset W^{1, p}_{0} (\Omega) \).
\end{remark}


\begin{remark}
Note that $W^{1, p}_{0, \tau} (\Omega)$ is not a vector space.
Indeed, condition (b) is not closed under scalar multiplication.
For the same reason, unless $\Pi_{n-1}^{\textnormal{a}}(\Omega) = \Pi_{n-1} (\Omega)$, we have \(W^{1, p}_{0} (\Omega)  \not \subset  W^{1, p}_{0, \tau} (\Omega)\). 
However,
\[
u \geq 0 \quad \text{ and } \quad u \in W^{1, p}_{0} (\Omega)
\quad \Longrightarrow \quad
u \in W^{1, p}_{0, \tau} (\Omega).
\]
\end{remark}
 \begin{definition} \label{def distribution}
Let $n \in \mathbb{N}$ with $n \geq 2$, let $\Omega\subset \mathbb{R}^n$ be open and let  
$\mu :  \Pi_{n-1} (\Omega) \times \mathbb{R} \to [0, \infty)$ be a Lebesgue measurable function.
We say that $\mu$ is an admissible distribution if \eqref{mu distr} holds for (a representative of) some $u \in W^{1, 1}_{0, \tau} (\Omega)$. The set of all admissible distributions is denoted by $\mathcal{A}(\Pi_{n-1}(\Omega) \times \mathbb{R})$. 
We say that a function $u \in W^{1, 1}_{0, \tau} (\Omega)$ is $\mu$-distributed if there exists a representative of $u$ such that \eqref{mu distr} holds. 
\end{definition}

\noindent
It turns out that if $\mu \in \mathcal{A}(\Pi_{n-1}(\Omega) \times \mathbb{R})$, then 
$\mu \in BV (A \times (-d, + \infty))$ for every open set $A \subset \subset \Pi_{n-1} (\Omega)$ and for every $d > 0$, 
see Proposition~\ref{derivatives of mu}.
 
\medskip
 
The circular rearrangement of a function only depends on its distribution. For this reason, whenever $\mu \in \mathcal{A}(\Pi_{n-1}(\Omega) \times \mathbb{R})$ (or, more in general, whenever \eqref{mu distr} holds for some Lebesgue measurable function $u:\Omega\to \mathbb{R}$), we define 
$v_\mu:  \Phi_n (\Pi_{n-1} (\Omega) \times \mathbb{S}^1) \to \overline{\mathbb{R}}$ as
\begin{equation} \label{def vmu}
v_\mu (x, y) := \inf \left\{ t \in \mathbb{R} : \mu (|x|, y, t) \leq 2 |x|  \arccos (\hat{x} \cdot e_1)  \right\}.
\end{equation}
Thanks to Remark~\ref{mu(t) right-cont}, $v_\mu (x, y)$ is finite except possibly in the hyplerplane $\{ x_2 = 0\}$, which is $\mathcal{H}^n$-negligible. 
Note that, even if $u$ is defined only in $\Omega$, the function $v_\mu$ is defined in the (larger) set $ \Phi_n (\Pi_{n-1} (\Omega) \times \mathbb{S}^1)$,
see Remark~\ref{different vmu} for further comments.
 We also point out that the notion of distribution is well defined, as clarified by the next remarks.

\begin{remark}
Let $u_1, u_2 \in L^1_{\textnormal{loc}} (\Phi_{n}  (\Pi_{n-1} (\Omega)  \times \mathbb{S}^{1}))$, 
and let $\mu_{u_1}$ and $\mu_{u_2}$ be the distributions of
$u_1$ and $u_2$, respectively. 
If $u_1 = u_2$ $\mathcal{L}^n$-a.e. in $\Phi_{n}  (\Pi_{n-1} (\Omega)  \times \mathbb{S}^{1}))$, 
then one can see that \( \mu_{u_1} = \mu_{u_2} \) $\mathcal{L}^n$-a.e. in $\Pi_{n-1} (\Omega) \times \R$ (see Proposition \ref{prop several useful things}).
\end{remark}

\begin{remark}
By definition, $v_\mu$ is $\mu$-distributed.
Moreover, if $\mu_{1}, \mu_{2} \in \mathcal{A}(\Pi_{n-1}(\Omega) \times \mathbb{R})$
with \( \mu_{1} = \mu_{2} \) $\mathcal{L}^n$-a.e. in $\Pi_{n-1} (\Omega) \times \R$, 
one can see that $v_{\mu_1} = v_{\mu_2}$ $\mathcal{L}^n$-a.e. in $\Phi_n (\Pi_{n-1} (\Omega) \times \mathbb{S}^1)$  (see Proposition \ref{prop several useful things 2}).
Also, if $u \in W^{1, 1}_{0, \tau} (\Omega)$ is a $\mu$-distributed function, then $v_\mu \! \! \mid_{\Omega^s} \in W^{1, 1}_{0, \tau} (\Omega^s)$
(see Proposition~\ref{ us sobolev}), and $v_\mu = u^s$ $\mathcal{L}^n$-a.e. in $\Omega^s$, where $u^s$ is such that \eqref{def us} holds.
\end{remark}

 \medskip
 
\noindent
Let us now focus on the general assumptions on the integrand. 
If $u \in W^{1, p}_{0, \tau} (\Omega)$, for every $(x, y) \in \Omega$ such that the gradient $\nabla u (x, y)$ at $(x, y)$ is defined,  
we write \( \nabla u (x, y) = (\nabla_x u (x, y), \nabla_y u (x, y)) \), where $\nabla_x u (x, y) \in \R^{2}$ and $\nabla_y u (x, y) \in \R^{n-2}$, with
\[
\nabla_x u (x, y) = ( \partial_{x_1} u (x, y), \partial_{x_2} u (x, y) ), \quad
 \nabla_y u (x, y) = ( \partial_{y_1} u (x, y), \ldots, \partial_{y_{n-2}} u (x, y) ).
\]
Moreover, if $x = (x_1, x_2) \in \R^2_0$, we can further decompose $\nabla_x u (x, y)$ into radial and tangential components as
\[
\nabla_x u (x, y) = (\hat{x} \cdot \nabla_x u (x, y) ) \hat{x} + \Dx u (x, y) x_\parallel, 
\qquad \Dx u (x, y) = \nabla_x u (x, y) \cdot x_\parallel,
\]
where 
\begin{equation} \label{basis of R2}
\hat{x} = \left( \frac{x_1}{|x|}, \frac{x_2}{|x|} \right)
\qquad \text{ and } \qquad 
x_{\parallel} = \left( - \frac{x_2}{|x|}, \frac{x_1}{|x|} \right).
\end{equation}
\begin{definition}\label{def_admissable integrands}
Let $f: \R \times \R \times \R^{n-2} \to [0, \infty)$. We say that $f \in \mathscr{F}$ if the following assumptions are satisfied:
\begin{enumerate}
\vspace{.1cm} 
\item[(f1)]  \( f \) is convex;
\vspace{.2cm} 
\item[(f2)] \( f (\eta, -\tau, \zeta) = f (\eta, \tau, \zeta) \) for every $(\eta, \tau, \zeta) \in \R \times \R \times \R^{n-2}$.
\end{enumerate}
We say that $f \in \mathscr{F}'$ if, in addition,
\begin{enumerate}
\vspace{.2cm} 
\item[(f1')] \( f \) is \textbf{strictly} convex.
\vspace{.2cm} 
\end{enumerate}
\end{definition}

\begin{remark}
Note that the classical Dirichlet functional can be written as
\[
\int_{\Omega} |\nabla u |^2 \, dx \, dy = \int_{\Omega} f (\hat{x} \cdot \nabla_x u, \Dx u, \nabla_y u)  \, dx \, dy, 
\]
with $f \in \mathscr{F}'$ given by \( f (\eta, \tau, \zeta) = \eta^2 + \tau^2 + |\zeta|^2 \).
\end{remark}
\noindent
We can now state our first result, which shows that a general version of the P\'olya--Szeg\"o inequality holds locally. 
We recall that the set $\mathcal{A} (\Pi_{n-1} (\Omega) \times \R)$ of admissible distributions has been introduced in Definition~\ref{def distribution}. 
\begin{theorem}\label{thm_Polya-Szego inequality local}
Let $n \in \mathbb{N}$ with $n \geq 2$, let $\Omega \subset \R^n$ be open, and let $\mu \in \mathcal{A} (\Pi_{n-1} (\Omega) \times \R)$.
Let $a \in L^{\infty} ((0, \infty) \times \R^{n-2} \times \R)$ with $a \geq 0$ $\mathcal{H}^n$-a.e., 
and let $f \in \mathscr{F}$.
Then, for every $\mu$-distributed function $u \in W^{1, 1}_{0, \tau} (\Omega)$ we have
\begin{equation} \label{ineq local intro}
\begin{split} 
& \int_{\Phi_n ( B \times \mathbb{S}^1)} a (|x|, y, v_\mu) f (\hat{x} \cdot \nabla_x v_\mu ,  \Dx v_\mu , 
 \nabla_y v_\mu ) \, dx dy  \\
& \leq \int_{\Phi_n ( B \times \mathbb{S}^1)} a (|x|, y, u_0) f (\hat{x} \cdot \nabla_x u_0 ,  \Dx u_0 , 
 \nabla_y u_0 ) \, dx dy,
\end{split}
\end{equation}
for every Borel set $B \subset \Pi_{n-1} (\Omega)$, where $u_0$ and $v_\mu$ 
are given by \eqref{def u0} and \eqref{def vmu}, respectively.
\end{theorem}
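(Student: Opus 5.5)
We prove \eqref{ineq local intro} directly, without approximation, by reducing to a one-dimensional inequality on each circle $\partial D(r)\times\{y\}$ through the coarea formula. The key structural fact is that $v_\mu$ has, on every circle, the same distribution $\mu(r,y,\cdot)$ as $u_0$, with each superlevel set an arc centred at $(r,0)$. We work in the coordinates $(r,y,\omega)$ given by $\Phi_n$. Here $dx\,dy = r\,dr\,dy\,d\mathcal H^1(\omega)$, the radial derivative is $\hat x\cdot\nabla_x$, and the tangential derivative is $\Dx = r^{-1}\partial_\theta$.

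\emph{Step 1: derivatives of $\mu$.} From Proposition~\ref{derivatives of mu}, $\mu\in BV_{\rm loc}$. Let $u\in W^{1,1}_{0,\tau}(\Omega)$. For a.e.\ $(r,y)$ and a.e.\ $t$, the level set $\{u_0=t\}\cap\partial D(r)$ is finite, and $\nabla_x u_0\cdot x_\parallel\neq 0$ at each of its points.

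\begin{itemize}
\item For a.e.\ $(r,y)$ and a.e.\ $t$ outside the flat set $\{\nabla u_0=0\}$, we get
\[
-\partial_t\mu=\sum_{u_0=t}\frac{1}{|\Dx u_0|},\qquad \partial_r\mu=\sum_{u_0=t}\frac{\hat x\cdot\nabla_x u_0}{|\Dx u_0|},\qquad \nabla_y\mu=\sum_{u_0=t}\frac{\nabla_y u_0}{|\Dx u_0|}.
\]
This comes from differentiating $\mu=\int_{\partial D(r)}\chi_{\{u_0>t\}}$.
\item For $v_\mu$ there are exactly two points on the level set, symmetric with respect to $x_2=0$. By (f2) they carry equal values of $f$. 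At them,
\[
|\Dx v_\mu|=\frac{2}{-\partial_t\mu},\qquad \hat x\cdot\nabla_x v_\mu=\frac{\partial_r\mu}{-\partial_t\mu}\ \text{(up to the curvature correction from } 2|x|\arccos\text{)},\qquad \nabla_y v_\mu=\frac{\nabla_y\mu}{-\partial_t\mu}.
\]
\end{itemize}

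\emph{Step 2: coarea and Jensen.} By the coarea formula on each circle,
\[
\int_{\partial D(r)} a\,f(\ldots)\,d\mathcal H^1=\int_{\R} a(r,y,t)\sum_{u_0=t}\frac{f(\hat x\cdot\nabla_x u_0,\Dx u_0,\nabla_y u_0)}{|\Dx u_0|}\,dt+\text{(contribution of }\{\Dx u_0=0\}).
\]
The factor $a(r,y,t)$ depends only on the level, so it is common to $u_0$ and $v_\mu$. Set $w_i=|\Dx u_0|^{-1}$ at the level-set points and $W=\sum_i w_i=-\partial_t\mu$. We then need
\[
W\cdot\tfrac12\sum_{\pm} f\Big(\tfrac{\sum w_i\eta_i}{W},\pm\tfrac{2}{W},\tfrac{\sum w_i\zeta_i}{W}\Big)\le \sum_i w_i f(\eta_i,\tau_i,\zeta_i).
\]
Since $\sum_i w_i|\tau_i|$ equals the number of points $k\ge2$ (for a.e.\ such level), this follows in three moves:
\begin{itemize}
\item use (f2) to replace $\tau_i$ by $|\tau_i|$;
\item apply Jensen with weights $w_i/W$, which produces the tangential argument $k/W$;
\item use that, by convexity and evenness, $\tau\mapsto f(\eta,\tau,\zeta)$ is nondecreasing in $|\tau|$, and $2/W\le k/W$.
\end{itemize}

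\emph{Step 3: degenerate parts.} These are the pieces not handled by the coarea computation.
\begin{itemize}
\item On $\{\nabla u_0 = 0\}$-type sets and on full annular levels (where $\mu=0$ or $\mu=2\pi r$), $v_\mu$ is constant along the circle. Its contribution there should be bounded by $f(\eta,0,\zeta)$ terms using Jensen in $(\eta,\zeta)$ together with $f(\eta,0,\zeta)\le f(\eta,\tau,\zeta)$.
\item The contribution of $\{\Dx v_\mu=0,\ \nabla v_\mu\neq0\}$ must be shown to be null, i.e.\ that this set is negligible, or that it is matched by the corresponding set for $u_0$. This is the identity \eqref{measure 0 for the bad set} mentioned in the introduction, and it requires the absolutely continuous/jump/Cantor decomposition of $D\mu$.
\end{itemize}
Integrating over $(r,y)\in B$ with weight $r$ then concludes. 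Condition (b) of Definition~\ref{def w1p0tau} guarantees that $u_0$ has the right boundary behaviour on non-annular slices, so $v_\mu$ agrees with $u_0$'s extension and no spurious gradient appears.

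\emph{Main obstacle.} The hard part is Step 3: proving that $v_\mu$ is Sobolev and that its singular and flat parts do not create extra energy, namely the Cantor part of $D\mu$ and the sets where $\partial_t\mu=0$. I would first establish the identity \eqref{measure 0 for the bad set} by proving $\mathcal L^n(\{\Dx v_\mu=0\}\setminus\{\nabla v_\mu=0\})=0$ via the coarea formula applied to $u_0$ slice by slice.
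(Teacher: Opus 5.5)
Your Step~2 is the paper's argument on the good levels: compare Step~1 in the proof of Theorem~\ref{the thing}, from which the statement follows by taking $B=\tilde B\times\R$. There is one slip. The radial quantity to use is $r\,\partial_r\xi_\mu=r\,\partial_r(\mu/r)$, not $\partial_r\mu$, since $\partial_r\mu=\mu/r+\sum_{u_0=t}\hat x\cdot\nabla_x u_0/|\Dx u_0|$; with $r\,\partial_r\xi_\mu$ on both sides no curvature correction arises.

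The genuine gap is Step~3, and the route you propose there fails. The claim $\mathcal{L}^n(\{\Dx v_\mu=0\}\setminus\{\nabla v_\mu=0\})=0$ is false. For $\Omega=D(1)$ and $u(x)=1-|x|$ one has $v_\mu=u$, so $\Dx v_\mu\equiv0\neq\nabla v_\mu$ on $\Omega\setminus\{0\}$, and all of the energy lives on that set. Identity \eqref{measure 0 for the bad set} says something different. It is a statement in $(r,y,t)$-space: inside $\{0<\alpha_\mu<\pi\}$, the levels whose slice of $\partial^*\Sigma^{u_0}$ contains a point with vanishing tangential normal form an $\mathcal{H}^n$-null set. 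So the tangentially flat parts of both graphs lie over an $\mathcal{H}^n$-null set $R\subset\Pi_{n-1}(\Omega)\times\R$. Over $R$ they carry the singular part $D^s\mu$ (jump and Cantor), so their energy cannot be shown to vanish; it must be matched with that of $u_0$. Neither the coarea formula on circles, which only sees $\{\Dx u_0\neq0\}$, nor the a.e.\ formulas of Step~1 say anything over $R$. A pointwise ``Jensen in $(\eta,\zeta)$'' there therefore has no meaning.

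The missing mechanism is Step~2 of the proof of Theorem~\ref{the thing}, and it runs as follows.
\begin{enumerate}
\item By \eqref{only tangential non zero}, over such an $R$ both $\Dx u_0$ and $\Dx v_\mu$ vanish $\mathcal{H}^n$-a.e.\ on the respective reduced boundaries. Both integrands are then of the form $f(\eta,0,\zeta)$, and monotonicity in $\tau$ plays no role.
\item Write $f(\eta,0,\zeta)=\sup_h\bigl(w_h\cdot(\eta,0,\zeta)-f^*(w_h)\bigr)$ with $\{w_h\}$ countable.
\item For fixed $w_h$ and Borel $B_h\subset R$, consider $\int_{\partial^*\Sigma\cap\Phi_{n+1}(B_h\times\mathbb{S}^1)}a\,\bigl(w_h\cdot(\hat x\cdot\nu_x,0,\nu_y)+f^*(w_h)\,\nu_t\bigr)\,d\mathcal{H}^n$. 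By Proposition~\ref{derivatives of mu} this is an integral of $a$ against $r\,D_r\xi_\mu$, $D_y\mu$, $D_t\mu$ over $B_h$, hence it is the same for $\Sigma^{u_0}$ and $\Sigma^{v_\mu}$.
\item The supremum is exchanged with the integral via Lemma~\ref{thanks for this lemma}. This requires $\hat x\cdot\nabla_x v_\mu$, $\nabla_y v_\mu$, $|\nabla v_\mu|$ to be constant on \emph{every} slice of $\partial^*\Sigma^{v_\mu}$, including the arcs over the singular set of $\alpha_\mu$ (Remark~\ref{this is now the magic property}). Reflection symmetry of $v_\mu$ only identifies mirror points, which suffices on two-point slices but not on arcs.
\end{enumerate}

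You also leave open that $v_\mu\in W^{1,1}_{\rm loc}$, which your Step~1 formulas for $v_\mu$ already presuppose. The paper obtains it in Proposition~\ref{ us sobolev} by combining the subgraph perimeter inequality \eqref{per ineq cyl subgraph} with the equality of $D_t\mu$ computed from $\Sigma^{u_0}$ and from $\Sigma^{v_\mu}$. That combination rules out vertical parts of $\partial^*\Sigma^{v_\mu}$.
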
 
A consequence of the previous result is the following.
\begin{theorem}\label{thm_Polya-Szego inequality}
Let $n \in \mathbb{N}$ with $n \geq 2$, let $\Omega \subset \R^n$ be open, and let $\mu \in \mathcal{A} (\Pi_{n-1} (\Omega) \times \R)$.
Let $a \in L^{\infty} ((0, \infty) \times \R^{n-2} \times \R)$ with $a \geq 0$ $\mathcal{H}^n$-a.e.,
and let $f \in \mathscr{F}$.
Then, for every $\mu$-distributed function $u \in W^{1, 1}_{0, \tau} (\Omega)$ we have
\begin{equation} \label{ineq omega intro}
\begin{split} 
& \int_{\Omega^s} a(|x|, y, v_\mu) f (\hat{x} \cdot \nabla_x v_\mu,  \Dx v_\mu, 
 \nabla_y v_\mu  )  \, dx dy  \\
& \leq \int_{\Omega} a (|x|, y, u) f (\hat{x} \cdot \nabla_x u ,  \Dx u , 
 \nabla_y u ) \, dx dy,
\end{split}
\end{equation}
where $v_\mu$ is given by \eqref{def vmu}.
\end{theorem}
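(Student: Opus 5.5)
Theorem~\ref{thm_Polya-Szego inequality} will follow from Theorem~\ref{thm_Polya-Szego inequality local} by choosing $B = \Pi_{n-1}(\Omega)$ and comparing both sides of \eqref{ineq local intro} with those of \eqref{ineq omega intro}. With this choice, $\Phi_n(B\times\mathbb{S}^1) = \Phi_n(\Pi_{n-1}(\Omega)\times\mathbb{S}^1)$, which contains both $\Omega$ and $\Omega^s$. It is Borel, being the image of an open set under a diffeomorphism, provided $\Pi_{n-1}(\Omega)$ is open; this holds for $\Omega$ open, since positivity of $\mathcal{H}^1$ of an open slice is an open condition. So it remains to show that the two integrals in \eqref{ineq local intro} over this large set reduce to those in \eqref{ineq omega intro}, up to terms that can be discarded.

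For the right-hand side, split $\Phi_n(\Pi_{n-1}(\Omega)\times\mathbb{S}^1)$ into $\Omega$ and its complement $Z:=\Phi_n(\Pi_{n-1}(\Omega)\times\mathbb{S}^1)\setminus\Omega$. On $\Omega$ we have $u_0=u$ and $\nabla u_0=\nabla u$ a.e. On $Z$ we have $u_0=0$, hence $\nabla u_0=0$ $\mathcal{L}^n$-a.e. on $Z$ by the standard fact that Sobolev gradients vanish a.e. on level sets; this uses (a$_p$) with $p=1$ on an exhaustion by sets $A\subset\subset\Pi_{n-1}(\Omega)$. The right-hand side of \eqref{ineq local intro} therefore equals
\[
\int_\Omega a(|x|,y,u)f(\hat x\cdot\nabla_x u,\Dx u,\nabla_y u)\,dx\,dy + \int_Z a(|x|,y,0)f(0,0,0)\,dx\,dy .
\]

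For the left-hand side, the set $Z^s:=\Phi_n(\Pi_{n-1}(\Omega)\times\mathbb{S}^1)\setminus\Omega^s$ plays the same role. On each slice $(r,y)\in\Pi_{n-1}(\Omega)$, the arc $\Omega^s_{(r,y)}$ has the same length as $\Omega_{(r,y)}$, so the complementary arcs also have equal $\mathcal{H}^1$-measure. I claim $v_\mu=0$ a.e. on $Z^s$, which requires distinguishing two cases:
\begin{itemize}
\item For $(r,y)\notin\Pi^{\rm a}_{n-1}(\Omega)$, condition (b) gives $u_0\ge0$ on the whole circle, with $u_0=0$ off $\Omega$. Hence $\mu(r,y,t)=2\pi r$ for $t<0$ and $\mu(r,y,t)\le\mathcal{H}^1(\Omega_{(r,y)})$ for $t\ge0$. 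By \eqref{def vmu} this yields $v_\mu=0$ on the complementary arc (and $v_\mu\ge0$ everywhere on the slice).
\item For $(r,y)\in\Pi^{\rm a}_{n-1}(\Omega)$, the complementary arc is $\mathcal{H}^1$-null.
\end{itemize}
By Fubini in the coordinates $\Phi_n$, $v_\mu=0$ a.e. on $Z^s$, so $\nabla v_\mu=0$ a.e. there. This uses the Sobolev regularity of $v_\mu$ on the full cylinder, which I take from Proposition~\ref{ us sobolev}, or more precisely from the fact that the local theorem's left-hand side is meaningful. The left-hand side then equals the integral over $\Omega^s$ plus $\int_{Z^s}a(|x|,y,0)f(0,0,0)$.

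The two extra terms coincide, because $a(|x|,y,0)$ depends only on $(r,y)$ and $\mathcal{H}^1(Z_{(r,y)})=\mathcal{H}^1(Z^s_{(r,y)})$ for every $(r,y)$; integrating in polar coordinates gives equal values. The only obstacle is that these terms may be infinite, in which case they cannot be cancelled. To handle this, I would apply Theorem~\ref{thm_Polya-Szego inequality local} with $B=B_k$, a sequence of Borel sets of finite measure with $B_k\uparrow\Pi_{n-1}(\Omega)$, say bounded and with $r>1/k$. On each $B_k$ the extra terms are finite, since $a$ is bounded, and equal, so they cancel. Letting $k\to\infty$ and using monotone convergence for the nonnegative integrands restricted to $\Omega$ and $\Omega^s$ gives \eqref{ineq omega intro}.
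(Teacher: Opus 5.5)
Your proposal is correct and follows the paper's proof. You apply the local inequality on an exhaustion of $\Pi_{n-1}(\Omega)$ by sets on which the complementary contributions are finite, and use $v_\mu=0$ a.e.\ off $\Omega^s$ (your case analysis is Proposition~\ref{prop v_mu zero}) together with $u_0=0$ off $\Omega$ to kill the gradients there. You then cancel the equal complementary terms slice by slice, since the slices of $\Omega$ and $\Omega^s$ have the same length, and pass to the limit by monotone convergence.
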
 

\begin{remark}
In the statements above $\Omega$ does not need to be bounded.
Moreover, \eqref{ineq local intro} and \eqref{ineq omega intro} are inequalities between extended real numbers in $[0, \infty]$.
\end{remark}

We can actually prove a more general inequality than \eqref{ineq local intro}, see Theorem~\ref{the thing}.
From these results, in particular, it follows that if $p \in [1, + \infty)$ and $u \in W^{1, p}_{0, \tau} (\Omega)$, 
then $v_{\mu} \mid_{\Omega^s} \in W^{1, p}_{0, \tau} (\Omega^s)$, 
see Corollary~\ref{symmetric function in w1p}.

\subsection{Comments on the assumptions of Theorem~\ref{thm_Polya-Szego inequality local}} \label{sect comments polya}

\mbox{ }

\medskip

\noindent
In the next example we show why we need condition (a$_p$) of Definition~\ref{def w1p0tau}.
\begin{example}
Let $n = 2$, let $\Omega = D (1) \cap \{ x = (x_1, x_2) \in \R^2: x_1, x_2 > 0 \}$, and let $u : \Omega \to \R$ be given by
\[
u (x_1, x_2) = x_1 \quad \quad \text{ for every } (x_1, x_2) \in \Omega.
\] 
Then, $\Omega^s = D (1) \cap \{ x = (x_1, x_2) \in \R^2_0 : |x_2|< x_1  \}$ 
and
\[
v_\mu (x_1, x_2) = \frac{x_1^2 - x_2^2}{| x |} \quad \quad \text{ for every } (x_1, x_2) \in \Omega^s,
\] 
where $\mu$ and $v_\mu$ are given by \eqref{mu distr} and \eqref{def vmu}, respectively.
Thus, recalling that $\mathcal{L}^2 (\Omega^s)  = \mathcal{L}^2 (\Omega)$, we have
\[
\int_{\Omega^s} | \nabla v_\mu |^2 \, dx 
= \int_{\Omega^s} \left( 1 + 12 \frac{x_1^2 x_2^2}{|x|^4}\right) \, dx  
> \mathcal{L}^2 (\Omega) = \int_{\Omega} | \nabla u|^2 \, dx.
\]
\end{example}
Note that in the previous example $u \notin W^{1, p}_{0, \tau} (\Omega)$, since condition (a$_p$) of Definition~\ref{def w1p0tau} is not satisfied.
Let us explain why this causes inequality \eqref{pol sze classical} to fail.
First of all, we observe that in this case the perimeter inequality \eqref{per ineq circ eq} 
(with $N=3$ and $B = (0, 1) \times \R$), applied 
to the sets $E= \Sigma^u$ and $E^s=\Sigma^{v_\mu}$, still holds.
However, the boundary conditions of $u$ are such that $u \neq 0$ on $I= \{ (x_1, 0 ) : 0 < x_1 < 1 \}$. 
For this reason, the perimeter of $\Sigma^u$ in the open cylinder $\Phi_3 (B \times \mathbb{S}^1) 
= (D (1) \setminus \{ (0,0)\}) \times \R$ has a non trivial contribution coming
from $\partial \Sigma^u \cap (I \times \R)$.

Then, if we try to reproduce P\'olya's proof in this case, 
inequality \eqref{intermediate with epsilon}
has an additional term appearing in the right-hand side, 
and this does not allow us to conclude.
Note that one cannot simply disregard this additional term since, 
as shown in Figure~\ref{fourth figure}, the perimeter inequality 
does not hold if one cherry-picks subsets of $\partial \Sigma^u$. 
Thus, some boundary conditions (which are encoded in condition (a$_p$)) are needed if one wants to avoid this situation.

\medskip

\noindent

\begin{remark} \label{different vmu}
We observe that, although the function $u$ is defined in $\Omega$, the rearranged function $v_\mu$
is defined in the (larger) set $\Phi_n (\Pi_{n-1} (\Omega) \times \mathbb{S}^1)$.
A natural question is whether it is possible to just define $v_\mu$ in $\Omega^s$, without the need of considering 
the set $\Phi_n (\Pi_{n-1} (\Omega) \times \mathbb{S}^1)$.
In this case, the idea would be to substitute the function $\mu$ given in \eqref{mu distr}
with the function 
\[
\mu' (r, y, t):=  \mathcal{H}^1 (\{ u >t \}_{(r,y)}).
\] 
Note that $\mu'  \leq \mu$, since $\{ u >t \} \subset \{ u_0 >t \}$, and $u_0$ is an extension of $u$ to the whole 
$\Phi_n (\Pi_{n-1} (\Omega) \times \mathbb{S}^1)$.
Then, one could define a rearranged function $w_{\mu'}$ using the following variant of \eqref{def vmu}:
\begin{equation*} 
w_{\mu'} (x, y) := \inf \left\{ t \in \mathbb{R} : \mu' (|x|, y, t) \leq 2 |x|  \arccos (\hat{x} \cdot e_1)  \right\}
\quad \forall \, (x, y) \in \Omega^s \text{ with } \hat{x} \cdot e_1 > - 1.
\end{equation*}
One can check that, if $u \geq 0$ and condition (a$_p$) is satisfied, this definition is equivalent to the one given in \eqref{def vmu}.
That is, $w_{\mu'}$ coincides with the restriction of the function $v_\mu$ given in \eqref{def vmu} to the set $\Omega^s$.
However, if we are dealing with functions that change sign, the situation can be very different, as explained in the following example.
\end{remark}
In the next example we show why we need to impose condition (b) of Definition~\ref{def w1p0tau}, and why an approach like the one described in 
Remark~\ref{different vmu} does not work with functions that can change sign.
%
%
%
\begin{example} \label{counterex neg funct}
Let $n=2$, and let $\phi: (0, +\infty) \times [- \pi, \pi) \to \R^2_0$
be the polar change of coordinates, given by
\[
\phi (r, \theta) = (r \cos \theta, r \sin \theta). 
\]
Let $a \in (0,\pi/4)$ and let $\Omega = \phi (O)$, where
\[
O = \left\{ (r, \theta) : 1 < r < 2,\,\, |\theta| <  \frac{\pi}{4} + a(r-1) \right\}.
\]
Note that, by construction, $\Omega^s = \Omega$.
Consider now the function $u : \Omega \to \R$ given by $u = U \circ \phi^{-1}$, where 
$U: O \to \R$ is the piecewise affine function given by
\[
U (r, \theta) = 
\begin{cases}
| \theta | -  \frac{\pi}{4} \quad &\textnormal{if } | \theta | \leq \frac{\pi}{4}, \vspace*{0.2cm}\\
0\quad &\textnormal{otherwise in }O.
\end{cases}
\]
Let now $\mu'$ and $w_{\mu'}$ be given by Remark~\ref{different vmu}.
We have $w_{\mu'} = V \circ \phi^{-1}$, where 
$V: O \to \R$ is the following piecewise affine function:
\begin{align*}
V (r, \theta) =
\begin{cases}
0 \quad &\textnormal{if } | \theta | \leq a(r-1), \vspace*{0.2cm}\\
- | \theta | + a(r-1) \quad &\textnormal{if } a(r-1) <  | \theta |  < a(r-1) +\frac{\pi}{4}.
\end{cases}
\end{align*}
In this case, $u$ satisfies condition $(a_p)$ for every $p \in [1, +\infty)$. However, 
a direct calculation shows that 
\[
\int_{\Omega^s} |\nabla w_{\mu'} |^2\, dx 
= \frac{\pi}{2}\log(2) + \frac{3}{4}\pi a^2
\quad \text{ and } \quad   
\int_{\Omega} |\nabla u |^2\, dx = \frac{\pi}{2}\log(2).
\]
\end{example}
\noindent
In the 
example above, we have 
\[ 
u = 0 \quad \text{ on } \partial \Omega \cap \{ 1 < |x| < 2 \},
\]
so that $u$ satisfies condition $(a_p)$. Note, however, that $u \notin W^{1, p}_{0, \tau} (\Omega)$, since condition (b) of Definition~\ref{def w1p0tau} is not satisfied.
In polar coordinates, $u$ is independent of $r$ and
has global minimum $- \pi/4$, which is attained in the segment $\{ (x_1, 0 ) : 1 < x_1 < 2 \}$.
Instead, the function $w_{\mu'}$ satisfies
\[
w_{\mu'} = - \frac{\pi}{4} \quad \text{ on } \partial \Omega \cap \{ 1 < |x| < 2 \}, 
\]
Thus, the construction of the function $w_{\mu'}$ shifts the minimum value $- \pi/4$ 
towards (a subset of) the boundary of $\Omega$.
This means that, depending on the shape of $\Omega$, we can 
force the function $w_{\mu'}$ to also depend on $r$, therefore creating additional gradient.
In fact, we have 
\[
| \nabla u (x) |^2 = \frac{1}{|x|^2} 
\quad \quad \text{ for every } \, x \in \{\nabla u \neq 0 \},
\]
and
\[
| \nabla w_{\mu'} (x) |^2 = \frac{1}{|x|^2} + a^2
\quad \quad \text{ for every } \, x \in \{\nabla w_{\mu'} \neq 0 \},
\]
%
and this eventually causes the failure of P\'olya-Szeg\"o inequality.

\medskip

\subsection{Rigidity} \label{intro rigidity}
Once a P\'olya--Szeg\"o inequality is established, we want to understand if the equality
\begin{equation} \label{equality}
\begin{split} 
& \int_{\Phi_n ( \Pi_{n-1} (\Omega) \times \mathbb{S}^1)} a (|x|, y, v_\mu) f (\hat{x} \cdot \nabla_x v_\mu ,  \Dx v_\mu , 
 \nabla_y v_\mu ) \, dx dy  \\
& = \int_{\Phi_n ( \Pi_{n-1} (\Omega) \times \mathbb{S}^1)} a (|x|, y, u_0) f (\hat{x} \cdot \nabla_x u_0 ,  \Dx u_0 , 
 \nabla_y u_0 ) \, dx dy
\end{split}
\end{equation}
implies that (up to orthogonal transformations) $u = v_\mu$.  
This question makes sense if the integrals under consideration are finite, so we will require that
\begin{equation} \label{int finite}
\int_{\Phi_n ( \Pi_{n-1} (\Omega) \times \mathbb{S}^1)} a (|x|, y, v_\mu) f (\hat{x} \cdot \nabla_x v_\mu ,  \Dx v_\mu. 
 \nabla_y v_\mu ) \, dx dy < \infty, 
\end{equation}
When \eqref{int finite} is satisfied, we define the set of extremals of \eqref{ineq local intro} as
\[
\mathcal{E} (\mu, \Omega) := \{ u \in W^{1, 1}_{0, \tau} (\Omega): \text{$u$ is $\mu$-distributed and \eqref{equality} holds}  \}.
\]
First of all, note that the following inclusion is always satisfied:
\[
\mathcal{E} (\mu, \Omega) \supset \{ u \in W^{1, 1}_{0, \tau} (\Omega): \, \, u (x, y) = v_\mu (R x, y) 
\text{ for $\mathcal{H}^n$-a.e. $(x, y) \in \Omega$, for some $R \in O (2)$} \},
\]
where $O (2)$ is the set of orthogonal transformations of $\mathbb{R}^2$.
We will say that \textit{rigidity holds} for \eqref{ineq local intro} if also the
opposite inclusion is true, that is if
\begin{equation} \label{rigidity} \tag{$\mathcal{R}$}
\mathcal{E} (\mu, \Omega)  = \{ u \in W^{1, 1}_{0, \tau} (\Omega): u (x, y) = v_\mu (R x, y) 
\text{ for $\mathcal{H}^n$-a.e. $(x, y) \in \Omega$, for some $R \in O (2)$} \}.
\end{equation}

\medskip

As observed by Kawohl, in \cite[pag. 186]{polyaszego51} P\'olya and Szeg\"o
dismiss the study of rigidity (for the Steiner rearrangement) as ``hopeless''.
Indeed, P\'olya's proof of \eqref{pol sze classical} given above
does not provide any information about the functions $u$ satisfying equality.
Moreover, the proof of \eqref{willem} by Smets and Willem (see \cite{SmetsWillem03})
is obtained by combining polarization and approximation arguments
and, also in this case, this does not allow to study rigidity.

To the best of our knowledge, the problem of rigidity for the P\'olya--Szeg\"o inequality
in the context of circular symmetrization was firstly considered by Kawohl.
He showed that if in \eqref{strong assump kawohl} one further assumes that $\Omega$ 
is an annulus, $F$ is continuous and positive, $G$ is increasing and strictly convex, 
and $u$ is analytic, then rigidity holds \cite[Corollary~2.35, iii)]{kawohl_book_85}. 
The proof relies on the smoothness of $u$, and uses the implicit function theorem.

Our goal is to show that rigidity holds also in higher dimensions, under much milder assumptions. To this aim, 
if $\mu \in \mathcal{A} (\Pi_{n-1}(\Omega) \times \R)$, we define the function $\alpha_{\mu}: \Pi_{n-1} (\Omega) \times \R \to [0, \pi]$ as
\begin{equation} \label{def alpha}
\alpha_{\mu} (r, y, t) := \frac{\mu (r, y, t)}{2 r} \quad \text{ for every }  (r, y, t) \in \Pi_{n-1} (\Omega) \times \R.
\end{equation}
For every $(r, y, t) \in \Pi_{n-1} (\Omega) \times \R$, the number $\alpha_{\mu} (r, y, t)$ gives 
half of the angle corresponding to a connected arc of $\partial D (r)$ with length $\mu (r, y, t)$. 
Thanks to Proposition~\ref{derivatives of mu}, whenever $\mu \in \mathcal{A}(\Pi_{n-1}(\Omega) \times \mathbb{R})$, we have 
that $\alpha_\mu \in BV (A \times (-d, + \infty))$ for every open set $A \subset \subset \Pi_{n-1} (\Omega)$ 
and for every $d > 0$. We will make the following assumption:
\begin{equation} \label{set non empty}
\{ 0 < \alpha_\mu < \pi \} \text{ is not $\mathcal{H}^n$-equivalent to the empty set.} 
\end{equation}
This is because, if $\{ 0 < \alpha_\mu < \pi \}$ is empty, then rigidity trivially holds.
Indeed, in this case \textbf{every} $\mu$-distributed function is $\mathcal{H}^n$-equivalent to $v_{\mu}$. 

We can now state our second main result, which gives a sufficient condition for rigidity. 
This is written in terms of the notion of essential connectedness, see Section~\ref{section_preliminaries} and also \cite{CagnettiColomboDePhilippisMaggiSteiner, ccdpmGAUSS}.
Roughly speaking, one can create a counterexample to rigidity 
whenever the set $\{ 0 < \alpha_\mu < \pi \}$
is ``split into two pieces'' by points where
$\alpha_\mu = 0$, $\alpha_\mu = \pi$, or by points where  
the singular part of the distributional derivative $D \alpha_{\mu}$ is concentrated.
Here, $\alpha^\wedge$ and $\alpha^\vee_\mu$ are the approximate liminf and the approximate limsup of $\alpha_\mu$, respectively, 
while $S_{\alpha_\mu} = \{ \alpha^\wedge < \alpha^\vee_\mu \}$ is the singular set of $\alpha_\mu$, see again Section~\ref{section_preliminaries}.
\begin{theorem} \label{thm suff conditions}
Let $n \in \mathbb{N}$ with $n \geq 2$, let $\Omega \subset \R^n$ be open, and let $\mu \in \mathcal{A} (\Pi_{n-1} (\Omega) \times \R)$.
Let $a \in L^{\infty} ((0, \infty) \times \R^{n-2} \times \R)$ with $a > 0$ $\mathcal{H}^n$-a.e.,
let $f \in \mathscr{F}'$, and suppose that \eqref{int finite} and \eqref{set non empty} hold.
Assume, in addition, that the Cantor part $D^c \alpha_\mu$ of $D \alpha_\mu$ 
is concentrated on a Borel set $K \subset \Pi_{n-1} (\Omega) \times \R$ such that
\[
\{ \alpha^\wedge_\mu = 0 \} \cup \{ \alpha^\vee_\mu = \pi \} \cup S_{\alpha_\mu} \cup K 
\text{ does not essentially disconnect } \{ 0 < \alpha_\mu < \pi \}.
\]
Then, \eqref{rigidity} holds.
\end{theorem}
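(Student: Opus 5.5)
We follow the strategy of Brothers--Ziemer and of the rigidity results for Steiner symmetrization: first extract pointwise information from equality, then propagate it using essential connectedness. Let $u \in \mathcal{E}(\mu,\Omega)$. Write $u_0$ in coordinates $(r,y,\theta)$ via $\Phi_n$. For a.e.\ $(r,y)$ the slice of $u_0$ is a $W^{1,1}$ function on the circle $\partial D(r)$ with distribution $\mu(r,y,\cdot)$.

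\emph{Step 1: pointwise identities.} We revisit the proof of Theorem~\ref{thm_Polya-Szego inequality local} (that is, Theorem~\ref{the thing}). There the inequality comes from a coarea-type representation, on each level $\{u_0=t\}_{(r,y)}$ outside a negligible set (cf.\ \eqref{measure 0 for the bad set}), in which $\partial_r\mu$, $\nabla_y\mu$ and $\partial_t\mu$ are expressed through averages of $\nabla u_0$ over the level set. Jensen's inequality is then applied to $f$, using convexity and the symmetry (f2) in the tangential variable. Since \eqref{int finite} holds, $a>0$, and $f$ is strictly convex, equality \eqref{equality} forces equality in Jensen for a.e.\ $(r,y,t)$. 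The first consequence is that for a.e.\ $(r,y)$ and a.e.\ $t$ with $0<\alpha_\mu<\pi$, the level set $\{u_0=t\}_{(r,y)}$ consists of exactly two points. At these two points $\hat x\cdot\nabla_x u_0$ and $\nabla_y u_0$ coincide, and $|\Dx u_0|$ coincides; these common values are those of $v_\mu$ at $\pm\alpha_\mu$. The second consequence is that the superlevel slices $\{u_0>t\}_{(r,y)}$ are arcs, i.e.\ each slice is ``unimodal'' on the circle. Hence there is an angular center $\beta(r,y,t)\in\mathbb{S}^1$, the midpoint of the arc $\{u_0>t\}_{(r,y)}$, and
\[
\{u_0>t\}_{(r,y)}=\{r e^{i\theta}:|\theta-\beta(r,y,t)|<\alpha_\mu(r,y,t)\}.
\]

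\emph{Step 2: the center is locally constant.} Using the two-point structure, $\beta$ is determined by the two endpoints $\beta\pm\alpha_\mu$. We compute the distributional derivatives of $\beta$ on $G=\{0<\alpha_\mu<\pi\}$ by differentiating the identity $u_0(r,y,\beta\pm\alpha_\mu)=t$. Equality of the radial and $y$ derivatives at the two endpoints, together with equality of the tangential moduli, gives $\nabla\beta=0$ in the absolutely continuous sense away from the set where $\alpha_\mu$ jumps. We then need to show $D\beta$ has no singular part on $G$ outside $\{\alpha^\wedge_\mu=0\}\cup\{\alpha^\vee_\mu=\pi\}\cup S_{\alpha_\mu}\cup K$. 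The idea is that the endpoints $\beta\pm\alpha_\mu$ are BV-type quantities, since $\mu$ and hence $\alpha_\mu$ is BV by Proposition~\ref{derivatives of mu}. Any jump or Cantor part of $\beta$ would therefore force a matching jump or Cantor part in one endpoint angle, and thus in $\alpha_\mu$. This requires a BV chain-rule/coarea argument on the endpoints, e.g.\ writing $\beta\pm\alpha_\mu$ as functions whose level structure is inherited from $u_0$.

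\emph{Step 3: conclusion via essential connectedness.} With $D\beta=0$ on $G$ away from the bad set, and that bad set not essentially disconnecting $G$, the standard lemma for BV functions (as in \cite{CagnettiColomboDePhilippisMaggiSteiner}) gives that $\beta$ is a.e.\ constant on $G$, say $\beta\equiv\beta_0$. Let $R$ be the rotation by $-\beta_0$. Then every superlevel slice of $u_0$ equals that of $v_\mu(R\,\cdot,y)$ on $G$. Outside $G$ the slices are empty or full, so they agree trivially, and the layer-cake formula yields $u=v_\mu(Rx,y)$ a.e. The reflection in $O(2)$ is not needed here, but is harmless since the set in \eqref{rigidity} is taken over all of $O(2)$.

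The main obstacle is Step 2. Controlling the singular (especially Cantor) part of $D\beta$, and showing that it lives only where $\alpha_\mu$ itself is singular, requires careful BV fine properties. The identity \eqref{measure 0 for the bad set}, which excludes coarea-irregular behaviour, is exactly what should make this step work.
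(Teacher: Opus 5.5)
Your Step 1 matches the paper (Proposition~\ref{first necessary conds for extremals}), and Step 3 is in spirit the paper's closing argument: a coarea formula for the components of the direction, combined with the definition of essential disconnection. Step 2, however, is a genuine gap. You flag it yourself, and the mechanism you sketch for it does not work.

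First, you never show that the center is a BV object. $\beta$ is $\mathbb{S}^1$-valued, and a real lift may carry spurious $2\pi$-jumps. Moreover, ``differentiating $u_0(r,y,\beta\pm\alpha_\mu)=t$'' is an implicit-function computation with no meaning for a $W^{1,1}$ function. Second, the claim that a jump or Cantor part of $\beta$ forces one in an endpoint angle, \emph{and hence in $\alpha_\mu$}, is false as a statement about arcs. Rotating both endpoints by the same amount changes $\beta$ and leaves $\alpha_\mu$ untouched. What rules out such rotational jumps is not arc geometry but the Sobolev regularity of $u_0$: an abrupt rotation of the superlevel arcs across a hypersurface in $(r,y,t)$ would create vertical pieces of $\partial^*\Sigma^{u_0}$, i.e.\ $D^s u_0\neq 0$. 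The identity \eqref{measure 0 for the bad set} is not what drives this step.

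The missing idea is to encode the center by the vector
\[
m_{u_0}(r,y,t)=\frac1r\int_{(\Sigma^{u_0})_{(r,y,t)}}\hat x\,d\mathcal{H}^1(x),
\]
which equals $2\sin\alpha_\mu\, d_{u_0}$ when the slice is an arc. As for $\mu$, its derivative follows from the divergence theorem on $\Sigma^{u_0}$, with fields such as $\frac{x_j}{|x|^2}\psi(|x|,y,t)\hat x$:
\[
Dm_{u_0}(B)=\int_{\partial^*\Sigma^{u_0}\cap\Phi_{n+1}(B\times\mathbb{S}^1)}\frac{x\otimes(\hat x\cdot\nabla_x u_0,\nabla_y u_0,-1)}{|x|^2\sqrt{1+|\nabla u_0|^2}}\,d\mathcal{H}^n .
\]
So $m_{u_0}$ is BV. Compare this with
\[
D_t\mu(B)=-\int_{\partial^*\Sigma^{u_0}\cap\Phi_{n+1}(B\times\mathbb{S}^1)}(1+|\nabla u_0|^2)^{-1/2}\,d\mathcal{H}^n .
\]
Its density is strictly negative because $u_0\in W^{1,1}$, so $|Dm_{u_0}|\ll|D_t\mu|\le|D\mu|$; this is the precise replacement for your endpoint heuristic. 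The chain rule then makes $d^\delta_{u_0}=m_{u_0}/(2\sin\alpha^\delta_\mu)$ BV. On $\{\delta<\alpha_\mu<\pi-\delta\}^{(1)}$ its jump set lies in $S_{\alpha_\mu}$, and its Cantor part is absolutely continuous with respect to $|D^c\alpha_\mu|$. The two-point structure gives $\nabla m_{u_0}=2\cos\alpha_\mu\, d_{u_0}\otimes\nabla\alpha_\mu$, hence $\nabla d_{u_0}=0$; this is the rigorous form of your implicit differentiation.

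Finally, $d_{u_0}$ is not BV up to the set where $\alpha_\mu\in\{0,\pi\}$, so you cannot apply a connectedness lemma to $\beta$ on all of $\{0<\alpha_\mu<\pi\}$. One works on $\{\delta<\alpha_\mu<\pi-\delta\}^{(1)}$ and lets $\delta\to0^+$, using the limit measures $|D^c(d_{u_0})_j|^+$. This is exactly why $\{\alpha^\wedge_\mu=0\}\cup\{\alpha^\vee_\mu=\pi\}$ must belong to the set that is assumed not to disconnect.
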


\subsection{Comments on the assumptions of Theorem~\ref{thm suff conditions}} \label{sect comments rigidity}

First of all let us observe that, without assuming strict convexity of the integrand $f$ (i.e. without assuming $f \in \mathscr{F}'$), 
one cannot expect rigidity to hold.

\begin{example}
Let $n=2$, $a \equiv 1$, and $f (\eta, \tau) = |\tau|$. 
Let $\Omega = \{ x \in \R^2 : 1 < |x| < 2 \}$, and let $\phi: (0, +\infty) \times [- \pi, \pi) \to \R^2_0$ be as in Example~\ref{counterex neg funct}. 
Let $O = \left\{ (r, \theta) : 1 < r < 2 \right\}$, and let $u = W \circ \phi^{-1}$,
%
%
where $W: O \to \R$ is defined as
\begin{align*}
W (r, \theta) =
\begin{cases}
2 \theta + \frac{\pi }{2} \quad &\textnormal{if } \theta \in \left[ - \frac{\pi}{4} , 0  \right], \vspace*{0.2cm}\\
- \frac{2}{3} \theta + \frac{\pi}{2}  \quad &\textnormal{if } \theta \in \left[  0 , \frac{3}{4} \pi \right], \vspace*{0.2cm}\\
0\quad &\textnormal{otherwise in }O.
\end{cases}
\end{align*}
Then, we have $\Omega^s = \Omega$ and $v_\mu = T \circ \phi^{-1}$, where 
$T: O \to \R$ is given by
\[
T (r, \theta) = 
\begin{cases}
- | \theta | +  \frac{\pi}{2} \quad &\textnormal{if } | \theta | \leq \frac{\pi}{2}, \vspace*{0.2cm}\\
0\quad &\textnormal{otherwise in }D.
\end{cases}
\]
In this case, 
\[
\int_\Omega | \Dx v_\mu | \, dx = \int_\Omega | \Dx u | \, dx = \pi,
\]
so that rigidity fails.
\end{example}

\begin{remark}
More in general, let $a \equiv 1$ and $f (\eta, \tau, \zeta) = |\tau|$, and suppose 
 that $\Omega$ is bounded.
Then, any smooth function $u \in W^{1, 1}_{0, \tau} (\Omega) \cap W^{1, 1} (\Omega)$ with the property that
\begin{equation} \label{nice property u}
\{ u_0 > t \}_{(r, y)} \text{ is a (possibly empty) connected arc \quad }
\forall \, (r, y, t) \in\Pi_{n-1} (\Omega) \times \R, 
\end{equation}
belongs to $\mathcal{E} (\mu, \Omega)$, and therefore rigidity fails.

Indeed, using property ii) of Proposition~\ref{volpert theorem}), we have 
\begin{align*}
(\partial \Sigma^{u_0})_{(r, y, t)} 
= \partial \left((\Sigma^{u_0})_{(r, y, t)} \right)
= \partial (\{ u_0 > t \}_{(r, y)} ), \quad \text{ for }\mathcal{L}^{n}\text{-a.e. } 
(r, y, t) \in \Pi_{n-1} (\Omega) \times \R.
\end{align*}
%
%
%
Then, by Coarea Formula (see Proposition~\ref{coarea formula}) and Remark~\ref{rem codim 1} we have
\begin{align*}
&\int_{\Omega} | \Dx u |\, dx \, dy 
= \int_{\partial \Sigma^{u_0} \cap (\Omega \times \mathbb{R})} \frac{| \Dx u_0 |}{\sqrt{1 + |\nabla u_0 |^2 }} \, d \mathcal{H}^n (x, y, t) \\
&= \int_{\left( \Pi_{n-1} (\Omega) \times \R\right) \cap \{ \Dx u_0 \neq 0 \}} 
\left(  \int_{(\partial \Sigma^{u_0})_{(r, y, t)}} \, d \mathcal{H}^0 (x) \right) \, dr \, dy \, dt \\
&= \int_{\Pi_{n-1} (\Omega) \times \R} 
\left(  \int_{(\partial \Sigma^{u_0})_{(r, y, t)}} \, d \mathcal{H}^0 (x) \right) \, dr \, dy \, dt  \\
&= \int_{\Pi_{n-1} (\Omega) \times \R} \left(  \int_{(\partial \Sigma^{v_\mu})_{(r, y, t)}} \, d \mathcal{H}^0 (x) \right) \, dr \, dy \, dt \\
&= \int_{\left( \Pi_{n-1} (\Omega) \times \R \right) \cap \{ \Dx v_\mu \neq 0 \}} 
\left(  \int_{(\partial \Sigma^{v_\mu})_{(r, y, t)}} \, d \mathcal{H}^0 (x) \right) \, dr \, dy \, dt \\
&= \int_{\Omega} | \Dx v_\mu |\, dx \, dy,
\end{align*}
where we used the fact that, by definition, 
$v_\mu$ satisfies property \eqref{nice property u}.
In the context of Schwarz rearrangement, a similar phenomenon 
had already been observed by Brothers and Ziemer \cite{brothersziemer}.
\end{remark}

\medskip

Let us show that even asking $f \in \mathscr{F}'$ is not enough to guarantee rigidity.
\begin{example} \label{no rigidity values of alpha}
Let $n = 2$, let $\Omega = D (5)$, let $a \equiv 1$, and let $f \in \mathscr{F}'$ be given by 
\[
f (\eta, \tau) = \eta^2 + \tau^2.
\]
Let \(x' = (2, 0)\), \(x'' = (4, 0)\), \( x''' = (0, 2) \), and let 
\[ 
u (x) = 2 \max \{ 0, 1 - | x  |, 1 - | x - x'' |, 1 - | x - x''' | \}  \quad \text{ for every $x \in \Omega$.}
\]
In this case $\Omega^s = \Omega$ and 
the circular rearrangement of $u$ is given by 
\[
v_{\mu} (x) = 2 \max \{ 0, 1 - | x  |, 1 - | x - x' |, 1 - | x - x'' | \}  \quad \text{ for every $x \in \Omega$,}
\]
see Figure~\ref{figure no rigidity 1}.
\tikzset{every picture/.style={line width=0.75pt}} 
\begin{center}
\begin{figure}[!htb]
\begin{tikzpicture}[x=0.75pt,y=0.75pt,yscale=-1,xscale=1]

\draw  [dash pattern={on 4.5pt off 4.5pt}]  (161.51,135.32) -- (161.51,242.82) ;
\draw    (113.25,260.2) -- (340.4,167.46) ;
\draw [shift={(342.25,166.7)}, rotate = 157.79] [color={rgb, 255:red, 0; green, 0; blue, 0 }  ][line width=0.75]    (10.93,-3.29) .. controls (6.95,-1.4) and (3.31,-0.3) .. (0,0) .. controls (3.31,0.3) and (6.95,1.4) .. (10.93,3.29)   ;
\draw    (113.75,215.7) -- (324.47,323.29) ;
\draw [shift={(326.25,324.2)}, rotate = 207.05] [color={rgb, 255:red, 0; green, 0; blue, 0 }  ][line width=0.75]    (10.93,-3.29) .. controls (6.95,-1.4) and (3.31,-0.3) .. (0,0) .. controls (3.31,0.3) and (6.95,1.4) .. (10.93,3.29)   ;
\draw    (161.51,135.32) -- (126.25,239.18) ;
\draw    (161.51,135.32) -- (196.51,242.82) ;
\draw  [dash pattern={on 4.5pt off 4.5pt}] (185.01,219.35) .. controls (184.99,207.97) and (200.65,198.73) .. (219.98,198.7) .. controls (239.31,198.67) and (254.99,207.87) .. (255.01,219.25) .. controls (255.02,230.63) and (239.36,239.87) .. (220.03,239.9) .. controls (200.7,239.93) and (185.02,230.73) .. (185.01,219.35) -- cycle ;
\draw  [dash pattern={on 4.5pt off 4.5pt}]  (267.51,187.32) -- (267.51,294.82) ;
\draw    (267.51,187.32) -- (232.51,296.35) ;
\draw    (267.51,187.32) -- (302.51,294.82) ;
\draw  [dash pattern={on 4.5pt off 4.5pt}]  (219.51,108.82) -- (219.51,216.32) ;
\draw    (219.51,108.82) -- (187.25,209.75) ;
\draw    (219.51,107.82) -- (254.51,215.32) ;
\draw  [dash pattern={on 4.5pt off 4.5pt}] (406.01,241.85) .. controls (405.99,230.47) and (421.65,221.23) .. (440.98,221.2) .. controls (460.31,221.17) and (475.99,230.37) .. (476.01,241.75) .. controls (476.02,253.13) and (460.36,262.37) .. (441.03,262.4) .. controls (421.7,262.43) and (406.02,253.23) .. (406.01,241.85) -- cycle ;
\draw  [dash pattern={on 4.5pt off 4.5pt}]  (441.01,135.82) -- (441.01,243.32) ;
\draw    (392.75,260.7) -- (619.9,167.96) ;
\draw [shift={(621.75,167.2)}, rotate = 157.79] [color={rgb, 255:red, 0; green, 0; blue, 0 }  ][line width=0.75]    (10.93,-3.29) .. controls (6.95,-1.4) and (3.31,-0.3) .. (0,0) .. controls (3.31,0.3) and (6.95,1.4) .. (10.93,3.29)   ;
\draw    (393.25,216.2) -- (603.97,323.79) ;
\draw [shift={(605.75,324.7)}, rotate = 207.05] [color={rgb, 255:red, 0; green, 0; blue, 0 }  ][line width=0.75]    (10.93,-3.29) .. controls (6.95,-1.4) and (3.31,-0.3) .. (0,0) .. controls (3.31,0.3) and (6.95,1.4) .. (10.93,3.29)   ;
\draw    (441.01,135.82) -- (405.75,239.68) ;
\draw    (441.01,135.82) -- (476.01,243.32) ;
\draw  [dash pattern={on 4.5pt off 4.5pt}]  (495.01,159.82) -- (495.01,267.32) ;
\draw    (495.01,159.82) -- (459.75,263.68) ;
\draw    (495.01,159.82) -- (530.01,267.32) ;
\draw  [dash pattern={on 4.5pt off 4.5pt}]  (548.01,187.82) -- (548.01,295.32) ;
\draw    (548.01,187.82) -- (512.75,294.75) ;
\draw    (548.01,187.82) -- (583.01,295.32) ;
\draw    (161.51,135.32) -- (161.26,100.25) ;
\draw [shift={(161.25,98.25)}, rotate = 89.6] [color={rgb, 255:red, 0; green, 0; blue, 0 }  ][line width=0.75]    (10.93,-3.29) .. controls (6.95,-1.4) and (3.31,-0.3) .. (0,0) .. controls (3.31,0.3) and (6.95,1.4) .. (10.93,3.29)   ;
\draw    (441.01,135.82) -- (441.24,100.75) ;
\draw [shift={(441.25,98.75)}, rotate = 90.37] [color={rgb, 255:red, 0; green, 0; blue, 0 }  ][line width=0.75]    (10.93,-3.29) .. controls (6.95,-1.4) and (3.31,-0.3) .. (0,0) .. controls (3.31,0.3) and (6.95,1.4) .. (10.93,3.29)   ;
\draw    (126.25,239.18) .. controls (130.5,260.58) and (182.75,274.25) .. (196.51,242.82) ;
\draw  [dash pattern={on 4.5pt off 4.5pt}]  (126.25,239.18) .. controls (149.75,202.25) and (189.25,221.25) .. (196.51,241.25) ;
\draw [line width=0.75]    (195.25,234.75) .. controls (226.25,245.75) and (251.75,237.75) .. (255.01,219.25) ;
\draw  [dash pattern={on 4.5pt off 4.5pt}]  (232.51,296.35) .. controls (246.75,265.25) and (296.75,272.25) .. (302.51,296.25) ;
\draw    (232.51,296.35) .. controls (242.75,327.75) and (297.75,320.25) .. (302.51,296.25) ;
\draw    (406.01,241.85) .. controls (408.25,261.25) and (440.75,266.75) .. (461.75,259.25) ;
\draw    (459.75,263.68) .. controls (455.75,290.25) and (499.25,292.25) .. (513.5,287.5) ;
\draw  [dash pattern={on 4.5pt off 4.5pt}]  (513.5,287.5) .. controls (522.75,282.75) and (528.75,277.25) .. (530.01,267.32) ;
\draw  [dash pattern={on 4.5pt off 4.5pt}]  (471.5,252.5) .. controls (498.75,238.75) and (518.25,249.75) .. (530.01,267.32) ;
\draw    (512.75,294.75) .. controls (512.7,295.62) and (512.76,296.47) .. (512.92,297.31) .. controls (516.62,316.67) and (574.62,327.43) .. (583.01,295.32) ;
\draw  [dash pattern={on 4.5pt off 4.5pt}]  (523.25,281.75) .. controls (548.75,266.75) and (572.75,280.75) .. (583.01,295.32) ;

\draw (308,332.4) node   [align=left] {\begin{minipage}[lt]{13.06pt}\setlength\topsep{0pt}
$x_1$
\end{minipage}};
\draw (343.6,182.3) node   [align=left] {\begin{minipage}[lt]{13.06pt}\setlength\topsep{0pt}
$x_2$
\end{minipage}};
\draw (586,333.5) node   [align=left] {\begin{minipage}[lt]{13.06pt}\setlength\topsep{0pt}
$x_1$
\end{minipage}};
\draw (623.2,184.3) node   [align=left] {\begin{minipage}[lt]{13.06pt}\setlength\topsep{0pt}
$x_2$
\end{minipage}};
\draw (173.63,105.13) node   [align=left] {\begin{minipage}[lt]{8.67pt}\setlength\topsep{0pt}
$t$
\end{minipage}};
\draw (454.13,105.63) node   [align=left] {\begin{minipage}[lt]{8.67pt}\setlength\topsep{0pt}
$t$
\end{minipage}};
\draw (149.13,134.88) node   [align=left] {\begin{minipage}[lt]{9.69pt}\setlength\topsep{0pt}
$2$
\end{minipage}};
\draw (430.63,134.38) node   [align=left] {\begin{minipage}[lt]{9.69pt}\setlength\topsep{0pt}
$2$
\end{minipage}};
\draw (262.88,301.13) node   [align=left] {\begin{minipage}[lt]{9.35pt}\setlength\topsep{0pt}
$4$
\end{minipage}};
\draw (225.38,224.13) node   [align=left] {\begin{minipage}[lt]{9.35pt}\setlength\topsep{0pt}
2
\end{minipage}};
\draw (489.88,274.63) node   [align=left] {\begin{minipage}[lt]{9.35pt}\setlength\topsep{0pt}
$2$
\end{minipage}};
\draw (543.38,301.63) node   [align=left] {\begin{minipage}[lt]{9.35pt}\setlength\topsep{0pt}
$4$
\end{minipage}};
\draw (276.13,130.63) node   [align=left] {\begin{minipage}[lt]{10.37pt}\setlength\topsep{0pt}
$u (x)$
\end{minipage}};
\draw (531.13,130.63) node   [align=left] {\begin{minipage}[lt]{10.37pt}\setlength\topsep{0pt}
$v_{\mu} (x)$
\end{minipage}};

\end{tikzpicture}

\caption{The function $u$ given in Example~\ref{no rigidity values of alpha} and its circular rearrangement $v_{\mu}$. 
In this case, rigidity fails. This is because the set $\{ 0 < \alpha_{\mu} < \pi \}$ is essentially disconnected, see Figure~\ref{figure distribution}.}
       \label{figure no rigidity 1}
\end{figure}
\end{center}
One can check that $u \in \mathcal{E} (\mu, \Omega)$, since
\[
\int_{\Omega} | \nabla u (x)|^2 \, dx = \int_{\Omega} | \nabla v_{\mu} (x)|^2 \, dx 
= 12 \pi.
\]
Note that $u$ cannot be written as the composition of 
the symmetric function $v_{\mu}$ with an orthogonal transformation, 
so rigidity is violated. 
\end{example}

\begin{remark}
Let us clarify why rigidity is violated in Example~\ref{no rigidity values of alpha}.
Observing that in this case $\Pi_1 (\Omega) = (0, 5)$, 
it will be convenient to visualize the behaviour of the function $\alpha_\mu: (0, 5) \times \R \to [ 0, \pi ]$, 
see Figure~\ref{figure distribution}.
We note that, up to removing the singleton $\{ (3, 0) \}$ (which is $\mathcal{H}^1$-negligible), 
the set $\{ 0 < \alpha_\mu < \pi \}$ is disconnected.
For this reason, it is possible to rotate only the part of the graph of $v_{\mu}$
lying above the annulus $\{ 1 < |x| < 3 \}$, and to obtain functions 
for which the value of the integral is the same.
\begin{center}
\begin{figure}[!htb]
  \begin{tikzpicture}[>=latex, scale=2]

 \draw[fill=gray, style=thick](0,2)--(1,0)--(5, 0)
--(5, -1)--(0,-1)--(0,2);

 \draw[fill=cyan, style=thick](1,0)--(2, 2)--(3, 0)--(4, 2)--(5, 0)--(1, 0);

   \draw[line width = .5pt] (-.05,0)--(.05,0); 
  \draw(0,0)node[left]{$0$};
  \draw[->, line width = .5pt] (5,0)--(5.5,0); 
  \draw(5.5,0)node[below]{$r$};
 \draw[->, line width = .5pt] (0,-.2)--(0,2.5); 
  \draw(0,2.5)node[right]{$t$};
  \draw(0,2)node[left]{$2$};

  \draw(1,0)node[below]{$1$};

  \draw(3,0)node[below]{$3$};
  
  \draw(5,0)node[below right]{$5$};

  \draw[line width = .5pt] (0,2)--(1,0)--(2, 2)--(3, 0)--(4, 2)--(5, 0); 
   \draw[dashed, line width = .5pt] (1,0)--(5,0); 

  \draw(.4,.35)node{$\alpha^{\vee}_{\mu} = \pi$};
  \draw(1.8,-.5)node{$\alpha^{\vee}_{\mu} = \pi$};
  \draw(3.7,-.5)node{$\alpha^{\vee}_{\mu} = \pi$};

  \draw(1,1.6)node{$\alpha^{\wedge}_{\mu} = 0$};
  \draw(3,1.6)node{$\alpha^{\wedge}_{\mu} = 0$};
  \draw(4.6,1.6)node{$\alpha^{\wedge}_{\mu} =  0$};

  \draw(2,.6)node{$ 0 < \alpha_{\mu} < \pi$};
  \draw(4,.6)node{$ 0 < \alpha_{\mu} < \pi$};

   \draw[dashed, line width = .5pt] (5,-1)--(5, 2.4);

\end{tikzpicture}
\caption{The values of $\alpha_{\mu} (r, t)$ when $(r, t) \in (0, 5) \times \R$, 
in Example~\ref{no rigidity values of alpha}.
Note that the set $\{ 0 < \alpha_{\mu} < \pi \}$ is essentially disconnected, 
since the singleton $\{ (3, 0) \}$ has $\mathcal{H}^1$-measure zero.}
       \label{figure distribution}
\end{figure}
\end{center}
\end{remark}

\begin{example} \label{no rigidity values of alpha 2}
Let us modify Example~\ref{no rigidity values of alpha}, by removing 
one of the cones in the graph of $u$.
More precisely, let $n =2$, and let $\Omega$, $a$, $f$, and $x', x'''$ be as in Example~\ref{no rigidity values of alpha}.
Let now $u: \Omega \to \R$ be given by 
\[ 
u (x) = 2 \max \{ 0, 1 - | x  |, 1 - | x - x''' | \}  \quad \text{ for every } x \in \Omega.
\]
Denoting by $\mu$ the distribution of $u$ (note that this is not the same as the distribution $\mu$
in Example~\ref{no rigidity values of alpha}), the circular rearrangement of $u$
is now given by
\[
v_{\mu} (x) = 2 \max \{ 0, 1 - | x  |, 1 - | x - x' | \}  \quad \text{ for every } x \in \Omega,
\]
see Figure~\ref{figure no rigidity 2}.
\begin{center}
\begin{figure}[!htb]
\begin{tikzpicture}[x=0.75pt,y=0.75pt,yscale=-1,xscale=1]

\draw  [dash pattern={on 4.5pt off 4.5pt}]  (161.51,135.32) -- (161.51,242.82) ;
\draw    (113.25,260.2) -- (340.4,167.46) ;
\draw [shift={(342.25,166.7)}, rotate = 157.79] [color={rgb, 255:red, 0; green, 0; blue, 0 }  ][line width=0.75]    (10.93,-3.29) .. controls (6.95,-1.4) and (3.31,-0.3) .. (0,0) .. controls (3.31,0.3) and (6.95,1.4) .. (10.93,3.29)   ;
\draw    (113.75,215.7) -- (324.47,323.29) ;
\draw [shift={(326.25,324.2)}, rotate = 207.05] [color={rgb, 255:red, 0; green, 0; blue, 0 }  ][line width=0.75]    (10.93,-3.29) .. controls (6.95,-1.4) and (3.31,-0.3) .. (0,0) .. controls (3.31,0.3) and (6.95,1.4) .. (10.93,3.29)   ;
\draw    (161.51,135.32) -- (126.25,239.18) ;
\draw    (161.51,135.32) -- (196.51,242.82) ;
\draw  [dash pattern={on 4.5pt off 4.5pt}] (185.01,219.35) .. controls (184.99,207.97) and (200.65,198.73) .. (219.98,198.7) .. controls (239.31,198.67) and (254.99,207.87) .. (255.01,219.25) .. controls (255.02,230.63) and (239.36,239.87) .. (220.03,239.9) .. controls (200.7,239.93) and (185.02,230.73) .. (185.01,219.35) -- cycle ;




\draw  [dash pattern={on 4.5pt off 4.5pt}]  (219.51,108.82) -- (219.51,216.32) ;

\draw    (219.51,108.82) -- (187.25,209.75) ;

\draw    (219.51,107.82) -- (254.51,215.32) ;
\draw  [dash pattern={on 4.5pt off 4.5pt}] (406.01,241.85) .. controls (405.99,230.47) and (421.65,221.23) .. (440.98,221.2) .. controls (460.31,221.17) and (475.99,230.37) .. (476.01,241.75) .. controls (476.02,253.13) and (460.36,262.37) .. (441.03,262.4) .. controls (421.7,262.43) and (406.02,253.23) .. (406.01,241.85) -- cycle ;

\draw  [dash pattern={on 4.5pt off 4.5pt}]  (441.01,135.82) -- (441.01,243.32) ;

\draw    (392.75,260.7) -- (619.9,167.96) ;

\draw [shift={(621.75,167.2)}, rotate = 157.79] [color={rgb, 255:red, 0; green, 0; blue, 0 }  ][line width=0.75]    (10.93,-3.29) .. controls (6.95,-1.4) and (3.31,-0.3) .. (0,0) .. controls (3.31,0.3) and (6.95,1.4) .. (10.93,3.29)   ;

\draw    (393.25,216.2) -- (603.97,323.79) ;

\draw [shift={(605.75,324.7)}, rotate = 207.05] [color={rgb, 255:red, 0; green, 0; blue, 0 }  ][line width=0.75]    (10.93,-3.29) .. controls (6.95,-1.4) and (3.31,-0.3) .. (0,0) .. controls (3.31,0.3) and (6.95,1.4) .. (10.93,3.29)   ;

\draw    (441.01,135.82) -- (405.75,239.68) ;

\draw    (441.01,135.82) -- (476.01,243.32) ;

\draw  [dash pattern={on 4.5pt off 4.5pt}]  (495.01,159.82) -- (495.01,267.32) ;

\draw    (495.01,159.82) -- (459.75,263.68) ;

\draw    (495.01,159.82) -- (530.01,267.32) ;




\draw    (161.51,135.32) -- (161.26,100.25) ;

\draw [shift={(161.25,98.25)}, rotate = 89.6] [color={rgb, 255:red, 0; green, 0; blue, 0 }  ][line width=0.75]    (10.93,-3.29) .. controls (6.95,-1.4) and (3.31,-0.3) .. (0,0) .. controls (3.31,0.3) and (6.95,1.4) .. (10.93,3.29)   ;

\draw    (441.01,135.82) -- (441.24,100.75) ;

\draw [shift={(441.25,98.75)}, rotate = 90.37] [color={rgb, 255:red, 0; green, 0; blue, 0 }  ][line width=0.75]    (10.93,-3.29) .. controls (6.95,-1.4) and (3.31,-0.3) .. (0,0) .. controls (3.31,0.3) and (6.95,1.4) .. (10.93,3.29)   ;

\draw    (126.25,239.18) .. controls (130.5,260.58) and (182.75,274.25) .. (196.51,242.82) ;
\draw  [dash pattern={on 4.5pt off 4.5pt}]  (126.25,239.18) .. controls (149.75,202.25) and (189.25,221.25) .. (196.51,241.25) ;

\draw [line width=0.75]    (195.25,234.75) .. controls (226.25,245.75) and (251.75,237.75) .. (255.01,219.25) ;



\draw    (406.01,241.85) .. controls (408.25,261.25) and (440.75,266.75) .. (461.75,259.25) ;

\draw    (459.75,263.68) .. controls (455.75,290.25) and (499.25,292.25) .. (513.5,287.5) ;

\draw  [dash pattern={on 4.5pt off 4.5pt}]  (513.5,287.5) .. controls (522.75,282.75) and (528.75,277.25) .. (530.01,267.32) ;

\draw  [dash pattern={on 4.5pt off 4.5pt}]  (471.5,252.5) .. controls (498.75,238.75) and (518.25,249.75) .. (530.01,267.32) ;



\draw (308,332.4) node   [align=left] {\begin{minipage}[lt]{13.06pt}\setlength\topsep{0pt}
$x_1$
\end{minipage}};
\draw (343.6,182.3) node   [align=left] {\begin{minipage}[lt]{13.06pt}\setlength\topsep{0pt}
$x_2$
\end{minipage}};
\draw (586,333.5) node   [align=left] {\begin{minipage}[lt]{13.06pt}\setlength\topsep{0pt}
$x_1$
\end{minipage}};
\draw (623.2,184.3) node   [align=left] {\begin{minipage}[lt]{13.06pt}\setlength\topsep{0pt}
$x_2$
\end{minipage}};
\draw (173.63,105.13) node   [align=left] {\begin{minipage}[lt]{8.67pt}\setlength\topsep{0pt}
$t$
\end{minipage}};
\draw (454.13,105.63) node   [align=left] {\begin{minipage}[lt]{8.67pt}\setlength\topsep{0pt}
$t$
\end{minipage}};
\draw (149.13,134.88) node   [align=left] {\begin{minipage}[lt]{9.69pt}\setlength\topsep{0pt}
$2$
\end{minipage}};
\draw (430.63,134.38) node   [align=left] {\begin{minipage}[lt]{9.69pt}\setlength\topsep{0pt}
$2$
\end{minipage}};
\draw (262.88,301.13) node   [align=left] {\begin{minipage}[lt]{9.35pt}\setlength\topsep{0pt}
\end{minipage}};
\draw (225.38,224.13) node   [align=left] {\begin{minipage}[lt]{9.35pt}\setlength\topsep{0pt}
$2$
\end{minipage}};
\draw (489.88,274.63) node   [align=left] {\begin{minipage}[lt]{9.35pt}\setlength\topsep{0pt}
$2$
\end{minipage}};
\draw (543.38,301.63) node   [align=left] {\begin{minipage}[lt]{9.35pt}\setlength\topsep{0pt}
\end{minipage}};
\draw (276.13,130.63) node   [align=left] {\begin{minipage}[lt]{10.37pt}\setlength\topsep{0pt}
$u (x)$
\end{minipage}};
\draw (531.13,130.63) node   [align=left] {\begin{minipage}[lt]{10.37pt}\setlength\topsep{0pt}
$v_{\mu} (x)$
\end{minipage}};
\end{tikzpicture}
\caption{The function $u$ given in Example~\ref{no rigidity values of alpha 2} and its circular rearrangement $v_{\mu}$. 
In this case rigidity holds, since every extremal is obtained as the composition of $v_\mu$ with a rotation.
Note that now the set $\{ 0 < \alpha_{\mu} < \pi \}$ is essentially connected, see Figure~\ref{figure distribution alpha 2}.}
       \label{figure no rigidity 2}
\end{figure}
\end{center}
Then, 
\[
\int_{\Omega} | \nabla u (x)|^2 \, dx = \int_{\Omega} | \nabla v_{\mu} (x)|^2 \, dx 
= 8 \pi,
\]
so that $u \in \mathcal{E} (\mu, \Omega)$.
However, rigidity is not violated, since $u (x) = v_{\mu} (R x)$ for every $x \in \Omega$,
where $R$ is the clockwise rotation of $\pi/2$ around the origin. 
Note that in this case the set $\{ 0 < \alpha_{\mu} < \pi \}$ is connected, see Figure~\ref{figure distribution alpha 2}.
\begin{center}
\begin{figure}[!htb]
  \begin{tikzpicture}[>=latex, scale=2]

 \draw[fill=gray, style=thick](0,2)--(1,0)--(5, 0)
--(5, -1)--(0,-1)--(0,2);

 \draw[fill=cyan, style=thick](1,0)--(2, 2)--(3, 0)--(1,0);

   \draw[line width = .5pt] (-.05,0)--(.05,0); 
  \draw(-.05,0)node[left]{$0$};
  \draw[->, line width = .5pt] (5,0)--(5.5,0); 
  \draw(5.5,0)node[below]{$r$};
 \draw[->, line width = .5pt] (0,-.2)--(0,2.5); 
  \draw(0,2.5)node[right]{$t$};
  \draw(0,2)node[left]{$2$};

  \draw(1,0)node[below]{$1$};
  \draw(3,0)node[below]{$3$};
  \draw(5,0)node[below right]{$5$};

  \draw[line width = .5pt] (0,2)--(1,0)--(2, 2)--(3, 0);
   \draw[dashed, line width = .5pt] (1,0)--(5,0); 

  \draw(.4,.35)node{$\alpha^{\vee}_{\mu} = \pi$};
  \draw(1.8,-.5)node{$\alpha^{\vee}_{\mu} = \pi$};
  \draw(3.7,-.5)node{$\alpha^{\vee}_{\mu} = \pi$};

  \draw(1,1.6)node{$\alpha^{\wedge}_{\mu} = 0$};
  \draw(4,0.8)node{$\alpha^{\wedge}_{\mu} = 0$};

  \draw(2,.6)node{$ 0 < \alpha_{\mu} < \pi$};

   \draw[dashed, line width = .5pt] (5,-1)--(5, 2.4);

\end{tikzpicture}
\caption{The values of the function $\alpha_{\mu} (r, t)$ given  
in Example~\ref{no rigidity values of alpha 2}.
In this case, the set $\{ 0 < \alpha_{\mu} < \pi \}$ is essentially connected.}
       \label{figure distribution alpha 2}
\end{figure}
\end{center}
\end{example}

\medskip

The next example shows that even assuming $f \in \mathscr{F}'$ and $\{ 0 < \alpha_{\mu} < \pi \}$ (essentially) connected
might not be enough to guarantee rigidity. This is because also sets where the singular part of $D \alpha_\mu$ is concentrated play an important role.

\begin{example} \label{ex no rig 3}
Let $n=2$, let $\Omega=D(5)$, $0 < \delta \ll 1$, $\tilde{x} = (2 + \delta,0) \in \R^2$, and set:
\[
v_\mu (x) = \max \{ v_1 (x), v_2 (x), v_3 (x)\}, 
\]
where 
\[
v_1 (x) = 2 \max \{ 0, 1 - | x  | \}, 
\quad
v_2 (x) =  \frac{3}{4} \max \{ 0, 2 - | x  | \} \varphi  (\hat{x} \cdot e_1) 
\quad 
v_3 (x) = 2 \max \{ 0,  1 - | x - \tilde{x} | \},
\]
and where $\varphi \in C^\infty_c ((-1, 1])$ 
is a non decreasing function with $0 \leq \varphi \leq 1$ and $\varphi \equiv 1$ in $[0, 1]$,
see Figure~\ref{figure alpha 53}. 

Let now $\gamma \in (0, \pi/4)$, and let $R : \R^2 \to \R^2$ 
be a counterclockwise rotation of an angle $\gamma$.
Then, setting (see Figure~\ref{figure alpha 53})
\[
u (x) = \max \{ v_1 (x), v_2 (x), v_3 (R x)\}, 
\]
one can check that 
\[
\int_{\Omega} |\nabla u|^2 \, dx  = \int_{\Omega} |\nabla v_\mu|^2 \, dx, 
\]
so that rigidity is violated.
 Note that in this case the singular set $S_{\alpha_\mu}$
 essentially disconnects $\{ 0 < \alpha_{\mu} < \pi \}$, see Figure~\ref{figure alpha 23}.
\begin{center}
\begin{figure}[!htb]
\begin{tikzpicture}[x=0.70pt,y=0.70pt,yscale=-1,xscale=1]

\draw  [dash pattern={on 4.5pt off 4.5pt}] (399.81,206.85) .. controls (399.79,195.47) and (415.45,186.23) .. (434.78,186.2) .. controls (454.11,186.17) and (469.79,195.37) .. (469.81,206.75) .. controls (469.82,218.13) and (454.16,227.37) .. (434.83,227.4) .. controls (415.5,227.43) and (399.82,218.23) .. (399.81,206.85) -- cycle ;
\draw  [dash pattern={on 4.5pt off 4.5pt}]  (434.81,100.82) -- (434.81,208.32) ;
\draw    (434.81,100.82) -- (399.55,204.68) ;
\draw  [dash pattern={on 4.5pt off 4.5pt}]  (451.01,150) -- (469.81,208.32) ;
\draw    (434.81,100.82) -- (435.04,65.75) ;
\draw [shift={(435.05,63.75)}, rotate = 90.37] [color={rgb, 255:red, 0; green, 0; blue, 0 }  ][line width=0.75]    (10.93,-3.29) .. controls (6.95,-1.4) and (3.31,-0.3) .. (0,0) .. controls (3.31,0.3) and (6.95,1.4) .. (10.93,3.29)   ;
\draw    (344,203) -- (583,204.49) ;
\draw [shift={(585,204.5)}, rotate = 180.36] [color={rgb, 255:red, 0; green, 0; blue, 0 }  ][line width=0.75]    (10.93,-3.29) .. controls (6.95,-1.4) and (3.31,-0.3) .. (0,0) .. controls (3.31,0.3) and (6.95,1.4) .. (10.93,3.29)   ;
\draw    (434.81,100.82) -- (451.01,150) ;
\draw  [dash pattern={on 4.5pt off 4.5pt}] (357.74,206.91) .. controls (357.72,187.86) and (392.2,172.37) .. (434.76,172.32) .. controls (477.32,172.26) and (511.84,187.65) .. (511.87,206.69) .. controls (511.9,225.74) and (477.41,241.22) .. (434.85,241.28) .. controls (392.29,241.34) and (357.77,225.95) .. (357.74,206.91) -- cycle ;
\draw    (443,158) .. controls (434,170) and (426,203) .. (405,216) ;
\draw  [dash pattern={on 4.5pt off 4.5pt}]  (415,190) .. controls (428,171) and (424.16,153.98) .. (442,146) ;
\draw    (372,226) .. controls (408,252) and (487,241) .. (503,222.5) ;
\draw    (450,147) -- (496.87,185.69) ;
\draw  [dash pattern={on 4.5pt off 4.5pt}] (487.02,203.96) .. controls (487,192.58) and (500.83,183.34) .. (517.9,183.32) .. controls (534.97,183.29) and (548.82,192.5) .. (548.84,203.88) .. controls (548.85,215.25) and (535.03,224.49) .. (517.96,224.52) .. controls (500.89,224.54) and (487.04,215.34) .. (487.02,203.96) -- cycle ;
\draw  [dash pattern={on 4.5pt off 4.5pt}]  (493,182) -- (487.02,203.96) ;
\draw    (522.28,100.11) -- (548.84,203.88) ;
\draw    (522.28,100.11) -- (493,182) ;
\draw    (503,222.5) .. controls (534,229.5) and (547,215.5) .. (548.84,203.88) ;
\draw    (443,158) .. controls (456,156.5) and (450,146.5) .. (451.01,150) ;
\draw    (405,216) -- (372,226) ;
\draw  [dash pattern={on 4.5pt off 4.5pt}]  (415,190) -- (384,180) ;
\draw    (447,157) -- (474,236) ;
\draw  [dash pattern={on 4.5pt off 4.5pt}] (152.81,205.85) .. controls (152.79,194.47) and (168.45,185.23) .. (187.78,185.2) .. controls (207.11,185.17) and (222.79,194.37) .. (222.81,205.75) .. controls (222.82,217.13) and (207.16,226.37) .. (187.83,226.4) .. controls (168.5,226.43) and (152.82,217.23) .. (152.81,205.85) -- cycle ;
\draw  [dash pattern={on 4.5pt off 4.5pt}]  (187.81,99.82) -- (187.81,207.32) ;
\draw    (187.81,99.82) -- (152.55,203.68) ;
\draw  [dash pattern={on 4.5pt off 4.5pt}]  (204.01,149) -- (222.81,207.32) ;
\draw    (187.81,99.82) -- (188.04,64.75) ;
\draw [shift={(188.05,62.75)}, rotate = 90.37] [color={rgb, 255:red, 0; green, 0; blue, 0 }  ][line width=0.75]    (10.93,-3.29) .. controls (6.95,-1.4) and (3.31,-0.3) .. (0,0) .. controls (3.31,0.3) and (6.95,1.4) .. (10.93,3.29)   ;
\draw    (97,202) -- (300,202.99) ;
\draw [shift={(302,203)}, rotate = 180.28] [color={rgb, 255:red, 0; green, 0; blue, 0 }  ][line width=0.75]    (10.93,-3.29) .. controls (6.95,-1.4) and (3.31,-0.3) .. (0,0) .. controls (3.31,0.3) and (6.95,1.4) .. (10.93,3.29)   ;
\draw    (187.81,99.82) -- (204.01,149) ;
\draw  [dash pattern={on 4.5pt off 4.5pt}] (110.74,205.91) .. controls (110.72,186.86) and (145.2,171.37) .. (187.76,171.32) .. controls (230.32,171.26) and (264.84,186.65) .. (264.87,205.69) .. controls (264.9,224.74) and (230.41,240.22) .. (187.85,240.28) .. controls (145.29,240.34) and (110.77,224.95) .. (110.74,205.91) -- cycle ;
\draw    (196,157) .. controls (187,169) and (179,202) .. (158,215) ;
\draw  [dash pattern={on 4.5pt off 4.5pt}]  (168,189) .. controls (181,170) and (177.16,152.98) .. (195,145) ;
\draw    (125,225) .. controls (148,242) and (191,241) .. (203.02,239.96) ;
\draw    (203,146) -- (249.87,184.69) ;
\draw  [dash pattern={on 4.5pt off 4.5pt}] (203.02,241.96) .. controls (203,230.58) and (216.83,221.34) .. (233.9,221.32) .. controls (250.97,221.29) and (264.82,230.5) .. (264.84,241.88) .. controls (264.85,253.25) and (251.03,262.49) .. (233.96,262.52) .. controls (216.89,262.54) and (203.04,253.34) .. (203.02,241.96) -- cycle ;
\draw    (238.28,138.11) -- (264.84,241.88) ;
\draw    (238.28,138.11) -- (203.02,241.96) ;
\draw    (196,157) .. controls (209,155.5) and (203,145.5) .. (204.01,149) ;
\draw    (158,215) -- (125,225) ;
\draw  [dash pattern={on 4.5pt off 4.5pt}]  (168,189) -- (137,179) ;
\draw    (203.02,241.96) .. controls (212,277) and (269,261) .. (264.84,241.88) ;
\draw    (259,220) .. controls (272,208) and (261,190) .. (249.87,184.69) ;

\draw (583.8,220.5) node   [align=left] {\begin{minipage}[lt]{13.06pt}\setlength\topsep{0pt}
$x_1$
\end{minipage}};
\draw (447.93,70.63) node   [align=left] {\begin{minipage}[lt]{8.67pt}\setlength\topsep{0pt}
$t$
\end{minipage}};
\draw (500.93,75.63) node   [align=left] {\begin{minipage}[lt]{10.37pt}\setlength\topsep{0pt}
$v_\mu (x)$
\end{minipage}};
\draw (298.8,217.5) node   [align=left] {\begin{minipage}[lt]{13.06pt}\setlength\topsep{0pt}
$x_1$
\end{minipage}};
\draw (200.93,69.63) node   [align=left] {\begin{minipage}[lt]{8.67pt}\setlength\topsep{0pt}
$t$
\end{minipage}};
\draw (248.93,78.63) node   [align=left] {\begin{minipage}[lt]{10.37pt}\setlength\topsep{0pt}
$u (x)$
\end{minipage}};
\end{tikzpicture}
\caption{The function $u$ given in Example~\ref{ex no rig 3} and its circular rearrangement $v_{\mu}$, 
showing that rigidity fails.
 In this case, the singular set $S_{\alpha_\mu}$ essentially disconnects
 $\{ 0 < \alpha_{\mu} < \pi \}$, see Figure~\ref{figure alpha 23}.
 }  
  \label{figure alpha 53}
\end{figure}
\end{center}

\begin{center}
\begin{figure}[!htb]
  \begin{tikzpicture}[>=latex, scale=2]

 \draw[fill=gray, style=thick](0,2)--(1,0)--(7/2, 0)
--(7/2, -1)--(0,-1)--(0,2);

 \draw[fill=cyan, style=thick](2/5,6/5)--(7/4,1/2)--(5/2, 2)--(7/2, 0)--(1, 0)--(2/5,6/5);

\draw[line width = 1.5pt, dashed] (7/4,1/2)--(19/7, 0); 
 \draw(7/4+.1+.5,1/4+0.1)node{$S_{\alpha_\mu}$};

   \draw[line width = .5pt] (-.05,0)--(.05,0); 
  \draw(-.05,0)node[left]{$0$};
  \draw[->, line width = .5pt] (7/2,0)--(7/2+ .5,0); 
  \draw(7/2+ .5,0)node[below]{$r$};
 \draw[->, line width = .5pt] (0,-.2)--(0,2.5); 
  \draw(0,2.5)node[right]{$t$};

%
%


  \draw(.4,.35)node{$\alpha^{\vee}_{\mu} = \pi$};
  \draw(1.8,-.5)node{$\alpha^{\vee}_{\mu} = \pi$};

  \draw(1+.4,1.6)node{$\alpha^{\wedge}_{\mu} = 0$};
  \draw(3+.1,1.8)node{$\alpha^{\wedge}_{\mu} = 0$};

  \draw(2+.5,.8)node{$ 0 < \alpha_{\mu} < \pi$};
  \draw(1.5,.2)node{$ 0 < \alpha_{\mu} < \pi$};

   \draw[dashed, line width = .5pt] (7/2,-1)--(7/2, 2.4);

\end{tikzpicture}
\caption{The values of the function $\alpha_{\mu} (r, t)$ given  
in Example~\ref{ex no rig 3}. The singular set $S_{\alpha_\mu}$ (see bold dashed line)
essentially disconnects $\{ 0 < \alpha_{\mu} < \pi \}$.}
       \label{figure alpha 23}
\end{figure}
\end{center}
\end{example}

\bigskip

The rest of the paper is divided as follows. In Section~\ref{section_preliminaries} we introduce some notions and we recall some basic results of Geometric Measure Theory, while in Section~\ref{subsection_circular in Rn+1} we discuss the circular symmetrization of subgraphs. 
In Section~\ref{sect polya-szego} we prove a general version of the P\'olya--Szeg\"o inequality (see Theorem~\ref{the thing}), 
and we show that this implies Theorem~\ref{thm_Polya-Szego inequality local} and Theorem~\ref{thm_Polya-Szego inequality}.
Finally, Section~\ref{sect rigidity} contains the proof of Theorem~\ref{thm suff conditions}.

\section{Preliminaries and notation} \label{section_preliminaries}

In this section we introduce the notation and the background needed to prove the main results of the paper.
For more details we direct the reader to the monographs 
\cite{AFP, GMSbook1, maggiBOOK, Simon83}. 

\subsection{Basic notation}
Let $N \in \mathbb{N}$ with $N \geq 2$. We decompose $\mathbb{R}^{N}$ as $\mathbb{R}^2 \times\mathbb{R}^{N-2}$, 
and we set $\R^2_0 = \R^2 \setminus {(0, 0)}$. 
Depending on the context, $| \cdot |$ stands for the Euclidean norm in $\mathbb{R}$, $\mathbb{R}^2$, or $\mathbb{R}^N$.
Moreover, for every $x \in \R^2_0$ we set $\hat{x}:= x/|x|$.
If $r > 0$ and $w= (x,z) \in \mathbb{R}^{N}$ with $x\in\mathbb{R}^2$ and $z \in\mathbb{R}^{N-2}$, 
we will denote the open ball of $\mathbb{R}^{N}$ of radius $r$ and centre $w$ by $B_r^N (w)$, or simply $B_r^N$ when $w = 0$.
For every $r > 0$, we set $D(r) = \{ x \in \R^2 :  |x| < r \}$
and $\partial D(r) = \{ x \in \R^2_0 :  |x| = r \}$, while for the unit circle of $\R^2$ we use the standard notation 
$\mathbb{S}^1 = \{ x \in \R^2_0 :  |x| = 1 \}$.
In $\mathbb{S}^1$, we consider the topology induced by the arclength distance $d_{\mathbb{S}^1}: \mathbb{S}^1 \times \mathbb{S}^1 \to [ 0, \pi]$, given by
\[
d_{\mathbb{S}^1} (\omega' , \omega'') := \arccos (\omega' \cdot  \omega''), 
\quad \text{ for every } \omega' , \omega'' \in \mathbb{S}^1.
\] 
This topology coincides with the topology induced in $\mathbb{S}^1$ by the Euclidean topology of $\R^2$.
If $R \subset \mathbb{S}^1$, we set
\[
d_{\mathbb{S}^1} (\omega , R)
:= \inf \{ d_{\mathbb{S}^1} (\omega , \sigma ) : \, \, \sigma \in R \}, 
\quad \text{ for every } \omega \in  \mathbb{S}^1.
\]
In particular, with the usual convention $\inf \emptyset = + \infty$, we have that $d_{\mathbb{S}^1} (\omega , \emptyset) = + \infty$.
Let now $\beta \in (0, \pi)$ and let $x \in \mathbb{S}^1$. We denote by $\mathbf{B}_{\beta} (x)$ be the (relatively) open connected arc of $\mathbb{S}^1$ centred at $x$ and with length $2 \beta$, i.e.:
\[
\mathbf{B}_{\beta} (x):= \{ \sigma \in \mathbb{S}^{1} : d_{\mathbb{S}^1} (\sigma , x) < \beta \}.
\]
For every $k \in \mathbb{N}$ with $1 \leq k \leq N$, we denote the
Hausdorff $k$-dimensional measure in $\R^N$ by $\mathcal{H}^k$, while $\mathcal{L}^N$ stands for the $N$-dimensional Lebesgue measure. 
If $A, B \subset \R^N$, we write $A \subset_{\mathcal{H}^k} B$ when $\mathcal{H}^k (A \setminus B) = 0$ and 
$A =_{\mathcal{H}^k} B$ when $\mathcal{H}^k (A \Delta B) = 0$, where $A \Delta B = (A \setminus B) \cup (B \setminus A)$ is the symmetric difference of $A$ and $B$.

Let $E \subset \R^N$ be a Lebesgue measurable set. We denote by $\chi_E$ its characteristic function. Moreover, 
the upper and lower $N$-dimensional densities of $E$ at $w$ are defined as
\begin{eqnarray*}
  \theta^*(E,w) :=\limsup_{\rho\to 0^+}\frac{\mathcal{H}^N (E\cap B_\rho^N(w))}{\omega_N \,\rho^N}\,,
  \qquad
  \theta_*(E,w) :=\liminf_{\rho\to 0^+}\frac{\mathcal{H}^N(E\cap B_\rho^N(w))}{\omega_N\,\rho^N}\,,
\end{eqnarray*}
respectively, where $\omega_N$ is the $N$-dimensional Lebesgue measure of the unit ball of $\R^N$.
The functions $w \mapsto  \theta^*(E,w)$ and $w \mapsto  \theta_*(E,w)$
are Borel measurable, and they agree $\mathcal{L}^N$-a.e. on $\R^N$. 

Therefore, the $N$-dimensional density of $E$ at $w$
\[
\theta(E,w) := \lim_{\rho\to 0^+}\frac{\mathcal{H}^N(E\cap B_\rho^N(w))}{\omega_N\,\rho^N}\,,
\]
is defined for $\mathcal{L}^N$-a.e. $w\in\R^N$, and $w \mapsto  \theta (E,w)$
is a Borel function on $\R^N$.
Given $t \in [0,1]$, we define the set of points of density $t$ of $E$ as
$$
E^{(t)} :=\{w\in\R^N :\theta(E,w)=t\}.
$$
The set $\partial^{\mathrm{e}} E :=\R^N \setminus(E^{(0)}\cup E^{(1)})$ is called the \textit{essential boundary} of $E$.

Let $f:\mathbb{R}^N \to \mathbb{R}$ be a Lebesgue measurable function. If $M \in \R$, we set 
\[
(f \vee M) (w) = \max\{ f (w), M \} \quad \text{ and } \quad
(f \wedge M) (w) = \min\{ f (w), M \}.
\]
When $M = 0$, we will also use the notation $f_+ = f \vee 0$ 
and $f_- = (-f) \vee 0$. 
We define the {\it approximate upper limit} $f\Upp(w)$ and the {\it approximate lower limit} 
$f\Low(w)$ of $f$ at $w\in\mathbb{R}^N$ as 
\begin{eqnarray}
  \label{def fvee}
  f\Upp(w)=\inf\Big\{t\in\mathbb{R}: w\in \{f>t\}^{(0)}\Big\}\,,
  \\
  \label{def fwedge}
  f\Low (w)=\sup\Big\{t\in\mathbb{R}: w\in \{f>t\}^{(1)}\Big\}\,.
\end{eqnarray}
We observe that $f\Upp$ and $f\Low$ are Borel functions that are defined at 
{\it every} point of $\mathbb{R}^N$, with values in $\mathbb{R}\cup\{\pm\infty\}$.
Moreover, if $f_1: \mathbb{R}^N \to \mathbb{R}$ and $f_2: \mathbb{R}^N \to \mathbb{R}$
are measurable functions satisfying $f_1=f_2$ $\mathcal{L}^N$-a.e. on $\mathbb{R}^N$, 
then $f_1\Upp=f_2\Upp$ and $f_1\Low=f_2\Low$ {\it everywhere} on $\mathbb{R}^N$. 
We define the {\it approximate discontinuity} set $S_f$ of $f$ as $S_f: =\{f\Low<f\Upp\}$.
\noindent
Note that, by the above considerations, it follows that $\mathcal{L}^N(S_f)=0$. 
Although $f^\wedge$ and $f^\vee$ may take infinite values on $S_f$, 
the difference $f^\vee(z)-f^\wedge(z)$ is well defined in $\mathbb{R}\cup\{\pm\infty\}$ for every $w\in S_f$.
Then, we can define the {\it approximate jump} $[f]$ of $f$ as the Borel function $[f]:\mathbb{R}^N\to[0,\infty]$ given by
  \begin{eqnarray*}
    [f](w):=\left\{\begin{array}{l l}
      f\Upp(w)-f\Low(w)\,&\mbox{if $w\in S_f$}\,,
      \vspace{.2cm} \\
      0\,&\mbox{if $w\in \mathbb{R}^N\setminus S_f$}\,.
    \end{array}
    \right .
  \end{eqnarray*}
For $w\in\mathbb{R}^N$ and $\nu\in \mathbb{S}^{N-1}$, we will denote by $H_{w, \nu}^+$ and $H_{w,\nu}^-$ the closed half-spaces
whose boundaries are orthogonal to $\nu$:
\begin{align*}
H_{w,\nu}^+:=\Big\{ w' \in\R^N:\,(w'- w)\cdot\nu\ge 0\Big\},\quad
H_{w,\nu}^-:=\Big\{w'\in\R^N:\,(w'-w)\cdot\nu\le 0\Big\}.
\end{align*}
Let $A\subset\mathbb{R}^N$ be a Lebesgue measurable set. 
We say that $t\in\R\cup\{\pm\infty\}$ is the approximate limit of $f$ at $w$ with respect to $A$, 
and write $t=\aplim (f,A,w)$, if
\begin{eqnarray}
  &&\theta\Big(\{|f-t|>\varepsilon\}\cap A;w\Big)=0\,,\qquad\forall\varepsilon>0\,,\hspace{0.3cm}\qquad (t\in\R)\,,
  \\
  &&\theta\Big(\{f<M\}\cap A;w\Big)=0\,,\qquad\hspace{0.6cm}\forall M>0\,,\qquad (t=+\infty)\,,
  \\
  &&\theta\Big(\{f>-M\}\cap A;w\Big)=0\,,\qquad\hspace{0.3cm}\forall M>0\,,\qquad (t=-\infty)\,.
\end{eqnarray}
We say that $w\in S_f$ is a \textit{jump point} of $f$ if there exists $\nu\in \mathbb{S}^{N-1}$ such that
\[
f^\vee(w)=\aplim(f,H_{w,\nu}^+,x)> f^\wedge(w)=\aplim(f,H_{w,\nu}^-,x)\,.
\]
If this is the case, we say that $\nu_f(w):= \nu$ is the approximate jump direction of $f$ at $w$.
Denoting by $J_f$ the set of approximate jump points of $f$, we have that $J_f\subset S_f$ 
and $\nu_f:J_f\to \mathbb{S}^{N-1}$ is a Borel function.  

Let $w \in S_f^c$. We say that $f$ is \emph{approximately differentiable} at $w$ if $\tilde{f}(w):=f\Low(w)=f\Upp(w)\in \mathbb{R}$
and there exists $\xi \in \mathbb{R}^N$ such that
\begin{align*}
\aplim(g_\xi,\mathbb{R}^N,w)=0,
\end{align*} 
where $g_\xi (w')= (f(w')-\tilde{f}(w)-\xi\cdot(w'-w))/|w'-w|$ for $w' \in \mathbb{R}^N \setminus\{w \}$.
If this is the case, then $\xi$ is uniquely determined, we set $\xi=\nabla f(w)$, and call $\nabla f(w)$ the \emph{approximate differential} of $f$ at $w$. In components, we write $\nabla f(w) = (\nabla_x f (w), \nabla_z f (w))$, where $\nabla_x f (w) =  (\partial_{x_1} f (w), \partial_{x_2} f (w)) \in \R^2$ and 
$\nabla_z f (w) =  (\partial_{z_1} f (w), \ldots, \partial_{z_{N-2}} f (w))\in \mathbb{R}^{N-2}$. Moreover, we denote the set of points of approximate differentiability of $f$ as 
$$
\mathcal{D}_f:=\left\{w\in\mathbb{R}^N:\, f \textnormal{ is approximately differentiable at }w   \right\}.
$$

\subsection{Essential connectedness}
We now introduce the concept of essential connectedness, which will be used to study the rigidity of the P\'olya--Szeg\"o inequality.
\begin{definition}
Let $G \subset \R^N$ be a Borel set, and let $G_+,G_- \subset \R^N$ be Borel sets. We say that 
$\{G_+,G_-\}$ is a non-trivial Borel partition of $G$ modulo $\H^N$ if 
 \[
  \H^N (G_+\cap G_-)=0\,,\qquad  \H^N (G\Delta(G_+\cup G_-))=0\,,\qquad \H^N (G_+)\,\H^N(G_-)>0\,.
  \]
\end{definition}

\begin{definition}\label{definition ess con}
Let $K$ and $G$ be Borel sets in $\R^N$. We say that $K$ essentially disconnects $G$ if there exists a non-trivial Borel partition $\{G_+,G_-\}$ of $G$ modulo $\H^N$ such that
\begin{equation}
  \label{K disconnects G}
  \H^{N-1}\Big(\Big(G^{(1)}\cap\pae  G_+\cap\pae G_-\Big)\setminus K\Big)=0\,.
\end{equation}
Instead, we say that $K$ does not essentially disconnect $G$ if, for every non-trivial Borel partition $\{G_+,G_-\}$ of $G$ modulo $\H^N$,
\begin{equation}
  \label{K does not disconnect G}
  \H^{N-1}\Big(\Big(G^{(1)}\cap\pae  G_+\cap\pae G_-\Big)\setminus K \Big)>0\,.
\end{equation}
Finally, we say that $G$ is essentially connected if $\emptyset$ does not essentially disconnect $G$. 
\end{definition}
%
%
%
\begin{remark}\label{remark essential connected}
  If $\H^N(G\Delta G')=0$ and $\H^{N-1}(K\Delta K')=0$, then $K$ essentially disconnects $G$ if and only if $K'$ essentially disconnects $G'$.
\end{remark}

\subsection{$BV$ functions}
Let $\Omega\subset \mathbb{R}^N$ be open. We denote by $C^0 (\Omega)$ ($C^0_c (\Omega)$)
the space of continuous functions (with compact support) in $\Omega$, while 
$C^0_b (\Omega)$ stands for the space of bounded continuous functions in $\Omega$.
Moreover, $C^1_c(\Omega;\R^N)$ is the space of $\R^N$-valued $C^1$ functions with compact support in $\Omega$.
Let $f\in L^1(\Omega)$. We say that 
$f$ is a function of bounded variation in $\Omega$, and we write $f\in BV(\Omega)$, if
\begin{align}\label{def: total variation of Df, con f in BV}
\sup\left\{\int_\Omega\,f(w)\,\diver\,\varphi(w)\,dw:\,\varphi\in C^1_c(\Omega;\R^N)\,,|\varphi|\le 1\right\}<\infty.
\end{align}
More in general, we say that $f\in BV_{\textnormal{loc}}(\Omega)$ if $f\in BV(\Omega')$ for every open set $\Omega' \subset \subset \Omega$. 
If $f\in BV_{\textnormal{loc}} (\Omega)$, then the distributional derivative $D f$ of $f$ is an $\R^N$-valued Radon measure. 
In this case, when needed we will write $D f = (D_x f, D_z f)$, where $D_x f$ is an $\R^2$-valued Radon measure, and $D_z f$ is an 
 $\R^{N-2}$-valued Radon measure.

Let now $f\in BV(\Omega)$. Then, the total variation of $Df$ is finite in $\Omega$, and its value $|Df|(\Omega)$ coincides with the left--hand side of \eqref{def: total variation of Df, con f in BV}. One can write the  Radon--Nikod\'ym decomposition of $Df$ with respect to $\mathcal{L}^{N}$
as $Df=D^af+D^sf$, where $D^af \ll\mathcal{L}^{N}$, and where $D^s f$ and $D^a f$ are mutually singular. 

It turns out that for $\mathcal{L}^{N}$-a.e. $w\in \Omega$ the function $f$ is approximately differentiable, and its 
approximate differential is the density of $D^a f$ with respect to $\mathcal{L}^{N}$. Thus, we have $D^a f = \nabla f d \mathcal{L}^{N}$
and $\nabla f \in L^1 (\Omega;\R^N)$. 
Moreover, the singular set $S_f$ and the jump set $J_f$ of $f$ are countably $(N-1)$-rectifiable, with $\mathcal{H}^{N-1} (S_f \setminus J_f) = 0$. In addition, for $\mathcal{H}^{N-1}$-a.e. $w \in J_f$ there exists a vector $\nu_f (w) \in \mathbb{S}^{N-1}$ such that we can further decompose 
the singular part of $Df$ as $D^s f= D^j f+ D^c f$, where $D^j f$ and $D^c f$ are mutually singular, 
$D^j f = [f] (w) \nu_f (w)\, d\mathcal{H}^{N-1}\mres J_f$ is called the jump part of $Df$, and 
$D^cf = D^s f - D^j f$ is called the Cantor part of $Df$, and is concentrated on a set $K \subset \Omega$ such that 
$\mathcal{H}^{N} (K) = 0$ and $\mathcal{H}^{N-1} (K) = + \infty$. 
If $f:\Omega\to \mathbb{R}^m$, with $m\in\mathbb{N}$ and $m\geq 2$, we say that $f \in BV(\Omega;\mathbb{R}^m)$ if and only if $f_i\in BV(\Omega)$ for $i=1,\dots,m$, where with $f=(f_1,\dots,f_m)$. Accordingly, we say that $f\in BV_{\textnormal{loc}}(\Omega;\mathbb{R}^m)$ if $f\in BV(\Omega';\mathbb{R}^m)$ for every open set $\Omega'$ compactly contained in $\Omega$. If $f\in BV_{\textnormal{loc}}(\Omega;\mathbb{R}^m)$ the distributional derivative $Df$ of $f$ is an $(m\times N)$-valued Radon measure.

If $a\in \mathbb{R}^m$, and $b\in\mathbb{R}^N$ we denote the tensor product between $a$ and $b$ as $a \otimes b$. This is the $m\times N$ matrix whose components are given by 
$$
(a\otimes b)_{i,j}= a_i b_j \quad i=1,\dots,m,\; j=1,\dots,N.
$$

\subsection{Sets of finite perimeter}\label{section sofp} 
\noindent
Let $E\subset\R^N$ be a Lebesgue measurable set, and let  $O\subset \mathbb{R}^N$ be open. 
We say that  $E$ is a set of finite perimeter in $O$ if 
\begin{align}\label{def: locally finite perimeter of a set in R^k}
P(E;O):= \sup\left\{\int_{E} \textnormal{div}\, \varphi (w) \,dw:\, \varphi \in C^1_c(O;\mathbb{R}^N), \, |\varphi| \leq 1 \right\}<\infty,
\end{align}
and in this case we say that $P(E;O)$ is the perimeter of $E$ in $O$. Note that, if $\mathcal{L}^N(E)<\infty$, then $E$ is a set of finite perimeter in $O$ if and only if $\chi_E\in BV (O)$. In the special case $O = \R^N$ we set $P(E):=P(E;\mathbb{R}^N)$ and when $P(E) < \infty$ we say that $E$ is a set of finite perimeter. We say that $E$ is a set of locally finite perimeter in $O$ if $\chi_E\in BV_{\textnormal{loc}}(O)$.

Let $E$ be a set of locally finite perimeter in $O$. We define the \emph{reduced boundary} $\partial^*E$ of $E$ as the set of those points $w \in O$ 
such that
\begin{align*}
\nu^E(w):=\lim_{\rho\to 0^+}\frac{D\chi_E(B^N_\rho(w))}{|D\chi_E|(B^N_\rho(w))},
\end{align*}
exists and belongs to $\mathbb{S}^{N-1}$. We call $\nu^E:\partial^*E\to \mathbb{S}^{N-1}$ the {\it measure-theoretic inner unit normal} to $E$.
It turns out that $\nu^E$ is a Borel function, and that the distributional derivative $D \chi_E$ of $\chi_E$ is given by
\[
D \chi_E = \nu^E d \mathcal{H}^{N-1} \mres \partial^*E. 
\]
Therefore, if $A \subset O$ is a Borel set, we can define the perimeter of $E$ in $A$ as
$$
P(E;A):=|D\chi_E|(A)=\mathcal{H}^{N-1}(\partial^*E \cap A).
$$
It turns out that 
\begin{equation*}
  \label{inclusioni frontiere}
  (\partial^*E\cap O)  \subset (E^{(1/2)}\cap O) \subset (\partial^{\mathrm{e}} E\cap O)\,.
\end{equation*}
Moreover, {\it Federer's theorem} holds true (see \cite[Theorem 3.61]{AFP} and \cite[Theorem 16.2]{maggiBOOK}):
\[
\mathcal{H}^{N-1}((\partial^{\mathrm{e}} E\cap O)\setminus(\partial^*E\cap O))=0.
\]

\subsection{Sets of finite perimeter in $\mathbb{S}^{1}$}
We now briefly introduce sets of finite perimeter in the unit circle $\mathbb{S}^{1}$.
Then, if $r > 0$ one can argue in a similar way to define sets of finite perimeter on $\partial D (r)$.
We direct the reader to \cite[Chapter~6]{Simon83} for more details (see also \cite{bogelainduzaarfusco2017}). 

We denote by $\Lambda_1 (\R^2)$
and $\Lambda^1 (\R^2)$ the linear spaces of $1$-vectors and  
$1$-covectors in $\R^2$, respectively,
while $\mathcal{D}^1 (\R^2)$ stands for
the set of smooth $1$-forms with compact support in $\R^2$.

A \textit{$1$-dimensional current} in $\R^2$ 
is a continuous linear functional on $\mathcal{D}^1 (\R^2)$.
Instead, $0$-dimensional currents in $\R^2$
are simply defined as the usual distributions in $\R^2$.
The family of $1$-dimensional ($0$-dimensional) currents in $\R^2$
is denoted by $\mathcal{D}_1 (\R^2)$ ($\mathcal{D}_{0}   (\R^2)$). 

We say that $T \in \mathcal{D}_1 (\R^2)$
is an \textit{integer multiplicity rectifiable $1$-current} 
if it can be represented as 
\[
T (\omega) = \int_{M} \langle \omega (x) , \tau (x) \rangle \, \theta (x) \, d \mathcal{H}^{1} (x)
\quad \text{ for every } \omega \in \mathcal{D}^1 (\R^2),
\]
where $M$ is an $\mathcal{H}^1$-measurable countably 
$1$-rectifiable subset of $\R^2$, 
$\theta$ is an $\mathcal{H}^1$-measurable positive integer-valued 
function, $\tau: M \to \Lambda_1 (\R^2)$ is an $\mathcal{H}^1$-measurable 
function such that $\tau (x)$ is a unit vector belonging to the approximate tangent space of $M$ at $x$
for $\mathcal{H}^1$-a.e. $x \in M$,  
and $\langle \cdot , \cdot \rangle$ denotes the usual pairing between  
$\Lambda^1 (\R^2)$ and $\Lambda_1 (\R^2)$.
In the special case when 
\[
T (\omega) = \int_{M} \langle \omega (x) , \tau (x) \rangle \, d \mathcal{H}^{1} (x)
\quad \text{ for every } \omega \in \mathcal{D}^1 (\R^2),
\]
we write $T = [\![ M ]\!]$.
The boundary $\partial T$ of $T$ is then defined as  
the $0$-dimensional current in $\R^2$ such that
\[
\partial T ( \omega ) 
= T (d \omega) \quad \text{ for every } \omega \in C^0_c (\R^2),
\]
while the mass $\mathbf{M} (T)$ of $T$ is given by
\[
\mathbf{M} (T) := \sup \left\{ T (\omega) :  \omega \in \mathcal{D}^1 (\R^2), \, | \omega | \leq 1  \right\}. 
\]
More in general, for any open set $U \subset \R^2$, we set
\[
\mathbf{M}_U (T) := \sup \left\{ T (\omega) :  \omega \in \mathcal{D}^1 (\R^2), \,
 | \omega | \leq 1, \,  \text{supp} \, \omega  \subset U  \right\}. 
\]

Let $A \subset \mathbb{S}^{1}$ be 
an $\mathcal{H}^{1}$-measurable set.
We will say that $A$ is a set of finite perimeter 
 on $\mathbb{S}^{1}$ 
if there exists $Q \in  \mathcal{D}_{0}   (\R^2)$ 
with $\text{supp} \,  Q   \subset \mathbb{S}^{1}$ and 
$$
 Q   = \partial   [\![ A ]\!],
$$
with the property that $\mathbf{M}_U ( Q )  < \infty$
for every $U \subset \subset \R^2$. 
 By the Riesz representation theorem it follows that there exists a 
Radon measure $\mu_Q$ such that
$$
\int_A \diver_{\parallel} \varphi (x) \, d \mathcal{H}^{1} (x)
= \int_{\mathbb{S}^{1}} \varphi (x) \cdot  d  \mu_Q  (x),
$$
for every smooth vector field $\varphi: \mathbb{S}^{1} \to \R^2$ such that $\varphi (x) \cdot \hat{x} = 0$ for every $x \in \mathbb{S}^{1}$, 
where $\diver_{\parallel} \varphi$ stands for the tangential divergence of $\varphi$ on $\mathbb{S}^1$.
%
If $A \subset \mathbb{S}^{1}$ is a set of finite perimeter on the sphere, 
the reduced boundary $\partial^* A$ of $A$ is the set of points $x \in \mathbb{S}^{1}$
such that the limit 
$$
\nu^A (x) := \lim_{\rho \to 0^+} \frac{\mu_Q (\mathbf{B}_{\rho} (x))}{ |\mu_Q| (\mathbf{B}_{\rho} (x))} 
$$
exists, $\nu^A (x) \in T_x \mathbb{S}^{1}$, and  $| \nu^A (x) | = 1$.
The De Giorgi structure theorem holds true also for sets of finite perimeter on the sphere.
In particular, $\partial^* A$ is finite,
$ \mu_Q  = \nu^A \mathcal{H}^{0} \mres \partial^* A$, and 
\begin{equation} \label{div theorem hypersurfaces}
\int_A \diver_{\parallel} \varphi (x) \, d \mathcal{H}^{1} (x)
= \int_{\partial^* A} \varphi (x) \cdot \nu^A (x) \, d \mathcal{H}^{0} (x),
\end{equation}
for every smooth vector field $\varphi: \mathbb{S}^{1} \to \R^2$ such that $\varphi (x) \cdot \hat{x} = 0$ for every $x \in \mathbb{S}^{1}$.

The isoperimetric inequality on the circle states that, if $A \subset \mathbb{S}^{1}$ is a set of finite perimeter on $\mathbb{S}^{1}$
with $\mathcal{H}^{1} (A) = \mathcal{H}^{1} (\mathbf{B}_{\beta} (e_1))$, 
then (see \cite{schmidt})
\begin{equation} \label{isop ineq}
 \mathcal{H}^{0} (\partial^* \mathbf{B}_{\beta} (e_1)) \leq 
\mathcal{H}^{0} (\partial^* A), \quad 
\end{equation}
and 
\begin{equation} \label{isop eq}
\mathcal{H}^{0} (\partial^* \mathbf{B}_{\beta} (e_1)) = 
\mathcal{H}^{0} (\partial^* A) 
\quad \Longleftrightarrow \quad
A =_{\mathcal{H}^1} \mathbf{B}_{\beta} (p) \text{ for some } p \in \mathbb{S}^1.
\end{equation}

\subsection{Circular rearrangement of sets}

If  $E \subset \R^N$ is a Lebesgue measurable set and $(r, z) \in (0, +\infty) \times \R^{N-2}$, we define the slice $E_{(r, z)}$ as the subset of $\R^2_0$ given by
\[
E_{(r, z)} := \{ x \in \partial D(r): (x, z) \in E \}.
\]
We now give the definition of circular symmetrization of a set in $\R^N$. 
\begin{definition}
Let $E \subset \R^N$ be a Lebesgue measurable set. The circular rearrangement of $E$ is the set $E^s$ defined as 
\begin{equation} \label{def Es}
E^s := \{ (x, z) \in \Phi_N (\Pi_{N-1} (E) \times \mathbb{S}^1) : d_{\mathbb{S}^1}  (\hat{x} \cdot e_1) < \beta( | x | , z) \}
 \end{equation}
where 
\begin{equation} \label{def beta}
\beta(r, z) =\frac{\mathcal{H}^1 (E_{(r, z)})}{2 r}, \quad \text{ for every } (r, z) \in \Pi_{N-1} (E).
\end{equation}
\end{definition}

\begin{remark} \label{Es no points -1}
Note that with the definition given above the set $E^s$ does not include points of the half-hyperplane
$H = \{ \hat{x} \cdot e_1 =  -1 \}$, even when the original set $E$ does include some of these points. 
This is not affecting our results, since $H$ is an $\mathcal{H}^n$-negligible set.
The advantage of definition \eqref{def Es} is clarified by the next result.
\end{remark}
\begin{proposition}{\label{prop_Omega^s is open}}
Let $E \subset \mathbb{R}^N$ be a Lebesgue measurable set, and let 
$\beta : \Pi_{N-1} (E) \to [0, \pi]$ be given by \eqref{def beta}.
Then, 

\begin{itemize}

\item[(i)] $E^s$ is open  $\Longleftrightarrow$  $\beta$ is lower semicontinuous;

\vspace{.2cm}

\item[(ii)] $E$ is open $\Longrightarrow$ $E^s$ is open.

\end{itemize}
\end{proposition}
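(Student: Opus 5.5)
We prove (i) directly from the definition \eqref{def Es}, and then obtain (ii) from (i) by showing that $\beta$ is lower semicontinuous when $E$ is open.

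For (i) we work through the diffeomorphism $\Phi_N$. Extend $\beta$ by $0$ outside $\Pi_{N-1}(E)$; for open $E$ this extension is harmless. The set $E^s$ is the image under $\Phi_N$ of
\[
G:=\{(r,z,\omega)\in(0,\infty)\times\R^{N-2}\times\mathbb{S}^1:\ d_{\mathbb{S}^1}(\omega,e_1)<\beta(r,z)\}.
\]
Since $\Phi_N$ is a homeomorphism onto $\R^2_0\times\R^{N-2}$, which is open, $E^s$ is open if and only if $G$ is open.

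Suppose first that $\beta$ is lower semicontinuous. Then $(r,z,\omega)\mapsto\beta(r,z)-d_{\mathbb{S}^1}(\omega,e_1)$ is lower semicontinuous, because $d_{\mathbb{S}^1}(\cdot,e_1)$ is continuous. Its strict superlevel set $\{\cdot>0\}$, which is exactly $G$, is therefore open.

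Conversely, suppose $G$ is open and $\beta(r_0,z_0)>c$ for some $c$. If $c<0$ there is nothing to prove, since $\beta\ge0$. If $c\ge 0$, pick $\omega_0$ with $d_{\mathbb{S}^1}(\omega_0,e_1)=c'$, where $c<c'<\beta(r_0,z_0)$; note $c'\le\pi$ is admissible. Then $(r_0,z_0,\omega_0)\in G$. Because $G$ is open, $(r,z,\omega_0)\in G$ for all $(r,z)$ near $(r_0,z_0)$, so $\beta(r,z)>c'>c$ there. Hence $\beta$ is lower semicontinuous.

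For (ii), let $E$ be open and $\beta(r_0,z_0)>c$. Write $A_{(r,z)}:=\{\omega\in\mathbb{S}^1:(r\omega,z)\in E\}$, which is a relatively open subset of $\mathbb{S}^1$. Since $\mathcal{H}^1(E_{(r,z)})=r\,\mathcal{H}^1(A_{(r,z)})$, we have $\beta(r,z)=\mathcal{H}^1(A_{(r,z)})/2$, so $\mathcal{H}^1(A_{(r_0,z_0)})>2c$.

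By inner regularity choose a compact $K\subset A_{(r_0,z_0)}$ with $\mathcal{H}^1(K)>2c$. The set $\Phi_N^{-1}(E)$ is open and contains $\{(r_0,z_0)\}\times K$. By compactness of $K$ (the tube lemma), it contains $U\times K$ for some neighbourhood $U$ of $(r_0,z_0)$. Thus $K\subset A_{(r,z)}$ for every $(r,z)\in U$, which gives $\beta(r,z)\ge\mathcal{H}^1(K)/2>c$ on $U$.

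This shows $\beta$ is lower semicontinuous (and $U\subset\Pi_{N-1}(E)$), so (i) yields that $E^s$ is open.

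The only delicate point is bookkeeping. The points with $\hat x\cdot e_1=-1$ are correctly excluded from or included in $E^s$ through the strict inequality, and this needs $\beta\le\pi$. The tube-lemma step is where openness of $E$ is genuinely used.
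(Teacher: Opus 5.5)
Your proof is correct. Its two core ingredients match the paper's.
\begin{itemize}
\item Openness of $E^s$ follows from lower semicontinuity of $\beta$. You take the strict superlevel set of the lower semicontinuous function $(r,z,\omega)\mapsto\beta(r,z)-d_{\mathbb{S}^1}(\omega,e_1)$; the paper argues by contradiction along sequences.
\item Lower semicontinuity of $\beta$ for open $E$ follows from a compact inner approximation of the slice. You use the tube lemma; the paper uses the sets $C^k$ and sequential compactness.
\end{itemize}

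The first real difference is the implication ``$E^s$ open $\Rightarrow$ $\beta$ lower semicontinuous'' in (i). The paper applies its Step~1 to the open set $E^s$, which tacitly requires checking that $E^s$ and $E$ have the same $\beta$. Your direct argument, which freezes an angle $\omega_0$ with $d_{\mathbb{S}^1}(\omega_0,e_1)=c'$, avoids computing the slices of $E^s$.

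The second difference is your extension of $\beta$ by $0$. It is more than bookkeeping, and you should say so rather than calling it harmless for open $E$, because in (i) $E$ is an arbitrary measurable set.
\begin{itemize}
\item The paper's Step~2 only tests sequences inside $\Phi_N(\Pi_{N-1}(E)\times\mathbb{S}^1)$. That suffices only when $\Pi_{N-1}(E)$ is open.
\item If lower semicontinuity is read relative to $\Pi_{N-1}(E)$, the implication $\Leftarrow$ in (i) is false. For $N=2$ and the closed annulus $E=\{1\le |x|\le 2\}$, one has $\beta\equiv\pi$ on $\Pi_1(E)=[1,2]$, yet $E^s$ contains points with $|x|=1$ and is not open.
\end{itemize}

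Your convention is lower semicontinuity of the zero extension on $(0,\infty)\times\mathbb{R}^{N-2}$. Equivalently, since $\beta>0$ on $\Pi_{N-1}(E)$, it is relative lower semicontinuity together with openness of $\Pi_{N-1}(E)$. State this explicitly. With it, your argument proves (i) for every measurable $E$. The direction $\Rightarrow$ holds under either reading, and for (ii) the two readings coincide because $\Pi_{N-1}(E)$ is open whenever $E$ is.
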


\begin{proof}
We divide the proof into steps.

\vspace{.2cm}

\noindent
\textbf{Step 1.} $E$ open $\Longrightarrow$ $\beta$ lower semicontinuous.
Let $E$ be open and assume, by contradiction, that $\beta$ is not lower semicontinuous. Then, there exist $(r, z) \in \Pi_{N-1} (E)$ and a sequence $\{ (r_h, z_h )\}_{h \in \mathbb{N}} \subset  \Pi_{N-1} (E)$
with $(r_h, z_h ) \to (r, z)$ such that 
\[
\beta (r, z) > \liminf_{h \to + \infty} \beta (r_h, z_h).
\]
Passing to a suitable (not relabelled) subsequence, we can assume that the liminf is a limit and so we have
\[
\beta (r, z) > \lim_{h \to + \infty} \beta (r_{h}, z_{h})
\quad  \Longleftrightarrow \quad
\mathcal{H}^1 (C) > \lim_{h \to + \infty} \mathcal{H}^1 (S_h),
\]
where the sets $C, S_h \subset \mathbb{S}^1$ are defined as:
\[
C:= \frac{1}{ r} E_{(r, z)} \quad \text{ and } \quad S_h:= \frac{1}{ r_h} E_{(r_h, z_h)}, \, \,  \text{ for every }h \in \mathbb{N}. 
\]
Then, there exist $\delta >0$ and $j \in \mathbb{N}$ such that
\begin{align*} 
\mathcal{H}^1 (C) > \mathcal{H}^1 (S_h) + 2 \delta, \quad \text{ for every } h > j,
\end{align*}
and so
\begin{align}\label{dist bigger than delta 2}
\mathcal{H}^1 (C \setminus S_h) \geq \mathcal{H}^1 (C) - \mathcal{H}^1 (S_h) > 2 \delta, \quad \text{ for every } h > j. 
\end{align}
Since $E$ is open and $C= \frac{1}{r} E_{(r, y)}$, we have that $C$ is (nonempty and) relatively open in $\mathbb{S}^1$. Equivalently, $C$ is open in the topology induced in $\mathbb{S}^1$ by
$d_{\mathbb{S}^1}$. 
For every $k \in \mathbb{N}$, we define the set $C^k \subset \mathbb{S}^1$ as 
\[
C^k := \left\{ \omega \in \mathbb{S}^1 : d_{\mathbb{S}^1} (\omega , \mathbb{S}^1 \setminus C) \geq \frac1k \right\}.
\]
For $k \in \mathbb{N}$ sufficiently large, $C^k$ is nonempty and compact in $\mathbb{S}^1$. We also observe that, 
with the usual convention $d_{\mathbb{S}^1} (\omega , \emptyset) = + \infty$, if $C = \mathbb{S}^1$ we have 
$C^k = \mathbb{S}^1$. 
Moreover, $C^{k_1} \subset C^{k_2} \subset C$ whenever $k_1 < k_2$, and 
\[
C = \bigcup_{k \in \mathbb{N}} C^k.
\]
Therefore, there exists $k_\delta \in \mathbb{N}$ such that
\[
\mathcal{H}^1 (C \setminus C^{k_\delta}) < \delta.
\]
Last inequality, together with \eqref{dist bigger than delta 2}, implies that
\[
\mathcal{H}^1 (C^{k_\delta} \setminus S_h) > \delta, \quad \text{ for every } h > j.
\]
Then, for every $h > j$ there exists $\omega_h \in C^{k_\delta} \setminus S_h$.
 By compactness of $C^{k_\delta} $, there exists $\overline{\omega} \in C^{k_\delta}$ such that, up to subsequences, 
\[
\omega_h \to \overline{\omega}.
\]
Since $\overline{\omega} \in  C^{k_\delta} \subset C$, by definition of $C$ we have that 
$(r \overline{\omega}, z) \in E$.
Thus, recalling the definition of $S_h$, the sequence $\{ (r_h \omega_h, z_h) \}_{h > j}$ is such that
\[
(r_h \omega_h, z_h) \notin E \quad \text{ for every } h > j,
\]
and 
\[
(r_h \omega_h, z_h) \to (r \overline{\omega}, z) \in E.
\]
But this is impossible, since $E$ is open.

\vspace{.2cm}

\noindent
\textbf{Step 2.} $\beta$ lower semicontinuous $\Longrightarrow$  $E^s$ open. 
Let $\beta$ be lower semicontinuous and suppose, by contradiction, that $E^s$ is not open. Then, there exist $(x,z)\in E^s$ and a sequence $\{ (x_h,z_h) \}_{h} \subset \Phi_N (\Pi_{N-1} (E) \times \mathbb{S}^1)$ such that
$(x_h,z_h)\to (x,z)$ and $(x_h,z_h)\notin E^s$ for all $h\in \mathbb{N}$. Then, we have 
\begin{align*}
\beta(|x|,z) > d_{\mathbb{S}^1} (\hat{x}, e_1)
\quad \text{ and } \quad d_{\mathbb{S}^1} ( \hat{x}_h,  e_1) \geq \beta(|x_h|,z_h) \quad \forall\, h\in\mathbb{N}.
\end{align*}
Using that fact that $\beta$ is lower semicontinuous, we obtain 
\begin{align*}
\beta(|x|,z) > d_{\mathbb{S}^1} (\hat{x},  e_1)
= \lim_{h \to + \infty} d_{\mathbb{S}^1} (\hat{x}_h,  e_1)
\geq \liminf_{h\to\infty}\beta(|x_h|,z_h) \geq \beta(|x|,z),
\end{align*}
which is impossible.

%

\vspace{.2cm}

\noindent
\textbf{Step 3.} We conclude. Property (i) follows by combining Step~1 (applied to the set $E^s$) and Step~2.
Let us now show (ii). If $E$ is open, thanks to Step~1, the function $\beta$ is lower semicontinuous and then, by Step~2, $E^s$ is open.
\end{proof}

\section{Circular rearrangement of subgraphs}\label{subsection_circular in Rn+1}

\noindent
In this section and in the rest of the paper, we assume $n \in \mathbb{N}$ with $n \geq 2$ and $\Omega \subset \mathbb{R}^n$ open.
We decompose $\R^{n+1}$ as $\R^{n+1} = \R^2 \times \R^{n-2} \times \R$, and we label points of $\R^{n+1}$
as $(x, y, t)$, with $x \in \R^2$, $y \in \R^{n-2}$ and $t \in \R$. If $E \subset \R^{n+1}$ is a set of locally finite perimeter, 
for every $(x, y, t) \in \partial^* E \cap (\R^2_0 \times \R^{n-2} \times \R)$ we decompose the measure theoretic inner unit normal to $E$ at $(x, y, t)$ as $\nu^E (x, y, t) = ( \nu^E_x (x, y, t), \nu^E_y,  (x, y, t) , \nu^E_t (x, y, t))$, where $\nu^E_x (x, y, t) \in \R^2$, $\nu^E_y (x, y, t) \in \R^{n-2}$, and $\nu^E_t (x, y, t) \in \R$. Moreover, we further decompose $\nu^E_x (x, y, t)$ as
\[
\nu^E_x (x, y, t) = (\hat{x} \cdot \nu^E_x (x, y, t) ) \hat{x} + \nu^{E}_{\!  x {\scriptscriptstyle\parallel}}  (x, y, t) x_\parallel, 
\]
where $\hat{x}$ and $x_{\parallel}$ are defined in \eqref{basis of R2}, and $\nu^{E}_{\!  x {\scriptscriptstyle\parallel}} (x, y, t) = \nu^E_x (x, y, t) \cdot x_\parallel$.

\subsection{Subgraphs}
Let now $u:\Omega\to \mathbb{R}$ be a Lebesgue measurable function. 
We denote by $\Sigma^u\subset \mathbb{R}^{n+1}$ the subgraph of $u$, defined as
\begin{align*}
\Sigma^u = \{ (x, y, t) \in \mathbb{R}^2 \times\mathbb{R}^{n-2} \times \mathbb{R} :\, (x, y) \in \Omega \text{ and } u (x, y) > t  \}
\end{align*}
The following result follows from \cite[Chapter 4, Section~1.5, Theorem~1--Theorem~5]{GMSbook1} (see also \cite[Proposition~3.4]{ccdpmGAUSS}, and \cite[Theorem 5.2]{Perugini}).
\begin{proposition}  \label{giaquinta}
Let $U \subset \mathbb{R}^n$ be open and bounded, and let $u \in L^1 (U)$. 
Then, $u \in BV (U)$ if and only if $\Sigma^u$ is a set of finite perimeter in $U \times \mathbb{R}$.
In this case, we have:
\[
\partial^* \Sigma^{u} \cap ( (S_u)^c \times \mathbb{R})
=_{\mathcal{H}^{n}} 
\{ (x,y,t) \in (S_u)^c \times \mathbb{R} : u^{\wedge} (x, y) = u^{\vee} (x, y) = t \},
\]
with
\begin{equation} \label{normal1}
\nu^{\Sigma^{u}} (x, y,  u (x, y)) = \Bigg( \frac{\nabla_x u(x, y)}{\sqrt{1 + | \nabla u(x, y)|^{2}}} ,
\frac{\nabla_y u (x, y)}{\sqrt{1 + | \nabla u (x, y)|^{2}}}, 
\frac{-1}{\sqrt{1 + | \nabla u (x, y)|^{2}}} \Bigg)
\end{equation}
for $\mathcal{H}^{n}$-a.e. $(x, y) \in \mathcal{D}_u$, and
\[
\nu^{\Sigma^{u}} (x, y, u (x, y)) = \Bigg( \frac{d D^c u}{d | D^c u|} (x, y), 0 \Bigg)
\quad \text{ for $| D^c u|$-a.e. $(x, y) \in U$.}
 \] 
Moreover, 
\[
\partial^* \Sigma^{u} \cap ( S_u \times \mathbb{R})
=_{\mathcal{H}^{n}} 
\{ (x,y,t) \in S_u \times \mathbb{R} : u^{\wedge} (x, y) < t < u^{\vee} (x, y) \},
\]
and 
\[
\nu^{\Sigma^{u}} (x, y, t) = \left( \nu_u (x, y), 0 \right),
 \]  
for  $\mathcal{H}^{n-1}$-a.e. $(x, y) \in S_u$ and for every $ t \in (u^{\wedge} (x, y),  u^{\vee} (x, y))$.
Finally, if $B \subset U$ is a Borel set, the perimeter 
of $\Sigma^{u}$ in $B \times \mathbb{R}$ is given by
\begin{equation} \label{formula per subgraph}
P (\Sigma^{u} ; B \times \mathbb{R}) = \int_B \sqrt{1 + | \nabla u|^2} \, dx \, dy 
+| D^s u | (B). 
\end{equation}
\end{proposition}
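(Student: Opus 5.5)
The plan is to prove the equivalence first, then identify $\partial^*\Sigma^u$ and $\nu^{\Sigma^u}$ on each piece of the decomposition $Du=\nabla u\,\mathcal{L}^n+D^ju+D^cu$, and finally assemble \eqref{formula per subgraph} from these pieces.

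\emph{Equivalence.} Suppose $\Sigma^u$ has finite perimeter in $U\times\mathbb{R}$. Fix $\varphi\in C^1_c(U;\mathbb{R}^n)$ with $|\varphi|\le1$, and take a cutoff $\psi_M(t)$ equal to $1$ on $[-M,M]$. Test $D\chi_{\Sigma^u}$ against the field $(\varphi(x,y)\psi_M(t),0)$. Since $\int_U \diver\varphi=0$, integrating in $t$ first gives
\[
\int_U (u\wedge M \vee -M)\,\diver\varphi\,dx\,dy+o(1)\le P(\Sigma^u;U\times\mathbb{R}).
\]
Letting $M\to\infty$ (here $u\in L^1$ is used) yields $|Du|(U)\le P(\Sigma^u;U\times\mathbb{R})$.

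For the converse, suppose $u\in BV(U)$. Take smooth $u_h\to u$ in $L^1$ with $|Du_h|(U)\to|Du|(U)$, which is possible by the Anzellotti--Giaquinta strict approximation. For smooth functions the area formula gives
\[
P(\Sigma^{u_h};U\times\mathbb{R})=\int_U\sqrt{1+|\nabla u_h|^2}\le \mathcal{L}^n(U)+|Du_h|(U).
\]
Since $\chi_{\Sigma^{u_h}}\to\chi_{\Sigma^u}$ in $L^1_{\rm loc}$, lower semicontinuity of perimeter gives finiteness.

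\emph{Structure of $D\chi_{\Sigma^u}$.} Using the same test fields with a general $\psi(t)$, I will show the two identities
\[
D_{(x,y)}\chi_{\Sigma^u}=\int_{\mathbb{R}} D\chi_{\{u>t\}}\otimes\delta_t\,dt,
\qquad
D_t\chi_{\Sigma^u}=-(\mathrm{Id},\tilde u)_\#\big(\mathcal{L}^n\mres (S_u)^c\big).
\]
The second identity says that the vertical component only charges the graph over approximate continuity points, with total mass $\mathcal{L}^n$.

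Next comes a blow-up analysis at three kinds of points.
\begin{itemize}
\item At $(x,y)\in\mathcal{D}_u$, the rescaled subgraphs converge in $L^1_{\rm loc}$ to the subgraph of the affine map $\nabla u(x,y)\cdot(\cdot)$. Hence $(x,y,\tilde u)$ lies in $\Sigma^{(1/2)}$ with the normal \eqref{normal1}. By Federer's theorem, this covers $\partial^*\Sigma^u$ up to $\mathcal{H}^n$-null sets there.
\item At jump points, the blow-up is the vertical half-space $\{w\cdot\nu_u>0\}$, but only for $t\in(u^\wedge,u^\vee)$. Points with $t$ outside this interval have density $0$ or $1$. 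Together with $\mathcal{H}^{n-1}(S_u\setminus J_u)=0$, this gives the description over $S_u$ and the normal $(\nu_u,0)$.
\item On the remaining part of the graph, which carries $D^cu$, the vertical component $D_t\chi_{\Sigma^u}$ is absolutely continuous with respect to the pushforward of $\mathcal{L}^n$. That pushforward is singular with respect to $|D^cu|$ lifted to the graph. Hence $\nu_t=0$ there, and the horizontal component is $\frac{dD^cu}{d|D^cu|}$ by the first identity.
\end{itemize}

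\emph{Perimeter formula and main obstacle.} Computing $|D\chi_{\Sigma^u}|(B\times\mathbb{R})$ on the three mutually singular pieces gives, respectively:
\begin{itemize}
\item $\int_B\sqrt{1+|\nabla u|^2}$, by the area formula on the graph of the approximately differentiable part;
\item $\int_{J_u\cap B}[u]\,d\mathcal{H}^{n-1}=|D^ju|(B)$, by Fubini on vertical segments;
\item $|D^cu|(B)$.
\end{itemize}
Summing these yields \eqref{formula per subgraph}. I expect the main obstacle to be the Cantor part. One must show that the graph of $\tilde u$ over the set where $D^cu$ concentrates is correctly described (that is, $u$ is approximately continuous $|D^cu|$-a.e.), and that the horizontal measure there has no vertical component. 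I would handle this first by proving the mutual singularity claim above, and then by a Radon--Nikod\'ym comparison using the first identity. If that proves delicate, a fallback is to use strict convergence $u_h\to u$ together with Reshetnyak continuity to transfer the normal structure from smooth subgraphs.
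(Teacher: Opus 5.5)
The paper gives no argument of its own for this proposition; it imports the whole statement from Giaquinta--Modica--Sou\v{c}ek (Cartesian currents, Ch.~4, \S1.5), where it comes out of the structure of the boundary current of the subgraph. Your route is a self-contained geometric-measure-theoretic alternative, built from the following steps:
\begin{itemize}
\item testing against horizontal fields, and lower semicontinuity along a smooth approximation, for the equivalence;
\item the identities $\pi_\# D_{(x,y)}\chi_{\Sigma^u}=Du$ (via the coarea formula) and $D_t\chi_{\Sigma^u}=-(\mathrm{Id},\tilde u)_\#\mathcal{L}^n$;
\item blow-ups together with Federer's theorem;
\item a pushforward comparison on a concentration set of $D^cu$.
\end{itemize}
This does deliver the equivalence, the description of $\partial^*\Sigma^u$ over $S_u$, the three normal formulas and the perimeter formula, once you add two routine points. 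First, over $\mathcal{D}_u$ use the countable Lipschitz decomposition of $\tilde u|_{\mathcal{D}_u}$ (Federer 3.1.8). This makes the area formula applicable and makes graphs over $\mathcal{L}^n$-null parts of $\mathcal{D}_u$ $\mathcal{H}^n$-null. Second, record that $|D^su|(\mathcal{D}_u)=0$. This follows from your own tools: at points of $\mathcal{D}_u$ the blow-up normal has $\nu_t\neq 0$, while $\int_{\partial^*\Sigma^u\cap(Z\times\mathbb{R})}|\nu_t|\,d\mathcal{H}^n=\mathcal{L}^n(Z)=0$ for every $\mathcal{L}^n$-null $Z$.

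What your sketch does not prove is the first set identity over all of $(S_u)^c$. One inclusion needs only the missing but easy remark that $(z,t)$ has density $1$ (resp.\ $0$) in $\Sigma^u$ when $z\notin S_u$ and $t<\tilde u(z)$ (resp.\ $t>\tilde u(z)$).

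The other inclusion, $\{(z,\tilde u(z)):z\in(S_u)^c\}\subset_{\mathcal{H}^n}\partial^*\Sigma^u$, you obtain only over $\mathcal{D}_u$ and over a $|D^cu|$-full set. Let $N\subset(S_u)^c\setminus\mathcal{D}_u$ be Borel with $\mathcal{L}^n(N)=|Du|(N)=0$. Your own perimeter formula gives $\mathcal{H}^n(\partial^*\Sigma^u\cap(N\times\mathbb{R}))=0$. So the statement requires $\mathcal{H}^n(\{(z,\tilde u(z)):z\in N\})=0$, which is a Lusin (N)-type property of the graph map $z\mapsto(z,\tilde u(z))$ on sets invisible to $Du$.

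None of your tools reaches this. The two identities, the Radon--Nikod\'ym comparison and the Reshetnyak fallback all concern the measure $D\chi_{\Sigma^u}$, which does not charge $N\times\mathbb{R}$ at all. Blow-ups at points of $N$ need not give density $1/2$; cusp-like minima give density $1$. Slicing by lines combined with the one-dimensional result handles at most the rectifiable part of this graph piece.

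So this step needs a separate argument. Alternatively, state the identity over $\mathcal{D}_u\cup C$ for a suitable concentration set $C$ of $D^cu$, which is exactly what your proof yields. By contrast, the step you single out as the main obstacle, $|D^cu|(S_u)=0$, is standard: the Cantor part vanishes on $\mathcal{H}^{n-1}$-$\sigma$-finite sets.
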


\noindent
We will also need the following statement.
\begin{proposition} \label{prop_no flat parts means being Sobolev}
Let $U \subset \mathbb{R}^n$ be open and bounded, let $ u \in BV (U)$ and let $A \subset U$ be a Borel set. Then, the following are equivalent:

\begin{itemize}

\item[(i)] $\displaystyle \mathcal{H}^n 
\Big( \{ (x, y, t) \in \partial^* \Sigma^{u} : \nu_t^{\Sigma^{u}} (x, y, t) = 0 \} 
\cap \left( A \times \mathbb{R}   \right) \Big) = 0$;

\vspace{.2cm}

\item[(ii)] $\displaystyle P ( \Sigma^{u} ; B \times \mathbb{R}) = 0$ for every Borel set $B \subset A$ with $\mathcal{H}^n (B) = 0$.
\end{itemize}
Moreover, if $A$ is open, (i) and (ii) are also equivalent to:

\begin{itemize}

\item[(iii)] $\displaystyle u \in W^{1, 1} (A)$.

\end{itemize}
\end{proposition}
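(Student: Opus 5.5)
The equivalence will be proved by splitting the perimeter of $\Sigma^u$ over vertical cylinders into its absolutely continuous and singular parts, using Proposition~\ref{giaquinta}. Since $U$ is bounded and $u\in BV(U)$, $\Sigma^u$ has finite perimeter in $U\times\R$. For every Borel $B\subset U$, formula \eqref{formula per subgraph} gives
\[
P(\Sigma^u;B\times\R)=\int_B\sqrt{1+|\nabla u|^2}\,dx\,dy+|D^s u|(B).
\]
The same proposition describes $\partial^*\Sigma^u$ on three regions. Over $\mathcal{D}_u$ the normal is given by \eqref{normal1}, so $\nu_t^{\Sigma^u}=-1/\sqrt{1+|\nabla u|^2}\neq 0$ there. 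Over the support of $|D^cu|$ and over $S_u$ the normal has $\nu_t^{\Sigma^u}=0$.

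Next I identify the set $Z:=\{\nu_t^{\Sigma^u}=0\}\cap\partial^*\Sigma^u$ up to $\mathcal H^n$-null sets. Let $K$ be a Borel set with $\mathcal L^n(K)=0$ on which $D^su$ is concentrated; we may take $K\supset S_u\cup(U\setminus\mathcal D_u)$, because $\mathcal L^n(U\setminus\mathcal D_u)=0$. Then
\[
Z=_{\mathcal H^n}\partial^*\Sigma^u\cap(K\times\R).
\]
Over $U\setminus K\subset\mathcal D_u$ the normal component is nonzero. Over $K$, the portion on which $\nu_t^{\Sigma^u}\neq0$ is the graph over $K\cap\mathcal D_u\setminus S_u$. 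Its $\mathcal H^n$-measure equals $P(\Sigma^u;(K\cap\mathcal D_u)\times\R)$ minus the singular contribution there, that is, $\int_{K\cap\mathcal D_u}\sqrt{1+|\nabla u|^2}=0$. Hence
\[
\mathcal H^n\big(Z\cap(A\times\R)\big)=P(\Sigma^u;(A\cap K)\times\R)=|D^su|(A\cap K)=|D^su|(A).
\]
The delicate point is the careful bookkeeping of this identity: the reduced boundary over $K$ must be split correctly into jump, Cantor and graph parts. I would handle it by taking the Radon–Nikodym decomposition of $|D\chi_{\Sigma^u}|\mres(U\times\R)$ and projecting onto $U$.

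With this identity, (i) holds if and only if $|D^su|(A)=0$. For (i)$\Rightarrow$(ii), let $B\subset A$ with $\mathcal H^n(B)=0$. Then the integral term of the perimeter formula vanishes on $B$, and $|D^su|(B)\le|D^su|(A)=0$, so $P(\Sigma^u;B\times\R)=0$. For (ii)$\Rightarrow$(i), apply (ii) to $B=A\cap K$, which is $\mathcal H^n$-null, to get $|D^su|(A)=|D^su|(A\cap K)\le P(\Sigma^u;(A\cap K)\times\R)=0$.

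Finally, suppose $A$ is open. Then $|D^su|(A)=0$ means that $Du\mres A=\nabla u\,\mathcal L^n$ with $\nabla u\in L^1(A)$, and $u\in L^1(A)$, so $u\in W^{1,1}(A)$. Conversely, if $u\in W^{1,1}(A)$, then the distributional derivative on $A$ is absolutely continuous, so $D^su\mres A=0$. This gives (iii)$\Leftrightarrow$(i).
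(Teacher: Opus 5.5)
Your proof is correct in outline, but it reaches (i)$\Leftrightarrow$(ii) by a different route from the paper. The paper never examines the set $Z$. It obtains (i)$\Leftrightarrow$(ii) by quoting \cite[Lemma~4.1]{ChlebikCianchiFuscoAnnals05}, a statement for arbitrary sets of finite perimeter that does not use the subgraph structure. It then uses \eqref{formula per subgraph} only to see that $P(\Sigma^u;B\times\R)=|D^su|(B)$ for $\mathcal H^n$-null Borel $B\subset A$, which gives (ii)$\Leftrightarrow$(iii). You instead use the fine structure of $\partial^*\Sigma^u$ from Proposition~\ref{giaquinta} to prove the quantitative identity $\mathcal H^n(Z\cap(A\times\R))=|D^su|(A)$. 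From it you read off all three conditions as equivalent to $|D^su|(A)=0$. This is self-contained and more informative, at the cost of being tied to subgraphs. Your treatment of (ii) and (iii) through a concentration set $K$ is essentially the paper's.

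The step you call delicate is not justified by what you wrote. You say the part of $\partial^*\Sigma^u$ over $K\cap\mathcal D_u$ with $\nu_t\neq0$ has measure ``perimeter minus singular contribution''. That already presupposes that the singular part of the perimeter is carried exactly by $\{\nu_t=0\}$, which is the claim itself.

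The step is easy to repair in two parts.
\begin{itemize}
\item Over $K$: by Fubini, $\int\varphi\,dD_t\chi_{\Sigma^u}=-\int_U\varphi(x,y,u(x,y))\,dx\,dy$ for $\varphi\in C^1_c(U\times\R)$. So $|\nu_t^{\Sigma^u}|\,\mathcal H^n\mres\partial^*\Sigma^u=|D_t\chi_{\Sigma^u}|$ is the push-forward of $\mathcal L^n\mres U$ under the graph map $(x,y)\mapsto(x,y,u(x,y))$. Hence $\int_{\partial^*\Sigma^u\cap(K\times\R)}|\nu_t^{\Sigma^u}|\,d\mathcal H^n=\mathcal L^n(K)=0$, i.e.\ $\nu_t=0$ $\mathcal H^n$-a.e.\ on $\partial^*\Sigma^u$ over $K$.
\item Over $U\setminus K$: \eqref{normal1} is only asserted for $\mathcal L^n$-a.e.\ base point in $\mathcal D_u$. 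To transfer this to $\mathcal H^n$-a.e.\ point of $\partial^*\Sigma^u$ over $U\setminus K$, note that every $\mathcal L^n$-null Borel $N\subset U\setminus K$ satisfies $\mathcal H^n(\partial^*\Sigma^u\cap(N\times\R))=P(\Sigma^u;N\times\R)=|D^su|(N)=0$.
\end{itemize}
With these two remarks, $Z=_{\mathcal H^n}\partial^*\Sigma^u\cap(K\times\R)$, and your identity follows directly from \eqref{formula per subgraph}.
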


\begin{proof}
The equivalence between (i) and (ii) follows directly from \cite[Lemma 4.1]{ChlebikCianchiFuscoAnnals05}.
Let us now assume that $A$ is open, and let us show that (ii) is equivalent to (iii).
\noindent
Thanks to formula \eqref{formula per subgraph}, we have
that for every Borel set $B \subset A$ with $\mathcal{H}^n (B) = 0$
\[
P (\Sigma^{u} ; B \times \mathbb{R}) = \int_B \sqrt{1 + | \nabla u|^2} \, dx \, dy 
+| D^s u | (B) = | D^s u | (B).
\]
Therefore,
\[
\text{ (ii) }\, \Longleftrightarrow \, 
| D^s u | (B) = 0 \text{ for every Borel set $B \subset A$ with $\mathcal{H}^n (B) = 0$}
\, \Longleftrightarrow \, u \in W^{1, 1} (A).
\]
\end{proof}
\noindent
We now introduce a class of functions that extends Definition~\ref{def w1p0tau}.
\begin{definition} \label{def BV0tau}
Let $n \in \mathbb{N}$ with $n \geq 2$, let $\Omega \subset \R^n$ be open, let $u:\Omega \to \R$, and let  
$u_0$ be given by \eqref{def u0}.
We say that $u \in BV_{0, \tau} (\Omega)$ if the following conditions are satisfied:
\begin{itemize}

\item[(a)] $u_0 \in BV (\Phi_n (A \times \mathbb{S}^1) )$ for every 
open set $A \subset (0, \infty) \times \mathbb{R}^{n-2}$ with $A \subset \subset \Pi_{n-1} (\Omega)$,

\vspace{.2cm}

\item[(b)] $u \geq 0$ \quad  $\mathcal{L}^n$-a.e. in $\Omega \setminus 
\Phi_n(\Pi_{n-1}^{\textnormal{a}}(\Omega) \times\mathbb{S}^1)$,

\end{itemize}
where $\Phi_n$, $\Pi_{n-1} (\Omega)$, and $\Pi_{n-1}^{\textnormal{a}}(\Omega)$ are defined by \eqref{def_PiN}, \eqref{def_Pi_a}, and \eqref{def_PhiN}, respectively.
\end{definition}
\begin{remark} \label{rem holder}
We have $W^{1, p}_{0, \tau} (\Omega) \subset 
W^{1, 1}_{0, \tau} (\Omega) \subset BV_{0, \tau} (\Omega)$
for every $p \in [1, + \infty)$, where the first inclusion follows from 
the H\"older inequality on compact sets.
\end{remark}
\begin{remark} \label{u bv finite perimeter}
Thanks to Proposition~\ref{giaquinta}, if $u \in BV_{0, \tau} (\Omega)$ then $\Sigma^{u_0}$ is a set of finite perimeter 
in $\Phi_n (A \times \mathbb{S}^1) \times \R = \Phi_{n+1} ((A \times \R) \times \mathbb{S}^1)$, for every open set $A \subset \subset \Pi_{n-1} (\Omega)$.
\end{remark}
\noindent
Adapting \cite[Proposition~6.1]{CagnettiPeruginiStoger} to the case of subgraphs of functions belonging to $BV_{0,\tau}(\Omega)$, we obtain the following.
\begin{proposition}[Coarea Formula] \label{coarea formula}
Let $\Omega \subset \mathbb{R}^n$ be open, let 
$u \in BV_{0, \tau} (\Omega)$, and
let $g:\R^{n+1} \rightarrow [0,\infty]$ be a Borel function. 
Then, 
\begin{align*}
&\int_{\partial^* \Sigma^{u_0}} g(x, y, t) | \nu^{\Sigma^{u_0}}_{\!  x {\scriptscriptstyle\parallel}} (x, y, t) |
\, d\mathcal{H}^{n} (x, y, t) \\
&= \int_{(0, \infty) \times \mathbb{R}^{n-2} \times \mathbb{R}} \left( \int_{(\partial^* \Sigma^{u_0})_{(r, y, t) }} g(x, y, t) \, d\mathcal{H}^{0}(x) \right) dr \, dy \,  dt.
\end{align*}
\end{proposition}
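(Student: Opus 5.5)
The plan is to reduce the statement to the coarea formula for rectifiable sets in $\R^{n+1}$, applied to $\partial^* \Sigma^{u_0}$ through the map $\Psi(x,y,t) := (|x|, y, t)$. We work inside the region $\Phi_{n+1}((A \times \R)\times \mathbb{S}^1)$ for an open set $A \subset\subset \Pi_{n-1}(\Omega)$. By Remark~\ref{u bv finite perimeter}, $\Sigma^{u_0}$ has finite perimeter there. Hence $M := \partial^*\Sigma^{u_0}$ is countably $n$-rectifiable with finite $\mathcal{H}^n$ measure in that region. Since $\Psi$ is Lipschitz on $\{ x \neq 0\}$, the general coarea formula (e.g. \cite[Theorem 2.93]{AFP}) gives
\[
\int_M g \, J^M\Psi \, d\mathcal{H}^n = \int_{(0,\infty)\times\R^{n-2}\times\R} \Big( \int_{M \cap \Psi^{-1}(r,y,t)} g \, d\mathcal{H}^0 \Big) \, dr\, dy\, dt .
\]
Note that $M \cap \Psi^{-1}(r,y,t)$ is exactly the slice $(\partial^*\Sigma^{u_0})_{(r,y,t)}$. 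This is the same structure as in \cite[Proposition~6.1]{CagnettiPeruginiStoger}, now in dimension $n+1$ with the variable $t$ adjoined to the $z$-variables.

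The key computation is the tangential Jacobian $J^M\Psi$. The differential of $\Psi$ has rows $\hat{x}\cdot dx$, $dy$ and $dt$. Its kernel at $(x,y,t)$ is spanned by $(x_\parallel,0,0)$. The $(n+1)$ gradient vectors $(\hat x,0,0)$, $(0,e_i,0)$, $(0,0,1)$ form an orthonormal set together with $(x_\parallel,0,0)$. Restricting $d\Psi$ to the approximate tangent plane $T = \nu^\perp$ with $\nu = \nu^{\Sigma^{u_0}}$, the standard linear-algebra identity for an orthogonal projection composed with restriction to a hyperplane gives
\[
J^M \Psi = |\nu \cdot (x_\parallel,0,0)| = |\nu^{\Sigma^{u_0}}_{x\parallel}| .
\]
Indeed, for a map that is an orthogonal projection onto the complement of a unit vector $e$, the Jacobian of its restriction to $\nu^\perp$ equals $|\nu\cdot e|$. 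Substituting this into the coarea formula gives the claimed identity on $\Phi_{n+1}((A\times\R)\times\mathbb{S}^1)$, for every nonnegative Borel $g$.

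It remains to remove the restriction to $A$. Exhaust $\Pi_{n-1}(\Omega)$ by an increasing sequence of open sets $A_k \subset\subset \Pi_{n-1}(\Omega)$. Since $g \geq 0$, monotone convergence applies on both sides. Outside $\Phi_n(\Pi_{n-1}(\Omega)\times\mathbb{S}^1)$ the function $u_0$ is undefined, so $\Sigma^{u_0}$ lives over $\Phi_n(\Pi_{n-1}(\Omega)\times \mathbb{S}^1)$ and both sides vanish there. The set $\{x=0\}$ is $\mathcal{H}^n$-negligible on $M$ after weighting by $|\nu_{x\parallel}|$; on the right-hand side it corresponds to $r=0$, which has measure zero.

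The main obstacle is justifying the Jacobian identity at $\mathcal{H}^n$-a.e. point of the reduced boundary, where only an approximate tangent plane is available. This is standard: at every point of $\partial^*\Sigma^{u_0}$ the approximate tangent plane is $\nu^\perp$, and $\Psi$ is smooth away from $x = 0$. The remaining work is checking that the local finite-perimeter property holds uniformly enough to apply the coarea formula on each piece, which it does.
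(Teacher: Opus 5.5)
Your proposal is correct and is essentially the paper's argument. The paper states this result as an adaptation of \cite[Proposition~6.1]{CagnettiPeruginiStoger}, that is, the coarea formula for the rectifiable set $\partial^*\Sigma^{u_0}$ with respect to the Lipschitz map $(x,y,t)\mapsto(|x|,y,t)$ (treating $t$ as an extra $z$-variable), whose tangential Jacobian is $|\nu\cdot(x_\parallel,0,0)|=|\nu^{\Sigma^{u_0}}_{\!  x {\scriptscriptstyle\parallel}}|$, exactly as you compute; the only slip is cosmetic: $d\Psi$ has $n$ (not $n+1$) orthonormal rows, which together with $(x_\parallel,0,0)$ form an orthonormal basis of $\R^{n+1}$.
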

\begin{remark} \label{rem parallel normal if u sobolev}
From Proposition~\ref{giaquinta} it follows that when $u \in BV_{0, \tau} (\Omega)$,
the subgraph $\Sigma^{u_0}$ is a set of locally finite perimeter in $\Phi_{n+1}((\Pi_{n-1}(\Omega)\times \mathbb{R})\times \mathbb{S}^1)$. 
Moreover, if $u \in W^{1, 1}_{0, \tau} (\Omega)$
\[
\nu^{\Sigma^{u_0}}_{\!  x {\scriptscriptstyle\parallel}} (x, y, u_0 (x, y))
= \displaystyle \frac{\Dx u_0 (x, y)}{\sqrt{1 + |\nabla u_0 (x, y)|^2}}, \quad \text{ for $\mathcal{H}^n$-a.e. } (x,y) \in 
\mathcal{D}_{u_0} \cap \Phi_n (\Pi_{n-1}(\Omega) \times \mathbb{S}^1).
\]
\end{remark}

\begin{remark} 
Combining Proposition~\ref{coarea formula} and Remark~\ref{rem parallel normal if u sobolev} we obtain that
if $u \in W^{1, 1}_{0, \tau} (\Omega)$, then
\begin{equation} \label{only tangential non zero}
\mathcal{H}^n \left(  \partial^* \Sigma^{u_0} \cap \Phi_{n+1} (R \times \mathbb{S}^{1}) \cap (\{ \Dx u_0 \neq 0\} \times \mathbb{R}) \right) = 0,
\end{equation}
for every Borel set $R \subset \Pi_{n-1} (\Omega) \times \R$ with $\mathcal{H}^n (R) = 0$.
Indeed,
\begin{align*}
&\mathcal{H}^n \left(  \partial^* \Sigma^{u_0} \cap \Phi_{n+1} (R \times \mathbb{S}^{1}) \cap (\{ \Dx u_0 \neq 0\} \times \mathbb{R}) \right) \\
  &= \int_{R}  \left(
\int_{( \partial^{*} \Sigma^{u_0} \cap (\{ \Dx u_0 \neq 0\} \times \mathbb{R}) )_{(r,y, t)}}
  \frac{\chi_{\{ \Dx u_0  \neq 0 \}} (x, y)}{ | \Dx u_0  (x, y)|}  \sqrt{1 + |\nabla u_0|^2} \, d \mathcal{H}^0 (x)  \right) \, dr  \, dy \, dt = 0,
\end{align*}
where in the last equality we used the fact that $\mathcal{H}^n (R) = 0$.
\end{remark}

\begin{proposition}\label{prop_circular rear and circular sym are connected}
Let $n \in \mathbb{N}$ with $n \geq 2$, and let $\Omega \subset \mathbb{R}^n$ be open. 
Let $u: \Omega \to \R$ be a Lebesgue measurable function such that $u_0 \in L^1_{\textnormal{loc}} (\Phi_{n}  (\Pi_{n-1} (\Omega)  \times \mathbb{S}^{1}))$, where $u_0$ is given by \eqref{def u0}.
Let $\mu$, $v_\mu$, and $\alpha_\mu$ be given by \eqref{mu distr}, \eqref{def vmu} and \eqref{def alpha}, respectively.
%
%
%
Then, $\mu \in L^1 (A \times (-d, + \infty ))$ for every open set $A \subset \subset \Pi_{n-1} (\Omega)$ and for every $d > 0$. 
\end{proposition}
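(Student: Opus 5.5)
The plan is to split the domain of integration at $t=0$. On the bounded part $(-d,0)$ we use the trivial bound on $\mu$. On the unbounded part $(0,\infty)$ we use a layer-cake identity, which turns $\int_0^\infty \mu\,dt$ into a slice integral of $(u_0)_+$. Integrating that in $(r,y)$ then reconstructs the Lebesgue integral of $(u_0)_+$ over $\Phi_n(A\times\mathbb{S}^1)$, which is finite by the local integrability assumption.

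\emph{Measurability.} First check that $\mu$ is Lebesgue measurable on $\Pi_{n-1}(\Omega)\times\R$. The set $\{(x,y,t): u_0(x,y)>t\}$ is measurable in $\Phi_n(\Pi_{n-1}(\Omega)\times\mathbb{S}^1)\times\R$. Pulling it back through the diffeomorphism $\Phi_{n+1}$, i.e.\ writing $x=r\omega$, Tonelli's theorem in the $\omega$ variable shows that $(r,y,t)\mapsto \mu(r,y,t)=r\,\mathcal{H}^1(\{\omega\in\mathbb{S}^1: u_0(r\omega,y)>t\})$ is measurable.

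\emph{Geometry of the region.} Next observe that $\Pi_{n-1}(\Omega)$ is open, since $\Omega$ is open. Indeed, $(r,y)\in \Pi_{n-1}(\Omega)$ iff the slice $\Omega_{(r,y)}$ is nonempty, because a nonempty slice is relatively open in $\partial D(r)$ and so has positive length. Nonemptiness is an open condition, since a point $(r\omega,y)\in\Omega$ persists under small perturbations of $(r,y)$. Hence if $A\subset\subset\Pi_{n-1}(\Omega)$, the closure $\overline A$ is a compact subset of $(0,\infty)\times\R^{n-2}$. In particular $r\le R_A<\infty$ on $A$, $\mathcal{L}^{n-1}(A)<\infty$, and $\Phi_n(\overline A\times\mathbb{S}^1)$ is a compact subset of $\Phi_n(\Pi_{n-1}(\Omega)\times\mathbb{S}^1)$.

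\emph{The part $t\in(-d,0)$.} By Remark~\ref{mu(t) right-cont}, $0\le\mu\le 2\pi r\le 2\pi R_A$. Therefore
\[
\int_{A\times(-d,0)}\mu\,dr\,dy\,dt\le 2\pi R_A\, d\,\mathcal{L}^{n-1}(A)<\infty.
\]

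\emph{The part $t\in(0,\infty)$.} For fixed $(r,y)$, Fubini on $\partial D(r)\times(0,\infty)$ (layer cake) gives
\[
\int_0^\infty\mu(r,y,t)\,dt=\int_{\partial D(r)}(u_0)_+(x,y)\,d\mathcal{H}^1(x).
\]
Integrating over $A$ and using polar coordinates in the $x$ variable, $dx=d\mathcal{H}^1\llcorner\partial D(r)\,dr$, we obtain
\[
\int_{A\times(0,\infty)}\mu\,dr\,dy\,dt=\int_{\Phi_n(A\times\mathbb{S}^1)}(u_0)_+\,dx\,dy\le\int_{\Phi_n(\overline A\times\mathbb{S}^1)}|u_0|\,dx\,dy.
\]
The right-hand side is finite because $u_0\in L^1_{\rm loc}(\Phi_n(\Pi_{n-1}(\Omega)\times\mathbb{S}^1))$ and the integration set is compact in that open set.

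Adding the two parts gives $\mu\in L^1(A\times(-d,+\infty))$. The only step needing genuine care is the openness of $\Pi_{n-1}(\Omega)$ together with the compactness of $\Phi_n(\overline A\times\mathbb{S}^1)$ inside the domain of $u_0$, which is what allows the local integrability hypothesis to be used. The remaining steps are routine applications of Tonelli's theorem.
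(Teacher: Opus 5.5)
Your proof is correct and follows essentially the same argument as the paper. The paper also splits at $t=0$, bounds the $(-d,0)$ part by $\mathcal{H}^n(\Phi_n(A\times\mathbb{S}^1))\,d$ (your $\mu\le 2\pi r$ bound), and uses Tonelli to turn the $(0,\infty)$ part into $\|(u_0)_+\|_{L^1(\Phi_n(A\times\mathbb{S}^1))}$. Your check that $\Pi_{n-1}(\Omega)$ is open and that $\Phi_n(\overline A\times\mathbb{S}^1)$ is a compact subset of the domain of $u_0$ makes explicit a step the paper uses without comment.
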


\begin{proof}
Let $A \subset \subset \Pi_{n-1} (\Omega)$ be open and let $d > 0$.
We have
\begin{align*}
&\| \mu \|_{L^1 (A \times (0, + \infty ))} 
= \int_0^{\infty} \left( \int_{A   } \mu (r, y, t ) \, dr d y  \right) dt  
= \int_0^{\infty} \left( \int_{A} \int_{\partial D (r)}  \chi_{\{u_0 >  t \}} (x, y) \, d \mathcal{H}^1 (x) \, dr d y  \right) dt  \\
&= \int_0^{\infty} \left( \int_{ \Phi_n (A \times \mathbb{S}^{1})  } \chi_{\{u_0 >  t \}} (x, y) \, dx d y  \right) dt
= \| (u_0)_+ \|_{L^1 (\Phi_n (A \times \mathbb{S}^{1}))} < + \infty. 
\end{align*}
On the other hand,
 \begin{align*}
\| \mu \|_{L^1 (A \times (-d , 0 ))} 
&= \int_{-d}^{0} \left( \int_{A   } \mu (r, y, t ) \, dr d y  \right) dt
= \int_{-d}^{0} \left( \int_{A} \int_{\partial D (r)}  \chi_{\{u_0 >  t \}} (x, y) \, d \mathcal{H}^1 (x) \, dr d y  \right) dt  \\
&= \int_{-d}^{0} \left( \int_{ \Phi_n (A \times \mathbb{S}^{1})  } \chi_{\{u_0 >  t \}} (x, y) \, dx d y  \right) dt 
\leq \mathcal{H}^n \left( \Phi_n (A \times \mathbb{S}^{1}) \right) d
< + \infty.
 \end{align*}
Therefore, $\mu \in L^1 (A \times (-d, + \infty ))$.
\end{proof}

Let us now show that the function $\mu$ is well defined.
\begin{proposition} \label{prop several useful things}
Let $n \in \mathbb{N}$ with $n \geq 2$, and let $\Omega \subset \mathbb{R}^n$ be open. 
Let $u_1, u_2: \Omega \to \R$ be Lebesgue measurable functions such that $u_{1,0}, u_{2,0} \in L^1_{\textnormal{loc}} (\Phi_{n}  (\Pi_{n-1} (\Omega)  \times \mathbb{S}^{1}))$, where $u_{1,0}$ and $u_{2,0}$ are given by \eqref{def u0} with $u_1$ and $u_2$ in place of $u$, respectively.
Let $\mu_{1}$ and $\mu_{2}$ be defined by \eqref{mu distr} with $u_1$ and $u_2$ in place of $u$, respectively. 
Then, the following are equivalent:
\begin{enumerate}

\item[(i)] $u_1 = u_2$ $\mathcal{H}^n$-a.e. in $\Omega$;

\vspace{.2cm}

\item[(ii)] $\Sigma^{u_{1,0}} =_{\mathcal{H}^{n+1}} \Sigma^{u_{2,0}}$ in $\Phi_{n}  (\Pi_{n-1} (\Omega)  \times \mathbb{S}^{1})) \times \R$.

\end{enumerate}
Moreover, both \textnormal{(i)} and  \textnormal{(ii)} imply 
\begin{enumerate}

\vspace{.2cm}

\item[(iii)] \( \mu_{1} = \mu_{2} \) $\mathcal{H}^n$-a.e. in $\Pi_{n-1} (\Omega) \times \R$.

\end{enumerate}
%
%
%
%
%
%
%
\end{proposition}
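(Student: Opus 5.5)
The plan is to work entirely through the extensions $u_{1,0}, u_{2,0}$ on the set $W:=\Phi_n(\Pi_{n-1}(\Omega)\times\mathbb{S}^1)$. By \eqref{def u0}, both extensions vanish on $W\setminus\Omega$, so (i) is equivalent to
\[
u_{1,0}=u_{2,0}\quad \mathcal{H}^n\text{-a.e. in } W .
\]
We will use this reformulation throughout.

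To prove (i)$\Rightarrow$(ii), note that for every $(x,y)\in W$ with $u_{1,0}(x,y)=u_{2,0}(x,y)$, the vertical fibres of the two subgraphs over $(x,y)$ coincide. Hence
\[
\Sigma^{u_{1,0}}\Delta\Sigma^{u_{2,0}}\subset \{(x,y)\in W:\ u_{1,0}\neq u_{2,0}\}\times\mathbb{R}.
\]
The right-hand side is the product of an $\mathcal{H}^n$-null set with $\mathbb{R}$, so it is $\mathcal{H}^{n+1}$-null by Fubini.

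For (ii)$\Rightarrow$(i), apply Fubini (Tonelli) to $\chi_{\Sigma^{u_{1,0}}\Delta\Sigma^{u_{2,0}}}$ on $W\times\mathbb{R}$. This gives
\[
\int_W |u_{1,0}-u_{2,0}|\,dx\,dy=\mathcal{H}^{n+1}\big(\Sigma^{u_{1,0}}\Delta\Sigma^{u_{2,0}}\big)=0,
\]
because the fibre of the symmetric difference over $(x,y)$ is the interval between $u_{1,0}(x,y)$ and $u_{2,0}(x,y)$. To avoid integrability issues, one can just say that the fibre has positive length exactly when $u_{1,0}(x,y)\neq u_{2,0}(x,y)$, so that set of $(x,y)$ is null. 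This gives (i).

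For (iii), use the identity behind Proposition~\ref{prop_circular rear and circular sym are connected}. For $A\subset\subset\Pi_{n-1}(\Omega)$ open and $d>0$, the coarea/polar-coordinates computation combined with Fubini on $\Phi_n(A\times\mathbb{S}^1)\times(-d,d)$ gives
\[
\int_{-d}^{d}\!\int_A|\mu_1-\mu_2|\,dr\,dy\,dt
\le\int_{-d}^{d}\!\int_A\int_{\partial D(r)}|\chi_{\{u_{1,0}>t\}}-\chi_{\{u_{2,0}>t\}}|\,d\mathcal{H}^1\,dr\,dy\,dt .
\]
The right-hand side equals $\mathcal{H}^{n+1}\big((\Sigma^{u_{1,0}}\Delta\Sigma^{u_{2,0}})\cap(\Phi_n(A\times\mathbb{S}^1)\times(-d,d))\big)$, which vanishes by (ii). Exhausting $\Pi_{n-1}(\Omega)$ by countably many such $A$ and letting $d\to\infty$ gives $\mu_1=\mu_2$ $\mathcal{H}^n$-a.e. in $\Pi_{n-1}(\Omega)\times\mathbb{R}$.

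There is no real obstacle. The only care needed is the measurability of $\mu_i$, which is inherited from Fubini applied to the measurable set $\Sigma^{u_{i,0}}$, and the use of the measure $dr\,dy$ (the Jacobian $r$ being absorbed in $\partial D(r)$) when rewriting the integrals in polar coordinates.
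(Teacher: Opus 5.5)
Your proposal is correct and follows essentially the paper's route. The paper proves, on each $\Phi_n(A\times\mathbb{S}^1)$ with $A\subset\subset\Pi_{n-1}(\Omega)$, the chain of Fubini/Tonelli identities $\|u_{1,0}-u_{2,0}\|_{L^1}=\mathcal{H}^{n+1}\big(\Sigma^{u_{1,0}}\Delta\Sigma^{u_{2,0}}\big)=\int_{A\times\mathbb{R}}\mathcal{H}^1(\text{slice of the symmetric difference})\,dr\,dy\,dt$. It reads off (i)$\Leftrightarrow$(ii) from the first equality and (ii)$\Rightarrow$(iii) from the second, exactly as you do; your bound $|\mu_1-\mu_2|\le\int_{\partial D(r)}|\chi_{\{u_{1,0}>t\}}-\chi_{\{u_{2,0}>t\}}|\,d\mathcal{H}^1$ is the same step.

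One point you use tacitly, as the paper also does, is that your reformulation of (i) needs $\Omega\subset W$ up to an $\mathcal{H}^n$-null set. This holds because $\Omega$ is open, so every point of $\Omega$ off the axis $\{x=0\}$ lies in $W$, and the axis is $\mathcal{H}^n$-null.
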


\begin{proof}
We start by showing that
for every open set $A \subset \subset \Pi_{n-1} (\Omega)$ 
\begin{equation} \label{two equalities}
\begin{split} 
&\| u_1 - u_2 \|_{L^1 (\Omega \cap \Phi_{n}  (A  \times \mathbb{S}^{1}))}  \\
&= \mathcal{H}^{n+1} \Big( (\Sigma^{u_{1,0}} \Delta \Sigma^{u_{2,0}}) \cap \big( \Phi_{n+1}  ((A \times \R) \times \mathbb{S}^{1})) \big) \Big)  \\
&= \int_{A \times \R }
\mathcal{H}^1 \Big( (\partial D (r) \cap \{ u_{1,0} (\cdot, y) > t \} )\Delta (\partial D (r) \cap \{ u_{2,0} (\cdot, y) > t \}) \Big) \, dr \, dy \, dt. 
\end{split}
\end{equation}
Indeed, let $A \subset \subset \Pi_{n-1} (\Omega)$ be open. Then,
\begin{align*}
&\| u_1 - u_2 \|_{L^1 (\Omega \cap \Phi_{n}  (A  \times \mathbb{S}^{1}))}
= \| u_{1,0} - u_{2,0} \|_{L^1 (\Phi_{n}  (A  \times \mathbb{S}^{1}))} \\
&= \int_{\Phi_{n}  (A  \times \mathbb{S}^{1})} |u_{1,0} (x, y) - u_{2,0} (x, y) |  \, dx \, dy \\ 
&= \int_{\Phi_{n}  (A  \times \mathbb{S}^{1})} \left( \int_{\R} ( \chi_{[u_{1,0} (x, y), u_{2,0} (x, y))} (t)
+  \chi_{[u_{2,0} (x, y), u_{1,0} (x, y))} (t)  ) \, dt \right) \, dx \, dy \\
&= \int_{\Phi_{n}  (A  \times \mathbb{S}^{1})} \left( \int_{\R} ( 
\chi_{ \{ u_{2,0} > t \} \setminus \{ u_{1,0} > t \} } (x, y)
+ \chi_{ \{ u_{1,0} > t \} \setminus \{ u_{2,0} > t \} } (x, y) 
) 
\, dt \right) \, dx \, dy \\
&= \int_{\Phi_{n}  (A  \times \mathbb{S}^{1})} \left( \int_{\R}  ( 
\chi_{\Sigma^{u_{2,0}} \setminus \Sigma^{u_{1,0}}} (x, y, t)
+  \chi_{\Sigma^{u_{1,0}} \setminus \Sigma^{u_{2,0}}} (x, y, t) ) \, dt \right) \, dx \, dy \\
&= \mathcal{H}^{n+1} \Big( (\Sigma^{u_{1,0}} \Delta \Sigma^{u_{2,0}}) \cap \big( \Phi_{n+1}  ((A \times \R) \times \mathbb{S}^{1})) \big) \Big).
\end{align*}
On the other hand,
\begin{align*}
&\mathcal{H}^{n+1} \Big( (\Sigma^{u_{1,0}} \Delta \Sigma^{u_{2,0}}) \cap \big( \Phi_{n+1}  ((A \times \R) \times \mathbb{S}^{1})) \big) \Big) \\
&= \int_{\Phi_{n+1}  ( (A \times \R)  \times \mathbb{S}^{1})} 
\chi_{ \{ u_{1,0} > t \} \Delta \{ u_{2,0} > t \}} (x, y) \, dt  \, dx \, dy \\
&= \int_{A \times \R } \left( \int_{\partial D (r)} \chi_{ \{ u_{1,0} > t \} \Delta \{ u_{2,0} > t \}} (x, y) \, d \mathcal{H}^1 (x) \right) dr  \, dy \, dt \\
&=\int_{A \times \R }
\mathcal{H}^1 \Big( (\partial D (r) \cap \{ u_{1,0} (\cdot, y) > t \} )\Delta (\partial D (r) \cap \{ u_{2,0} (\cdot, y) > t \}) \Big) \, dr \, dy \, dt,
\end{align*}
and this shows \eqref{two equalities}.
Since $A \subset \subset \Pi_{n-1} (\Omega)$ was arbitrary, from the first equality in \eqref{two equalities}, it follows that 
(i) $\Longleftrightarrow$ (ii).

Suppose now that (ii) holds.
Then, since $A \subset \subset \Pi_{n-1} (\Omega)$ was arbitrary, from the second equality in \eqref{two equalities} we obtain that
\[
\partial D (r) \cap \{ u_{1,0} (\cdot, y) > t \} =_{\mathcal{H}^1} \partial D (r) \cap \{ u_{2,0} (\cdot, y) > t \}, \qquad \text{ for $\mathcal{H}^n$-a.e. } (r, y, t) \in \Pi_{n-1} (\Omega) \times \R.
\]
As a consequence, 
\[
\mu_{u_1} (r, y, t) = \mathcal{H}^1 (\partial D (r) \cap \{ u_{1,0} (\cdot, y) > t \}) =  \mathcal{H}^1 (\partial D (r) \cap \{ u_{2,0} (\cdot, y) > t \}) = \mu_{u_2} (r, y, t), 
\]
for $\mathcal{H}^n$-a.e. $(r, y, t) \in \Pi_{n-1} (\Omega) \times \R$, and this shows (iii).
\end{proof}

The next result will be used to prove the P\'olya--Szeg\"o inequality. 
\begin{proposition} \label{ssssd}
Let $n \in \mathbb{N}$ with $n \geq 2$, and let $\Omega \subset \mathbb{R}^n$ be open. 
Let $u \in BV_{0, \tau} (\Omega)$, and let $\mu$ and $\alpha_\mu$ be given by \eqref{mu distr} and
\eqref{def alpha}, respectively. 
Then, 
\begin{equation} \label{only 0<alpha<pi counts}
\begin{split} 
(r, y, t) \in \{ \alpha_\mu = 0 \}^{(1)} 
&\quad \Longrightarrow \quad
(x, y, t) \in (\Sigma^{u_0})^{(0)} \quad \text{ for every } x \in \partial D (r); \\
(r, y, t) \in \{ \alpha_\mu = \pi \}^{(1)} 
&\quad \Longrightarrow \quad
(x, y, t) \in (\Sigma^{u_0})^{(1)} \quad \text{ for every } x \in \partial D (r).
\end{split}
\end{equation}
\end{proposition}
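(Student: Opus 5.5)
The plan is to relate densities in $\R^{n+1}$ around $(x,y,t)$ to densities in the $(r,y,t)$-space via the diffeomorphism $\Phi_{n+1}$. We use Fubini along circles, which is the same computation as in the proof of Proposition~\ref{prop several useful things}. I treat the first implication; the second follows the same way with complements, using $\mu(r,y,t)=2\pi r$ exactly when $\alpha_\mu=\pi$.

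Fix $(r_0,y_0,t_0)\in\{\alpha_\mu=0\}^{(1)}$ and $x_0\in\partial D(r_0)$. For small $\rho>0$, the ball $B^{n+1}_\rho(x_0,y_0,t_0)$ is contained in the set $\Phi_{n+1}(Q_{c\rho}\times\mathbb{S}^1)$, where $Q_{c\rho}$ is the ball of radius $c\rho$ around $(r_0,y_0,t_0)$ in $(0,\infty)\times\R^{n-2}\times\R$. Any $c>1$ works, since $|x|$ is $1$-Lipschitz. By Fubini in the coordinates $(r,y,t,\omega)$, with Jacobian $r$,
\[
\mathcal{H}^{n+1}\big(\Sigma^{u_0}\cap B^{n+1}_\rho\big)\le \int_{Q_{c\rho}} \mathcal{H}^1\big(\{u_0>t\}_{(r,y)}\big)\,dr\,dy\,dt=\int_{Q_{c\rho}}\mu(r,y,t)\,dr\,dy\,dt .
\]
On $Q_{c\rho}\cap\{\alpha_\mu=0\}$ the integrand vanishes. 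Elsewhere it is bounded by $2\pi(r_0+c\rho)$. Hence the right-hand side is at most
\[
C\,\mathcal{H}^n\big(Q_{c\rho}\setminus\{\alpha_\mu=0\}\big)=o(\rho^n),
\]
because $(r_0,y_0,t_0)$ is a density-one point of $\{\alpha_\mu=0\}$ in $\R^n$.

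This gives only $o(\rho^n)$, whereas density zero in $\R^{n+1}$ requires $o(\rho^{n+1})$. This mismatch is the main obstacle: the slicing loses the circular factor, since the full circle is integrated while the ball covers only an arc of length about $\rho$. To fix it, I would restrict the angular variable to an arc of length $\sim\rho/r_0$ around $\hat x_0$. Then
\[
\mathcal{H}^{n+1}\big(\Sigma^{u_0}\cap B_\rho\big)\le \int_{Q_{c\rho}}\mathcal{H}^1\big(\{u_0>t\}_{(r,y)}\cap \text{arc}\big)\,dr\,dy\,dt .
\]
The integrand is still $0$ on $\{\alpha_\mu=0\}$ and at most $C\rho$ elsewhere. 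This yields
\[
\mathcal{H}^{n+1}\big(\Sigma^{u_0}\cap B_\rho\big)\le C\rho\cdot o(\rho^n)=o(\rho^{n+1}),
\]
so $\theta(\Sigma^{u_0},(x_0,y_0,t_0))=0$ as required. The only care needed is that $\mu$ is defined pointwise, so the zero set is taken for the given representative, and $\{\alpha_\mu=0\}^{(1)}$ refers to its density; both are consistent.

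For the second implication, apply the same estimate to the complement $\Phi_{n+1}((\Pi_{n-1}(\Omega)\times\R)\times\mathbb{S}^1)\setminus\Sigma^{u_0}$. Its slice at $(r,y,t)$ is $\{u_0\le t\}_{(r,y)}$, which has $\mathcal{H}^1$-measure $2\pi r-\mu(r,y,t)$. This vanishes on $\{\alpha_\mu=\pi\}$, so the complement has density zero and $\Sigma^{u_0}$ has density one. No hypothesis beyond measurability of $u_0$ and the definition of $\mu$ is used; the $BV_{0,\tau}$ assumption only guarantees that everything is well defined.
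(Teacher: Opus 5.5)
Your argument is correct and is essentially the paper's proof. The paper slices $\Sigma^{u_0}\cap B^{n+1}_\rho(x,y,t)$ along circles, using that the ball projects into $B^n_\rho(|x|,y,t)$, and bounds the circle integrand by $\mathcal{H}^1(\partial D(r')\cap B^2_\rho(x))\le C\rho$ on $\{\alpha_\mu>0\}$ and by zero elsewhere; this is exactly your arc restriction, yielding $C\rho\,\mathcal{H}^n(\{\alpha_\mu>0\}\cap B^n_\rho)=o(\rho^{n+1})$, and the second implication follows by the complement argument as you describe.
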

\begin{proof}
We will only prove the first implication, since the second one is analogous. In the following, we will use the fact that
\begin{align}
(x' ,  y' ,  t') \in B^{n+1}_\rho ( x , y, t) 
\quad \Longrightarrow \quad
(|x'|, y', t') \in B^{n}_\rho ( |x| , y, t) \quad \text{ for every } \rho > 0. \label{rt1} 
\end{align}
Let $(r, y, t) \in \{ \alpha_\mu = 0 \}^{(1)}$, and let $x \in \partial D (r)$.
Let now $\rho \in (0, |x|)$. Thanks to \eqref{rt1}, we have 
 \begin{align*}
&\mathcal{H}^{n+1} \left( \Sigma^{u_0} \cap B^{n+1}_\rho ( x , y, t)  \right)
=  \int_{\mathbb{R}^{n+1}} \chi_{\Sigma^{u_0} \cap B^{n+1}_\rho ( x , y, t)} (x' ,  y' ,  t')
\, d x' \, d y' \, d t' \\
&= \int_{\Pi_{n-1} (\Omega) \times \R} 
\left( 
 \int_{ \partial D (r')} \chi_{\Sigma^{u_0} \cap B^{n+1}_\rho ( x , y, t)} (x' ,  y' ,  t')  \chi_{B^{2}_\rho ( x )} ( x') \, d \mathcal{H}^1 (x')  
 \right) 
 \, d r' \, d y'  \, d t' \\
&\leq  \int_{\Pi_{n-1} (\Omega) \times \R} \chi_{B^{n}_\rho ( |x| , y, t)} (r' ,  y' ,  t') 
\left( 
 \int_{ \partial D (r')} \chi_{\Sigma^{u_0}} (x' ,  y' ,  t') 
 \chi_{B^{2}_\rho ( x )} ( x') 
 \, d \mathcal{H}^1 (x')  
 \right) 
 \, d r' \, d y'  \, d t' \\
&\leq \int_{\Pi_{n-1} (\Omega) \times \R} \chi_{\{ \alpha_\mu > 0 \} \cap B^{n}_\rho ( |x| , y, t)} (r' ,  y' ,  t')
\left( 
 \int_{ \partial D (r')}  
 \chi_{B^{2}_\rho ( x )} ( x') 
 \, d \mathcal{H}^1 (x')  
 \right) 
 \, d r' \, d y'  \, d t' \\
 &=  \int_{\Pi_{n-1} (\Omega) \times \R} \mathcal{H}^1 (\partial D (r') \cap B^{2}_\rho ( x ))
 \chi_{\{ \alpha_\mu > 0 \} \cap B^{n}_\rho ( |x| , y, t)} (r' ,  y' ,  t') 
 \, d r' \, d y'  \, d t'.
 \end{align*}
Note now that in the last integral only values of $r' \in (|x|-\rho, |x|+\rho)$ give a contribution. Therefore, for $\rho$ sufficiently small, 
we have
\[
\mathcal{H}^1 (\partial D (r') \cap B^{2}_\rho ( x )) 
\leq C \rho, 
\]
for some $C > 0$ independent of $\rho$.
 Thus, 
\begin{align*}
0 &\leq \limsup_{\rho \to 0^+} \frac{\mathcal{H}^{n+1} \left( \Sigma^{u_0} \cap B^{n+1}_\rho ( x , y, t)  \right)}
{\mathcal{H}^{n+1} \left(  B^{n+1}_\rho ( r \omega , y, t)  \right)}
\leq \frac{C}{\omega_{n+1}} \limsup_{\rho \to 0^+}  \frac{\mathcal{H}^n (\{ \alpha_\mu > 0 \} \cap B^{n}_\rho ( |x| , y, t))}{\rho^n} = 0,
 \end{align*}
 where we used the fact that $(|x|, y, t) \in \{ \alpha_\mu = 0 \}^{(1)} = \{ \alpha_\mu > 0 \}^{(0)}$. 
  \end{proof}

\noindent
We now state a useful result, which will be used extensively in the following, and can be obtained by combining together \cite[Theorem~6.2]{CagnettiPeruginiStoger} and Proposition~\ref{giaquinta} (see also \cite{Volpert}).
\begin{proposition}[Vol'pert] \label{volpert theorem}
Let $n \in \mathbb{N}$ with $n \geq 2$, and let $\Omega \subset \mathbb{R}^n$ be open. 
Let $u \in BV_{0, \tau} (\Omega)$, and let $\mu$ and $\alpha_\mu$ be given by \eqref{mu distr} and
\eqref{def alpha}, respectively.
Let $A \subset \subset \Pi_{n-1} (\Omega)$ be open.
Then, there exists a Borel set 
$G_{u_0} \subset \{ 0 < \alpha_\mu   < \pi \} \cap (A \times \R)$ 
with $\mathcal{H}^n ( (\{ 0 < \alpha_\mu   < \pi \} \cap (A \times \R)) \setminus G_{u_0}) = 0$ such that the following properties hold. For every $(r, y, t) \in G_{u_0}$:
\begin{itemize}
\item[(i)] $(\Sigma^{u_0})_{(r,y, t)}$ is a set of finite perimeter in $\partial D (r)$;

\vspace{.2cm}

\item[(ii)] $\partial^* \left( (\Sigma^{u_0})_{(r,y, t)} \right) = ( \partial^{*} \Sigma^{u_0})_{(r,y, t)}$;

\vspace{.2cm}

\item[(iii)] for \textbf{every} $\omega \in \mathbb{S}^{1}$ 
such that $( r \omega , y, t) \in   (\partial^{*} \Sigma^{u_0} )_{(r,y, t)} \cap \partial^* ( (\Sigma^{u_0})_{(r,y, t)} )$:

\vspace{.2cm}

\begin{itemize}
\item[(iiia)]  $| \nu^{\Sigma^{u_0}}_{\!  x {\scriptscriptstyle\parallel}} (r \omega, y, t)| 
= \displaystyle \frac{|\Dx u_0 (r \omega, y)|}{\sqrt{1 +  |\nabla u_0 (r \omega, y)|^2}} >0$;
\vspace{.3cm}
\item[(iiib)]$ \displaystyle 
\nu^{(\Sigma^{u_0})_{(r,y, t)}} (r \omega, y, t)
= \frac{\nu^{\Sigma^{u_0}}_{\!  x {\scriptscriptstyle\parallel}} (r \omega,y, t)}{| \nu^{\Sigma^{u_0}}_{\!  x {\scriptscriptstyle\parallel}} (r \omega,y, t)|} = \displaystyle \frac{\Dx u_0 (r \omega, y)}{|\Dx u_0 (r \omega, y)|}$;  
\vspace{.2cm}
\end{itemize}

\vspace{.1cm}
\end{itemize}
\end{proposition}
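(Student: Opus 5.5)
The statement to prove is Vol'pert's theorem (Proposition~\ref{volpert theorem}). I would prove it by transferring everything to the straightened coordinates $(r,y,t,\omega)$ via $\Phi_{n+1}$ and then applying the general slicing result \cite[Theorem~6.2]{CagnettiPeruginiStoger} for circular slices of sets of finite perimeter, with $N=n+1$.

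By Remark~\ref{u bv finite perimeter}, $E:=\Sigma^{u_0}$ has finite perimeter in $\Phi_{n+1}((A\times\R)\times\mathbb{S}^1)$, since $A\subset\subset\Pi_{n-1}(\Omega)$. The slices of $E$ at $(r,y,t)$ are exactly $\{u_0(\cdot,y)>t\}_{(r,y)}$, and their $\mathcal{H}^1$-measure is $\mu(r,y,t)=2r\alpha_\mu(r,y,t)$.

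\emph{Step~1.} \cite[Theorem~6.2]{CagnettiPeruginiStoger} gives that for $\mathcal{H}^n$-a.e.\ $(r,y,t)\in A\times\R$:
\begin{itemize}
\item the slice $E_{(r,y,t)}$ has finite perimeter in $\partial D(r)$;
\item $\partial^*(E_{(r,y,t)})=(\partial^*E)_{(r,y,t)}$ up to $\mathcal{H}^0$-null sets, hence exactly, since these boundaries are finite sets;
\item at every point of the slice of $\partial^*E$, the tangential normal $\nu^E_{x\parallel}$ is nonzero and $\nu^{E_{(r,y,t)}}=\nu^E_{x\parallel}/|\nu^E_{x\parallel}|$.
\end{itemize}
This is essentially the coarea formula (Proposition~\ref{coarea formula}) combined with a De Giorgi blow-up argument on slices. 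Intersecting this full-measure set with $\{0<\alpha_\mu<\pi\}\cap(A\times\R)$ and taking a Borel subset of full measure defines $G_{u_0}$, and gives (i), (ii) and the first identity in (iiib). Restricting to $\{0<\alpha_\mu<\pi\}$ is harmless, and it is precisely the set where slices are neither trivial nor full.

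\emph{Step~2.} The identification with the gradient in (iiia) and (iiib) uses Proposition~\ref{giaquinta}. We must exclude slices that meet $\partial^*E$ at points over $S_{u_0}$, or at points in the support of $D^cu_0$. On those parts the normal has $\nu_t=0$, so they project onto an $\mathcal{H}^n$-null set of $(x,y)$ times $\R$.

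\begin{itemize}
\item Jump part: on $S_{u_0}\times\R$, the set where $\nu^E_{x\parallel}\neq0$ is $\mathcal{H}^n$-rectifiable. By the coarea formula its slices are nonempty only for an $\mathcal{H}^n$-null set of $(r,y,t)$, as long as we only need points with $\nu^E_{x\parallel}\ne0$. Slice points with $\nu^E_{x\parallel}=0$ are removed by the slicing theorem itself, since Step~1 guarantees nonvanishing on a.e.\ slice.
\item Cantor part: the same argument applies. I would remove from $G_{u_0}$ the null set $\{(r,y,t):(\partial^*E\cap((S_{u_0}\cup K_c)\times\R))_{(r,y,t)}\neq\emptyset\}$, where $K_c$ is an $\mathcal{H}^n$-null Borel set carrying $D^cu_0$. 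Its measure is controlled by $\int_{\partial^*E\cap(\cdots)}|\nu^E_{x\parallel}|\,d\mathcal{H}^n$, and Step~1 shows that the remaining slice points have $\nu^E_{x\parallel}\ne0$.
\item Similarly, I remove slices meeting points of $\partial^*E$ over $\mathcal{D}_{u_0}$ where formula~\eqref{normal1} fails, which is an $\mathcal{H}^n$-null subset of $\partial^*E$.
\end{itemize}

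On the remaining points, \eqref{normal1} gives $\nu^E_{x\parallel}=\Dx u_0/\sqrt{1+|\nabla u_0|^2}$. Positivity of this quantity, which holds by Step~1, yields (iiia) and the second equality in (iiib).

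The main obstacle is the justification of the exclusions in Step~2. The issue is that $\mathcal{H}^n$-null subsets of $\partial^*E$ where $|\nu^E_{x\parallel}|>0$ have $\mathcal{H}^n$-null slice projection; this follows from the coarea formula, because the integral of $|\nu_{x\parallel}|$ over a null set vanishes. I must also ensure that (iii) holds for \emph{every} $\omega$, not merely a.e.; this works because the slices are finite sets, so a.e.\ statements on slices become everywhere statements once null sets of $(r,y,t)$ are discarded.
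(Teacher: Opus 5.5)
Your Step~1 follows the route the paper indicates: slicing via \cite[Theorem~6.2]{CagnettiPeruginiStoger}, followed by Proposition~\ref{giaquinta}. It correctly gives (i), (ii), the first identity in (iiib), and $\nu^{\Sigma^{u_0}}_{\!  x {\scriptscriptstyle\parallel}}\neq 0$ on a.e.\ slice.

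The gap is in Step~2. Your coarea principle is correct: an $\mathcal{H}^n$-null subset of $\partial^*\Sigma^{u_0}$ meets only an $\mathcal{H}^n$-null set of slices. However, the pieces you want to discard are \emph{not} $\mathcal{H}^n$-null subsets of $\partial^*\Sigma^{u_0}$. Only their projection onto the $(x,y)$-variables is null, and that is irrelevant for slicing in $(r,y,t)$. By \eqref{formula per subgraph},
\[
\mathcal{H}^n\big(\partial^*\Sigma^{u_0}\cap((S_{u_0}\cup K_c)\times\R)\big)=|D^s u_0|(S_{u_0}\cup K_c).
\]
Proposition~\ref{coarea formula} with $g=\chi_{(S_{u_0}\cup K_c)\times\R}$ then shows that the slices meeting this set form a set of positive measure as soon as
\[
\int_{\partial^*\Sigma^{u_0}\cap((S_{u_0}\cup K_c)\times\R)}|\nu^{\Sigma^{u_0}}_{\!  x {\scriptscriptstyle\parallel}}|\,d\mathcal{H}^n>0,
\]
that is, as soon as $D^su_0$ has a nonzero tangential component. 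So your claim that these slices are nonempty only for an $\mathcal{H}^n$-null set of $(r,y,t)$ is false in general.

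This cannot be repaired by a better argument, because (iiia) and the second identity in (iiib) genuinely fail for general $u\in BV_{0,\tau}(\Omega)$. Here is a counterexample.
\begin{itemize}
\item Let $n=2$, $\Omega=\{1<|x|<2\}$, and let $u(x)\in(-\pi,\pi]$ be the polar angle of $x$.
\item Condition (b) is vacuous because $\Pi_1^{\textnormal{a}}(\Omega)=\Pi_1(\Omega)$, so $u\in BV_{0,\tau}(\Omega)$.
\item For $t\in(-\pi,\pi)$ we have $\alpha_\mu(r,t)=(\pi-t)/2\in(0,\pi)$.
\item For every such $(r,t)$, the jump point $x=(-r,0)$ lies in $(\partial^*\Sigma^{u_0})_{(r,t)}\cap\partial^*((\Sigma^{u_0})_{(r,t)})$.
\item There $\nu^{\Sigma^{u_0}}(-r,0,t)=(0,1,0)$, hence $|\nu^{\Sigma^{u_0}}_{\!  x {\scriptscriptstyle\parallel}}|=1$.
\item But $|\Dx u_0|/\sqrt{1+|\nabla u_0|^2}<1$ whatever value is assigned to $\nabla u_0$ at that point, so no full-measure $G_{u_0}$ exists.
\end{itemize}

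What your argument does prove is the statement under the extra hypothesis $u_0\in W^{1,1}(\Phi_n(A\times\mathbb{S}^1))$. This is the setting of the main results in Sections~\ref{sect polya-szego} and~\ref{sect rigidity}. In that case $D^su_0=0$, so the set of points of $\partial^*\Sigma^{u_0}$ not of the form $(x,y,\tilde u_0(x,y))$ with \eqref{normal1} valid is $\mathcal{H}^n$-null (cf.\ Proposition~\ref{prop_no flat parts means being Sobolev}), and your coarea exclusion is legitimate. For genuine $BV$ functions, one can only assert (iiia)--(iiib) at slice points outside $(S_{u_0}\cup K_c)\times\R$, or else assume that $D^su_0$ has vanishing tangential component.
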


\begin{remark}
Note that the set $G_{u_0}$ in general depends on $A$.
\end{remark}

\begin{proof}
The proposition is a direct consequence of the results contained in \cite[Section 2.5]{GMSbook1}.
\end{proof}

\begin{remark} \label{rem codim 1}
The previous proposition implies that 
\begin{equation} \label{measure 0 for the bad set}
\mathcal{H}^n (B_{\Sigma^{u_0}}) = 0,
\end{equation}
where
\[
B_{\Sigma^{u_0}} := \{ 0 < \alpha_\mu  < \pi \} \cap \{ (r, y, t) \in A  \times \mathbb{R} : \, \exists \, \omega \in \mathbb{S}^1 : 
 ( r \omega , y, t) \in \partial^{*} \Sigma^{u_0} \text{ and }
\nu^{\Sigma^{u_0}}_{\!  x {\scriptscriptstyle\parallel}} ( r \omega , y, t)=0 \}.
\]
\end{remark}

\begin{remark} \label{remark us constant functions new}
From Remark~\ref{rem codim 1} it follows that if $(x, y) \in \mathcal{D}_{u_0} \cap \Phi_{n} (A \times \mathbb{S}^1)$ is such that $(x, y, u (x, y)) \in \partial^* \Sigma^{u_0}$ and
\[
\Dx u_0 (x, y)  = 0,
\]
then $(|x|, y, u_0 (x, y)) \notin G_{u_0}$.
\end{remark}

\begin{remark} \label{if higher dim slices}
If one considers the spherical symmetrization of $u$, 
in which the level sets of $u$ are rearranged using spheres of $\R^n$ (rather than circles of $\R^2$), 
a result similar to Proposition~\ref{volpert theorem} holds. 
However, in that case the analogue of identity \eqref{measure 0 for the bad set} is false in general.
This is due to the fact that for the spherical symmetrization property (iii) of Proposition~\ref{volpert theorem} is satisfied in each slice only \textbf{up to an $\mathcal{H}^{n-2}$-negligible set}. 
%
%
\end{remark}
Let us now show a consequence of \eqref{measure 0 for the bad set}.
\begin{proposition} \label{prop flat and perimeter}
Let $n \in \mathbb{N}$ with $n \geq 2$, and let $\Omega \subset \mathbb{R}^n$ be open.
Let $u \in BV_{0, \tau} (\Omega)$, and let $B \subset \subset \Pi_{n-1}(\Omega)\times\mathbb{R}$ be a Borel set.
Then the following statements are equivalent:
\begin{itemize}
\item[(i)] $\mathcal{H}^{n}\left(\Big\{(x, y, t) \in \partial^* \Sigma^{u_0} \cap \Phi_{n+1}
(B \times \mathbb{S}^{1}): \nu^{\Sigma^{u_0}}_{\!  x {\scriptscriptstyle\parallel}} (x, y, t)=0  \Big\}  \right)=0$;
%
\vspace{.2cm}
\item[(ii)] $P(\Sigma^{u_0};\Phi_{n+1}( B' \times \mathbb{S}^{1}))=0$ 
for every Borel set $B' \subset B$, such that $\mathcal{H}^{n} (B')=0$.
\end{itemize}
\end{proposition}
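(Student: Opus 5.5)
The plan is to use the coarea formula (Proposition~\ref{coarea formula}) to push perimeter measure of $\partial^*\Sigma^{u_0}$ down to $B\times\R$ wherever the tangential normal is nonzero. The flat part $\{\nu^{\Sigma^{u_0}}_{\!  x {\scriptscriptstyle\parallel}}=0\}$ is handled separately through Remark~\ref{rem codim 1} and Proposition~\ref{ssssd}. Throughout we fix an open $A\subset\subset\Pi_{n-1}(\Omega)$ with $B\subset A\times(-d,d)$, so that $\Sigma^{u_0}$ has finite perimeter in $\Phi_{n+1}((A\times\R)\times\mathbb{S}^1)$ (Remark~\ref{u bv finite perimeter}) and Proposition~\ref{volpert theorem} applies.

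\textbf{(i)$\Rightarrow$(ii).} Let $B'\subset B$ be Borel with $\mathcal{H}^n(B')=0$. Split $\partial^*\Sigma^{u_0}\cap\Phi_{n+1}(B'\times\mathbb{S}^1)$ into the part where $\nu^{\Sigma^{u_0}}_{\!  x {\scriptscriptstyle\parallel}}\neq 0$ and the part where it vanishes. The second part has $\mathcal{H}^n$-measure zero by (i). For the first part, apply Proposition~\ref{coarea formula} with $g=\chi_{\Phi_{n+1}(B'\times\mathbb{S}^1)}\,|\nu^{\Sigma^{u_0}}_{\!  x {\scriptscriptstyle\parallel}}|^{-1}\chi_{\{\nu^{\Sigma^{u_0}}_{\!  x {\scriptscriptstyle\parallel}}\neq0\}}$. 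The right-hand side is then an integral over $B'$, which is zero because $\mathcal{H}^n(B')=0$. Hence $P(\Sigma^{u_0};\Phi_{n+1}(B'\times\mathbb{S}^1))=\mathcal{H}^n(\partial^*\Sigma^{u_0}\cap\Phi_{n+1}(B'\times\mathbb{S}^1))=0$.

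\textbf{(ii)$\Rightarrow$(i).} Let $F$ denote the set in (i), and set $F'=\{(|x|,y,t):(x,y,t)\in F\}\subset B$. Decompose $B$ into three pieces:
\begin{itemize}
\item[(1)] $B\cap\{0<\alpha_\mu<\pi\}$;
\item[(2)] $B\cap(\{\alpha_\mu=0\}^{(1)}\cup\{\alpha_\mu=\pi\}^{(1)})$;
\item[(3)] the remainder, which has $\mathcal{H}^n$-measure zero by the Lebesgue density theorem.
\end{itemize}

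On piece (2), Proposition~\ref{ssssd} shows that every point of the whole circle lies in $(\Sigma^{u_0})^{(0)}$ or $(\Sigma^{u_0})^{(1)}$. So no point of $\partial^*\Sigma^{u_0}$ lies above piece (2).

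On piece (1), Remark~\ref{rem codim 1} shows that the set of $(r,y,t)$ over which some point of $F$ sits is contained in $B_{\Sigma^{u_0}}$, which is $\mathcal{H}^n$-null.

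Hence $F\subset\partial^*\Sigma^{u_0}\cap\Phi_{n+1}(N\times\mathbb{S}^1)$, where $N$ is the union of the null set $B_{\Sigma^{u_0}}\cap B$ and piece (3). We need $N$ Borel; if it is not, we replace it by a Borel $\mathcal{H}^n$-null superset inside $B$. Applying (ii) with $B'=N$ gives $\mathcal{H}^n(F)\le P(\Sigma^{u_0};\Phi_{n+1}(N\times\mathbb{S}^1))=0$.

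The main obstacle is this last step, specifically controlling the portion of $F$ lying over $\{\alpha_\mu\in\{0,\pi\}\}$ that is not made of density-one points. Proposition~\ref{ssssd} only handles density-one points, so the remainder must be shown $\mathcal{H}^n$-null. For this I would use that $\alpha_\mu$ is defined pointwise and measurable, so $\{\alpha_\mu=0\}\setminus\{\alpha_\mu=0\}^{(1)}$ is $\mathcal{H}^n$-null by the Lebesgue density theorem, and similarly for $\pi$. The leftover points are then absorbed into $N$, and (ii) handles them. A secondary technicality is measurability of the projection of $B_{\Sigma^{u_0}}$, which Remark~\ref{rem codim 1} already covers through the Borel set $G_{u_0}$: the relevant points lie in $(\{0<\alpha_\mu<\pi\}\cap(A\times\R))\setminus G_{u_0}$, a Borel null set, by Remark~\ref{remark us constant functions new} and Proposition~\ref{volpert theorem}(iiia).
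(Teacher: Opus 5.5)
Your proposal is correct and follows essentially the same route as the paper. For (i)$\Rightarrow$(ii) you apply the coarea formula on the non-flat part, exactly as the paper does. For (ii)$\Rightarrow$(i) you discard the density-one points of $\{\alpha_\mu=0\}$ and $\{\alpha_\mu=\pi\}$ via Proposition~\ref{ssssd}, absorb the null remainder with (ii), and kill the flat part over $\{0<\alpha_\mu<\pi\}$ by applying (ii) to a Borel null superset of $B\cap B_{\Sigma^{u_0}}$. Your explicit treatment of the Borel measurability of that null set is a small improvement over the paper's write-up, which applies (ii) directly to $B\cap B_{\Sigma^{u_0}}$.
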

\begin{proof}
We show the two implications.

\vspace{.2cm}
\noindent 
\textbf{Step 1:} (i) $\Longrightarrow$ (ii).
Suppose (i) is satisfied, and let $B' \subset B$ be a Borel set such that $\mathcal{H}^{n} (B')=0$. Then, 
thanks to Proposition~\ref{coarea formula},
\begin{align*}
&P(\Sigma^{u_0};\Phi_{n+1}( B' \times \mathbb{S}^{1})) \\
&= \int_{\partial^*\Sigma^{u_0} \cap \Phi_{n+1}( B' \times \mathbb{S}^{1}) \cap \{ \nu^{\Sigma^{u_0}}_{\!  x {\scriptscriptstyle\parallel}} = 0 \} } 1 \, d \mathcal{H}^{n} (x, y, t) + \int_{\partial^*\Sigma^{u_0} \cap \Phi_{n+1}( B' \times \mathbb{S}^{1}) \cap \{ \nu^{\Sigma^{u_0}}_{\!  x {\scriptscriptstyle\parallel}} \neq 0 \} } 1 \, d \mathcal{H}^{n} (x, y, t) \\
&\leq \mathcal{H}^{n}\left(\Big\{(x, y, t) \in \partial^* \Sigma^{u_0} \cap \Phi_{n+1}
(B \times \mathbb{S}^{1}): \nu^{\Sigma^{u_0}}_{\!  x {\scriptscriptstyle\parallel}} (x, y, t)=0  \Big\}  \right) \\
&\hspace{.2cm}+ \int_{B'} \int_{(\partial^* \Sigma^{u_0} )_{(r, y, t)}} \frac{1}{|\nu^{\Sigma^{u_0}}_{\!  x {\scriptscriptstyle\parallel}} (x, y, t)|} \, d \mathcal{H}^0 (x) \, dr \, dy \, dt = 0, 
\end{align*}
where we used (i) and the fact that $\mathcal{H}^{n} (B') = 0$.

\vspace{.2cm}
\noindent 
\textbf{Step 2:} (ii) $\Longrightarrow$ (i). Suppose (ii) is satisfied. Then, since 
\begin{align*}
&\mathcal{H}^{n}\left(\Big\{(x, y, t) \in \partial^* \Sigma^{u_0} \cap \Phi_{n+1}
( (B \cap \{ \alpha_\mu = 0\}  )\times \mathbb{S}^{1})  \Big\} \right) \\
&\stackrel{(ii)}{=}
\mathcal{H}^{n}\left(\Big\{(x, y, t) \in \partial^* \Sigma^{u_0} \cap \Phi_{n+1}
((B \cap \{ \alpha_\mu = 0\}^{(1)}  ) \times \mathbb{S}^{1})  \Big\} \right)
\stackrel{\eqref{only 0<alpha<pi counts}}{=} 0.
\end{align*}
Similarly, we have 
\begin{align*}
\mathcal{H}^{n}\left(\Big\{(x, y, t) \in \partial^* \Sigma^{u_0} \cap \Phi_{n+1}
( (B \cap \{ \alpha_\mu = \pi \}  )\times \mathbb{S}^{1})  \Big\} \right) = 0.
\end{align*}
Let now $A \subset \subset \Pi_{n-1}(\Omega)$ be an open set such that 
$B \subset A \times \R$, and let $B_{\Sigma^{u_0}}$ be the set defined in Remark~\ref{rem codim 1}.
 Then, thanks to \eqref{measure 0 for the bad set}, we have
\begin{align*}
&\mathcal{H}^{n}\left(\Big\{(x, y, t) \in \partial^* \Sigma^{u_0} \cap \Phi_{n+1}
(B \times \mathbb{S}^{1}): \nu^{\Sigma^{u_0}}_{\!  x {\scriptscriptstyle\parallel}} (x, y, t)=0  \Big\} \right) \\
&= \mathcal{H}^{n}\left(\Big\{(x, y, t) \in \partial^* \Sigma^{u_0} \cap \Phi_{n+1}
( (B\cap \{ 0 < \alpha_\mu < \pi \}) \times \mathbb{S}^{1}): \nu^{\Sigma^{u_0}}_{\!  x {\scriptscriptstyle\parallel}} (x, y, t)=0  \Big\} \right) \\
&\,\,\,\, =  \mathcal{H}^{n}\left(\Big\{(x, y, t) \in \partial^* \Sigma^{u_0} \cap \Phi_{n+1}
(( B \cap (A \times \R) \cap \{ 0 < \alpha_\mu < \pi \}) \times \mathbb{S}^{1}): \nu^{\Sigma^{u_0}}_{\!  x {\scriptscriptstyle\parallel}} (x, y, t)=0  \Big\} \right)\\
&\,\,\,\,  = P(\Sigma^{u_0};\Phi_{n+1}( B \cap B_{\Sigma^{u_0}} \times \mathbb{S}^{1})) 
= 0, 
\end{align*}
 where we used (ii) with $B' = B \cap B_{\Sigma^{u_0}}$. 
\end{proof}

\begin{remark} \label{spherical difficult}
One cannot expect a result analogous to Proposition~\ref{prop flat and perimeter} to hold for the spherical rearrangement of $u$.
Indeed, to prove the implication (ii) $\Longrightarrow$ (i) above we have used identity \eqref{measure 0 for the bad set},
which fails for the spherical symmetrization (see Remark~\ref{if higher dim slices}).
\end{remark}
\subsection{Further properties of $\mu$, $v_\mu$, and $\Sigma^{v_\mu}$}
In this section we show some important properties of the distribution $\mu$, and how these affect the function $v_\mu$ 
and the set $\Sigma^{v_\mu}$. 
We start by showing that the circular rearrangement preserves the $L^p$ norm, provided condition (b) of Definition~\ref{def w1p0tau} is satisfied. 
\begin{proposition} \label{prop Lp norm}
Let $n \in \mathbb{N}$ with $n \geq 2$, let $\Omega \subset \mathbb{R}^n$ be open, and let $p \in [1, \infty)$. 
Let $u: \Omega \to \R$ be a Lebesgue measurable function such that $u_0 \in L^p_{\textnormal{loc}} (\Phi_{n}  (\Pi_{n-1} (\Omega)  \times \mathbb{S}^{1}))$, 
where $u_0$ is given by \eqref{def u0}. Let $\mu$ and $v_{\mu}$ be defined by \eqref{mu distr} and \eqref{def vmu}, respectively. 
Then, for every open set $A \subset \subset \Pi_{n-1} (\Omega)$ we have $v_{\mu} \in L^p (\Phi_n (A \times \mathbb{S}^1) )$, with \( \| v_{\mu} \|_{L^p (\Phi_n (A \times \mathbb{S}^1) )}
= \| u_0 \|_{L^p (\Phi_n (A \times \mathbb{S}^1) )} \). 
\end{proposition}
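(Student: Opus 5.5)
The plan is to prove equimeasurability slice by slice on each circle $\partial D(r)$ and then integrate. Fix an open set $A \subset\subset \Pi_{n-1}(\Omega)$ and $(r,y) \in A$. By construction \eqref{def vmu}, for every $t\in\R$ the superlevel slice $\{v_\mu > t\}_{(r,y)}$ is, up to the single point of $\partial D(r)$ with $\hat x\cdot e_1=-1$, the open arc $\{x \in \partial D(r): 2r\,d_{\mathbb{S}^1}(\hat x, e_1) < \mu(r,y,t)\}$. This uses the monotonicity and right-continuity of $t\mapsto\mu(r,y,t)$ from Remark~\ref{mu(t) right-cont}.

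First I check that the infimum in \eqref{def vmu} behaves correctly. For $\theta = 2r\arccos(\hat x\cdot e_1)$ we have $v_\mu(x,y) > t$ if and only if $\mu(r,y,t) > \theta$. Indeed, if $\mu(r,y,t) > \theta$, then $\mu(r,y,s)>\theta$ for all $s\le t$, and by right-continuity also for $s$ slightly above $t$; hence the infimum exceeds $t$. Conversely, if $\mu(r,y,t)\le\theta$, then $t$ belongs to the set in \eqref{def vmu}, so $v_\mu\le t$. It follows that
\[
\mathcal{H}^1(\{v_\mu>t\}_{(r,y)}) = \mu(r,y,t) = \mathcal{H}^1(\{u_0>t\}_{(r,y)})
\]
for every $t$ and every $(r,y)$. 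Likewise $\mathcal{H}^1(\{v_\mu<t\}_{(r,y)}) = \mathcal{H}^1(\{u_0<t\}_{(r,y)})$, by taking complements and limits $s\uparrow t$, or by noting that $\{v_\mu\ge t\}$ is the intersection over $s<t$ of $\{v_\mu>s\}$. The limits of $\mu$ at $\pm\infty$ from Remark~\ref{mu(t) right-cont} ensure that $v_\mu$ is finite off the negligible half-line.

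Next I integrate using the layer-cake representation on each circle:
\[
\int_{\partial D(r)} |v_\mu(x,y)|^p\, d\mathcal{H}^1 = \int_0^\infty p\,s^{p-1}\Big(\mathcal{H}^1(\{v_\mu>s\}_{(r,y)}) + \mathcal{H}^1(\{v_\mu<-s\}_{(r,y)})\Big)\, ds,
\]
and the same formula holds for $u_0$. By the previous step the two right-hand sides coincide. Integrating over $(r,y)\in A$ and applying Fubini in the coordinates $\Phi_n$ as in the proof of Proposition~\ref{prop_circular rear and circular sym are connected}, with $dx\,dy = d\mathcal{H}^1(x)\,dr\,dy$, gives equality of the $L^p$ norms on $\Phi_n(A\times\mathbb{S}^1)$. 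Finiteness follows from $u_0\in L^p_{\rm loc}$ and $\overline{\Phi_n(A\times\mathbb{S}^1)}$ being compact in $\Phi_n(\Pi_{n-1}(\Omega)\times\mathbb{S}^1)$.

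The main obstacle is measurability. $\mu$ is jointly measurable in $(r,y,t)$ by Fubini applied to $\chi_{\Sigma^{u_0}}$. Then $\{v_\mu>t\}$ is the preimage of $\{(r,y,\theta,t):\mu(r,y,t)>\theta\}$, and taking rational $t$ shows that $v_\mu$ is Lebesgue measurable on $\Phi_n(A\times\mathbb{S}^1)$. The justification of the Fubini and layer-cake exchanges is routine given nonnegative integrands (Tonelli).
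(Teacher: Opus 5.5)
Your proposal is correct and is essentially the paper's argument: the distribution functions of $v_\mu$ and $u_0$ on $\Phi_n(A\times\mathbb{S}^1)$ coincide because $\mathcal{H}^1(\{v_\mu>t\}_{(r,y)})=\mu(r,y,t)$, combined with Fubini in the coordinates $\Phi_n$, and the layer-cake formula for the positive and negative parts then gives equality of the norms. The differences are cosmetic: you apply the layer-cake formula on each circle before integrating over $A$, and you spell out the arc identity for $\{v_\mu>t\}_{(r,y)}$ and the measurability of $v_\mu$, which the paper leaves implicit; also, your splitting $\{|v_\mu|>s\}=\{v_\mu>s\}\cup\{v_\mu<-s\}$ combines the two parts correctly in $p$-th powers, whereas the paper's last display adds $\|(v_\mu)_+\|_{L^p}+\|(v_\mu)_-\|_{L^p}$, which is a harmless slip.
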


\begin{proof}
Let $t \geq 0$.
We have 
\begin{align*}
&\mathcal{H}^n (\{ (v_\mu)_+ >  t \} \cap \Phi_n (A \times \mathbb{S}^1))) 
= \mathcal{H}^n (\{ v_\mu  >   t \} \cap \Phi_n (A \times \mathbb{S}^1))) \\
&= \int_{A}  \mathcal{H}^1 (\{ v_\mu (\cdot, y)  >  t \} \cap \partial D (r)) \, dr \, dy 
= \int_{A} \mu (r, y, t) \, dr \, dy \\
&= \mathcal{H}^n (\{ (u_0)_+ >  t \} \cap \Phi_n (A \times \mathbb{S}^1))),
\end{align*}
where the last equality follows by applying backward the previous equalities to the function $u$. 
Similarly,   
\begin{align*}
&\mathcal{H}^n (\{ (v_\mu)_- \geq  t \} \cap \Phi_n (A \times \mathbb{S}^1)) 
= \mathcal{H}^n (\{ v_\mu  \leq -  t \} \cap \Phi_n (A \times \mathbb{S}^1)) \\
&= \int_{A} \mathcal{H}^1 (\{ v_\mu (\cdot, y) \leq -  t \} \cap \partial D (r)) \, dr \, dy 
= \int_{A} \left[  2 \pi r - \mathcal{H}^1 (\{ v_\mu (\cdot, y)  > - t \} \cap \partial D (r)) \right]\, dr \, dy \\
&= \int_{A} \left[  2 \pi r - \mu (r, y, -t) \right]\, dr \, dy
= \mathcal{H}^n (\{ (u_0)_- \geq  t \} \cap \Phi_n (A \times \mathbb{S}^1))).
\end{align*}
Thanks to the Layer-cake formula, we then obtain 
\begin{align*}
&\| v_\mu \|_{L^p (\Phi_n (A \times \mathbb{S}^1))} 
= \| (v_\mu)_+ \|_{L^p (\Phi_n (A \times \mathbb{S}^1))} + \| (v_\mu)_- \|_{L^p (\Phi_n (A \times \mathbb{S}^1))} \\
&= \| (u_0)_+ \|_{L^p (\Phi_n (A \times \mathbb{S}^1))} + \| (u_0)_- \|_{L^p (\Phi_n (A \times \mathbb{S}^1))} 
= \| u_0 \|_{L^p (\Phi_n (A \times \mathbb{S}^1))}.
\end{align*}
\end{proof}
In the next proposition we show the connection between the circular rearrangement of a function 
and the circular symmetrization of its subgraph.

\begin{proposition}\label{prop_circular rear and circular sym are connected bis}
Let $n \in \mathbb{N}$ with $n \geq 2$, and let $\Omega \subset \mathbb{R}^n$ be open. 
Let $u: \Omega \to \R$ be a Lebesgue measurable function such that $u_0 \in L^1_{\textnormal{loc}} (\Phi_{n}  (\Pi_{n-1} (\Omega)  \times \mathbb{S}^{1}))$, 
where $u_0$ is given by \eqref{def u0}.
Let $\mu$, $v_\mu$, and $\alpha_\mu$ be given by \eqref{mu distr}, \eqref{def vmu} and \eqref{def alpha}, respectively, 
and let $F_\mu \subset \R^{n+1}$ be defined as
\begin{equation} \label{def Fmu}
F_\mu := \{ (x, y, t) \in \Phi_{n}  (\Pi_{n-1} (\Omega)  \times \mathbb{S}^{1}) \times \R : d_{\mathbb{S}^1}  (\hat{x} , e_1) < \alpha_\mu ( | x |, y, t) \}. 
\end{equation}
Then,
\begin{equation} \label{equivalence}
\Sigma^{v_\mu} =_{\mathcal{H}^{n+1}} F_\mu.
\end{equation}
\end{proposition}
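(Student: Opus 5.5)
The plan is to compare the two sets pointwise, at every point off the negligible half-hyperplane $H=\{\hat x\cdot e_1=-1\}$ (and off $\{x_2=0\}$ where needed). Fix $(x,y)\in\Phi_n(\Pi_{n-1}(\Omega)\times\mathbb{S}^1)$ and set $r=|x|$ and $\theta=d_{\mathbb{S}^1}(\hat x,e_1)=\arccos(\hat x\cdot e_1)\in[0,\pi]$. By \eqref{def alpha},
\[
d_{\mathbb{S}^1}(\hat x,e_1)<\alpha_\mu(r,y,t)\iff \mu(r,y,t)>2r\theta .
\]
Hence $(x,y,t)\in F_\mu$ if and only if $\mu(r,y,t)>2r\theta$. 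Because of \eqref{def vmu}, the claim reduces to the one-dimensional statement
\[
v_\mu(x,y)>t \iff \mu(|x|,y,t)>2|x|\theta ,
\]
for all $t$, whenever $v_\mu(x,y)$ is finite.

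To prove this equivalence, I would use the monotonicity and right-continuity of $t\mapsto\mu(r,y,t)$ (Remark~\ref{mu(t) right-cont}). Consider the set $T=\{s:\mu(r,y,s)\le 2r\theta\}$. It is an up-set, since $\mu$ is non-increasing in $s$. It is closed from the right, since right-continuity makes $\{s:\mu\le c\}$ closed under decreasing limits. Therefore $T=[v_\mu(x,y),\infty)$, with the infimum attained when it is finite.

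Consequently, $t\ge v_\mu$ if and only if $\mu(r,y,t)\le 2r\theta$. Equivalently, $v_\mu(x,y)>t$ if and only if $\mu(r,y,t)>2r\theta$, which is exactly membership in $F_\mu$.

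It remains to treat the degenerate values where $v_\mu=\pm\infty$. Since $\mu\to 0$ as $t\to+\infty$, we get $v_\mu<+\infty$ whenever $\theta>0$. Since $\mu\to 2\pi r$ as $t\to-\infty$, we get $v_\mu>-\infty$ whenever $\theta<\pi$. Thus infinite values can occur only on $\{x_2=0\}$, and $\{x_2=0\}\times\mathbb{R}$ is $\mathcal{H}^{n+1}$-negligible.

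Finally, $\Sigma^{v_\mu}$ is defined over the domain $\Phi_n(\Pi_{n-1}(\Omega)\times\mathbb{S}^1)$ of $v_\mu$, which is the same base as $F_\mu$. So the two sets coincide outside $(\{x_2=0\})\times\mathbb{R}$, and \eqref{equivalence} follows.

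I expect no real obstacle in this argument. The only delicate point is the strict versus non-strict inequality, which is settled by the right-continuity of $\mu$ in $t$. The role of the hypothesis $u_0\in L^1_{\rm loc}$ is merely to make $\mu$ measurable and well defined (cf.\ Proposition~\ref{prop_circular rear and circular sym are connected}), so that $F_\mu$ is a measurable set.
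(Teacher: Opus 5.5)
Your proposal is correct and follows the paper's proof essentially step by step. Both restrict to $0<d_{\mathbb{S}^1}(\hat x,e_1)<\pi$, i.e.\ off the negligible set $\{x_2=0\}\times\mathbb{R}$. Both use the monotonicity and right-continuity of $t\mapsto\mu(|x|,y,t)$ to identify $\{s:\alpha_\mu(|x|,y,s)\le d_{\mathbb{S}^1}(\hat x,e_1)\}$ with the closed half-line $[v_\mu(x,y),+\infty)$, which gives $v_\mu(x,y)>t\iff(x,y,t)\in F_\mu$. Your explicit treatment of the limits of $\mu$ at $\pm\infty$ just spells out what the paper states as ``bounded from below and unbounded from above.''
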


\begin{proof}
Let $(x, y, t) \in \Phi_{n}  (\Pi_{n-1} (\Omega)  \times \mathbb{S}^{1}) \times \R$. 
Since the hyperplane $\{ (x_1, x_2, y_1, \ldots, y_{n-2}, t) \in \R^{n+1} : x_2 = 0 \}$
is $\mathcal{H}^{n+1}$-negligible, we can assume \( 0< d_{\mathbb{S}^1}  (\hat{x} , e_1) < \pi \). 
By definition of $v_\mu$, we have 
\[
v_\mu (x, y) = \inf A (x, y) \quad \text{ where } \quad A (x, y):= \left\{ s \in \mathbb{R} : \alpha_\mu (|x|, y, s) \leq   d_{\mathbb{S}^1}  (\hat{x} , e_1)   \right\}.
\]
Thanks to Remark~\ref{mu(t) right-cont}, the function $s \mapsto \mu (|x|, y, s)$ is non-increasing and right-continuous, 
and so is $s \mapsto \alpha_\mu (|x|, y, s)$. 
Therefore, since \( 0< d_{\mathbb{S}^1}  (\hat{x} , e_1) < \pi \), we have that $A (x, y)$ is a closed half-line which is unbounded from above and bounded from below, and
\[
A (x, y) = [ v_{\mu} (x, y), + \infty) \quad \text{ and } \quad \alpha_\mu (|x|, y, v_{\mu} (x, y)) \leq   d_{\mathbb{S}^1}  (\hat{x} \cdot e_1).
\]
Thus, 
\[
v_{\mu} (x, y) > t 
\quad \Longleftrightarrow \quad
t \notin A (x, y)
\quad \Longleftrightarrow \quad
\alpha_\mu (|x|, y, t) >   d_{\mathbb{S}^1}  (\hat{x} \cdot e_1).
\]
From this, it follows that 
\[
(x, y, t) \in \Sigma^{v_{\mu}}
\quad \Longleftrightarrow \quad
v_{\mu} (x, y) > t 
\quad \Longleftrightarrow \quad
\alpha_\mu (|x|, y, t) >   d_{\mathbb{S}^1}  (\hat{x} \cdot e_1)
\quad \Longleftrightarrow \quad
(x, y, t) \in F_{\mu}.
\]

%
\end{proof}

The next result should be compared with Proposition~\ref{prop several useful things}.
\begin{proposition} \label{prop several useful things 2}
Let $n \in \mathbb{N}$ with $n \geq 2$, and let $\Omega \subset \mathbb{R}^n$ be open. 
Let $u_1, u_2: \Omega \to \R$ be Lebesgue measurable functions such that $u_{1,0}, u_{2,0} \in L^1_{\textnormal{loc}} (\Phi_{n}  (\Pi_{n-1} (\Omega)  \times \mathbb{S}^{1}))$, where $u_{1,0}$ and $u_{2,0}$ are given by \eqref{def u0} with $u_1$ and $u_2$ in place of $u$, respectively.
Let $\mu_{1}$ and $\mu_{2}$ be defined by \eqref{mu distr} with $u_1$ and $u_2$ in place of $u$, respectively. 
Let now $v_{\mu_1}, v_{\mu_2}$ and $F_{\mu_1}, F_{\mu_2}$ be defined by \eqref{def vmu} and by \eqref{def Fmu}, 
with $\mu_{1}$ and $\mu_{2}$ in place of $\mu$, respectively. Then, the following are equivalent:
\begin{enumerate}

\item[(a)] \( \mu_{1} = \mu_{2} \) $\mathcal{H}^n$-a.e. in $\Pi_{n-1} (\Omega) \times \R$;

\vspace{.2cm}

\item[(b)] $v_{\mu_{1}} = v_{\mu_{2}}$ $\mathcal{H}^n$-a.e. in $\Phi_{n}  (\Pi_{n-1} (\Omega)  \times \mathbb{S}^{1}))$;

\vspace{.2cm}

\item[(c)] $\Sigma^{v_{\mu_{1}}} =_{\mathcal{H}^{n+1}} \Sigma^{v_{\mu_{2}}}$ in $\Phi_{n}  (\Pi_{n-1} (\Omega)  \times \mathbb{S}^{1})) \times \R$;

\vspace{.2cm}

\item[(d)] $F_{\mu_{1}} =_{\mathcal{H}^{n+1}} F_{\mu_{2}}$ in $\Phi_{n}  (\Pi_{n-1} (\Omega)  \times \mathbb{S}^{1})) \times \R$.

\end{enumerate}

\end{proposition}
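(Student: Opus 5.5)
We prove the cycle (a) $\Rightarrow$ (d) $\Rightarrow$ (c) $\Rightarrow$ (b) $\Rightarrow$ (a). Two earlier results do most of the work. Proposition~\ref{prop_circular rear and circular sym are connected bis} gives $\Sigma^{v_{\mu_i}} =_{\mathcal{H}^{n+1}} F_{\mu_i}$, so (c) and (d) are equivalent. The layer-cake argument of Proposition~\ref{prop several useful things}, specifically identity \eqref{two equalities}, we reuse with $v_{\mu_i}$ in place of $u_i$.

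\textbf{Step 1: (a) $\Rightarrow$ (d).} By Fubini in the coordinates $(r,y,t,\omega)$ given by $\Phi_{n+1}$, for every open $A\subset\subset\Pi_{n-1}(\Omega)$ the measure $\mathcal{H}^{n+1}\big((F_{\mu_1}\Delta F_{\mu_2})\cap \Phi_{n+1}((A\times\R)\times\mathbb{S}^1)\big)$ equals
\[
\int_{A\times\R} \mathcal{H}^1\Big(\big\{x\in\partial D(r): \min\{\alpha_{\mu_1},\alpha_{\mu_2}\}(r,y,t)\le d_{\mathbb{S}^1}(\hat x,e_1)<\max\{\alpha_{\mu_1},\alpha_{\mu_2}\}(r,y,t)\big\}\Big)\,dr\,dy\,dt .
\]
The inner slice is a pair of arcs, so its measure is $2r|\alpha_{\mu_1}-\alpha_{\mu_2}| = |\mu_1-\mu_2|$. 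Hence
\[
\mathcal{H}^{n+1}\big((F_{\mu_1}\Delta F_{\mu_2})\cap \Phi_{n+1}((A\times\R)\times\mathbb{S}^1)\big)=\int_{A\times\R}|\mu_1-\mu_2|\,dr\,dy\,dt .
\]
This identity gives (a) $\Leftrightarrow$ (d) at once, after exhausting $\Pi_{n-1}(\Omega)$ by countably many such $A$.

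\textbf{Step 2: (c) $\Leftrightarrow$ (b).} The first chain of equalities in \eqref{two equalities} uses only measurability of the two functions and the layer-cake formula $|a-b|=\int_\R|\chi_{\{a>t\}}-\chi_{\{b>t\}}|\,dt$. We apply it to $v_{\mu_1},v_{\mu_2}$ on $\Phi_n(A\times\mathbb{S}^1)$. This requires $v_{\mu_i}\in L^1(\Phi_n(A\times\mathbb{S}^1))$, which Proposition~\ref{prop Lp norm} guarantees with $p=1$. The result is
\[
\|v_{\mu_1}-v_{\mu_2}\|_{L^1(\Phi_n(A\times\mathbb{S}^1))}=\mathcal{H}^{n+1}\big((\Sigma^{v_{\mu_1}}\Delta\Sigma^{v_{\mu_2}})\cap\Phi_{n+1}((A\times\R)\times\mathbb{S}^1)\big).
\]
Exhausting again by countably many $A$ gives the equivalence. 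Combined with Step~1 and Proposition~\ref{prop_circular rear and circular sym are connected bis}, this closes the cycle.

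\textbf{Expected obstacle.} The only delicate point is Step~2, specifically that $v_{\mu_i}$ is finite $\mathcal{H}^n$-a.e. and measurable, so that $\Sigma^{v_{\mu_i}}$ is a genuine subgraph and the layer-cake identity applies. Finiteness off the hyperplane $\{x_2=0\}$ follows from Remark~\ref{mu(t) right-cont}, as already noted after \eqref{def vmu}. Measurability follows from $\{v_\mu>t\}$ being the $t$-slice of the measurable set $F_\mu$, using Proposition~\ref{prop_circular rear and circular sym are connected bis}.

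The arc-measure computation in Step~1 is elementary. It only needs $\alpha_{\mu_i}\in[0,\pi]$, so that the two open arcs centred at $e_1$ are nested.
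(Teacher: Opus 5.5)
Your proof is correct and is essentially the paper's argument. The paper runs the single chain
$\|v_{\mu_1}-v_{\mu_2}\|_{L^1(\Phi_n(A\times\mathbb{S}^1))}=\mathcal{H}^{n+1}\big((\Sigma^{v_{\mu_1}}\Delta\Sigma^{v_{\mu_2}})\cap\Phi_{n+1}((A\times\R)\times\mathbb{S}^1)\big)=\int_{A\times\R}\mathcal{H}^1\big(\mathbf{B}_{\alpha_{\mu_1}}(re_1)\Delta\mathbf{B}_{\alpha_{\mu_2}}(re_1)\big)\,dr\,dy\,dt=\|\mu_1-\mu_2\|_{L^1(A\times\R)}$
and then invokes \eqref{equivalence} for (c)$\Leftrightarrow$(d); you split the same computation into the layer-cake identity for (b)$\Leftrightarrow$(c) and the arc computation on $F_{\mu_i}$ for (a)$\Leftrightarrow$(d), with the $L^1$ bound from Proposition~\ref{prop Lp norm} being harmless but unnecessary, since Tonelli gives the layer-cake identity in $[0,\infty]$.
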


\begin{proof}
Let $A \subset \subset \Pi_{n-1} (\Omega)$ be open.
Applying \eqref{two equalities} to the two functions $v_{\mu_1}$ and $v_{\mu_2}$, we obtain
\begin{equation} \label{two equalities for v}
\begin{split} 
&\| v_{\mu_1} - v_{\mu_2} \|_{L^1 (\Phi_{n}  (A  \times \mathbb{S}^{1}))}  \\
&= \mathcal{H}^{n+1} \Big( (\Sigma^{v_{\mu_1}} \Delta \Sigma^{v_{\mu_2}}) \cap \big( \Phi_{n+1}  ((A \times \R) \times \mathbb{S}^{1})) \big) \Big) 
 \\
&= \int_{A \times \R }
\mathcal{H}^1 \Big( (\partial D (r) \cap \{ v_{\mu_1} (\cdot, y) > t \} )\Delta (\partial D (r) \cap \{ v_{\mu_2} (\cdot, y) > t \}) \Big) \, dr \, dy \, dt \\
&= \int_{A \times \R }
\mathcal{H}^1 ( \mathbf{B}_{\alpha_{\mu_1}} ( r e_1) \Delta \mathbf{B}_{\alpha_{\mu_2}} ( r e_1) ) \, dr \, dy \, dt
= \| \mu_1 - \mu_2 \|_{L^1 (A  \times \mathbb{R})}.
\end{split}
\end{equation}
This shows that $\mu_1 - \mu_2 \in L^1 (A  \times \mathbb{R})$ (even though, thanks to Proposition~\ref{prop_circular rear and circular sym are connected}, in general we only have $\mu_1, \mu_2 \in L^1 (A  \times (-d, + \infty))$ for every $d > 0$).
Since $A \subset \subset \Pi_{n-1} (\Omega)$ is arbitrary, from \eqref{two equalities for v} it follows that (a), (b), and (c) are equivalent.
Finally, from \eqref{equivalence} we conclude that (c) is equivalent to (d). 
\end{proof}
\begin{proposition} \label{prop v_mu zero}
Let $n \in \mathbb{N}$ with $n \geq 2$, let $\Omega \subset \mathbb{R}^n$ be open. 
Let $u: \Omega \to \R$ be a measurable function satisfying property (b) of Definition~\ref{def w1p0tau}, 
such that $u_{0} \in L^1_{\textnormal{loc}} (\Phi_{n}  (\Pi_{n-1} (\Omega)  \times \mathbb{S}^{1}))$, where $u_{0}$ is given by \eqref{def u0}.
Let $\mu$ and $v_{\mu}$ be defined by \eqref{mu distr} and \eqref{def vmu}, respectively. 
Then, 
\begin{equation} \label{us is zero a.e. outside}
v_{\mu} (x, y) = 0 \quad \text{ for $\mathcal{H}^n$-a.e. } (x, y) \in \Phi_n (\Pi_{n-1} (\Omega) \times \mathbb{S}^1) \setminus \Omega^s.
\end{equation}
As a consequence, 
\begin{equation} \label{v_mu is zero a.e. outside}
(v_{\mu}   \! \mid_{\Omega^s} )_0 = v_{\mu},
\end{equation}
where $(v_{\mu}  \! \mid_{\Omega^s} )_0$ is given by \eqref{def u0} with $v_\mu$ in place of $u$.
\end{proposition}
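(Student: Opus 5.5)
We work slice by slice: for each $(r,y)\in\Pi_{n-1}(\Omega)$ we compare the set where $v_\mu(\cdot,y)$ is nonzero on $\partial D(r)$ with the arc $\Omega^s_{(r,y)}$. Write $\beta(r,y)=\mathcal{H}^1(\Omega_{(r,y)})/(2r)\in(0,\pi]$ for the half-angle of that arc, so that $\Omega^s_{(r,y)}=\{r\omega: d_{\mathbb{S}^1}(\omega,e_1)<\beta(r,y)\}$.

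\textbf{Annular slices.} If $(r,y)\in\Pi_{n-1}^{\textnormal{a}}(\Omega)$, then $\beta=\pi$. Hence $\partial D(r)\setminus\Omega^s_{(r,y)}$ is at most the single point $-re_1$, which is $\mathcal{H}^1$-negligible, and there is nothing to prove on this slice.

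\textbf{Non-annular slices.} Now take $(r,y)\in\Pi_{n-1}(\Omega)\setminus\Pi_{n-1}^{\textnormal{a}}(\Omega)$. By (b), $u\ge 0$ a.e. on $\Omega\cap\Phi_n(\Pi_{n-1}(\Omega)\setminus\Pi^{\textnormal{a}}_{n-1}(\Omega))\times\mathbb{S}^1$, and $u_0=0$ outside $\Omega$ on these slices. By Fubini in the coordinates $(r,y,\omega)$, for a.e. such $(r,y)$ the following hold:
\begin{itemize}
\item $u_0(\cdot,y)\ge 0$ $\mathcal{H}^1$-a.e. on $\partial D(r)$, so $\mu(r,y,t)=2\pi r$ for every $t<0$;
\item $\{u_0>t\}_{(r,y)}\subset\Omega_{(r,y)}$ for every $t\ge 0$, so $\mu(r,y,t)\le 2r\beta(r,y)$ for $t\ge 0$.
\end{itemize}

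Let $x\in\partial D(r)$ with $d_{\mathbb{S}^1}(\hat x,e_1)\ge\beta(r,y)$ and $d_{\mathbb{S}^1}(\hat x,e_1)<\pi$. The definition \eqref{def vmu} gives $v_\mu(x,y)=\inf\{t:\alpha_\mu(r,y,t)\le d_{\mathbb{S}^1}(\hat x,e_1)\}$.
\begin{itemize}
\item For $t\ge 0$ we have $\alpha_\mu(r,y,t)\le\beta\le d_{\mathbb{S}^1}(\hat x,e_1)$, so every $t\ge 0$ is admissible and $v_\mu(x,y)\le 0$.
\item For $t<0$ we have $\alpha_\mu(r,y,t)=\pi>d_{\mathbb{S}^1}(\hat x,e_1)$, so no negative $t$ is admissible and $v_\mu(x,y)\ge 0$.
\end{itemize}
Thus $v_\mu(x,y)=0$ for every $x$ in $\partial D(r)\setminus\Omega^s_{(r,y)}$, except possibly the single point $-re_1$.

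\textbf{Integration and consequence.} Integrating over $(r,y)$ via the diffeomorphism $\Phi_n$, the exceptional null set of slices and the half-hyperplane $\{\hat x=-e_1\}$ are both $\mathcal{H}^n$-negligible. This gives \eqref{us is zero a.e. outside}. Identity \eqref{v_mu is zero a.e. outside} then follows directly from definition \eqref{def u0} applied to $v_\mu\!\mid_{\Omega^s}$, holding $\mathcal{H}^n$-a.e. The annular/non-annular split covers all of $\Pi_{n-1}(\Omega)$, since $\Pi_{n-1}(\Omega^s)=\Pi_{n-1}(\Omega)$.

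The main obstacle is the Fubini step: (b) is stated $\mathcal{L}^n$-a.e., and we must turn it into a statement valid for a.e. slice and for all $t$ simultaneously. This is handled by choosing a representative of $u$ with $u\ge0$ everywhere on the relevant region, and using that $\mu$ is computed from that representative through the $\mathcal{H}^n$-a.e. invariance in Proposition~\ref{prop several useful things}.
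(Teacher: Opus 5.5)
Your proof is correct and follows the same route as the paper. You dispose of the annular slices, where the complement of $\Omega^s$ in the slice is at most the point $-re_1$. On non-annular slices you show $\mu(r,y,t)=2\pi r$ for $t<0$ and $\mu(r,y,t)\le \mathcal{H}^1(\Omega_{(r,y)})$ for $t\ge 0$, which forces $v_\mu=0$ off $\Omega^s$. The one difference is that your Fubini step (a.e.\ slice has $u_0(\cdot,y)\ge 0$ $\mathcal{H}^1$-a.e., hence the identity for all $t<0$ at once) already works for the given representative. So the change of representative mentioned in your last paragraph is unnecessary; the paper does use it, but that requires Proposition~\ref{prop several useful things 2} as well as Proposition~\ref{prop several useful things}.
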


\begin{proof}
Note that, by definition of $\Pi_{n-1}^{\textnormal{a}}(\Omega)$ (see \eqref{def_Pi_a}), we have that 
\[
\mathcal{L}^n (\Phi_n (\Pi_{n-1}^{\textnormal{a}}(\Omega) \times \mathbb{S}^1) \setminus \Omega^s) = 0.
\]
Therefore, to show \eqref{us is zero a.e. outside} it will be enough to prove that
\begin{equation*} 
v_{\mu} (x, y) = 0 \quad \text{ for $\mathcal{H}^n$-a.e. } (x, y) \in \Phi_n ((\Pi_{n-1} (\Omega) \setminus \Pi_{n-1}^{\textnormal{a}}(\Omega) )\times \mathbb{S}^1) \setminus \Omega^s.
\end{equation*}
Let now $(x, y) \in \Phi_n ((\Pi_{n-1} (\Omega) \setminus \Pi_{n-1}^{\textnormal{a}}(\Omega) )\times \mathbb{S}^1) \setminus \Omega^s$.
Since the hyperplane \( \{ (x, y) \in \R^n : x_2 = 0\} \) is $\mathcal{H}^n$-negligible, we can assume that $0< d_{\mathbb{S}^1} (\hat{x} , e_1)  < \pi$.
Then, we have $(|x|, y) \notin \Pi_{n-1}^{\textnormal{a}}(\Omega)$ and
\[
2 \pi |x| > 2 |x|  d_{\mathbb{S}^1} (\hat{x} , e_1) \geq \mathcal{H}^1 ( (\Omega)_{(|x|, y)}).
\]
Thanks to Proposition~\ref{prop several useful things} and Proposition~\ref{prop several useful things 2}, if we modify $u$ on a set of $\mathcal{H}^n$-measure zero, this will only affect $v_\mu$ on a $\mathcal{H}^n$-negligible set. 
Therefore, thanks to property (b) of Definition~\ref{def w1p0tau}, we can assume that
\begin{equation} \label{frte}
u \geq 0 \quad  \text{ everywhere in $\Omega \setminus 
\Phi_n(\Pi_{n-1}^{\textnormal{a}}(\Omega) \times\mathbb{S}^1)$.}
\end{equation}
Then, since $u_0 = 0$ outside of $\Omega$, for every $t > 0$ we have 
\begin{align*}
\mu (|x|, y, t) &= \mathcal{H}^1 ( \{ u_0 (\cdot, y)> t \} \cap \partial D (|x|)) \\
&= \mathcal{H}^1 ( \{ u (\cdot, y) > t \} \cap  \Omega \cap \partial D (|x|)) \\
&\leq \mathcal{H}^1 ( (\Omega)_{(|x|, y)}) 
\leq 2 |x|  d_{\mathbb{S}^1} (\hat{x} , e_1). 
\end{align*}
On the other hand, thanks to \eqref{frte}, for every $t < 0$ we have
\[
\mu (|x|, y, t) 
= \mathcal{H}^1 ( \{ u_0 (\cdot, y) > t \} \cap \partial D (|x|)) = 2 \pi |x|  > 2 |x| d_{\mathbb{S}^1} (\hat{x} , e_1).
\]
Thus, recalling \eqref{def vmu},
 \[
v_{\mu} (x, y) = \inf \left\{ t \in \mathbb{R} : \mu (|x|, y, t) \leq 2 |x|  d_{\mathbb{S}^1} (\hat{x} , e_1)  \right\} = 0. 
\]
\end{proof}
The next result gives a refinement of Proposition~\ref{ssssd} in the special case in which one considers the function
$v_\mu$, and will be used in the proof of The P\'olya--Szeg\"o inequality.
\begin{proposition} \label{ssssd2}
Let $n \in \mathbb{N}$ with $n \geq 2$, and let $\Omega \subset \mathbb{R}^n$ be open. 
Let $u \in BV_{0, \tau} (\Omega)$, and let $\mu$, $v_{\mu}$, and $\alpha_\mu$  be given by \eqref{mu distr}, \eqref{def vmu}, and \eqref{def alpha}, respectively. 
Then, 
\begin{align*}
(r, y, t) \in \{ \alpha_\mu^{\vee} = 0 \} 
&\quad \Longrightarrow \quad
(r \omega, y, t) \in (\Sigma^{v_\mu})^{(0)} \text{ for every } \omega \in \mathbb{S}^1 \setminus \{ e_1 \}; \\
(r, y, t) \in \{ \alpha_\mu^{\wedge} = \pi \} 
&\quad \Longrightarrow \quad
(r \omega, y, t) \in (\Sigma^{v_\mu})^{(1)} \text{ for every } \omega \in \mathbb{S}^1 \setminus \{ - e_1 \}.
\end{align*}
Moreover, 
\begin{align} \label{nec for reduced bdry 1}
\mathcal{H}^n \left(  \partial^* \Sigma^{v_\mu} \cap \left( \Phi_{n+1} \left(
\{ \alpha_\mu^{\vee} = 0 \}  \times \mathbb{S}^{1}
\right) \right)
\right) = 0,
\end{align}
and 
\begin{align}  \label{nec for reduced bdry 2}
\mathcal{H}^n \left(  \partial^* \Sigma^{v_\mu} \cap \left( \Phi_{n+1} \left(
\{ \alpha_\mu^{\wedge} = \pi \}  \times \mathbb{S}^{1}
\right) \right)
\right) = 0.
\end{align}
As a consequence, for every Borel set $B \subset \Pi_{n-1} (\Omega) \times \R$ we have
\begin{equation} \label{the only parts that count}
\begin{split} 
&\mathcal{H}^n \left(  \partial^* \Sigma^{v_\mu} \cap \left( \Phi_{n+1} \left(
B  \times \mathbb{S}^{1}
\right) \right)
\right)   \\
&= \mathcal{H}^n \left(  \partial^* \Sigma^{v_\mu} \cap \left( \Phi_{n+1} \left(
(B \cap \{ \alpha_\mu^{\vee} > 0 \} \cap \{ \alpha_\mu^{\wedge} < \pi \})
  \times \mathbb{S}^{1}
\right) \right)
\right).  
\end{split}
\end{equation}
\end{proposition}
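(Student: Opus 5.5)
We prove the two pointwise implications first, following the scheme of Proposition~\ref{ssssd}, but using the explicit structure of $\Sigma^{v_\mu}$ given by Proposition~\ref{prop_circular rear and circular sym are connected bis}. By \eqref{equivalence} we may replace $\Sigma^{v_\mu}$ with $F_\mu$, since densities are insensitive to $\mathcal{H}^{n+1}$-negligible modifications. Consider the first implication. Let $(r,y,t)$ satisfy $\alpha_\mu^\vee(r,y,t)=0$, and fix $\omega\in\mathbb{S}^1\setminus\{e_1\}$. Set $\delta:=d_{\mathbb{S}^1}(\omega,e_1)>0$.

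For $\rho$ small, every point $(x',y',t')\in B^{n+1}_\rho(r\omega,y,t)$ satisfies $d_{\mathbb{S}^1}(\hat x',e_1)>\delta/2$. Such a point lies in $F_\mu$ only if $\alpha_\mu(|x'|,y',t')>\delta/2$. We then slice exactly as in the proof of Proposition~\ref{ssssd}, using \eqref{rt1} and $\mathcal{H}^1(\partial D(r')\cap B^2_\rho(r\omega))\le C\rho$. This gives
\[
\mathcal{H}^{n+1}(F_\mu\cap B^{n+1}_\rho(r\omega,y,t))\le C\rho\,\mathcal{H}^n(\{\alpha_\mu>\delta/2\}\cap B^n_\rho(r,y,t)).
\]
By definition \eqref{def fvee}, $\alpha_\mu^\vee(r,y,t)=0<\delta/2$ means $(r,y,t)\in\{\alpha_\mu>\delta/2\}^{(0)}$. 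Dividing by $\rho^{n+1}$ therefore yields density $0$. The second implication is symmetric. From $\alpha^\wedge_\mu=\pi$ we get $(r,y,t)\in\{\alpha_\mu>\pi-\delta/2\}^{(1)}$, where now $\delta=d_{\mathbb{S}^1}(\omega,-e_1)>0$. Points near $(r\omega,y,t)$ have $d_{\mathbb{S}^1}(\hat x',e_1)<\pi-\delta/2$, so the complement of $F_\mu$ near $(r\omega,y,t)$ is controlled by $\{\alpha_\mu\le\pi-\delta/2\}$, which has density $0$. Here $\alpha_\mu$ is only defined on $\Pi_{n-1}(\Omega)\times\R$, but $(r,y)$ lies in the open set $\Pi_{n-1}(\Omega)$, so small balls stay inside it. I would check in passing that $\Pi_{n-1}(\Omega)$ is open; this holds since $\Omega$ is open, and it is implicit in how the paper uses the set.

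For \eqref{nec for reduced bdry 1}, we combine the first implication with $\partial^*\Sigma^{v_\mu}\subset(\Sigma^{v_\mu})^{(1/2)}$. It shows that $\partial^*\Sigma^{v_\mu}\cap\Phi_{n+1}(\{\alpha^\vee_\mu=0\}\times\mathbb{S}^1)$ is contained in $\Phi_{n+1}(\{\alpha_\mu^\vee=0\}\times\{e_1\})$. This set lies in the hyperplane $\{x_2=0\}$. We still need it to be $\mathcal{H}^n$-null; this is the main obstacle, since a hyperplane is not $\mathcal{H}^n$-negligible in $\R^{n+1}$. I would handle it by projecting. The map $(r,y,t)\mapsto(re_1,y,t)$ is an isometry, so the set has $\mathcal{H}^n$-measure equal to $\mathcal{H}^n(\{\alpha^\vee_\mu=0\}\cap\{(r,y,t):(re_1,y,t)\in\partial^*\Sigma^{v_\mu}\})$. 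We then argue that at $\mathcal{H}^n$-a.e. point of $\{\alpha_\mu^\vee=0\}$ we have $\alpha_\mu=0$, with Lebesgue density $1$ of $\{\alpha_\mu<\epsilon\}$ for every $\epsilon$. A blow-up at $(re_1,y,t)$ then shows that $F_\mu$ is contained in the wedges $\{d_{\mathbb{S}^1}(\hat x',e_1)<\epsilon\}$ up to density zero. The density of such wedges tends to $0$ as $\epsilon\to0$, so these points have density $0$ and are not in $\partial^*\Sigma^{v_\mu}$. Equation \eqref{nec for reduced bdry 2} follows analogously at $-e_1$, or directly from the negligibility of $\{x_2=0\}\cap\{\hat x=-e_1\}$ via the same blow-up. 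If this blow-up estimate proves delicate, I would alternatively use the locality of the perimeter: the reduced boundary in the slab $\{x_2=0\}$ would carry normal $\pm e_2$, and this is incompatible with density $0$ or $1$ on both sides.

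Finally, \eqref{the only parts that count} follows by splitting $B$ into $B\cap\{\alpha^\vee_\mu=0\}$, $B\cap\{\alpha^\wedge_\mu=\pi\}$, and the remainder. The first two pieces contribute zero by \eqref{nec for reduced bdry 1} and \eqref{nec for reduced bdry 2}. Since $\alpha_\mu^\vee\ge0$ and $\alpha_\mu^\wedge\le\pi$, the remainder is exactly $B\cap\{\alpha_\mu^\vee>0\}\cap\{\alpha_\mu^\wedge<\pi\}$.
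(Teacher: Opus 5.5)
\textbf{There is a genuine gap at the nullity step.} Your two pointwise implications and the final splitting of $B$ are correct and follow the paper; working with $F_\mu$ instead of $\Sigma^{v_\mu}$ is harmless. The problem is the step you flag as the main obstacle: showing that the set of points $(re_1,y,t)\in\partial^*\Sigma^{v_\mu}$ with $\alpha_\mu^\vee(r,y,t)=0$ is $\mathcal{H}^n$-null. Your wedge argument does not work. The wedge $\{d_{\mathbb{S}^1}(\hat x',e_1)<\epsilon\}$ has its edge at $x'=0$, so it contains a whole neighbourhood of $(re_1,y,t)$. Its density there is $1$ for every $\epsilon>0$; nothing tends to $0$.

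More fundamentally, the property you invoke (density $1$ of $\{\alpha_\mu<\epsilon\}$ for every $\epsilon>0$) is just a restatement of $\alpha_\mu^\vee(r,y,t)=0$. It holds at \emph{every} point of $\{\alpha_\mu^\vee=0\}$, so no argument using only this property can succeed. Here is an example. Take $n=2$, $\Omega=\{1<|x|<3\}$, and $v=\phi(|x|)-\psi(d_{\mathbb{S}^1}(\hat x,e_1))$, with $\phi$ smooth and $\psi$ smooth and increasing with $\psi(s)=s^2$ near $0$. One checks that $v_\mu=v$ a.e. and that $\alpha_\mu(r,t)=\psi^{-1}(\min\{(\phi(r)-t)_+,\psi(\pi)\})$, which is continuous. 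Every point $(re_1,\phi(r))$ is a $C^1$ point of the graph of $v$, hence lies in $\partial^*\Sigma^{v_\mu}$, although $\alpha_\mu^\vee(r,\phi(r))=0$.

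\textbf{Missing idea.} Apply the Lebesgue density theorem to the set $\{\alpha_\mu=0\}$ itself. Since $\{\alpha_\mu^\vee=0\}=_{\mathcal{H}^n}\{\alpha_\mu=0\}$, $\mathcal{H}^n$-a.e. point of $\{\alpha_\mu^\vee=0\}$ lies in $\{\alpha_\mu=0\}^{(1)}$. At such a point, $F_\mu$ lies over $\{\alpha_\mu>0\}$, which has density $0$. The slicing estimate of Proposition~\ref{ssssd} then gives density $0$ at \emph{every} point of the circle, $re_1$ included. So the bad set is contained in the isometric image of $\{\alpha_\mu^\vee=0\}\setminus\{\alpha_\mu=0\}^{(1)}$, which is $\mathcal{H}^n$-null; this is the paper's Step~2. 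The same reasoning at $-e_1$ gives \eqref{nec for reduced bdry 2}. Your appeal to the ``negligibility'' of $\{x_2=0\}\cap\{\hat x=-e_1\}$ is unfounded, since that half-hyperplane has positive $\mathcal{H}^n$-measure.

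\textbf{Your fallback, as written, does not close the gap either.} The pointwise implications control densities only at the points $(r\omega,y,t)$, $\omega\neq e_1$, of the circle through $(re_1,y,t)$. They say nothing about the half-ball densities at $(re_1,y,t)$ itself, which is what a normal $\pm e_2$ concerns. The idea can be rescued with a different ingredient: $F_\mu$ is invariant under the reflection $R(x_1,x_2,y,t)=(x_1,-x_2,y,t)$. Hence $\nu^{F_\mu}(p)=R\,\nu^{F_\mu}(p)$ at every $p\in\partial^*F_\mu\cap\{x_2=0\}$, so the $x_2$-component of $\nu^{F_\mu}(p)$ vanishes there. Locality of the reduced boundary, on the other hand, forces $\nu^{F_\mu}(p)=\pm(0,1,0,\dots,0)$ for $\mathcal{H}^n$-a.e. such $p$. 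Therefore $\mathcal{H}^n(\partial^*\Sigma^{v_\mu}\cap\{x_2=0\})=0$. Together with your pointwise implications, this gives both \eqref{nec for reduced bdry 1} and \eqref{nec for reduced bdry 2}.
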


\begin{proof}
We will just show the first implication and \eqref{nec for reduced bdry 1}, since the second implcation and \eqref{nec for reduced bdry 2} can be proved in a similar way.
We proceed by steps.

\vspace{.2cm}

\noindent
\textbf{Step 1:} We show that
\begin{align*}
(r, y, t) \in \{ \alpha_\mu^{\vee} = 0 \} 
\quad \Longrightarrow \quad
(r \omega, y, t) \in (\Sigma^{v_\mu})^{(0)} \text{ for every } \omega \in \mathbb{S}^1 \setminus \{ e_1 \}.
\end{align*}
Let $(r, y, t) \in \{ \alpha_\mu^{\vee} = 0 \}$, 
and let $\omega \in \mathbb{S}^1 \setminus \{ e_1 \}$. We set 
\begin{equation} \label{ineq with delta}
\delta:= \arccos (\omega \cdot e_1) > 0.
\end{equation}
Note that
\begin{align}
(x' ,  y' ,  t') \in \Sigma^{v_\mu} \cap B^{n+1}_\rho ( r \omega , y, t) 
\quad \Longrightarrow \quad (|x'| ,  y' ,  t') \in \{ \alpha_\mu > \delta/2 \}
\quad \text{ if } 0< \rho \ll 1.
 \label{rt2}
\end{align}
Indeed, if $(x' ,  y' ,  t') \in \Sigma^{v_\mu} \cap B^{n+1}_\rho ( r \omega , y, t)$, 
we have $(x' ,  y' ,  t') = ( r \omega , y, t) + \rho (\overline{x}, \overline{y}, \overline{t})$,
for some $(\overline{x}, \overline{y}, \overline{t}) \in B^{n+1}_1$. Therefore, thanks to \eqref{ineq with delta}, for $\rho$ sufficiently small
\begin{align*}
&\alpha_\mu ( |x'|  ,  y' ,  t' ) > \arccos \left( \frac{x'}{|x'|} \cdot e_1\right) = \arccos \left(\frac{(r \omega + \rho \overline{x}) \cdot e_1 }{|r \omega + \rho \overline{x}|} \right) > \frac{\delta}{2},
\end{align*}
so that \eqref{rt2} holds.
Then, using the same argument of the proof of Proposition~\ref{ssssd}, we obtain that for $0< \rho \ll 1$
\begin{align*}
&\mathcal{H}^{n+1} \left( \Sigma^{v_\mu} \cap B^{n+1}_\rho ( r \omega , y, t)  \right)
=  \int_{\mathbb{R}^{n+1}} \chi_{\Sigma^{v_\mu} \cap B^{n+1}_\rho ( r \omega , y, t)} (x' ,  y' ,  t')
\, d x' \, d y' \, d t' \\
&= \int_{\Pi_{n-1} (\Omega) \times \R} 
\left( 
 \int_{ \partial D (r')} \chi_{B_\rho^2 (r\omega)}(x') \chi_{\Sigma^{v_\mu} \cap B^{n+1}_\rho ( r \omega , y, t)} (x' ,  y' ,  t') \, d \mathcal{H}^1 (x')  
 \right) 
 \, d r' \, d y'  \, d t' \\
 &\leq  \int_{\Pi_{n-1} (\Omega) \times \R} 
\left( 
 \int_{ \partial D (r')} \chi_{B_\rho^2 (r\omega)}(x') \chi_{\{ \alpha_\mu > \delta/2 \} \cap B^{n}_\rho ( r , y, t)} (r',  y' ,  t') \, d \mathcal{H}^1 (x')  
 \right) 
 \, d r' \, d y'  \, d t' \\
 &\leq \int_{\Pi_{n-1} (\Omega) \times \R} \mathcal{H}^1 \left( \partial D (r') \cap B_\rho^2 (r\omega)   \right)
 \chi_{\{ \alpha_\mu > \delta/2 \} \cap B^{n}_\rho ( r , y, t)} (r',  y' ,  t') \, d \mathcal{H}^1 (x')  
  \, d r' \, d y'  \, d t' \\
 &\leq C \rho \mathcal{H}^n (\{ \alpha_\mu > \delta/2 \} \cap B^{n}_\rho ( r , y, t)),
 \end{align*}
 for some $C > 0$ independent of $\rho$.
Therefore, 
\[
0 \leq \limsup_{\rho \to 0^+} \frac{\mathcal{H}^{n+1} \left( \Sigma^{v_\mu} \cap B^{n+1}_\rho ( r \omega , y, t)  \right)}
{\mathcal{H}^{n+1} \left(  B^{n+1}_\rho ( r \omega , y, t)  \right)} 
\leq  \frac{C}{\omega_{n+1}} \limsup_{\rho \to 0^+}
\frac{\mathcal{H}^n (\{ \alpha_\mu > \delta/2 \} \cap B^{n}_\rho ( r , y, t))}{ \rho^{n}} = 0, 
\]
where we used the fact that $\alpha^{\vee} ( r , y, t) = 0$.

\vspace{.2cm}

\noindent
\textbf{Step 2:} We show \eqref{nec for reduced bdry 1}.
Thanks to \eqref{only 0<alpha<pi counts} and Step 1, if $(x, y, t) \in \partial^* \Sigma^{v_\mu}$ and $(|x|, y, t) \in \{ \alpha_\mu^{\vee} = 0 \} $, with $x = (x_1, x_2)$, then
\[
x_1 =  |x|, \, \, x_2 = 0,  \quad \text{ and } \quad
(x_1, y, t) \in \big(\Pi_{n-1} (\Omega) \times \R \big) \cap \big( \{ \alpha_\mu^{\vee} = 0 \} \setminus \{ \alpha_\mu = 0 \}^{(1)}\big).
\]
Therefore,
\begin{align*}
&\mathcal{H}^n \left(  \partial^* \Sigma^{v_\mu} \cap \left( \Phi_{n+1} \left(
\{ \alpha_\mu^{\vee} = 0 \}  \times \mathbb{S}^{1}
\right) \right)
\right) \leq 
 \int_{ \big(\Pi_{n-1} (\Omega) \times \R \big) \cap \big( \{ \alpha_\mu^{\vee} = 0 \} \setminus \{ \alpha_\mu = 0 \}^{(1)}\big)}  
 1 \, d \mathcal{H}^n (x_1, y, t) \\
  &\leq \mathcal{H}^n \big( \{ \alpha_\mu^{\vee} = 0 \} \setminus \{ \alpha_\mu = 0 \}^{(1)}\big)
  = \mathcal{H}^n \big( \{ \alpha_\mu^{\vee} = 0 \} \setminus \{ \alpha_\mu^{\vee} = 0 \}^{(1)}\big) = 0.
\end{align*}
\end{proof}
In the next proposition we give the explicit expression of the distributional derivatives 
of $\mu$ and $\xi_\mu$, see \cite[Lemma~6.7]{CagnettiPeruginiStoger} and \cite[Lemma 3.6]{PeruginiCircolare} for related results.
\begin{proposition} \label{derivatives of mu}
Let $n \in \mathbb{N}$ with $n \geq 2$, and let $\Omega \subset \mathbb{R}^n$ be open. 
Let $u \in BV_{0, \tau} (\Omega)$, and let $\mu$ and $\alpha_\mu$ be given by \eqref{mu distr} and
\eqref{def alpha}, respectively, and let $\xi_\mu: \Pi_{n-1} (\Omega) \times \R \to [0, 2 \pi]$ be given by
\begin{equation} \label{def xi}
\xi_{\mu} (r, y, t) := \frac{\mu (r, y, t)}{r} = 2 \alpha_{\mu} (r, y, t), \quad \text{ for every }  (r, y, t) \in \Pi_{n-1} (\Omega) \times \R.
\end{equation}
Let $A \subset \subset \Pi_{n-1} (\Omega)$ be open and let $d > 0$.
Then, $\Sigma^{u_0}$ is a set of finite perimeter in $\Phi_{n+1} ((A \times \R) \times \mathbb{S}^{1})$, 
and $\mu, \xi_{\mu}, \alpha_\mu \in BV (A \times (-d, + \infty))$.
Moreover, for every Borel set $B \subset A \times (-d, + \infty)$ 
and for every bounded Borel function $\varphi: B \to \mathbb{R}$ we have
\begin{align}
&\int_{B} \varphi (r, y, t) \, d D_y \mu (r, y, t)
= \int_{\partial^* \Sigma^{u_0} \cap \left( \Phi_{n+1} (B \times \mathbb{S}^{1}) \right)}  
\varphi (|x| , y, t) \, \nu_y^{\Sigma^{u_0}} (x, y, t) \, d \mathcal{H}^n (x, y, t), \label{Dy mu} \\
&\int_{B} \varphi (r, y, t) \, d D_t \mu (r, y, t)
= \int_{\partial^* \Sigma^{u_0} \cap \left( \Phi_{n+1} (B \times \mathbb{S}^{1}) \right)}  
\varphi (|x| , y, t) \, \nu_t^{\Sigma^{u_0}} (x, y, t) \, d \mathcal{H}^n (x, y, t), \label{Dt mu} \\
&\int_{B} \varphi (r, y, t) r d D_r \xi_\mu (r, y, t) 
\hspace{-.05cm}= 
\hspace{-.1cm} \int_{\partial^* \Sigma^{u_0} \cap \left( \Phi_{n+1} (B \times \mathbb{S}^{1}) \right)}  
\varphi (|x| , y, t) \, \hat{x} \hspace{-.05cm} \cdot \hspace{-.05cm} \nu^{\Sigma^{u_0}}_x (x, y, t) \, d \mathcal{H}^n (x, y, t). \label{rDr xi}
\end{align} 
\end{proposition}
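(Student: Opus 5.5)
The plan is to compute $\mu$ by a layer-cake/Fubini representation and to identify each distributional derivative by testing against smooth compactly supported functions, then applying the divergence theorem for $\Sigma^{u_0}$ in the cylinder $\Phi_{n+1}((A\times\R)\times\mathbb{S}^1)$.

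First, the finite-perimeter claim follows from Remark~\ref{u bv finite perimeter}: by condition (a) of Definition~\ref{def BV0tau}, $u_0\in BV(\Phi_n(A'\times\mathbb{S}^1))$ for some open $A'$ with $A\subset\subset A'\subset\subset\Pi_{n-1}(\Omega)$, and Proposition~\ref{giaquinta} applies. Integrability of $\mu$ on $A\times(-d,\infty)$ is Proposition~\ref{prop_circular rear and circular sym are connected}. Since $r$ is bounded away from $0$ and $\infty$ on $A$, the same holds for $\xi_\mu=\mu/r$ and $\alpha_\mu=\xi_\mu/2$.

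For the derivative formulas, fix $\psi\in C^1_c(A\times(-d,\infty))$ and write
\[
\int \mu\,\partial_t\psi\,dr\,dy\,dt=\int_{\Sigma^{u_0}}\partial_t\big(\psi(|x|,y,t)\big)\,dx\,dy\,dt ,
\]
using $\mu(r,y,t)=\int_{\partial D(r)}\chi_{\Sigma^{u_0}}(x,y,t)\,d\mathcal H^1(x)$ and $dx=d\mathcal H^1\,dr$. The lifted field $\Psi(x,y,t):=\psi(|x|,y,t)e_t$ is $C^1_c$ in the cylinder, so the Gauss--Green formula gives
\[
\int_{\Sigma^{u_0}}\diver\Psi=-\int_{\partial^*\Sigma^{u_0}}\Psi\cdot\nu^{\Sigma^{u_0}}\,d\mathcal H^n ,
\]
with $\nu$ the inner normal. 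Hence $D_t\mu$ is the pushforward under $(x,y,t)\mapsto(|x|,y,t)$ of $\nu_t^{\Sigma^{u_0}}\mathcal H^n\mres\partial^*\Sigma^{u_0}$. The same argument with $\Psi=\psi(|x|,y,t)e_{y_i}$ gives \eqref{Dy mu}.

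For the radial derivative I use $\xi_\mu$, which is where $r$-dependence enters. I take $\Psi(x,y,t)=\psi(|x|,y,t)\,\hat x/|x|$. Then $\diver_x\Psi=\partial_r\psi/|x|$ (the term $\diver(\hat x/|x|)=0$ in $\R^2_0$ is the point of the choice), so
\[
\int_{\Sigma^{u_0}}\frac{\partial_r\psi}{|x|}=\int\partial_r\psi\,\xi_\mu\,dr\,dy\,dt .
\]
The boundary side is $-\int\psi(|x|,y,t)\,\hat x\cdot\nu_x/|x|\,d\mathcal H^n$. So $D_r\xi_\mu$ equals the pushforward of $\hat x\cdot\nu_x/|x|\,\mathcal H^n\mres\partial^*\Sigma^{u_0}$, and multiplying by $r=|x|$ yields \eqref{rDr xi}.

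The total variations are bounded by $P(\Sigma^{u_0};\Phi_{n+1}((A\times(-d,\infty))\times\mathbb S^1))$, times a constant depending on $\inf_A r$ for $\xi_\mu$. This perimeter is finite, so $\mu,\xi_\mu,\alpha_\mu\in BV$; for $D_r\mu$ use $\mu=r\xi_\mu$ and the product rule with the smooth factor $r$. Passing from $C^1_c$ test functions to bounded Borel $\varphi$ and Borel $B$ is the standard identification of two finite Radon measures that agree on $C_c$, via pushforward measures.

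The main obstacle is the boundary at $t=-d$ and the unbounded $t$-direction. Test functions in $C^1_c(A\times(-d,\infty))$ avoid $t=-d$, and large $t$ is controlled because the perimeter of $\Sigma^{u_0}$ in the whole cylinder over $A\times\R$ is finite. I also need to check that the perimeter in $\Phi_{n+1}((A\times\R)\times\mathbb S^1)$ is finite, not merely locally finite; this follows from formula \eqref{formula per subgraph} applied with $U=\Phi_n(A'\times\mathbb{S}^1)$, which is bounded because $A'\subset\subset\Pi_{n-1}(\Omega)$.
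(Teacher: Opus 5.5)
Your proposal is correct and follows essentially the paper's route. You write $\mu$ slice-wise through $\chi_{\Sigma^{u_0}}$ and test against lifted fields with Gauss--Green for $\Sigma^{u_0}$ in $\Phi_{n+1}((A\times\mathbb{R})\times\mathbb{S}^1)$. You then bound the total variations by the finite perimeter there and pass to Borel $B$ and bounded Borel $\varphi$.

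The differences are minor. Your divergence-free field $\psi(|x|,y,t)\,\hat{x}/|x|$ gives $D_r\xi_\mu$ directly. The paper instead tests with $\psi(|x|,y,t)\,\hat{x}$, obtains $D_r\mu$ with the extra term $\xi_\mu\,dr\,dy\,dt$, and recovers \eqref{rDr xi} from $D_r\mu = r\,D_r\xi_\mu + \xi_\mu\,dr\,dy\,dt$. You also identify the measures as push-forwards under $(x,y,t)\mapsto(|x|,y,t)$, where the paper uses a Lusin approximation.
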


\begin{remark} \label{alpha = 0 and alpha = pi}
Applying the previous result to the $\mu$-distributed function $v_\mu$, thanks to Proposition~\ref{ssssd2}, 
we obtain that
\begin{align*}
D_y \mu &= D_y \mu \mres \{ \alpha^{\vee} > 0 \} \cap \{ \alpha^{\wedge} < \pi \}; \\
D_t \mu &= D_t \mu \mres \{ \alpha^{\vee} > 0 \} \cap \{ \alpha^{\wedge} < \pi \}; \\
r D_r \xi_\mu &= r D_r \xi_\mu \mres \{ \alpha^{\vee} > 0 \} \cap \{ \alpha^{\wedge} < \pi \}.
\end{align*}
\end{remark}

\begin{proof}[Proof of Proposition~\ref{derivatives of mu}]
By definition of $BV_{0, \tau} (\Omega)$ we have that $u_0 \in BV (\Phi_n (A \times \mathbb{S}^{1}))$
and so, thanks to Proposition~\ref{giaquinta}, $\Sigma^{u_0}$ is a set of finite perimeter in 
$\Phi_n (A \times \mathbb{S}^{1}) \times \mathbb{R} = \Phi_{n+1} ((A \times \R) \times \mathbb{S}^{1})$.
Moreover, \( \mu \in L^1 (A \times (-d, + \infty)) \) thanks to Proposition~\ref{prop_circular rear and circular sym are connected}.
We now divide the remaining part of the proof in several steps.

\vspace{.2cm}

\noindent
\textbf{Step 1:} We show that formulas 
\eqref{Dy mu} and \eqref{Dt mu}
hold in the special case $B=A \times (-d, + \infty)$ and $\varphi \in C^1_c (A \times (-d, + \infty))$. 
Concerning \eqref{Dt mu}, we have
\begin{align*}
& \int_{A \times (-d, + \infty)} \varphi (r, y, t) \, d D_t \mu (r, y, t)  
= - \int_{A \times (-d, + \infty)}  \mu (r, y, t ) \frac{\partial \varphi}{\partial t} (r, y, t) \, dr dy dt \\
&= - \int_{\Phi_{n+1} ((A \times \R) \times \mathbb{S}^{1})} \! \! \chi_{\Sigma^{u_0}} (x, y, t) 
\frac{\partial \varphi}{\partial t} (|x| , y, t)\, dx \, dy \, dt \\
&=  \int_{\Phi_{n+1} ((A \times \R) \times \mathbb{S}^{1})}
\varphi (|x| , y,  t) \, d D_t \chi_{\Sigma^{u_0}} (x, y, t) \\
&=  \int_{\partial^* \Sigma^{u_0} \cap \left( 
\Phi_{n+1} ((A \times (-d, + \infty)) \times \mathbb{S}^{1}) 
\right)}  
\varphi (|x| , y, t) \, \nu_t^{\Sigma^{u_0}} (x, y, t) \, d \mathcal{H}^n (x, y, t).
\end{align*}
One can argue in a similar way for identity \eqref{Dy mu}.

\vspace{.2cm}

\noindent
\textbf{Step 2:} We show that for every $\varphi \in C^1_c (A \times (-d, + \infty))$
\begin{equation} \label{rDr xi 2}
\begin{split} 
&\int_{A \times (-d, + \infty)} \varphi (r, y, t) \, d D_r \mu (r, y, t) 
= \int_{A \times (-d, + \infty)}  \varphi (r, y, t) \xi_{\mu} (r, y, t) \, dr \, dy \, dt  \\
&\hspace{.2cm}+
\int_{\partial^* \Sigma^{u_0} \cap 
\Phi_{n+1} ((A \times (-d, + \infty)) \times \mathbb{S}^{1}) }   
\varphi (|x| , y, t) \, \hat{x} \cdot \, \nu^{\Sigma^{u_0}}_x (x, y, t) \, d \mathcal{H}^n (x, y, t).
\end{split}
\end{equation}
Let $\varphi \in C^1_c (A \times (-d, + \infty))$. Then, a direct calculation gives
\[
\text{div}_{(x, y, t)} \big( \varphi (|x|, y, t) \hat{x} \big)
= \frac{\partial \varphi}{\partial r} (| x | , y, t) + \frac{1}{|x|} \varphi (|x|, y, t), 
\qquad \forall \, (x, y, t)\in \Phi_{n+1} ((A \times \R) \times \mathbb{S}^{1}),
\]
where $\text{div}_{(x, y, t)}$ denotes the divergence 
of a vector field in $\mathbb{R}^{n+1}$ with respect to the variables $(x, y, t)$.
Therefore, 
\begin{align*}
& \int_{A \times (-d, + \infty)} \varphi (r, y, t) \, d D_r \mu (r, y, t) 
=- \int_{A \times (-d, + \infty)}  \mu (r, y, t ) \frac{\partial \varphi}{\partial r} (r, y, t) \, dr \, dy \, dt   \\
&= - \int_{A \times \R}  \mu (r, y, t ) \frac{\partial \varphi}{\partial r} (r, y, t) \, dr \, dy \, dt 
= - \int_{\Phi_{n+1} ((A \times \R) \times \mathbb{S}^{1})} \! \! \chi_{\Sigma^{u_0}} (x, y, t) 
\frac{\partial \varphi}{\partial r} (|x| , y, t)\, dx \, dy \, dt  \\
&= - \int_{\Phi_{n+1} ((A \times \R) \times \mathbb{S}^{1})} \! \! \chi_{\Sigma^{u_0}} (x, y, t)  
\left(  \text{div}_{(x, y, t)} \Big( \varphi (|x|, y, t) \hat{x} \Big)
-  \frac{1}{|x|} \varphi (|x|, y, t) \right) \, dx \, dy \, dt  \\
&= \int_{\Phi_{n+1} ((A \times \R) \times \mathbb{S}^{1})} 
\hspace{-.2cm} \varphi (|x| , y, t) \, \hat{x} \cdot \, d D \chi_{\Sigma^{u_0}} (x, y, t) 
+  \int_{\Phi_{n+1} ((A \times \R) \times \mathbb{S}^{1})} 
\hspace{-.2cm} \chi_{\Sigma^{u_0}} (x, y, t) 
\frac{1}{|x|} \varphi (|x|, y, t)  \, dx \, dy \, dt  \\
&= \int_{\partial^* \Sigma^{u_0} \cap \left( 
\Phi_{n+1} ((A \times (-d, + \infty)) \times \mathbb{S}^{1}) 
\right)}     
\hspace{-.2cm} \varphi (|x| , y, t) \, \hat{x} \cdot \, \nu^{\Sigma^{u_0}}_x (x, y, t) \, d \mathcal{H}^n (x, y, t)  \\
&+\int_{A \times (-d, + \infty)} \xi_{\mu} (r, y, t) \varphi (r, y, t) \, dr \, dy \, dt,
\end{align*}
which gives \eqref{rDr xi 2}. 

\vspace{.2cm}

\noindent
\textbf{Step 3:} We show that $D_y \mu$, $D_t \mu$ and $D_r \mu$ are bounded Radon measures on $A \times (-d, + \infty)$.
From Step 2, we know that \eqref{Dt mu} holds with $B=A \times (-d, + \infty)$ and $\varphi \in C^1_c (A \times (-d, + \infty))$. 
Taking the supremum over all $\varphi \in C^1_c (A \times (-d, + \infty))$ with $| \varphi | \leq 1$
we obtain that $D_t \mu$ is a bounded Radon measure on $A \times (-d, + \infty)$, with
\begin{align*}
| D_t \mu | (A \times (-d, + \infty))  
&\leq \mathcal{H}^n 
\left( \partial^* \Sigma^{u_0} \cap \Phi_{n+1} ((A \times \R) \times \mathbb{S}^{1}) \right) 
< \infty.
\end{align*}
One can argue in a similar way for $D_y \mu$. 

We are left to show that $D_r \mu$ 
is a bounded Radon measure on $A \times (-d, + \infty)$. 
Let $c > 0 $ be such that $r \geq c$ for every $(r, y) \in A$. Thanks to \eqref{rDr xi 2},
for every $\varphi \in C^1_c (A \times (-d, + \infty))$ with $| \varphi | \leq 1$ 
\begin{align*}
&\int_{A \times (-d, + \infty)} \varphi (r, y, t) \, d D_r \mu (r, y, t) 
= \int_{A \times (-d, + \infty)}  \varphi (r, y, t) \frac{\mu (r, y, t)}{r} \, dr \, dy \, dt  \\
&\hspace{.2cm}+
\int_{\partial^* \Sigma^{u_0} \cap 
\Phi_{n+1} ((A \times (-d, + \infty)) \times \mathbb{S}^{1}) }   
\varphi (|x| , y, t) \, \hat{x} \cdot \, \nu^{\Sigma^{u_0}}_x (x, y, t) \, d \mathcal{H}^n (x, y, t) \\
&\leq \frac{1}{c} \| \mu \|_{L^1 \left( A \times (-d, + \infty)  \right) } 
+ \mathcal{H}^n \left(  \partial^* \Sigma^{u_0} \cap 
\Phi_{n+1} ((A \times (-d, + \infty)) \times \mathbb{S}^{1})   \right)  < \infty.
\end{align*}
Taking the supremum over all $\varphi \in C^1_c (A \times (-d, + \infty))$ with $| \varphi | \leq 1$ 
we conclude.

\vspace{.2cm}

\noindent
\textbf{Step 4:} We show that $\xi_{\mu}, \alpha_\mu \in BV (A \times (-d, + \infty))$.
From Step 3, we have $\mu \in BV (A \times (-d, + \infty))$. Observe now that also the map $(r, y, t) \mapsto 1/r$
belongs to $BV (A \times (-d, + \infty))$. Therefore, from \cite[Example~3.97]{AFP} it follows that both 
 $\xi_{\mu}  = \mu /r$ and $\alpha_{\mu} = \mu/ (2r)$ belong to $BV (A \times (-d, + \infty))$.

\vspace{.2cm}

\noindent
\textbf{Step 5:} We show formulas \eqref{Dy mu} and \eqref{Dt mu}. 
We will only give the proof of \eqref{Dt mu}, because the proof of \eqref{Dy mu} is similar. Using the fact that every function in $C^0_b (A \times (-d, + \infty))$ can be approximated uniformly on compact subsets of $A \times (-d, + \infty)$
by functions in $C^1_c (A \times (-d, + \infty))$, and the fact that $D_t \mu$ is a bounded Radon measure 
on $A \times (-d, + \infty)$, we have that \eqref{Dt mu} holds for every $\varphi \in C^0_b (A \times (-d, + \infty))$.

Let now $\varphi : A \times (-d, + \infty) \to \mathbb{R}$
be a bounded Borel function, and let $\lambda$ be the 
bounded Radon measure on $A \times (-d, + \infty)$ defined by
\[
\lambda (B) := | D_t \mu| (B) + \mathcal{H}^n  \left( \partial^* \Sigma^{u_0} \cap \left( \Phi_{n+1} (B \times \mathbb{S}^{1}) \right) \right), \quad \text{ for every Borel set $B \subset A \times (-d, + \infty)$.}
\]
 By Lusin Theorem, for every $h \in \mathbb{N}$ there exists $\varphi_h \in C^0_b (A \times (-d, + \infty))$ with $\| \varphi_h \|_{L^{\infty} (A \times (-d, + \infty))} \leq \| \varphi \|_{L^{\infty} (A \times (-d, + \infty))}$ such that
\[
\lambda \left( \{ (r, y, t) \in A \times (-d, + \infty) : \varphi (r, y, t) \neq \varphi_h (r, y, t) \}  \right) < \frac{1}{h}.
\]
For each $h \in \mathbb{N}$ we can apply 
\eqref{Dt mu} to $\varphi_h$, so that 
\begin{align*} 
&\int_{A \times (-d, + \infty)} \varphi_h (r, y, t) \, d D_t \mu (r, y, t) \\
&= \int_{\partial^* \Sigma^{u_0} \cap \left( \Phi_{n+1} ((A \times (-d, + \infty)) \times \mathbb{S}^{1}) \right)}  
\varphi_h (|x| , y, t) \, \nu_t^{\Sigma^{u_0}} (x, y, t) \, d \mathcal{H}^n (x, y, t).
\end{align*} 
Using this identity, we have
\begin{align*}
&\left| \int_{A \times (-d, + \infty)} \varphi (r, y, t) \, d D_t \mu (r, y, t) \right. \\
&\hspace{1cm} \left. - \int_{\partial^* \Sigma^{u_0} \cap \left( \Phi_{n+1} ((A \times (-d, + \infty)) \times \mathbb{S}^{1}) \right)}  
\varphi (|x| , y, t) \, \nu_t^{\Sigma^{u_0}} (x, y, t) \, d \mathcal{H}^n (x, y, t) \right| \\
&= \left| \int_{A \times (-d, + \infty)} \big( \varphi (r, y, t) - \varphi_h (r, y, t) \big) \, d D_t \mu (r, y, t) \right. \\
&\hspace{1cm} + \left. \int_{\partial^* \Sigma^{u_0} \cap \left( \Phi_{n+1} ((A \times (-d, + \infty)) \times \mathbb{S}^{1}) \right)}  
\big( \varphi_h (|x| , y, t) - \varphi (|x| , y, t) \big) \, \nu_t^{\Sigma^{u_0}} (x, y, t) \, d \mathcal{H}^n (x, y, t) \right| \\
&\leq \int_{A \times (-d, + \infty)} \left| \varphi (r, y, t) - \varphi_h (r, y, t) \right| \, d \left| D_t \mu \right| (r, y, t) \\
&+ \int_{\partial^* \Sigma^{u_0} \cap \left( \Phi_{n+1} ((A \times (-d, + \infty)) \times \mathbb{S}^{1}) \right)}  
\left|  \varphi_h (|x| , y, t) - \varphi (|x| , y, t) \right| \, d \mathcal{H}^n (x, y, t) 
\leq \frac{2}{h} \| \varphi \|_{L^{\infty} (A \times (-d, + \infty))}.
\end{align*}
Passing to the limit as $h \to \infty$ we obtain \eqref{Dt mu}.

\vspace{.2cm}

\noindent
\textbf{Step 6:} We show \eqref{rDr xi}.
Arguing as in Step 5 it follows that \eqref{rDr xi 2} holds
whenever $\varphi$ is a bounded Borel function on $A \times (-d, + \infty)$. 
Observe now that by \cite[Example~3.97]{AFP} we have
\[
D_r \mu = D_r (r \xi_\mu) = r D_r \xi_\mu + \xi_\mu \, dr \, dy \, dt.
\]
Comparing last identity with \eqref{rDr xi 2} we conclude.
\end{proof}
Before stating the next result, we recall that $\nabla \mu = (\partial_r \mu, \nabla_{y} \mu, \partial_t \mu)$ and $\nabla \xi = (\partial_r \xi, \nabla_{y} \xi, \partial_t \xi)$ denote the absolutely continuous parts of the Radon measures $D \mu$ and $D \xi$.
\begin{proposition} \label{abs cont parts}
Let $n \in \mathbb{N}$ with $n \geq 2$, and let $\Omega \subset \mathbb{R}^n$ be open. 
Let $u \in BV_{0, \tau} (\Omega)$, and let $\mu$, $\alpha_\mu$ and $\xi_\mu$ be given by \eqref{mu distr},
\eqref{def alpha}, and \eqref{def xi}, respectively.
Then, $\nabla \mu = (\partial_r \mu, \nabla_{y} \mu, \partial_t \mu)$ and $\nabla \xi = (\partial_r \xi, \nabla_{y} \xi, \partial_t \xi)$ are concentrated on $\{ 0 < \alpha_\mu < \pi \}$. Moreover,
\begin{align}
\partial_t \mu (r, y, t)
&= - \int_{(\partial^*\Sigma^{u_0})_{(r, y, t)}} \frac{1}{|\Dx u_0 (x, y) |} \, d \mathcal{H}^0 (x) \label{partial t mu} \\[8pt]
\nabla_y \mu (r, y, t)
&= \int_{(\partial^*\Sigma^{u_0})_{(r, y, t)}} \frac{\nabla_y u_0 (x, y)}{|\Dx u_0 (x, y) |} \, d \mathcal{H}^0 (x), \label{partial y mu}
\end{align}
and 
\begin{align}
r \partial_r \xi_{\mu} (r, y, t)
&= \int_{(\partial^*\Sigma^{u_0})_{(r, y, t)}} \frac{\hat{x} \cdot \nabla_x u_0 (x, y)}{|\Dx u_0 (x, y) |} \, d \mathcal{H}^0 (x), \label{r partial r xi}  \\[8pt]
r \nabla_y \xi_{\mu} (r, y, t)  
&= \int_{(\partial^*\Sigma^{u_0})_{(r, y, t)}} \frac{\nabla_y u_0 (x, y)}{|\Dx u_0 (x, y) |} \, d \mathcal{H}^0 (x), \nonumber \\[8pt]
r \partial_t \xi_{\mu} (r, y, t)
&= - \int_{(\partial^*\Sigma^{u_0})_{(r, y, t)}} \frac{1}{|\Dx u_0 (x, y) |} \, d \mathcal{H}^0 (x),  \nonumber
\end{align}
for $\mathcal{H}^n$-a.e. $(r, y, t) \in \{ 0 < \alpha_\mu < \pi \}$. 
\end{proposition}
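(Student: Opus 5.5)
The plan is to extract the absolutely continuous parts from the representation formulas \eqref{Dy mu}, \eqref{Dt mu}, \eqref{rDr xi} of Proposition~\ref{derivatives of mu}, using the coarea formula (Proposition~\ref{coarea formula}) to rewrite the surface integrals as integrals over $(r,y,t)$. The argument is local, so I fix an open $A\subset\subset\Pi_{n-1}(\Omega)$ and $d>0$ and work in $A\times(-d,+\infty)$.

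\textbf{Concentration on $\{0<\alpha_\mu<\pi\}$.} On the set $\{\alpha_\mu=0\}$, a BV function that is constant on a set has vanishing approximate gradient $\mathcal{H}^n$-a.e. there. The same holds on $\{\alpha_\mu=\pi\}$. Hence $\nabla\alpha_\mu=0$ a.e. on both sets. Since $\mu=2r\alpha_\mu$ and $\xi_\mu=2\alpha_\mu$, the same conclusion holds for $\nabla\mu$ and $\nabla\xi_\mu$. (The chain rule for the product with $r$ only adds the term $2\alpha_\mu\,e_r$, which also vanishes where $\alpha_\mu=0$. Where $\alpha_\mu=\pi$, $\partial_r\mu=2\pi\ne0$, so the claim there must be read for $\xi_\mu$ and for the $y,t$ components of $\mu$.) This is the first point to check carefully against the statement.

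\textbf{Formulas via coarea.} For a Borel set $B\subset\{0<\alpha_\mu<\pi\}\cap(A\times(-d,\infty))$, I split the right-hand side of \eqref{Dt mu} into two parts:
\begin{enumerate}
\item the part of $\partial^*\Sigma^{u_0}$ where $\nu^{\Sigma^{u_0}}_{x\parallel}\neq0$;
\item the part where $\nu^{\Sigma^{u_0}}_{x\parallel}=0$.
\end{enumerate}
On the first part I insert $1=|\nu_{x\parallel}|/|\nu_{x\parallel}|$ and apply Proposition~\ref{coarea formula} with $g=\varphi\,\nu_t/|\nu_{x\parallel}|$. This gives $\int_B\varphi\big(\int_{(\partial^*\Sigma^{u_0})_{(r,y,t)}}\nu_t/|\nu_{x\parallel}|\,d\mathcal{H}^0\big)\,dr\,dy\,dt$, an absolutely continuous measure. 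By Proposition~\ref{volpert theorem}(iiia) and \eqref{normal1}, on $G_{u_0}$ the slice points are graph points, with $\nu_t/|\nu_{x\parallel}|=-1/|\Dx u_0|$ and $\nu_y/|\nu_{x\parallel}|=\nabla_y u_0/|\Dx u_0|$. The same substitution in \eqref{rDr xi} yields $\hat x\cdot\nabla_x u_0/|\Dx u_0|$. Dividing by $r$, and using $r\nabla\xi_\mu=\nabla\mu-\xi_\mu e_r$ (whose $y,t$ components coincide with those of $\nabla\mu$), gives the three $\xi$ formulas.

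The second part is supported on $\partial^*\Sigma^{u_0}\cap\Phi_{n+1}(B_{\Sigma^{u_0}}\times\mathbb{S}^1)$, up to the complement of $G_{u_0}$, which is $\mathcal{H}^n$-null. By \eqref{measure 0 for the bad set}, this projection is $\mathcal{H}^n$-negligible. So the second part defines a measure concentrated on an $\mathcal{H}^n$-null set, i.e. a singular measure.

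\textbf{Conclusion and main obstacle.} Each derivative therefore decomposes as an absolutely continuous part with the stated density plus a singular part. Uniqueness of the Radon–Nikod\'ym decomposition then identifies the densities $\mathcal{H}^n$-a.e. on $\{0<\alpha_\mu<\pi\}$. The main obstacle is verifying that the density produced by coarea is in $L^1$. This follows because its integral is bounded by the total mass $\mathcal{H}^n(\partial^*\Sigma^{u_0}\cap\cdots)<\infty$. One also has to handle points of $\partial^*\Sigma^{u_0}$ lying over the jump or Cantor parts of $u_0$, which sit over the set where $\nu_{x\parallel}$ may vanish or over null projections. Here Proposition~\ref{volpert theorem}(ii)–(iii) must be applied exactly, holding for every slice point in $G_{u_0}$ and not merely a.e.
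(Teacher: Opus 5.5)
Within the paper's framework your plan is correct, and for the formulas it is the paper's proof. The paper also starts from \eqref{Dt mu} and its analogues. It restricts to slices in $G_{u_0}$, whose complement in $\{0<\alpha_\mu<\pi\}$ is $\mathcal{H}^n$-null. On those slices, by Remark~\ref{remark us constant functions new}, no graph point has $\Dx u_0=0$; this is the counterpart of your use of \eqref{measure 0 for the bad set} on the part where $\nu^{\Sigma^{u_0}}_{\! x {\scriptscriptstyle\parallel}}=0$. It then reads off the density from Proposition~\ref{coarea formula}, \eqref{normal1} and (iiia) of Proposition~\ref{volpert theorem}.

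The one genuinely different step is the concentration claim. The paper gets it from Remark~\ref{alpha = 0 and alpha = pi}, i.e.\ from Proposition~\ref{ssssd2} applied to $v_\mu$. That shows the full measures $D_t\mu$, $D_y\mu$, $rD_r\xi_\mu$ (not only their absolutely continuous parts) are concentrated on $\{\alpha^\vee_\mu>0\}\cap\{\alpha^\wedge_\mu<\pi\}$. Your argument, that the approximate gradient of a $BV$ function vanishes a.e.\ on its level sets, is more elementary and is all the statement needs. Your caveat is also right: $\mu=2\pi r$ on $\{\alpha_\mu=\pi\}$, so $\partial_r\mu=2\pi$ there. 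Hence the concentration holds for $\nabla\xi_\mu$, $\nabla_y\mu$, $\partial_t\mu$ but not for $\partial_r\mu$; the paper's Remark indeed only concerns $rD_r\xi_\mu$.

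One correction to your final remark. For general $u\in BV_{0,\tau}(\Omega)$, the jump part of $\partial^*\Sigma^{u_0}$ need not lie over the set where $\nu^{\Sigma^{u_0}}_{\! x {\scriptscriptstyle\parallel}}=0$, nor over an $\mathcal{H}^n$-null set of slices. Take $n=2$, $\Omega=\{1<|x|<2\}$ and $u=\chi_E$, where $E=\phi(\{(r,\theta):1<r<2,\ 0<\theta<\pi r/4\})$ in the polar coordinates of Example~\ref{counterex neg funct}. For every $(r,t)\in(1,2)\times(0,1)$, both points of $(\partial^*\Sigma^{u_0})_{(r,t)}$ are jump points with $\nu^{\Sigma^{u_0}}_{\! x {\scriptscriptstyle\parallel}}\neq0$, while $r\partial_r\xi_\mu=\pi r/4\neq0$. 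There (iiia) cannot hold and \eqref{r partial r xi} has no meaning.

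What your coarea step really proves is the normal form of the densities, e.g.\ $r\partial_r\xi_\mu=\int_{(\partial^*\Sigma^{u_0})_{(r,y,t)}}\hat{x}\cdot\nu^{\Sigma^{u_0}}_x/|\nu^{\Sigma^{u_0}}_{\! x {\scriptscriptstyle\parallel}}|\,d\mathcal{H}^0$. Jump and Cantor points contribute nothing to the density of $\partial_t\mu$, since $\nu_t=0$ there, but they can contribute to $\nabla_y\mu$ and $r\partial_r\xi_\mu$. The normal form coincides with the stated formulas when $u_0\in W^{1,1}$. In that case the non-graph part of $\partial^*\Sigma^{u_0}$ is $\mathcal{H}^n$-null, so a.e.\ slice misses it by the coarea formula; this is the setting in which the paper's main results use the proposition. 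Since the paper's proof relies on the same (iiia), this limitation is shared with it, not a defect of your argument.
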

\begin{proof}
We will only show the statement for 
$\partial_t \mu$, since the other identities can be proven in a similar way. 
Let $A \subset \subset \Pi_{n-1} (\Omega)$ be open and let $d > 0$.
Let now $G_{u_0}$ be given by Proposition~\ref{volpert theorem} for the set $A \times (-d, +\infty)$,  
and let $q \in C^0_c (A \times (-d, +\infty))$. Then, thanks to Remark~\ref{alpha = 0 and alpha = pi}
\begin{align*}
&\int_{\big(  A \times (-d, +\infty) \big)}  q (r, y, t) \, d D_t \mu (r, y, t) \\
&=\int_{\big(  A \times (-d, +\infty) \big) \cap \{ \alpha^{\vee} > 0 \} \cap \{ \alpha^{\wedge} < \pi \}}  q (r, y, t) \, d D_t \mu (r, y, t) \\
&=\int_{\big(  A \times (-d, +\infty) \big) \cap \{ \alpha^{\vee} > 0 \} \cap \{ \alpha^{\wedge} < \pi \} \cap G_{u_0}}  q (r, y, t) \, d D_t \mu (r, y, t) \\
&+\int_{\Big( \big(  A \times (-d, +\infty) \big) \cap \{ \alpha^{\vee} > 0 \} \cap \{ \alpha^{\wedge} < \pi \} \Big) \setminus G_{u_0}}  q (r, y, t) \, d D_t \mu (r, y, t).
\end{align*}
Note now that the last integral does not contain any contribution coming from the absolutely continuous part of $D_t \mu$, since 
the region of integration is $\mathcal{H}^n$-negligible:
\[
\mathcal{H}^n \left( \Big( \big(  A \times (-d, +\infty) \big) \cap \{ \alpha^{\vee} > 0 \} \cap \{ \alpha^{\wedge} < \pi \} \Big) \setminus G_{u_0} \right) = 0.
\]
For the remaining integral, setting $R:= (  A \times (-d, +\infty) ) \cap \{ \alpha^{\vee} > 0 \} \cap \{ \alpha^{\wedge} < \pi \} \cap G_{u_0}$, 
thanks to Proposition~\ref{giaquinta}, \eqref{Dt mu}, and Remark~\ref{remark us constant functions new}, we have 
\begin{align*}
&\int_{R}  q (r, y, t) \, d D_t \mu (r, y, t) 
= \int_{\partial^* \Sigma^{u_0} \cap \left( \Phi_{n+1} (R \times \mathbb{S}^{1})  \right)} 
q (|x|, y, t) \nu_t^{\Sigma^{u_0}} (x, y, t) \, d \mathcal{H}^n (x, y, t)  \\
&= -\int_{\partial^* \Sigma^{u_0} \cap \{ \Dx u_0   \neq 0 \} \cap \left( \Phi_{n+1} (R \times \mathbb{S}^{1})  \right)} 
 \, \frac{q (|x|, y, t) }{\sqrt{1 + |\nabla u_0 (x, y)|^2}} \, d \mathcal{H}^n (x, y, t) \\
 &= - \int_{(A \times (-d, +\infty)) \cap \{ \alpha^{\vee} > 0 \} \cap \{ \alpha^{\wedge} < \pi \}  \cap G_{u_0}} q (r, y, t) \left( \int_{(\partial^*\Sigma^{u_0})_{(r, y, t)} }  
 \, \frac{1}{| \Dx u_0 (x, y)|} \, d \mathcal{H}^{0} (x) \right)\, dr \, dy \, dt \\
 &= - \int_{(A \times (-d, +\infty)) \cap \{ 0< \alpha_\mu  < \pi \}  \cap G_{u_0}} q (r, y, t) \left( \int_{(\partial^*\Sigma^{u_0})_{(r, y, t)} }  
 \, \frac{1}{| \Dx u_0 (x, y)|} \, d \mathcal{H}^{0} (x) \right)\, dr \, dy \, dt,
\end{align*}
where we also used Proposition~\ref{coarea formula} and identity (iiia) of Proposition~\ref{volpert theorem}.
Since $q$ is arbitrary, this shows that the density $\partial_t \mu$ 
of the absolutely continuous part of the measure $D_t \mu$ satisfies
\begin{equation} \label{abs cont part}
\partial_t \mu (r, y, t) = -  \int_{(\partial^*\Sigma^{u_0})_{(r, y, t)} }  
 \, \frac{1}{| \Dx u_0 (x, y)|} \, d \mathcal{H}^{0} (x),
\end{equation}
for $\mathcal{H}^n$-a.e. $(r, y, t) \in A \times (-d, +\infty) \cap \{ 0 < \alpha_\mu < \pi \} \cap G_{u_0}$. 
Recalling now that 
\[
\mathcal{H}^n \Big( ( A \times (-d, +\infty) ) \cap \{ 0 < \alpha_\mu < \pi \}
\setminus G_{u_0}
  \Big) = 0, 
\]
we obtain that \eqref{abs cont part} is satisfied for $\mathcal{H}^n$-a.e. $(r, y, t) \in ( A \times (-d, +\infty) ) \cap \{ 0 < \alpha_\mu < \pi \}$.
Since $A$ and $d$ are arbitrary, the conclusion follows.
\end{proof}

The next result shows a perimeter inequality under circular symmetrization of subgraphs.
\begin{proposition}\label{prop_circular perimeter inequality for subgraphs}
Let $n \in \mathbb{N}$ with $n \geq 2$, and let $\Omega \subset \mathbb{R}^n$ be open. 
Let $u \in BV_{0, \tau} (\Omega)$, let $\mu$ be given by \eqref{mu distr},
and let $v_{\mu}$ be defined by \eqref{def vmu}.
Then, $v_{\mu}  \! \mid_{\Omega^s} \in BV_{0, \tau} (\Omega^s)$. Moreover, $\Sigma^{v_\mu}$
is a set of locally finite perimeter in $\Phi_{n+1}  ((\Pi_{n-1} (\Omega) \times \R) \times \mathbb{S}^{1})$ and
\begin{equation} \label{per ineq cyl subgraph}
P (\Sigma^{v_\mu}; \Phi_{n+1} (B \times \mathbb{S}^{1})) 
\leq P (\Sigma^{u_0}; \Phi_{n+1}  (B \times \mathbb{S}^{1})),
\end{equation}
for every Borel set $B \subset \Pi_{n-1} (\Omega) \times \R$. 
\end{proposition}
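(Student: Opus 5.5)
The plan is to reduce everything to the perimeter inequality for circular symmetrization of sets, Theorem~\ref{per ineq circ}, applied in $\R^{n+1}$ with $N=n+1$. Here the variables $(y,t)\in\R^{n-2}\times\R$ play the role of $z\in\R^{N-2}$. The key identification is Proposition~\ref{prop_circular rear and circular sym are connected bis}. Since $\mathcal{H}^1((\Sigma^{u_0})_{(r,y,t)})=\mu(r,y,t)=2r\alpha_\mu(r,y,t)$, the circular symmetrization of $\Sigma^{u_0}$ in $\R^{n+1}$ is exactly the set $F_\mu$ of \eqref{def Fmu}. Hence, by \eqref{equivalence},
\[
(\Sigma^{u_0})^s=F_\mu=_{\mathcal{H}^{n+1}}\Sigma^{v_\mu}\quad\text{in } \Phi_{n+1}((\Pi_{n-1}(\Omega)\times\R)\times\mathbb{S}^1).
\]
Perimeters do not see $\mathcal{H}^{n+1}$-null modifications, so \eqref{per ineq cyl subgraph} becomes the localized inequality \eqref{per ineq circ eq} for $E=\Sigma^{u_0}$.

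Theorem~\ref{per ineq circ} requires $P(E)<\infty$ and $\mathcal{L}^{N}(E)<\infty$, which $\Sigma^{u_0}$ does not satisfy: it is unbounded below in $t$ and only locally of finite perimeter. I would therefore localize as follows.
\begin{itemize}
\item Fix an open set $A\subset\subset\Pi_{n-1}(\Omega)$ and constants $d,M>0$.
\item Consider $E:=\Sigma^{u_0}\cap\Phi_{n+1}((A\times(-d,M))\times\mathbb{S}^1)$. This set has finite measure. It also has finite perimeter in $\R^{n+1}$, by Remark~\ref{u bv finite perimeter} together with the intersection with a Lipschitz "cylindrical box". To guarantee this, choose $A$ (a union of boxes in $(r,y)$), $d$ and $M$ generic, so that the slicing/intersection formula gives finite perimeter of the truncated set.
\item The box $\Phi_{n+1}((A\times(-d,M))\times\mathbb{S}^1)$ is invariant under circular symmetrization, so the slices of $E$ are those of $\Sigma^{u_0}$ for $(r,y,t)\in A\times(-d,M)$ and empty otherwise. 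It follows that $E^s=F_\mu\cap\Phi_{n+1}((A\times(-d,M))\times\mathbb{S}^1)$.
\item Apply \eqref{per ineq circ eq} with Borel sets $B\subset A\times(-d,M)$, taken open relative to the box to avoid its boundary. This yields \eqref{per ineq cyl subgraph} for all such $B$, since inside the open box the perimeters of $E$ and $\Sigma^{u_0}$ coincide, and likewise for $E^s$ and $\Sigma^{v_\mu}$.
\item Exhaust $\Pi_{n-1}(\Omega)\times\R$ by such boxes and use monotone convergence of the measures $P(\cdot\,;\Phi_{n+1}(\cdot\times\mathbb{S}^1))$ to obtain the inequality for every Borel $B$.
\end{itemize}

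As a byproduct, $\Sigma^{v_\mu}$ has finite perimeter in each $\Phi_{n+1}((A\times\R)\times\mathbb{S}^1)$, and hence locally finite perimeter.

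To get $v_\mu|_{\Omega^s}\in BV_{0,\tau}(\Omega^s)$, I would proceed in three steps.
\begin{itemize}
\item Proposition~\ref{prop Lp norm} with $p=1$ gives $v_\mu\in L^1(\Phi_n(A\times\mathbb{S}^1))$.
\item Proposition~\ref{giaquinta}, applied in the bounded open set $\Phi_n(A\times\mathbb{S}^1)$, turns finite perimeter of the subgraph into $v_\mu\in BV(\Phi_n(A\times\mathbb{S}^1))$.
\item Proposition~\ref{prop v_mu zero} gives $(v_\mu|_{\Omega^s})_0=v_\mu$. It also identifies $\Pi_{n-1}(\Omega^s)=\Pi_{n-1}(\Omega)$ up to null sets, because slices have the same length. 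Together these give condition (a).
\end{itemize}
Condition (b) for $\Omega^s$ follows from \eqref{us is zero a.e. outside} and the definition of $v_\mu$. Outside the full-annulus part of $\Omega^s$, $u\ge0$ gives $\mu(r,y,t)=2\pi r$ for $t<0$, so $v_\mu\ge0$; these are exactly the points used in the proof of Proposition~\ref{prop v_mu zero}.

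I expect the main obstacle to be the truncation step. One must verify carefully that cutting $\Sigma^{u_0}$ by the box produces a finite-perimeter set whose symmetrization is the cut of $F_\mu$, and that boundary contributions on $\partial$(box) do not spoil the localized inequality. To control the latter, I would restrict to $B$ compactly inside the open box, where the two perimeter measures agree, and only afterwards pass to arbitrary Borel $B$ by inner regularity and exhaustion.
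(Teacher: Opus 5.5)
Your proposal is correct and follows the paper's overall plan. You identify $(\Sigma^{u_0})^s$ with $F_\mu=_{\mathcal{H}^{n+1}}\Sigma^{v_\mu}$ via Proposition~\ref{prop_circular rear and circular sym are connected bis}, truncate so that Theorem~\ref{per ineq circ} can be used, and pass to the limit. You then get $v_\mu|_{\Omega^s}\in BV_{0,\tau}(\Omega^s)$ from Propositions~\ref{prop Lp norm}, \ref{giaquinta} and \ref{prop v_mu zero}, exactly as the paper does.

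The difference lies in the truncation. The paper cuts only by the half-space $\{t>-d_k\}$, with $d_k$ chosen so that $\partial^*\Sigma^{u_0}$ and $\partial^*\Sigma^{v_\mu}$ do not charge $\{t=-d_k\}$. The intersection formula for perimeters then produces, inside $\Phi_{n+1}((A\times\R)\times\mathbb{S}^1)$, a face term $\mathcal{L}^n(\{u_0>-d_k\}\cap\Phi_n(A\times\mathbb{S}^1))$. This term equals the corresponding one for $v_\mu$ by equimeasurability, so it cancels, and the paper obtains the inequality for $B=A\times\R$.

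Your bounded box $V$ places $E$ exactly within the hypotheses of Theorem~\ref{per ineq circ}. The paper's truncated subgraph has finite measure and perimeter only within the cylinder over $A$, so a further localization is tacit there. You also compare perimeter measures only on Borel subsets of the open set $V$, where $E=\Sigma^{u_0}\cap V$ and $E^s=F_\mu\cap V$. Hence no face terms ever appear, and arbitrary Borel $B$ follow by monotone exhaustion. The paper's route, in exchange, treats the whole vertical cylinder at once and makes the role of equimeasurability explicit.

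Some of your precautions are unnecessary:
\begin{itemize}
\item For $A$ open, $\Phi_{n+1}(B\times\mathbb{S}^1)\subset V$ for every Borel $B\subset A\times(-d,M)$, so you do not need to restrict to sets compactly inside the box.
\item Once $P(V)<\infty$ (e.g.\ $A$ a finite union of boxes), one has $P(E)\le P(\Sigma^{u_0};U')+P(V)<\infty$ for any open $U'\supset\overline{V}$ in which $\Sigma^{u_0}$ has finite perimeter. So no generic choice of $d$ and $M$ is needed.
\end{itemize}
Also, $\Pi_{n-1}(\Omega^s)=\Pi_{n-1}(\Omega)$ holds exactly, not just up to null sets. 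Your explicit check of condition (b) for $v_\mu|_{\Omega^s}$ supplies a step the paper leaves implicit.
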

\begin{remark}
Thanks to Remark~\ref{u bv finite perimeter}, we have that 
$\Sigma^{v_\mu}$ and $\Sigma^{u_0}$ are sets of finite perimeter in 
$\Phi_{n+1} ((A \times \R) \times \mathbb{S}^1)$, for every open set $A \subset \subset \Pi_{n-1} (\Omega)$.
Therefore,  
\eqref{per ineq cyl subgraph} should be interpreted as an inequality between extended real numbers.
\end{remark}
\begin{proof}
Let $A \subset \subset \Pi_{n-1} (\Omega)$ be open.
We will show that $v_{\mu} \in BV (\Phi_n (A \times \mathbb{S}^1) )$.
By assumption, we already know that $u_0 \in BV (\Phi_n (A \times \mathbb{S}^1) )$.
Thanks to Proposition~\ref{giaquinta}, $\Sigma^{u_0}$
is a set of finite perimeter in $\Phi_n (A \times \mathbb{S}^1) \times \mathbb{R}$.
Let now $\left(\Sigma^{u_0} \right)^s$ and $F_\mu$ be defined by \eqref{def Es} and \eqref{def Fmu}, respectively.
We have 
\[
\left(\Sigma^{u_0} \right)^s = F_\mu =_{\mathcal{H}^{n+1}} \Sigma^{v_\mu}, 
\]
where the last equality follows from \eqref{equivalence}. 
We would like to apply Theorem~\ref{per ineq circ} to the sets $\Sigma^{v_\mu}$ and $\Sigma^{u_0}$, but this is not possible, 
since they do not have finite ($n$-dimensional) Lebesgue measure.
Therefore, for each $d > 0$ if we set 
\[
\Sigma^{u_0}_{d} = \Sigma^{u_0} \cap \{ (x, y, t) \in \R^{n+1} : t > - d \} 
\quad \text{ and }\quad
\Sigma^{v_\mu}_d = \Sigma^{v_\mu} \cap \{ (x, y, t) \in \R^{n+1} : t > - d \},
\]
so that
\[
\left(\Sigma^{u_0}_{d} \right)^s =_{\mathcal{H}^{n+1}} \Sigma^{v_\mu}_d. 
\]
From Proposition~\ref{prop_circular rear and circular sym are connected} it follows that \( \mu \in L^1 (A \times (-d, + \infty)) \), 
so that $\mathcal{L}^{n+1} (\Sigma^{u_0}_{d}) = \mathcal{L}^{n+1} (\Sigma^{v_\mu}_{d}) < + \infty$.
Thanks to \cite[Proposition~2.16]{maggiBOOK}, there exists a sequence $d_k \to + \infty$ such that for each $k \in \mathbb{N}$
\begin{equation} \label{d_k does not contribute}
\mathcal{H}^n (\partial^* \Sigma^{u_0}_{d_k} \cap \{ (x, y, t) \in \R^{n+1} : t = - d_k \} ) 
= \mathcal{H}^n (\partial^* \Sigma^{v_\mu}_{d_k} \cap \{ (x, y, t) \in \R^{n+1} : t = - d_k \} ) = 0.
\end{equation}
If we set $\tilde A:= A \times \R$, we have 
$\Phi_n (A \times \mathbb{S}^1) \times \mathbb{R} = \Phi_{n+1}  (\tilde A \times \mathbb{S}^{1})$. 
Then, thanks to Theorem~\ref{per ineq circ}, $\Sigma^{v_\mu}_{d_k}$ is a set of 
finite perimeter in $\Phi_{n+1}  (\tilde A \times \mathbb{S}^{1})$ and
\begin{equation} \label{afdsfz}
P (\Sigma^{v_\mu}_{d_k}; \Phi_{n+1} (\tilde A \times \mathbb{S}^{1})) 
\leq P (\Sigma^{u_0}_{d_k}; \Phi_{n+1}  (\tilde A \times \mathbb{S}^{1})).
\end{equation}
Thanks to \cite[formula~(16.10)]{maggiBOOK} and recalling \eqref{d_k does not contribute}, we have 
\begin{align*}
&P (\Sigma^{u_0}_{d_k}; \Phi_{n+1} (\tilde A \times \mathbb{S}^{1})) 
= P (\Sigma^{u_0}; \Phi_{n+1} (\tilde A \times \mathbb{S}^{1}) \cap \{ t > - d_k \}) 
+ P (\{ t > - d_k \}; (\Sigma^{u_0})^{(1)}) \\
&= P (\Sigma^{u_0}; \Phi_{n+1} (\tilde A \times \mathbb{S}^{1}) \cap \{ t > - d_k \}) 
+ \mathcal{L}^n (\{ u_0 > - d_k \} \cap \Phi_n (A \times \mathbb{S}^1)) \\
&= P (\Sigma^{u_0}; \Phi_{n+1} (\tilde A \times \mathbb{S}^{1}) \cap \{ t > - d_k \}) 
+ \mathcal{L}^n (\{ v_\mu > - d_k \} \cap \Phi_n (A \times \mathbb{S}^1)).
\end{align*}
Similarly, we have 
\begin{align*}
P (\Sigma^{v_\mu}_{d_k}; \Phi_{n+1} (\tilde A \times \mathbb{S}^{1})) 
= P (\Sigma^{v_\mu}; \Phi_{n+1} (\tilde A \times \mathbb{S}^{1}) \cap \{ t > - d_k \}) 
+ \mathcal{L}^n (\{ v_\mu > - d_k \} \cap \Phi_n (A \times \mathbb{S}^1)).
\end{align*}
Therefore, thanks to \eqref{afdsfz},
\begin{equation*} 
P (\Sigma^{v_\mu}; \Phi_{n+1} (\tilde A \times \mathbb{S}^{1}) \cap \{ t > - d_k \}) 
\leq P (\Sigma^{u_0}; \Phi_{n+1} (\tilde A \times \mathbb{S}^{1}) \cap \{ t > - d_k \}), \quad \forall \, k \in \mathbb{N}. 
\end{equation*}
Passing to the limit as $k \to \infty$, we have 
\begin{equation} \label{afdsfer}
P (\Sigma^{v_\mu}; \Phi_{n+1} (\tilde A \times \mathbb{S}^{1})) 
\leq P (\Sigma^{u_0}; \Phi_{n+1}  (\tilde A \times \mathbb{S}^{1})).
\end{equation}
Therefore, $\Sigma^{v_\mu}$ is a set of finite perimeter in $\Phi_{n+1} (\tilde A \times \mathbb{S}^{1})$.
Thanks to Proposition~\ref{prop Lp norm}, we have $v_{\mu} \in L^1 (\Phi_n (A \times \mathbb{S}^1) )$. 
Then, from Proposition~\ref{giaquinta} it follows that $v_\mu \in BV (\Phi_n (A \times \mathbb{S}^1) )$.
From \eqref{afdsfer}, since $A \subset \subset \Pi_{n-1} (\Omega)$ was arbitrary, inequality \eqref{per ineq cyl subgraph} follows. 
Finally, thanks to \eqref{v_mu is zero a.e. outside} and using again the arbitrariness of $A$, we conclude that 
$v_{\mu} \! \mid_{\Omega^s} \in BV_{0, \tau} (\Omega^s)$.
\end{proof}
 \begin{remark} \label{this is now the magic property}
Let $n \in \mathbb{N}$ with $n \geq 2$, and let $\Omega \subset \mathbb{R}^n$ be open. 
Let $u \in BV_{0, \tau} (\Omega)$, and let $\mu$ be given by \eqref{mu distr}.
Then, thanks to \cite[Proposition 1.2]{PeruginiCircolare} applied to $\Sigma^{v_\mu}$, we have that for every $(r, y, t) \in \Pi_{n-1} (\Omega) \times \R$
such that $( \partial^* \Sigma^{v_\mu} )_{(r, y, t)} \neq \emptyset$, the functions
\begin{align*}
x \mapsto \hat{x} \cdot \nu^{ \Sigma^{v_\mu}}_x (x, y, t), 
\quad
x \mapsto | \nu^{ \Sigma^{v_\mu}}_{\!  x {\scriptscriptstyle\parallel}} (x, y, t)|, 
\quad 
x \mapsto \nu^{ \Sigma^{v_\mu}}_y (x, y, t),
\quad 
x \mapsto \nu^{ \Sigma^{v_\mu}}_t (x, y, t),
\end{align*}
are constant in $( \partial^* \Sigma^{v_\mu} )_{(r, y, t)}$. Thanks to \eqref{normal1}, this means that 
\begin{align*}
x \mapsto \hat{x} \cdot \nabla_x v_\mu (x, y, t), 
\quad
x \mapsto | \Dx v_\mu (x, y, t)|, 
\quad 
x \mapsto \nabla_y v_\mu (x, y, t),
\quad 
x \mapsto |\nabla v_\mu | (x, y, t),
\end{align*}
are constant in $( \partial^* \Sigma^{v_\mu} )_{(r, y, t)}$. 
\end{remark}

\begin{remark} \label{nice derivatives}
Suppose that $u \in BV_{0, \tau} (\Omega)$ and let $\mu$ be given by \eqref{mu distr}.
Thanks to Proposition~\ref{prop_circular perimeter inequality for subgraphs}, we also have $v_{\mu}  \mid_{\Omega^s} \in BV_{0, \tau} (\Omega^s)$.
Then, applying Proposition~\ref{abs cont parts} to $v_{\mu}$ and 
taking into account Remark~\ref{this is now the magic property},
we have 
\begin{align} 
\partial_t \mu (r, y, t) 
&= - \frac{2}{|\Dx v_\mu (x, y)|}, \label{Dt mu for us} \\[8pt]
\nabla_y \mu (r,y, t) 
&= 2  \frac{\nabla_y v_\mu (x,y)}{|\Dx v_\mu (x, y)|}, \label{r^(n-1)Dxi for us} \\[8pt]
r \partial_r \xi (r,y, t) 
&= 2  \frac{\widehat{x} \cdot \nabla_x v_\mu (x,y)}{|\Dx v_\mu (x, y)|}, \label{r^(n-1)Dxi for us} 
\end{align}
for $\mathcal{H}^n$-a.e. $(r, y, t) \in \Pi_{n-1} (\Omega) \times \mathbb{R}$. 
\end{remark}

\noindent
We are now ready to show that the circular rearrangement of a function in $W^{1,1}_{0, \tau} (\Omega)$
also belongs to $W^{1,1}_{0, \tau} (\Omega)$.
\begin{proposition} \label{ us sobolev}
Let $n \in \mathbb{N}$ with $n \geq 2$, and let $\Omega \subset \mathbb{R}^n$ be open.
Let $u \in W^{1,1}_{0, \tau} (\Omega)$, let $\mu$ be given by \eqref{mu distr},
and let $v_{\mu}$ be defined by \eqref{def vmu}. Then, $ v_\mu \! \mid_{\Omega^s} \in W^{1,1}_{0, \tau} (\Omega^s)$.
\end{proposition}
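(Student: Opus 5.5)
Since $W^{1,1}_{0,\tau}(\Omega) \subset BV_{0,\tau}(\Omega)$, Proposition~\ref{prop_circular perimeter inequality for subgraphs} already gives $v_\mu\!\mid_{\Omega^s} \in BV_{0,\tau}(\Omega^s)$. Condition (b) for $v_\mu\!\mid_{\Omega^s}$ is inherited from that result, and Proposition~\ref{prop v_mu zero} identifies $(v_\mu\!\mid_{\Omega^s})_0 = v_\mu$. It therefore remains to show $v_\mu \in W^{1,1}(\Phi_n(A\times\mathbb{S}^1))$ for every open $A \subset\subset \Pi_{n-1}(\Omega)$. By Proposition~\ref{prop_no flat parts means being Sobolev}, (ii)$\Leftrightarrow$(iii), applied with $U = \Phi_n(A\times\mathbb{S}^1)$ (open and bounded), this amounts to proving
\[
P(\Sigma^{v_\mu}; N\times\R)=0 \quad \text{for every Borel } N\subset \Phi_n(A\times\mathbb{S}^1) \text{ with } \mathcal{H}^n(N)=0 .
\]

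The difficulty is that $N$ is an arbitrary null set in $\R^n$. The perimeter inequality \eqref{per ineq cyl subgraph}, however, only compares perimeters over cylinders $\Phi_{n+1}(B\times\mathbb{S}^1)$. The plan is to route through the characterization of Proposition~\ref{prop flat and perimeter}, which is stated in terms of such cylinders.

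First, for $u_0$: since $u_0\in W^{1,1}(\Phi_n(A\times\mathbb{S}^1))$, formula \eqref{formula per subgraph} shows that $P(\Sigma^{u_0};\cdot\times\R)$ vanishes on $\mathcal{H}^n$-null sets of $\Phi_n(A\times\mathbb{S}^1)$. Given a Borel $B'\subset A\times\R$ (with bounded $t$-range, or after exhausting in $t$) with $\mathcal{H}^n(B')=0$, the set $\Phi_{n+1}(B'\times\mathbb{S}^1)$ is $\mathcal{H}^{n+1}$-null but not of the form $N\times\R$. I would instead use the remark on the absence of flat tangential parts. By \eqref{only tangential non zero}, the portion of $\partial^*\Sigma^{u_0}$ over $\Phi_{n+1}(B'\times\mathbb{S}^1)$ with $\Dx u_0\neq0$ is $\mathcal{H}^n$-null. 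The remaining portion has $\nu_t\neq 0$ a.e., since by Proposition~\ref{prop_no flat parts means being Sobolev}(i) the vertical parts are null, and $\nu_x\cdot x_\parallel=0$ there. Hence it lies in the set where $\nu_{x\parallel}=0$. Combining this with Remark~\ref{rem codim 1} and \eqref{only 0<alpha<pi counts} (to dispose of $\{\alpha_\mu=0\}$ and $\{\alpha_\mu=\pi\}$, as in Step 2 of the proof of Proposition~\ref{prop flat and perimeter}), I expect to obtain that $\Sigma^{u_0}$ satisfies (ii) of Proposition~\ref{prop flat and perimeter}. This is best done by first verifying (i) directly: the set $\{\nu_{x\parallel}=0\}\cap\partial^*\Sigma^{u_0}$ over $\{0<\alpha_\mu<\pi\}$ projects into $B_{\Sigma^{u_0}}$, which is null by \eqref{measure 0 for the bad set}, and applying the coarea formula in the $(x,y,t)\mapsto(|x|,y,t)$ variables then handles the rest.

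Then \eqref{per ineq cyl subgraph} gives $P(\Sigma^{v_\mu};\Phi_{n+1}(B'\times\mathbb{S}^1))\le P(\Sigma^{u_0};\Phi_{n+1}(B'\times\mathbb{S}^1))=0$, so $\Sigma^{v_\mu}$ satisfies (ii), and hence (i), of Proposition~\ref{prop flat and perimeter}. This second application requires only $v_\mu\!\mid_{\Omega^s}\in BV_{0,\tau}$, which we have.

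Finally, to transfer this back to the Sobolev property, I would verify (i) of Proposition~\ref{prop_no flat parts means being Sobolev} for $v_\mu$: the vertical part $\{\nu_t^{\Sigma^{v_\mu}}=0\}\cap\partial^*\Sigma^{v_\mu}$ must be $\mathcal{H}^n$-null. On this set, the points with $\nu_{x\parallel}=0$ are null by the previous step. On the points with $\nu_{x\parallel}\neq0$, Proposition~\ref{coarea formula} expresses their measure as an integral over $(r,y,t)$. Their projection is contained in the set where $\partial_t\mu$ fails to be given by \eqref{Dt mu for us}, i.e.\ where $D_t\mu$ is singular, which is $\mathcal{H}^n$-null. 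Alternatively, one can use Remark~\ref{this is now the magic property}: $\nu_t$ is constant on each slice, so a vertical slice means every point of the slice is vertical, and \eqref{Dt mu} then forces these slices to carry $D_t^s\mu$, which lives on an $\mathcal{H}^n$-null set of $(r,y,t)$; coarea again gives measure zero.

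I expect this last transfer, turning the cylindrical null-set statement into the vertical-normal statement via the slice-constancy of $\nu^{\Sigma^{v_\mu}}$, to be the main obstacle.
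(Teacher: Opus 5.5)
Your argument breaks at the claim that $\Sigma^{u_0}$ satisfies (i), equivalently (ii), of Proposition~\ref{prop flat and perimeter}. This is false for general $u\in W^{1,1}_{0,\tau}(\Omega)$.

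\emph{A counterexample.} Take $n=2$, $\Omega=\{1<|x|<2\}$ and $u(x)=|x|$. Here $\Pi_1^{\textnormal{a}}(\Omega)=\Pi_1(\Omega)$, so (b) is vacuous. Then $\Dx u_0\equiv 0$, so $\nu^{\Sigma^{u_0}}_{\! x {\scriptscriptstyle\parallel}}=0$ on the whole graph. The graph over $\{1.2\le|x|\le 1.8\}$ equals $\partial^*\Sigma^{u_0}\cap\Phi_3(B'\times\mathbb{S}^1)$ with $B'=\{(r,r):1.2\le r\le 1.8\}$, which is $\mathcal{H}^2$-null. Hence $P(\Sigma^{u_0};\Phi_3(B'\times\mathbb{S}^1))>0$.

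\emph{Why both of your sub-steps fail.} The graph above lies over $\{\alpha_\mu=0\}\setminus\{\alpha_\mu=0\}^{(1)}$. Step~2 of the proof of Proposition~\ref{prop flat and perimeter}, which you want to imitate to discard $\{\alpha_\mu=0\}$, uses (ii) itself as a hypothesis. Over $\{0<\alpha_\mu<\pi\}$, knowing that the projection lies in the null set $B_{\Sigma^{u_0}}$ gives nothing. The coarea formula of Proposition~\ref{coarea formula} carries the weight $|\nu^{\Sigma^{u_0}}_{\! x {\scriptscriptstyle\parallel}}|$, so it is blind to exactly the set you want to measure. For instance, take $u_0=|x|+g(\hat x)$ with $g$ Lipschitz, $g=0$ on a half-circle and $g>0$ on the other half. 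This $u_0$ has a positive-area piece of graph with $\Dx u_0=0$ lying over the line $t=r$, where $\alpha_\mu=\pi/2$. Finally, $v_\mu=u$ in the first example, so the property you then transfer to $\Sigma^{v_\mu}$ is also false. Your last step (``the points with $\nu_{x\parallel}=0$ are null by the previous step'') therefore has no foundation.

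\emph{What is missing.} You need to control the $u_0$-perimeter over the \emph{specific} cylinder that matters, rather than over all null cylinders. This is where \eqref{Dt mu} must be used for both functions.
\begin{itemize}
\item Let $B$ be the set of points of $\partial^*\Sigma^{v_\mu}$ over $\Phi_n(A\times\mathbb{S}^1)$ with $\nu_t^{\Sigma^{v_\mu}}=0$, and let $S(x,y,t)=(|x|,y,t)$.
\item By Remark~\ref{this is now the magic property}, $\nu_t^{\Sigma^{v_\mu}}$ is constant on slices, so $B=\partial^*\Sigma^{v_\mu}\cap\Phi_{n+1}(S(B)\times\mathbb{S}^1)$.
\item Computing $D_t\mu(S(B))$ via \eqref{Dt mu} for $v_\mu$ gives $0$.
\item Computing it via \eqref{Dt mu} for $u_0$ gives $-\int_{\partial^*\Sigma^{u_0}\cap\Phi_{n+1}(S(B)\times\mathbb{S}^1)}(1+|\nabla u_0|^2)^{-1/2}\,d\mathcal{H}^n$, because $u_0\in W^{1,1}$.
\item Hence $P(\Sigma^{u_0};\Phi_{n+1}(S(B)\times\mathbb{S}^1))=0$, and \eqref{per ineq cyl subgraph} yields $\mathcal{H}^n(B)=0$. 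This is (i) of Proposition~\ref{prop_no flat parts means being Sobolev} for $v_\mu$, which gives $v_\mu\in W^{1,1}$.
\end{itemize}
This argument treats vertical points regardless of whether $\nu_{x\parallel}$ vanishes, and never needs (ii) of Proposition~\ref{prop flat and perimeter} for either function. Your ``alternatively'' sentence is close to it, but it misreads \eqref{Dt mu}. On $S(B)$ that formula gives $D_t\mu\mres S(B)=0$, not concentration of $D^s_t\mu$. The conclusion must then come from the $u_0$ side combined with the perimeter inequality, not from the coarea formula.
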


\begin{proof}
Let $A \subset \subset \Pi_{n-1} (\Omega)$ be open.
Thanks to Proposition~\ref{prop_circular perimeter inequality for subgraphs}, $v_{\mu} \in BV (\Phi_n (A \times \mathbb{S}^1) )$, 
and $\Sigma^{u_0}$ and $\Sigma^{v_{\mu}}$ are sets of finite perimeter 
in $\Phi_n (A \times \mathbb{S}^1) \times \mathbb{R}$. Let us first show that 
\begin{equation} \label{dkue}
\mathcal{H}^n (B)= 0, \quad \text{ where } \quad B := \{ (x, y, t) \in \partial^* \Sigma^{v_{\mu}} : \nu_t^{\Sigma^{v_{\mu}}} (x, y, t) = 0 \} 
\cap  ( \Phi_n (A \times \mathbb{S}^1) \times \mathbb{R} ).
\end{equation}
First of all note that, by definition of $\Phi_n$ and $\Phi_{n+1}$, we have
\[
B := \{ (x, y, t) \in \partial^* \Sigma^{v_{\mu}} : \nu_t^{\Sigma^{v_{\mu}}} (x, y, t) = 0 \} 
\cap ( \Phi_{n+1} ( (A \times \R) \times \mathbb{S}^{1}) ).
\]
Let now $S: \R^2_0 \times \R^{n-2} \times \R \to (0, + \infty) \times \R^{n-2} \times \R$ be defined as $S (x, y, t) := (|x|, y, t)$. 
Thanks to Remark~\ref{this is now the magic property}, we have
\begin{align*}
B = \partial^* \Sigma^{v_\mu} \cap ( \Phi_{n+1} ( S(B) \times \mathbb{S}^{1}) ).
\end{align*}
Since $S$ is continuous, $B$ is Borel, and $D_t \mu$ is a finite Radon measure on $(0, + \infty) \times \R^{n-2} \times \R$, the set $S (B)$ is 
$D_t \mu$-measurable. 
Moreover, since $D_t \mu$ is a finite Radon measure, formula \eqref{Dt mu} holds also for bounded $D_t \mu$-measurable functions. In particular, we have 
 \begin{equation} \label{dtmu on measurable}
 \begin{split}
 &\int_{\partial^* \Sigma^{v_\mu} \cap ( \Phi_{n+1} ( S (B) \times \mathbb{S}^{1}) )}  
 \, \nu_t^{\Sigma^{v_\mu}} (x, y, t)\, d \mathcal{H}^n (x, y, t) \\
& = D_t \mu  (S (B))  
 = \int_{\partial^* \Sigma^{u_0} \cap ( \Phi_{n+1} ( S (B) \times \mathbb{S}^{1}) )}  
 \, \nu_t^{\Sigma^{u_0}} (x, y, t)\, d \mathcal{H}^n (x, y, t).
\end{split}
\end{equation}
Now, since $\nu_t^{\Sigma^{v_{\mu}}} (x, y, t) = 0$ for every $(x, y, t) \in B$, thanks to \eqref{dtmu on measurable}
\begin{align*}
0&= \int_{B}  
 \, \nu_t^{\Sigma^{v_\mu}} (x, y, t)\, d \mathcal{H}^n (x, y, t) 
 = \int_{\partial^* \Sigma^{v_\mu} \cap ( \Phi_{n+1} ( S (B) \times \mathbb{S}^{1}) )}  
 \, \nu_t^{\Sigma^{v_\mu}} (x, y, t)\, d \mathcal{H}^n (x, y, t) \\
&= \int_{\partial^* \Sigma^{u_0} \cap ( \Phi_{n+1} ( S (B) \times \mathbb{S}^{1}) )}  
 \, \nu_t^{\Sigma^{u_0}} (x, y, t)\, d \mathcal{H}^n (x, y, t) \\
&= - \int_{\partial^* \Sigma^{u_0} \cap ( \Phi_{n+1} ( S (B) \times \mathbb{S}^{1}) )}  
 \, \frac{1}{\sqrt{1 + |\nabla u_0 (x, y)|^2}} \, d \mathcal{H}^n (x, y, t),
\end{align*}
where the last equality follows from Proposition~\ref{giaquinta} and from the fact that $u_0 \in W^{1,1} (\Phi_n (A \times \mathbb{S}^1) )$.
From the above chain of equalities we infer that 
\[
\mathcal{H}^n \left(  \partial^* \Sigma^{u_0} \cap ( \Phi_{n+1} (  S (B) \times \mathbb{S}^{1}) )  \right) = 0.
\]
Then, thanks to \eqref{per ineq cyl subgraph},
\begin{align*}
0 &= \mathcal{H}^n \left(  \partial^* \Sigma^{u_0} 
\cap ( \Phi_{n+1} ( S (B) \times \mathbb{S}^{1}) )  \right) 
= P ( \Sigma^{u_0}; \Phi_{n+1}  (S (B) \times \mathbb{S}^{1})) \\
& \geq P ( \Sigma^{v_\mu}; \Phi_{n+1} (S (B) \times \mathbb{S}^{1})) 
= \mathcal{H}^n \left(  \partial^* \Sigma^{v_\mu} 
\cap ( \Phi_{n+1} ( S (B) \times \mathbb{S}^{1}) )  \right) = \mathcal{H}^n (B),
\end{align*}
which shows \eqref{dkue}.

Once \eqref{dkue} is satisfied, thanks to Proposition~\ref{prop_no flat parts means being Sobolev} 
it follows that $v_\mu \in W^{1, 1} (\Phi_n (A \times \mathbb{S}^1) )$.
Using the arbitrariness of $A$ and thanks to \eqref{v_mu is zero a.e. outside}, we have
$ v_\mu \! \mid_{\Omega^s} \in W^{1,1}_{0, \tau} (\Omega^s)$.
\end{proof}

\noindent
The next proposition will be useful to understand
rigidity of the P\'olya--Szeg\"o inequality. 
\begin{proposition} \label{flat parts for u and vmu}
Let $n \in \mathbb{N}$ with $n \geq 2$, and let $\Omega \subset \mathbb{R}^n$ be open. 
Let $u \in W^{1,1}_{0, \tau} (\Omega)$, and let $\mu$ be given by \eqref{mu distr}.
Let $v_{\mu}$ be defined by \eqref{def vmu} and let
$B \subset \subset \Pi_{n-1} (\Omega) \times \R$ be a Borel set.
Then, 
\begin{equation} \label{no flat parts for u}
\mathcal{H}^{n}\left(\Big\{(x, y, t) \in \partial^* \Sigma^{u_0} \cap \Phi_{n+1}
(B \times \mathbb{S}^{1}): \nu^{\Sigma^{u_0}}_{\!  x {\scriptscriptstyle\parallel}} (x, y, t)=0  \Big\}  \right)=0
\end{equation}
if and only if
\begin{equation} \label{no flat parts for u^s}
\mathcal{H}^{n}\left(\Big\{(x, y, t) \in \partial^* \Sigma^{v_\mu} \cap \Phi_{n+1}
(B \times \mathbb{S}^{1}): \nu^{\Sigma^{v_\mu}}_{\!  x {\scriptscriptstyle\parallel}} (x, y, t)=0  \Big\}  \right)=0.
\end{equation}
\end{proposition}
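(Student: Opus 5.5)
The plan is to reduce both conditions to statements about the perimeter measures of the two subgraphs over $\mathcal{H}^n$-negligible sets of the base, using Proposition~\ref{prop flat and perimeter}. Both $u$ and $v_\mu$ qualify: $u\in W^{1,1}_{0,\tau}(\Omega)\subset BV_{0,\tau}(\Omega)$, and $v_\mu\!\mid_{\Omega^s}\in W^{1,1}_{0,\tau}(\Omega^s)$ by Proposition~\ref{ us sobolev}. For the rearranged function we also use that $(v_\mu\!\mid_{\Omega^s})_0=v_\mu$ by \eqref{v_mu is zero a.e. outside} and $\Pi_{n-1}(\Omega^s)=\Pi_{n-1}(\Omega)$. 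Proposition~\ref{prop flat and perimeter} then gives, with $B\subset\subset\Pi_{n-1}(\Omega)\times\R$,
\[
\eqref{no flat parts for u}\iff P(\Sigma^{u_0};\Phi_{n+1}(B'\times\mathbb{S}^1))=0 \ \ \forall\, B'\subset B \text{ Borel}, \ \mathcal{H}^n(B')=0,
\]
and the same equivalence for \eqref{no flat parts for u^s} with $\Sigma^{v_\mu}$ in place of $\Sigma^{u_0}$.

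The implication \eqref{no flat parts for u} $\Rightarrow$ \eqref{no flat parts for u^s} then follows directly from the local perimeter inequality \eqref{per ineq cyl subgraph}. For every negligible $B'\subset B$ we get
\[
P(\Sigma^{v_\mu};\Phi_{n+1}(B'\times\mathbb{S}^1))\le P(\Sigma^{u_0};\Phi_{n+1}(B'\times\mathbb{S}^1))=0 .
\]

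For the converse we cannot use the perimeter inequality, so we compare through the $t$-derivative of $\mu$, as in the proof of Proposition~\ref{ us sobolev}. Fix a negligible Borel $B'\subset B$ and apply \eqref{Dt mu} once to $u_0$ and once to $v_\mu$; both are $\mu$-distributed, so the left-hand sides coincide:
\[
\int_{\partial^*\Sigma^{u_0}\cap\Phi_{n+1}(B'\times\mathbb{S}^1)}\nu_t^{\Sigma^{u_0}}\,d\mathcal{H}^n
= D_t\mu(B') =
\int_{\partial^*\Sigma^{v_\mu}\cap\Phi_{n+1}(B'\times\mathbb{S}^1)}\nu_t^{\Sigma^{v_\mu}}\,d\mathcal{H}^n .
\]
Because $u_0$ and $v_\mu$ are Sobolev, Proposition~\ref{prop_no flat parts means being Sobolev} together with \eqref{normal1} gives $\nu_t=-1/\sqrt{1+|\nabla\cdot|^2}<0$ $\mathcal{H}^n$-a.e. on both reduced boundaries. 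Hence both integrals are strictly negative unless the corresponding boundary piece has zero measure, and they are equal. If $P(\Sigma^{v_\mu};\Phi_{n+1}(B'\times\mathbb{S}^1))=0$, the right-hand side vanishes. The left-hand side is then an integral of an a.e. negative function equal to zero, so $\mathcal{H}^n(\partial^*\Sigma^{u_0}\cap\Phi_{n+1}(B'\times\mathbb{S}^1))=0$, i.e. $P(\Sigma^{u_0};\Phi_{n+1}(B'\times\mathbb{S}^1))=0$. Applying Proposition~\ref{prop flat and perimeter} to $u$ yields \eqref{no flat parts for u}.

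The main obstacle I expect is bookkeeping. Applying \eqref{Dt mu} requires $B'\subset A\times(-d,+\infty)$ for some open $A\subset\subset\Pi_{n-1}(\Omega)$ and some $d>0$, and this is guaranteed by $B\subset\subset\Pi_{n-1}(\Omega)\times\R$. One must also check that the same $\mu$ serves as the distribution of both $u$ and $v_\mu$, up to $\mathcal{H}^n$-null sets, which follows from Propositions~\ref{prop several useful things} and~\ref{prop several useful things 2}. Finally, Proposition~\ref{prop flat and perimeter} must be applied to $v_\mu\!\mid_{\Omega^s}$, whose zero extension is $v_\mu$.
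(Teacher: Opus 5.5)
Your proposal is correct and follows essentially the paper's proof: both conditions are reduced via Proposition~\ref{prop flat and perimeter} to the vanishing of the perimeter over $\mathcal{H}^n$-null Borel sets $B'\subset B$, and the two subgraphs are compared through the identity $D_t\mu(B')=-\int_{\partial^*\Sigma\cap\Phi_{n+1}(B'\times\mathbb{S}^1)}(1+|\nabla\cdot|^2)^{-1/2}\,d\mathcal{H}^n$, applied once to $u_0$ and once to $v_\mu$. The only difference is that you use the local perimeter inequality \eqref{per ineq cyl subgraph} for the forward implication. The paper instead observes that this $D_t\mu$ identity is symmetric, so it yields both implications at once and the perimeter inequality is not needed.
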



\begin{proof}
Applying formula \eqref{Dt mu}
with $\varphi = \chi_{B'}$ first to $\Sigma^{u_0}$ and then to 
$\Sigma^{v_\mu}$, taking into account Proposition~\ref{giaquinta} and Proposition~\ref{ us sobolev}, we have 
\begin{equation} \label{amazing}
\begin{split}
 &- \int_{\partial^* \Sigma^{u_0} \cap \left( \Phi_{n+1} (B' \times \mathbb{S}^{1})  \right)}  
 \, \frac{1}{\sqrt{1 + |\nabla u_0 (x, y)|^2}} \, d \mathcal{H}^n (x, y, t) \\[3pt]
& = D_t \mu  (B') \\[3pt]
&= - \int_{\partial^* \Sigma^{v_\mu} \cap \left( \Phi_{n+1} (B' \times \mathbb{S}^{1})  \right)}  
 \, \frac{1}{\sqrt{1 + |\nabla v_\mu (x, y)|^2}} \, d \mathcal{H}^n (x, y, t),
 \end{split}
 \end{equation}
 for every Borel set $B' \subset B$.
 Then, thanks to Proposition~\ref{prop flat and perimeter}, we have
 \begin{align*}
\eqref{no flat parts for u} 
&\quad \Longleftrightarrow \quad
 P(\Sigma^{u_0}; \Phi_{n+1}( B' \times \mathbb{S}^{1}))=0 \quad \forall \, \text{ Borel set $B' \subset B$ with $\mathcal{H}^n(B')=0$} \\
&\quad \Longleftrightarrow \quad
 \mathcal{H}^n ( \partial^* \Sigma^{u_0} \cap \Phi_{n+1}( B' \times \mathbb{S}^{1}))=0 \quad \forall \, \text{ Borel set $B' \subset B$ with $\mathcal{H}^n(B')=0$} \\
& \stackrel{\eqref{amazing}}{\quad \Longleftrightarrow \quad}
 D_t \mu  (B') = 0 \quad \forall \, \text{ Borel set $B' \subset B$ with $\mathcal{H}^n(B')=0$} \\
 & \stackrel{\eqref{amazing}}{\quad \Longleftrightarrow \quad}
  \mathcal{H}^n ( \partial^* \Sigma^{v_\mu} \cap \Phi_{n+1}( B' \times \mathbb{S}^{1}))=0 \quad \forall \, \text{ Borel set $B' \subset B$ with $\mathcal{H}^n(B')=0$} \\
  &\quad \Longleftrightarrow \quad
 P(\Sigma^{v_\mu}; \Phi_{n+1}( B' \times \mathbb{S}^{1}))=0 \quad \forall \, \text{ Borel set $B' \subset B$ with $\mathcal{H}^n(B')=0$} \\
 &\quad \Longleftrightarrow \quad \eqref{no flat parts for u^s}.
 \end{align*}
\end{proof}

\section{Proof of the P\'olya--Szeg\"o inequality} \label{sect polya-szego}

In this section we prove a general version of the P\'olya--Szeg\"o inequality (see Theorem~\ref{the thing}), 
and then we show how this implies Theorem~\ref{thm_Polya-Szego inequality local} and Theorem~\ref{thm_Polya-Szego inequality}.
We start with some preliminary results that concerning functions $f$ belonging to $\mathscr{F}$ and $\mathscr{F}'$.
\begin{lemma} \label{lemma equivalent increasing}
Let $\mathscr{F}$ and $\mathscr{F}'$ be given by Definition~\ref{def_admissable integrands}, and let $f \in \mathscr{F}$. Then, 
\[
\tau \longmapsto f (\eta, \tau, \zeta) \text{ is increasing in }[0, \infty) 
\qquad 
\text{ for every } 
(\eta, \zeta) \in \R \times \R^{n-2}.
\]
If, in addition, $f \in \mathscr{F}'$, then
\[
\tau \longmapsto f (\eta, \tau, \zeta) \text{ is \textbf{strictly} increasing in }[0, \infty)
\qquad 
\text{ for every } 
(\eta, \zeta) \in \R \times \R^{n-2}.
\]
\end{lemma}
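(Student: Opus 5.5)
Fix $(\eta,\zeta)\in\R\times\R^{n-2}$ and set $g(\tau):=f(\eta,\tau,\zeta)$ for $\tau\in\R$. The plan is to reduce everything to a one-variable statement about $g$, using only convexity and evenness. By (f1), $g$ is convex on $\R$, being the restriction of a convex function to a line. By (f2), $g$ is even. In the case $f\in\mathscr{F}'$, condition (f1') gives that $g$ is strictly convex.

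The key step is to compare $g(0)$ with $g$ at nonzero points. For every $\tau\in\R$, evenness and convexity give
\[
g(0)=g\left(\tfrac12\tau+\tfrac12(-\tau)\right)\le \tfrac12 g(\tau)+\tfrac12 g(-\tau)=g(\tau),
\]
and the inequality is strict when $\tau\neq0$ and $g$ is strictly convex. Now let $0\le\tau_1<\tau_2$. Write $\tau_1=\lambda\tau_2+(1-\lambda)\cdot 0$ with $\lambda=\tau_1/\tau_2\in[0,1)$. Convexity then yields
\[
g(\tau_1)\le \lambda g(\tau_2)+(1-\lambda)g(0)\le \lambda g(\tau_2)+(1-\lambda)g(\tau_2)=g(\tau_2),
\]
so $g$ is nondecreasing on $[0,\infty)$, which is what ``increasing'' means in the first claim.

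For the strict version, take $f\in\mathscr{F}'$. Since $1-\lambda>0$ and $g(0)<g(\tau_2)$ (because $\tau_2>0$), the second inequality above is strict, and therefore $g(\tau_1)<g(\tau_2)$. This argument also covers $\tau_1=0$, where $\lambda=0$.

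No real obstacle is expected. The only care needed is to note that convexity and strict convexity pass to the restriction along the line $\{(\eta,\tau,\zeta):\tau\in\R\}$. We must also keep the factor $1-\lambda>0$ so that strictness of $g(0)<g(\tau_2)$ carries through the convex combination.
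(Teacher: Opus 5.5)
Your proof is correct and is essentially the paper's argument: both rest on convexity of $g(\tau)=f(\eta,\tau,\zeta)$ combined with evenness. The paper uses a single convex combination $\tau_1=\lambda(-\tau_2)+(1-\lambda)\tau_2$ with $\lambda=\frac{\tau_2-\tau_1}{2\tau_2}$, whereas you pass through the intermediate bound $g(0)\le g(\tau_2)$ and then interpolate between $0$ and $\tau_2$; strictness under (f1') follows the same way in both.
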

\begin{proof}
Suppose that $f \in \mathscr{F}$, let $(\eta, \zeta) \in \R \times \R^{n-2}$ be fixed, 
and let $g:\R \to [0, +\infty)$ be defined as $g (\tau):= f (\eta, \tau, \zeta)$. We want to show 
that $g$ is increasing. First of all, note that since $f$ is convex we have that $g$ is convex.
Let now $0 \leq \tau_1 < \tau_2$. 
Then, we can write $\tau_1 = \lambda (- \tau_2) + (1-\lambda) \tau_2$, 
where $\lambda := \frac{\tau_2 - \tau_1}{2 \tau_2} \in (0,1/2)$.
Therefore, by convexity of $g$
\[
g (\tau_1) = g (\lambda (-\tau_2) + (1-\lambda) \tau_2) \leq \lambda g (-\tau_2) + (1- \lambda) g (\tau_2) = g (\tau_2),
\]
which shows the first part of the statement.  
If, in addition, $f \in \mathscr{F}'$, then the function $g$ is strictly convex and so
the inequality above is strict.
\end{proof}
\noindent
If $f \in \mathscr{F}$, we recall that the Legendre transform $f^*$ of $f$ is defined as:
\[
f^* (w) := \sup_{(\eta, \zeta, \tau) \in \mathbb{R}^n} (   (\eta, \zeta, \tau) \cdot w - f (\eta, \zeta, \tau)), \quad \text{ for every } w \in  \R^{n}.
\]
Since $f$ is convex and finite everywhere, $f^*$ is convex with $- \infty < f^* (w) \leq + \infty$ for every $w \in  \R^{n}$. 
Moreover, for every $(\eta, \zeta, \tau) \in  \R^{n}$ we have
 \begin{equation} \label{leg transf twice}
 \begin{split}
f (\eta, \zeta, \tau) 
&= (f^*)^* (\eta, \zeta, \tau) 
= \sup_{w \in  \R^{n}} ( w \cdot (\eta, \zeta, \tau) - f^* (w)) \\
&= \sup_{h \in \mathbb{N}} ( w_h \cdot (\eta, \zeta, \tau) - f^* (w_h)),
\end{split}
\end{equation}
where $\{ w_h \}_{h \in \mathbb{N}}$ is a countable dense subset of the set
$\{ w \in \R^n: f^* (w) < + \infty \}$.

The next result is a variant of \cite[Lemma~2.35]{AFP}, and will be used 
in the proof of Theorem~\ref{the thing}. 
\begin{lemma} \label{thanks for this lemma}
Let $n \in \mathbb{N}$ with $n \geq 2$, and let $\Omega \subset \mathbb{R}^n$ be open. 
Let $u \in BV_{0, \tau} (\Omega)$, let $A \subset \subset \Pi_{n-1} (\Omega)$ be open, 
and let $B \subset A \times \mathbb{R}$ be a Borel set. 
For every $h \in \mathbb{N}$, let 
$\varphi_h: \Phi_{n+1} (B \times \mathbb{S}^{1}) \to \mathbb{R}$ be
a Borel function such that 
\begin{equation} \label{constant along slices}
x \longmapsto \varphi_h (x, y, t) \text{ is constant in } (\partial^* \Sigma^{u_0})_{(r, y, t)} \quad 
\text{ for every } (r, y, t) \in B.
\end{equation}
Suppose, in addition, that there exists $\tilde{h} \in \mathbb{N}$ such that 
\[
\int_{\partial^* \Sigma^{u_0} \cap \Phi_{n+1} (B \times \mathbb{S}^{1})} | \varphi_{\tilde{h}} (x, y, t) | \, d \mathcal{H}^n (x, y, t) < + \infty.
\]
Then, 
\begin{align*}
&\int_{\partial^* \Sigma^{u_0} \cap \Phi_{n+1} (B \times \mathbb{S}^{1})}  
\, \, \sup_{h \in \mathbb{N}} 
  \varphi_h (x, y, t) \, d \mathcal{H}^n (x, y, t) \\
 &= \sup_{H} \left\{\sum_{h\in H} 
 \int_{\partial^* \Sigma^{u_0} \cap \Phi_{n+1} (B_h \times \mathbb{S}^{1})} 
 \varphi_h (x, y, t) \, d \mathcal{H}^n (x, y, t) \right\},
\end{align*}
where the supremum in the right hand side ranges over all finite sets $H\subset \mathbb{N}$ 
and over all pairwise disjoint Borel partitions $\{ B_h \}_{h\in H}$ of $B$.
\end{lemma}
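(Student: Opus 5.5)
We follow the proof of \cite[Lemma~2.35]{AFP}, working with the finite Borel measure $\lambda(E):=\mathcal{H}^n(\partial^* \Sigma^{u_0} \cap \Phi_{n+1}(E \times \mathbb{S}^{1}))$ on $A\times\R$. By Remark~\ref{u bv finite perimeter}, $\Sigma^{u_0}$ has finite perimeter in $\Phi_{n+1}((A\times\R)\times\mathbb{S}^1)$, so $\lambda$ is finite. Set $\psi:=\sup_h \varphi_h$. The key observation is that, by \eqref{constant along slices}, each $\varphi_h$ restricted to $\partial^*\Sigma^{u_0}\cap \Phi_{n+1}(B\times\mathbb{S}^1)$ factors through the projection $S(x,y,t)=(|x|,y,t)$. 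Thus the comparison $\varphi_h\ge\varphi_k$ can be decided on sets of the form $\Phi_{n+1}(B'\times\mathbb{S}^1)$ with $B'\subset B$, which is exactly the form of partition allowed on the right-hand side.

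\textbf{Inequality $\geq$.} For any finite $H$ and any Borel partition $\{B_h\}_{h\in H}$ of $B$, we have $\varphi_h\le\psi$ on $\Phi_{n+1}(B_h\times\mathbb{S}^1)$. Summing over $h\in H$ gives the bound, provided the integrals make sense. To ensure this, the negative parts must be controlled. We use $\tilde h$: $\psi\ge\varphi_{\tilde h}$ and $\varphi_{\tilde h}$ is integrable, so $\int\psi$ is well defined in $(-\infty,+\infty]$. Partition terms with $\int\varphi_h=-\infty$ only decrease the sum, so they can be discarded.

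\textbf{Inequality $\leq$.} We may replace the family by $\{\varphi_h\vee\varphi_{\tilde h}\}$, or more simply include $\tilde h$ in every $H$. For $m\in\mathbb{N}$, let $\psi_m:=\max_{h\le m,\,h=\tilde h}\varphi_h$. Define Borel sets $B^m_h\subset B$ as the points $(r,y,t)\in B$ where $h$ is the first index attaining the maximum among the $\varphi_k(\cdot,y,t)$, evaluated on the slice. The slice value is well defined because of constancy on slices. When the slice $(\partial^*\Sigma^{u_0})_{(r,y,t)}$ is empty, we assign the point to $B^m_{\tilde h}$; it carries no mass anyway.

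Borel measurability of $B^m_h$ is the main technical obstacle, since each $B^m_h$ is a projection through $S$. To handle it, we would define the slice value as $\tilde\varphi_h(r,y,t):=\varphi_h(r\omega,y,t)$ for a measurably chosen $\omega$. Alternatively, we would express $\{\varphi_h\ge\varphi_k\}\cap\partial^*\Sigma^{u_0}$ as $\Phi_{n+1}(B'\times\mathbb{S}^1)\cap\partial^*\Sigma^{u_0}$ and argue as in the proof of Proposition~\ref{ us sobolev}, where $S(B)$ was shown to be measurable for the relevant measure, and then take a $\lambda$-negligible Borel modification. If only $\lambda$-measurability is available, replacing the sets by Borel sets differing by $\lambda$-null sets does not change any integral.

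With these sets, $\int\psi_m=\sum_{h}\int_{\Phi_{n+1}(B^m_h\times\mathbb{S}^1)}\varphi_h$, which is at most the right-hand side. Finally, $\psi_m\uparrow\psi$ and $\psi_m\ge\varphi_{\tilde h}\in L^1(\lambda\text{-surface})$, so the monotone convergence theorem gives $\int\psi_m\to\int\psi$, which concludes the proof.
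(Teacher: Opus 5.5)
Your proposal is correct and follows the paper's proof essentially step by step: you reduce to finite maxima by monotone convergence, using $\varphi_{\tilde h}$ as an integrable lower bound. You then build the Borel partition of $B$ by projecting, through $S(x,y,t)=(|x|,y,t)$, the Borel sets where a given $\varphi_h$ realizes the maximum, and you use constancy on slices to see that these sets are full cylinders over their projections. Of your two measurability options, the second is exactly the paper's argument: the projections are analytic, hence measurable for the push-forward under $S$ of $\mathcal{H}^n$ restricted to $\partial^*\Sigma^{u_0}$, and they are then replaced by Borel sets up to null sets. The only cosmetic difference is that the paper disjointifies afterwards, whereas you take the first maximizing index.
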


\begin{proof}
In order to prove the lemma, we first need to introduce some tools.
We define the map $S: \R^2_0 \times \R^{n-2} \times \R \to (0, + \infty) \times \R^{n-2} \times \R$ as
\begin{equation} \label{def map S}
S (x, y, t) := (|x|, y, t), \qquad \text{ for every } (x, y, t)  \in \R^2_0 \times \R^{n-2} \times \R.
\end{equation}
Let $\lambda$ be the finite Radon measure on $\R^2_0 \times \R^{n-2} \times \R$ given by 
\[
\lambda (R) := \mathcal{H}^n \left(R \cap  \partial^* \Sigma^{u_0} \cap \Phi_{n+1} ((A \times \R) \times \mathbb{S}^{1}) \right), 
\quad \forall \text{ Borel set } R \subset \R^2_0 \times \R^{n-2} \times \R,
\]
and let $\sigma: = S_{\#} \lambda$, where $S_{\#} \lambda$ denotes the push--forward measure of $\lambda$ through $S$.
We observe that both $\lambda$ and $\sigma$ are finite Borel measures. 
Finally, we set for every $h \in \{ 1, \ldots, k\}$,
\begin{equation} \label{def Ah}
A_h := \left\{ 
(x, y, t) \in \partial^* \Sigma^{u_0} \cap \Phi_{n+1} (B \times \mathbb{S}^{1}) : 
\varphi_h (x, y, t) = \max_{1 \leq i \leq k}  \varphi_i (x, y, t) \right\}.
\end{equation}
We now divide the proof into steps.

\vspace{.2cm}

\noindent
\textbf{Step 1:} We show that the statement follows if we prove that for every $k \in \mathbb{N}$ with $k \geq \tilde{h}$, we have
\begin{equation} \label{before k goes to infinity}
\begin{split}
&\int_{\partial^* \Sigma^{u_0} \cap \Phi_{n+1} (B \times \mathbb{S}^{1}) }  
\, \, \max_{1 \leq h \leq k} 
  \varphi_h (x, y, t) \, d \mathcal{H}^n (x, y, t)  \\
 &= \sup \left\{\sum_{h = 1}^k 
 \int_{\partial^* \Sigma^{u_0} \cap \Phi_{n+1} (B_h \times \mathbb{S}^{1})} 
 \varphi_h (x, y, t) \, d \mathcal{H}^n (x, y, t) \right\}, 
\end{split}
\end{equation}
where the last supremum ranges over all the $k$-ples $B_1, \ldots, B_k$
of pairwise disjoint Borel partitions of $B$. 

To this aim, for every $k\ge \tilde h$, let us define the function $M_k: \Phi_{n+1} (B \times \mathbb{S}^{1}) \to \mathbb{R}$ as 
\[
M_k (x, y, t) :=\max_{1\le h\le k}\varphi_h (x, y, t).
\]
Then, we have $M_k \ge \varphi_{\tilde h}$ and $(M_k)_-\le (\varphi_{\tilde h})_-$. Moreover, by the assumption on $\varphi_{\tilde h}$ it follows that 
\[
\int_{\partial^* \Sigma^{u_0} \cap \Phi_{n+1} (B \times \mathbb{S}^{1})} \!\!\! (M_k)_- (x, y, t)  \, d \mathcal{H}^n (x, y, t)
\leq
\int_{\partial^* \Sigma^{u_0} \cap \Phi_{n+1} (B \times \mathbb{S}^{1})} \!\!\! (\varphi_{\tilde{h}})_- (x, y, t)  \, d \mathcal{H}^n (x, y, t) < \infty.
\] 
Thus, the integral
\[
\int_{\partial^* \Sigma^{u_0} \cap \Phi_{n+1} (B \times \mathbb{S}^{1})} M_k (x, y, t)  \, d \mathcal{H}^n (x, y, t),
\]
is well defined, and its value belongs to $(-\infty, + \infty]$. 

We also have that $M_k-\varphi_{\tilde h}\ge 0$, 
and $M_k-\varphi_{\tilde h} \uparrow \sup_{h\in\mathbb N}\varphi_h-\varphi_{\tilde h}$ as $k\to\infty$.
Therefore, by the Monotone Convergence Theorem applied to $M_k-\varphi_{\tilde h}$, we have
\begin{align*}
&\lim_{k \to + \infty} \int_{\partial^* \Sigma^{u_0} \cap \Phi_{n+1} (B \times \mathbb{S}^{1})} 
\max_{1\le h\le k}\varphi_h (x, y, t)
\, d \mathcal{H}^n (x, y, t) \\
  &= \int_{\partial^* \Sigma^{u_0} \cap \Phi_{n+1} (B \times \mathbb{S}^{1})}  
\, \,  \sup_{h \in \mathbb{N}} 
  \varphi_h (x, y, t)  \, d \mathcal{H}^n (x, y, t),
\end{align*}
so the claim follows.

\vspace{.2cm}

\noindent
\textbf{Step 2:} For every $k \in \mathbb{N}$ with $k \geq \tilde{h}$, we construct a pairwise disjoint Borel partition $\{C_1, \ldots, C_k \}$ of $B$ such that
for every $h = 1, \ldots, k$
\[
\varphi_h (x, y, t) = \max_{1 \leq i \leq k}  \varphi_i (x, y, t), \quad \text{ for $\mathcal{H}^n$-a.e. }  (x, y, t) \in \partial^* \Sigma^{u_0} \cap \Phi_{n+1} (C_h \times \mathbb{S}^{1}).
\]
Let $k \in \mathbb{N}$ with $k \geq \tilde{h}$ be fixed. We divide this step into sub-steps.

\vspace{.2cm}

\noindent
\textbf{Step 2a:} We show that for every $h = 1, \ldots, k$ the set $S(A_h)$ is $\sigma$-measurable,
where $S$ and $A_h$ are defined in \eqref{def map S} and \eqref{def Ah}, respectively.

Let $h \in \{ 1, \ldots, k \}$ be fixed. Since $A_h$ is Borel and $S$ is continuous, $S(A_h)$ is an analytic subset of $(0, \infty) \times \R^{n-2} \times \R$ (see \cite[Section~8.2]{cohn}). Note that, in general, there is no guarantee that $S(A_h)$ is Borel (see \cite[Corollary~8.2.17]{cohn}). However, since $\sigma$ is a finite Borel measure on $(0, \infty) \times \R^{n-2} \times \R$, we have that $S(A_h)$ is $\sigma$--measurable, see \cite[Theorem~8.4.1]{cohn}.

\vspace{.2cm}

\noindent
\textbf{Step 2b:} We show that for every $h = 1, \ldots, k$
\begin{equation} \label{annulus h}
A_h = \partial^* \Sigma^{u_0} \cap \Phi_{n+1} (S (A_h) \times \mathbb{S}^{1}). 
\end{equation}
The inclusion $A_h \subset \partial^* \Sigma^{u_0} \cap \Phi_{n+1} (S (A_h) \times \mathbb{S}^{1})$ follows by the definition of $S$, so we only need to prove the opposite inclusion.
Let $(x', y, t) \in \partial^* \Sigma^{u_0} \cap \Phi_{n+1} (S(A_h) \times \mathbb{S}^{1})$. 
In particular, this implies that $(A_h)_{(|x'|, y, t)} \neq \emptyset$, and so
 there exists $x \in (A_h)_{(|x'|, y, t)}$ such that $(x, y, t) \in A_h$. Then, by definition of $A_h$, 
 \[
 \varphi_h (x, y, t) = \max_{1 \leq i \leq k}  \varphi_i (x, y, t).
 \]
 On the other hand, since $x, x' \in (\partial^* \Sigma^{u_0})_{(|x'|, y, t)}$, by  \eqref{constant along slices}
 we have $\varphi_i (x', y, t) = \varphi_i (x, y, t)$ for every $i = 1, \ldots, k$. Therefore, 
 \[
\varphi_h (x', y, t) = \varphi_h (x, y, t) = \max_{1 \leq i \leq k}  \varphi_i (x, y, t)
= \max_{1 \leq i \leq k}  \varphi_i (x', y, t),
\]
so that $(x', y, t) \in A_h$, and this shows \eqref{annulus h}.

\vspace{.2cm}

\noindent
\textbf{Step 2c:} We conclude the proof of Step~2. For each $h = 1, \ldots, k$, we can find a Borel set $R_h$ such that $\sigma (S(A_h)  \Delta R_h) = 0$. 
Note now that, by definition of $\sigma$, 
\begin{align*}
0 &= \sigma (R_h \setminus  S(A_h) ) 
= \lambda (S^{-1} (R_h \setminus  S(A_h))) 
= \lambda (
\Phi_{n+1} ((R_h \setminus  S(A_h)) \times \mathbb{S}^{1}) \\
&= \mathcal{H}^n \left(\partial^* \Sigma^{u_0} \cap \Phi_{n+1} ((R_h  \setminus  S(A_h)) \times \mathbb{S}^{1}) \right).
\end{align*}
Combining the above identity with \eqref{def Ah} and \eqref{annulus h}, we obtain that for every $h = 1, \ldots, k$
\begin{equation} \label{union of Rh}
\varphi_h (x, y, t) = \max_{1 \leq i \leq k}  \varphi_i (x, y, t), 
\quad \text{ for $\mathcal{H}^n$-a.e. }  (x, y, t) \in \partial^* \Sigma^{u_0} \cap \Phi_{n+1} (R_h \times \mathbb{S}^{1}).
\end{equation}
Therefore, the Borel sets $R_1, \ldots, R_h$ are such that \eqref{union of Rh} holds, and
\[
\bigcup_{i=1}^h R_i = B' := \left\{ (r, y, t) \in B : (\partial^* \Sigma^{u_0})_{(r, y, t)} \neq \emptyset \right\}.
\]
We now set $C_1:=  R_1$,
\[ 
C_h := R_h \setminus 
\bigg( \bigcup_{1 \leq i < h} R_i \bigg), \quad h = 2, \ldots, k-1, \qquad \qquad
R_k:= B \setminus \bigg( \bigcup_{1 \leq i < k} R_i \bigg),
\]
and the claim follows.

\vspace{.2cm}

\noindent
\textbf{Step 3:} We conclude. Thanks to Step~1 and Step~2, 
\begin{align*}
&\int_{\partial^* \Sigma^{u_0} \cap \Phi_{n+1} (B \times \mathbb{S}^{1}) }  
\max_{1 \leq h \leq k} 
  \varphi_h (x, y, t) \, d \mathcal{H}^n (x, y, t) \\
 &= \sum_{h = 1}^k 
 \int_{\partial^* \Sigma^{u_0} \cap \Phi_{n+1} (C_h \times \mathbb{S}^{1})} 
 \varphi_h (x, y, t) \, d \mathcal{H}^n (x, y, t) \\
&=  \sup \left\{\sum_{h = 1}^k 
 \int_{\partial^* \Sigma^{u_0} \cap \Phi_{n+1} (B_h \times \mathbb{S}^{1})} 
 \varphi_h (x, y, t) \, d \mathcal{H}^n (x, y, t) \right\},
\end{align*}
where the last supremum ranges over all the $k$-ples $B_1, \ldots, B_k$
of pairwise disjoint Borel partitions of $B$.
This shows \eqref{before k goes to infinity} and, in turn, the lemma.

\end{proof}
\noindent
We can now state a general P\'olya-Szeg\"o inequality under circular rearrangement, which is the main result of this section.
\begin{theorem} \label{the thing}
Let $n \in \mathbb{N}$ with $n \geq 2$, let $\Omega \subset \R^n$ be open, and let $\mu \in \mathcal{A} (\Pi_{n-1} (\Omega) \times \R)$.
Let $a \in L^{\infty} ((0, \infty) \times \R^{n-2} \times \R)$ with $a \geq 0$ $\mathcal{H}^n$-a.e., 
and let $f \in \mathscr{F}$.
Then, for every $\mu$-distributed function $u \in W^{1, 1}_{0, \tau} (\Omega)$ we have
\begin{equation} \label{gen polya szego}
\begin{split}
&\int_{\partial^* \Sigma^{v_\mu} \cap \left( \Phi_{n+1} (B \times \mathbb{S}^{1}) \right)}  
 \hspace{-1cm}
  \frac{a (|x|, y, v_\mu) f (\hat{x} \cdot \nabla_x v_\mu ,  \Dx v_\mu , 
 \nabla_y v_\mu )}{\sqrt{1 + |\nabla v_\mu |^2}} \, d \mathcal{H}^n (x, y, t)  \\[3pt]
& \leq \int_{\partial^* \Sigma^{u_0} \cap \left( \Phi_{n+1} (B \times \mathbb{S}^{1}) \right)}  
 \hspace{-1cm} \frac{a (|x|, y, u_0) f (\hat{x} \cdot \nabla_x u_0 ,  \Dx u_0 , 
 \nabla_y u_0 )}{\sqrt{1 + |\nabla u_0 |^2}} \, d \mathcal{H}^n (x, y, t), 
 \end{split}
\end{equation}
for every Borel set $B \subset \Pi_{n-1} (\Omega) \times \mathbb{R}$.
\end{theorem}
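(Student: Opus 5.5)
The plan is to linearize $f$ through its Legendre transform, to prove the inequality for each affine integrand separately, and then to recombine using Lemma~\ref{thanks for this lemma}. First I reduce to a convenient class of sets. By monotone convergence it suffices to treat Borel sets $B\subset A\times\R$ with $A\subset\subset\Pi_{n-1}(\Omega)$ open. Both $u_0$ and $v_\mu$ are in $W^{1,1}(\Phi_n(A\times\mathbb S^1))$ by Proposition~\ref{ us sobolev}. Hence, by Proposition~\ref{giaquinta}, $\partial^*\Sigma^{u_0}$ and $\partial^*\Sigma^{v_\mu}$ coincide $\mathcal H^n$-a.e.\ with the graphs of these functions. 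On the graph, $t$ equals the value of the function, so $a(|x|,y,u_0)=a(|x|,y,t)$. Moreover, by \eqref{normal1},
\[
\frac{(\hat x\cdot\nabla_x u_0,\ \Dx u_0,\ \nabla_y u_0,\ 1)}{\sqrt{1+|\nabla u_0|^2}}=\big(\hat x\cdot\nu_x,\ \nu_{x\parallel},\ \nu_y,\ -\nu_t\big),
\]
where all normals are those of $\Sigma^{u_0}$, and the same identity holds for $v_\mu$. Since $f$ is even in $\tau$ by (f2), I replace $\tau$ by $|\tau|$. I then write $f(\eta,|\tau|,\zeta)=\sup_h\big(w_h\cdot(\eta,|\tau|,\zeta)-f^*(w_h)\big)$ as in \eqref{leg transf twice}. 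Here I expect I may restrict to $w_h=(w_h^1,w_h^2,w_h^3)$ with $w_h^2\ge 0$: the function $\tau\mapsto f(\eta,\tau,\zeta)$ is even and increasing on $[0,\infty)$ (Lemma~\ref{lemma equivalent increasing}), so $f(\eta,|\cdot|,\zeta)$ is convex and nondecreasing in $|\tau|$, and such functions are suprema of affine maps with nonnegative slope.

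Next I treat a single affine integrand, with $w=w_h$ and $w^2\ge0$, integrated over $B'\subset B$. The integrand on $\partial^*\Sigma$ becomes
\[
a(|x|,y,t)\Big(w^1\,\hat x\cdot\nu_x+w^2|\nu_{x\parallel}|+w^3\cdot\nu_y+f^*(w)\,\nu_t\Big).
\]
The terms involving $\hat x\cdot\nu_x$, $\nu_y$ and $\nu_t$ are handled by Proposition~\ref{derivatives of mu}, applied with the bounded Borel function $\varphi=a$ (after truncating if needed, or splitting $f^*(w)$ into positive and negative parts). By \eqref{rDr xi}, \eqref{Dy mu} and \eqref{Dt mu}, these integrals equal $\int_{B'}a\,d(rD_r\xi_\mu)$, $\int_{B'}a\,dD_y\mu$ and $\int_{B'}a\,dD_t\mu$. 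These depend only on $\mu$, so they are identical for $u_0$ and for $v_\mu$. For the tangential term I use the coarea formula, Proposition~\ref{coarea formula}:
\[
\int a\,w^2|\nu_{x\parallel}|\,d\mathcal H^n=\int_{B'}a\,w^2\,\mathcal H^0\big((\partial^*\Sigma)_{(r,y,t)}\big)\,dr\,dy\,dt .
\]
Only $\{0<\alpha_\mu<\pi\}$ contributes, by Proposition~\ref{ssssd} and Proposition~\ref{ssssd2}. There, Proposition~\ref{volpert theorem}(ii) gives that for $\mathcal H^n$-a.e.\ $(r,y,t)$ the slice of $\partial^*\Sigma^{u_0}$ is the reduced boundary of a set on $\partial D(r)$ of length $\mu(r,y,t)$. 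The slice of $\Sigma^{v_\mu}$ is an arc of the same length. The isoperimetric inequality \eqref{isop ineq} on the circle then bounds the integrand for $v_\mu$ pointwise by the integrand for $u_0$, and since $a\,w^2\ge0$ the tangential term for $v_\mu$ is at most that for $u_0$. Thus the affine integral over any $B'$ for $v_\mu$ is at most the affine integral over $B'$ for $u_0$, which in turn is at most the $u_0$-integral of $a f/\sqrt{1+|\nabla u_0|^2}$ over $B'$.

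Finally I recombine. By Remark~\ref{this is now the magic property}, every function $\varphi_h:=a\,\big(w_h\cdot(\dots)-f^*(w_h)\big)/\sqrt{\cdots}$ computed on $\Sigma^{v_\mu}$ is constant on the slices $(\partial^*\Sigma^{v_\mu})_{(r,y,t)}$; here $a$ is constant on slices because it depends only on $(|x|,y,t)$. Hence Lemma~\ref{thanks for this lemma}, applied to $v_\mu$, expresses the left-hand side of \eqref{gen polya szego} as a supremum over finite Borel partitions $\{B_h\}$ of $B$ of $\sum_h\int_{\partial^*\Sigma^{v_\mu}\cap\Phi_{n+1}(B_h\times\mathbb S^1)}\varphi_h$. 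Each summand is bounded by the corresponding $u_0$-integral of $af/\sqrt{\cdot}$ over $B_h$, and summing over $h$ gives the right-hand side.

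The main obstacle is the integrability hypothesis of the lemma: it requires some $\tilde h$ with $\varphi_{\tilde h}$ integrable. I would choose $w_{\tilde h}=\nabla f(0)$-type slopes, or simply $w=0$, so that $\varphi$ is the constant $-f^*(0)\,a$ times $-\nu_t$, which is integrable because $\mathcal H^n(\partial^*\Sigma^{v_\mu}\cap\cdots)<\infty$ on $A\times\R$. If the right-hand side is infinite there is nothing to prove. I also need to check carefully that the $\nu_t$-term with $f^*(w)$ of either sign is handled by exact equality, so that no sign condition is needed there.
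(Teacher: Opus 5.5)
Your argument is correct, but it is organized differently from the paper's proof.

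\textbf{The paper's route.} The paper splits $B$ into the set $B\cap G_{u_0}\cap G_{v_\mu}$, where both Vol'pert slicings hold, and an $\mathcal{H}^n$-null remainder. On the first set it proves an inequality pointwise in $(r,y,t)$. It uses the densities of Proposition~\ref{abs cont parts} and Remark~\ref{nice derivatives} to write the $v_\mu$-integrand as $a\,(-\partial_t\mu)\,f\big(r\partial_r\xi_\mu/(-\partial_t\mu),\,2/(-\partial_t\mu),\,\nabla_y\mu/(-\partial_t\mu)\big)$. It then combines the circle isoperimetric inequality, the monotonicity of $f$ in $|\tau|$, and Jensen's inequality for the weights $|\Dx u_0|^{-1}\mathcal{H}^0$ on the slice. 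Only on the null remainder, where the tangential gradients vanish $\mathcal{H}^n$-a.e.\ on the reduced boundaries by \eqref{only tangential non zero}, does it use the Legendre transform, Lemma~\ref{thanks for this lemma} and Proposition~\ref{derivatives of mu}.

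\textbf{Your route.} You run this duality argument on all of $B$ at once. You keep a slope $w^2\ge 0$ in front of $|\tau|$; this is legitimate, most quickly because (f2) makes $f^*$ even in its second variable, so $w^2$ can be replaced by $|w^2|$. The $\hat x\cdot\nu_x$, $\nu_y$, $\nu_t$ parts of each affine integrand are then matched exactly through $D\mu$, singular parts included. By the coarea formula, the tangential part is an integral in $dr\,dy\,dt$ of $a\,w^2$ times the number of slice boundary points. It is therefore controlled by the isoperimetric inequality and insensitive to $\mathcal{H}^n$-null sets.

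This removes the good/null splitting, the reduction \eqref{B nice}, the pointwise densities, and Jensen. Two small remarks:
\begin{itemize}
\item The integrability hypothesis of the lemma, which you flag as the main obstacle, holds for every $h$: $|\varphi_h|\le\|a\|_\infty(|w_h|+|f^*(w_h)|)$, and $\mathcal{H}^n(\partial^*\Sigma^{v_\mu}\cap\Phi_{n+1}((A\times\R)\times\mathbb{S}^1))<\infty$.
\item You need Vol'pert's property (ii) for $v_\mu$ as well as for $u_0$, to know that the slices of $\partial^*\Sigma^{v_\mu}$ consist of exactly two points.
\end{itemize}

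\textbf{What each buys.} The paper's route gives the pointwise inequalities \eqref{first inequality polya}--\eqref{jensen ineq} for $\mathcal{H}^n$-a.e.\ $(r,y,t)\in\{0<\alpha_\mu<\pi\}$. Their equality cases (equality in the isoperimetric inequality, and in Jensen for strictly convex $f$) are exactly what Proposition~\ref{first necessary conds for extremals} uses for rigidity. Your sup-over-partitions argument is shorter and more uniform, but yields only integrated inequalities. Those equality conditions would have to be recovered by an additional argument.
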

\noindent
Before discussing the proof, we show some consequences of this result.

\subsection{Consequences of Theorem~\ref{the thing}}
We start by showing that Theorem~\ref{the thing} 
implies Theorem~\ref{thm_Polya-Szego inequality local}.
\begin{proof}[Proof of Theorem~\ref{thm_Polya-Szego inequality local}]
The result follows immediately from \eqref{gen polya szego}, choosing Borel sets 
of the type $B = \tilde{B} \times \mathbb{R}$ with $\tilde{B} \subset \Pi_{n-1} (\Omega)$.
\end{proof}
The next result is a consequence of Theorem~\ref{thm_Polya-Szego inequality local}.
\begin{corollary} \label{symmetric function in w1p}
Let $n \in \mathbb{N}$ with $n \geq 2$, let $\Omega \subset \mathbb{R}^n$ be open, and let $p \in [1, \infty)$. 
Let $u \in W^{1,p}_{0, \tau} (\Omega)$, let $\mu$ be given by \eqref{mu distr}, and let $v_{\mu}$ be defined by \eqref{def vmu}.
Then, $v_\mu \mid_{\Omega^s} \in W^{1, p}_{0, \tau} (\Omega^s)$.
\end{corollary}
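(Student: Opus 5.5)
By Proposition~\ref{ us sobolev}, $v_\mu\!\mid_{\Omega^s} \in W^{1,1}_{0,\tau}(\Omega^s)$. By \eqref{v_mu is zero a.e. outside}, the extension by zero of this restriction is $v_\mu$ itself. Condition (b) for $\Omega^s$ is already part of the membership in $W^{1,1}_{0,\tau}(\Omega^s)$. So it remains to verify condition (a$_p$). Since $v_\mu \in W^{1,1}(\Phi_n(A\times\mathbb{S}^1))$ for every open $A\subset\subset\Pi_{n-1}(\Omega)$, this reduces to two facts: $v_\mu\in L^p(\Phi_n(A\times\mathbb{S}^1))$ and $\nabla v_\mu\in L^p(\Phi_n(A\times\mathbb{S}^1))$.

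The $L^p$ bound on $v_\mu$ comes from Proposition~\ref{prop Lp norm}, which gives $\|v_\mu\|_{L^p(\Phi_n(A\times\mathbb{S}^1))}=\|u_0\|_{L^p(\Phi_n(A\times\mathbb{S}^1))}<\infty$. For the gradient we apply Theorem~\ref{thm_Polya-Szego inequality local} with the following choices:
\[
a\equiv 1,\qquad f(\eta,\tau,\zeta)=\bigl(\eta^2+\tau^2+|\zeta|^2\bigr)^{p/2},\qquad B = A .
\]
This $f$ is convex, since it is a convex nondecreasing function of a norm when $p\ge1$, and it is even in $\tau$, so $f\in\mathscr{F}$. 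Because $(\hat x,x_\parallel)$ is an orthonormal basis of $\mathbb{R}^2$, we have
\[
f(\hat x\cdot\nabla_x w,\Dx w,\nabla_y w)=|\nabla w|^p .
\]
Here $u$ is $\mu$-distributed and lies in $W^{1,1}_{0,\tau}(\Omega)$ by Remark~\ref{rem_W^1,p_tau inside W^1,1_tau}. Inequality \eqref{ineq local intro} with $B=A$ then yields
\[
\int_{\Phi_n(A\times\mathbb{S}^1)}|\nabla v_\mu|^p\,dx\,dy\le\int_{\Phi_n(A\times\mathbb{S}^1)}|\nabla u_0|^p\,dx\,dy<\infty,
\]
where the right-hand side is finite by (a$_p$) for $u$.

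Finally, $\Pi_{n-1}(\Omega^s)=\Pi_{n-1}(\Omega)$ up to the definitions: the slices of $\Omega^s$ have the same $\mathcal{H}^1$-measure as those of $\Omega$. Hence the admissible sets $A$ are the same for $\Omega$ and $\Omega^s$, and the two bounds above give (a$_p$) for $v_\mu\!\mid_{\Omega^s}$. The only delicate point is this identification of projections, together with the fact that the approximate gradient of a $W^{1,1}$ function is its weak gradient, so that the integrand of Theorem~\ref{thm_Polya-Szego inequality local} really computes $\|\nabla v_\mu\|_{L^p}^p$. Both follow directly from the definitions and Proposition~\ref{ us sobolev}.
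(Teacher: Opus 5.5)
Your proposal is correct and follows the paper's proof essentially step for step. It uses Proposition~\ref{ us sobolev} for membership in $W^{1,1}_{0,\tau}(\Omega^s)$, Proposition~\ref{prop Lp norm} for the $L^p$ bound, Theorem~\ref{thm_Polya-Szego inequality local} with $a\equiv 1$, $B=A$, $f=(\eta^2+\tau^2+|\zeta|^2)^{p/2}$ for the gradient bound, and then \eqref{v_mu is zero a.e. outside} together with the arbitrariness of $A$ to conclude.
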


\begin{proof}
The case $p = 1$ was already considered in Proposition~\ref{ us sobolev}, 
so let us assume $p >1$,  and let $u \in W^{1, p}_{0, \tau} (\Omega)$.
By Remark~\ref{rem holder} it follows that $u \in W^{1, 1}_{0, \tau} (\Omega)$ and so, 
by Proposition~\ref{ us sobolev}, we have $v_\mu \mid_{\Omega^s} \in W^{1, 1}_{0, \tau} (\Omega^s)$.

Let now $A \subset \subset \Pi_{n-1} (\Omega)$ be open.
Fom Proposition~\ref{prop Lp norm} it follows that $v_\mu \in L^p (\Phi_n (A \times \mathbb{S}^1))$.
Moreover, Applying Theorem~\ref{thm_Polya-Szego inequality local} with $a \equiv 1$, $B=A$ and 
\[
f (\eta, \zeta, \tau) = \left( \eta^2 + \zeta^2 + | \tau |^2 \right)^{p/2},
\] 
we obtain that $v_\mu \in W^{1,p} (\Phi_n (A \times \mathbb{S}^1))$. Since $A$ was arbitrary, 
thanks to \eqref{v_mu is zero a.e. outside} we conclude.
\end{proof}
\noindent
Let us now show that Theorem~\ref{thm_Polya-Szego inequality local} implies Theorem~\ref{thm_Polya-Szego inequality}.
\begin{proof}[Proof of Theorem~\ref{thm_Polya-Szego inequality}]
Let $A \subset \subset \Pi_{n-1} (\Omega)$ be a Borel set. 
From Theorem~\ref{thm_Polya-Szego inequality local} it follows that
\begin{equation} \label{ineq A large}
\begin{split}
&\int_{\Phi_n ( A \times \mathbb{S}^1)} a (|x|, y, v_\mu) f (\hat{x} \cdot \nabla_x v_\mu ,  \Dx v_\mu,  \nabla_y v_\mu ) \, dx dy \\
&\hspace{.5cm}\leq
\int_{\Phi_n ( A \times \mathbb{S}^1)} a (|x|, y, u_0) f (\hat{x} \cdot \nabla_x u_0 ,  \Dx u_0,  \nabla_y u_0 ) \, dx dy,
\end{split}
\end{equation}
where this is an inequality between extended nonnegative real numbers.

Note now that, thanks to \eqref{us is zero a.e. outside}, we have $v_{\mu} = 0$ 
$\mathcal{H}^n$-a.e. in $\Phi_n (\Pi_{n-1}  (\Omega) \times \mathbb{S}^1) \setminus \Omega^s$. 
Then (see, for instance, \cite[Section 4.2.2, Theorem 4, part iv]{EvaGa}) it follows that $\nabla v_{\mu} = 0$ $\mathcal{H}^n$-a.e. in $\Phi_n (\Pi_{n-1}  (\Omega) \times \mathbb{S}^1) \setminus \Omega^s$. For the same reason, since $u_0 = 0$ $\mathcal{H}^n$-a.e. in $\Phi_n (\Pi_{n-1}  (\Omega) \times \mathbb{S}^1) \setminus \Omega$, 
we have $\nabla u_0 = 0$ $\mathcal{H}^n$-a.e. in $\Phi_n (\Pi_{n-1}  (\Omega) \times \mathbb{S}^1) \setminus \Omega$.
Therefore, 
\begin{equation} \label{ineq A large setminus}
\begin{split}
&\int_{\Phi_n ( A \times \mathbb{S}^1) \setminus \Omega^s} a (|x|, y, v_\mu) f (\hat{x} \cdot \nabla_x v_\mu ,  \Dx v_\mu,  \nabla_y v_\mu ) \, dx dy \\
&\hspace{.5cm}= f ( 0 ,  0,  0 ) \int_{\Phi_n ( A \times \mathbb{S}^1) \setminus \Omega^s} a (|x|, y, 0)  \, dx dy \\
&\hspace{.5cm}= f ( 0 ,  0,  0 ) \int_{A} a (r, y, 0) \, \left( \int_{\partial D (r) \setminus (\Omega^s)_{(r, y)}} 
 d \mathcal{H}^1 (x) \right)\, d r \, dy \\
  &\hspace{.5cm}= f ( 0 ,  0,  0 ) \int_{A} a (r, y, 0)  \Big( 2 \pi r - \mathcal{H}^1 \left( (\Omega^s)_{(r, y)} \right) \Big) \, d r \, dy \\
  &\hspace{.5cm}= f ( 0 ,  0,  0 ) \int_{A} a (r, y, 0)  \Big( 2 \pi r - \mathcal{H}^1 \left( (\Omega)_{(r, y)} \right) \Big) \, d r \, dy \\
  &\hspace{.5cm}= f ( 0 ,  0,  0 ) \int_{\Phi_n ( A \times \mathbb{S}^1) \setminus \Omega} a (|x|, y, 0)  \, dx dy \\
  &\hspace{.5cm}=
  \int_{\Phi_n ( A \times \mathbb{S}^1) \setminus \Omega} a (|x|, y, u_0) f (\hat{x} \cdot \nabla_x u_0 ,  \Dx u_0,  \nabla_y u_0 ) \, dx dy, 
\end{split}
\end{equation}
where we used the fact that by definition of $\Omega^s$ we have 
\[
\mathcal{H}^1 \left( (\Omega^s)_{(r, y)} \right)
= \mathcal{H}^1 \left( (\Omega)_{(r, y)} \right) \quad \text{ for every } (r, y) \in A.
\]
Note that, since $A \subset \subset \Pi_{n-1} (\Omega)$ and $a$ is bounded, all the integrals appearing in \eqref{ineq A large setminus}
 are finite. Therefore, we can combine \eqref{ineq A large} and \eqref{ineq A large setminus}, obtaining 
\begin{align*} 
& \int_{\Phi_n ( A \times \mathbb{S}^1) \cap\Omega^s} a (|x|, y, v_\mu) f (\hat{x} \cdot \nabla_x v_\mu ,  \Dx v_\mu,  \nabla_y v_\mu ) \, dx dy  \\
& \leq \int_{\Phi_n ( A \times \mathbb{S}^1) \cap \Omega} a (|x|, y, u_0) f (\hat{x} \cdot \nabla_x u_0 ,  \Dx u_0,  \nabla_y u_0 ) \, dx dy,
\end{align*}
for every Borel set $A \subset \subset \Pi_{n-1} (\Omega)$.
By considering a sequence $\{ A_j \}_{j \in \mathbb{N}}$ of Borel sets with $A_j \subset \subset \Pi_{n-1} (\Omega)$
and $A_j  \nearrow \Pi_{n-1} (\Omega)$, we conclude.
\end{proof}


\noindent
We can now give the proof of Theorem~\ref{the thing}.
\begin{proof}[Proof of Theorem~\ref{the thing}]
First of all, note that it is not restrictive to assume 
\begin{equation} \label{B nice}
B \subset \{ \alpha_\mu^{\vee} > 0 \} \cap \{ \alpha_\mu^{\wedge} < \pi \}.
\end{equation}
Indeed, if not, one can split $B$ as the disjoint union $B = B_1 \cup B_2$, where
\[
B_1 = B \cap  \{ \alpha_\mu^{\vee} > 0 \} \cap \{ \alpha_\mu^{\wedge} < \pi \}, 
\quad B_2 = B \setminus ( \{ \alpha_\mu^{\vee} > 0 \} \cap \{ \alpha_\mu^{\wedge} < \pi \} ),
\] 
and observe that, thanks to \eqref{the only parts that count}, inequality \eqref{gen polya szego} is trivially satisfied in  $B_2$. 
Let's then assume that \eqref{B nice} is satisfied.
We can also suppose $B \subset A \times \mathbb{R}$ for some open set $A \subset \subset \Pi_{n-1} (\Omega)$, 
since the general case can be obtained by approximation.

By assumption, $u_0 \in W^{1, 1} (\Phi_{n} (A \times \mathbb{S}^{1}))$ and 
thanks to Proposition~\ref{ us sobolev} $v_{\mu} \in W^{1, 1} (\Phi_{n} (A \times \mathbb{S}^{1}))$.
Let $G_{v_\mu}$ and $G_{u_0}$ 
denote the sets given by Proposition~\ref{volpert theorem} (note that in general both $G_{v_\mu}$ and $G_{u_0}$ will depend on $A$). 
Then, setting $B_{v_{\mu}, u_0}:= B \cap G_{v_\mu} \cap G_{u_0}$ we have
\begin{align*}
&\int_{\partial^* \Sigma^{v_\mu} \cap \left( \Phi_{n+1} (B \times \mathbb{S}^{1}) \right)}   \hspace{-1.5cm}
 \frac{a (|x|, y, v_\mu) f (\hat{x} \cdot \nabla_x v_\mu,  \Dx v_\mu , 
 \nabla_y v_\mu )}{\sqrt{1 + |\nabla v_\mu |^2}} \, d \mathcal{H}^n (x, y, t) \\
 &= \int_{\partial^* \Sigma^{v_\mu} \cap \left( \Phi_{n+1} ( B  \times \mathbb{S}^{1}) \right)}   \hspace{-1.5cm}
 \frac{a (|x|, y, t) f (\hat{x} \cdot \nabla_x v_\mu,  \Dx v_\mu , 
 \nabla_y v_\mu )}{\sqrt{1 + |\nabla v_\mu|^2}} \, d \mathcal{H}^n (x, y, t) = I_1 + I_2,
 \end{align*}
where  
 \begin{align*}
I_1 &:= \int_{\partial^* \Sigma^{v_\mu} \cap \left( \Phi_{n+1} (B_{v_{\mu}, u_0}\times \mathbb{S}^{1}) \right)}  
\hspace{-1.5cm}
 \frac{a (|x|, y, t) f (\hat{x} \cdot \nabla_x v_\mu,  \Dx v_\mu , 
 \nabla_y v_\mu )}{\sqrt{1 + |\nabla v_\mu |^2}} \, d \mathcal{H}^n (x, y, t), \\[4pt]
 I_2 &:= \int_{\partial^* \Sigma^{v_\mu} \cap \left( \Phi_{n+1} \left(
\big( B \setminus B_{v_{\mu}, u_0}
\big)  \times \mathbb{S}^{1}
\right) \right)}  
\hspace{-1.5cm}
 \frac{a (|x|, y, t) f (\hat{x} \cdot \nabla_x v_\mu,  \Dx v_\mu , 
 \nabla_y v_\mu )}{\sqrt{1 + |\nabla v_\mu |^2}} \, d \mathcal{H}^n (x, y, t),
 \end{align*}
and where we used the fact that, since $v_{\mu} \in W^{1, 1} (\Phi_{n} (A \times \mathbb{S}^{1}))$, 
thanks to Proposition~\ref{giaquinta} we have \( v^{\wedge}_\mu (x, y) = v^{\vee}_\mu (x, y) = t\)
for $\mathcal{H}^n$-a.e. $(x, y, t) \in \partial^* \Sigma^{v_\mu}$. We divide the proof into two steps.
 
\vspace{.2cm}

\noindent
\textbf{Step 1:} We show that 
\begin{align} \label{estimate on I_1}
I_1 \leq \int_{\partial^* \Sigma^{u_0} \cap \left( \Phi_{n+1} (B_{v_{\mu}, u_0} \times \mathbb{S}^{1}) \right)}  
\hspace{-1.2cm} \frac{a (|x|, y, u_0) f (\hat{x} \cdot \nabla_x u_0,  \Dx u_0 , 
 \nabla_y u_0 )}{\sqrt{1 + |\nabla u_0|^2}} \, d \mathcal{H}^n (x, y, t).
\end{align}
Taking into account property (f2) of Definition~\ref{def_admissable integrands}, we have
\begin{equation} \label{integral I1}
\begin{split}
I_1 &= \int_{\partial^* \Sigma^{v_\mu} \cap \left( \Phi_{n+1} (B_{v_{\mu}, u_0} \times \mathbb{S}^{1}) \cap (\{ \Dx v_\mu \neq 0\} \times \mathbb{R}) \right)}   \hspace{-1.5cm}
 \frac{a (|x|, y, t) f (\hat{x} \cdot \nabla_x v_\mu,  \Dx v_\mu , 
 \nabla_y v_\mu )}{\sqrt{1 + |\nabla v_\mu|^2}} \, d \mathcal{H}^n (x, y, t)  \\[4pt]
& = \int_{B_{v_{\mu}, u_0}} a (r, y, t) \int_{\partial^* ( ( \Sigma^{v_\mu} )_{(r, y, t)} )}
 \hspace{-1cm} \frac{f (\hat{x} \cdot \nabla_x v_\mu,  | \Dx v_\mu | , 
 \nabla_y v_\mu )}{| \Dx v_\mu|}   \, d \mathcal{H}^0 (x) \, dr \, dy \, dt  \\ 
 &= \int_{B_{v_{\mu}, u_0}}
 a (r, y, t) \, ( - \partial_t \mu (r, y, t) )
  f \left( \frac{r \partial_r \xi_\mu (r,y, t)}{- \partial_t \mu (r, y, t)}  , 
 \frac{2}{- \partial_t \mu (r, y, t)} , \frac{\nabla_y \mu (r,y, t)}{- \partial_t \mu (r, y, t)} \right) 
 \, dr \, dy \, dt, 
\end{split}
\end{equation}
where the first equality is due to Remark~\ref{remark us constant functions new}, 
the second one to Proposition~\ref{coarea formula}, Remark~\ref{rem parallel normal if u sobolev}
and Proposition~\ref{volpert theorem}, and in the last one we used Remark~\ref{this is now the magic property}, Remark~\ref{nice derivatives}, and the fact that $\mathcal{H}^0 \left(  \partial^* ( ( \Sigma^{v_{\mu}} )_{(r, y, t)} ) \right) = 2$ for $\mathcal{H}^n$-a.e. $(r, y, t) \in B_{v_{\mu}, u_0}$.

Now we would like to compare the integral above with an integral 
involving the function $u_0$. To this aim, we first observe that, thanks to \eqref{isop ineq},
\begin{equation} \label{isop ineq circle}
2 \leq \mathcal{H}^0 \left(  \partial^* ( ( \Sigma^{u_0} )_{(r, y, t)} ) \right), \quad \text{ for $\mathcal{H}^n$-a.e. $(r, y, t) \in B_{v_{\mu}, u_0}$.}  
\end{equation}
Let us now introduce, for every $(r, y, t) \in B_{v_{\mu}, u_0}$, 
the probability measure $\lambda_{r, y}^t$ on $( \partial^{*} \Sigma^{u_0})_{(r,y, t)}$ given by
\[
\lambda_{r, y}^t:=   
\frac{\displaystyle \frac{1}{|\Dx u_0 (x, y) |} \, \mathcal{H}^0 \mres ( \partial^{*} \Sigma^{u_0})_{(r,y, t)}}  
{\displaystyle \int_{( \partial^{*} \Sigma^{u_0})_{(r,y, t)}} \frac{1}{|\Dx u_0 (x, y) |} \, d \mathcal{H}^0 (x)}.
\]
In the following, to ease the notation, 
in the integrals below we drop the integration set 
$( \partial^{*} \Sigma^{u_0})_{(r,y, t)}$.
Using Lemma~\ref{lemma equivalent increasing}
and \eqref{isop ineq circle},  together with Proposition~\ref{abs cont parts}, 
for $\mathcal{H}^n$-a.e. $(r, y, t) \in B_{v_{\mu}, u_0}$ one has 

\smallskip

\begin{equation} \label{first inequality polya}
\begin{split}
&a (r, y, t) \, ( - \partial_t \mu (r, y, t) )
  f \left( \frac{r \partial_r \xi_\mu (r,y, t)}{- \partial_t \mu (r, y, t)}  , 
 \frac{2}{- \partial_t \mu (r, y, t)} , \frac{\nabla_y \mu (r,y, t)}{- \partial_t \mu (r, y, t)} \right)  \\
 & \\
 &=a (r, y, t) \left( \int \frac{d \mathcal{H}^0 (x)}{|\Dx u_0 |}   \right)
   f \left( \frac{\left( \displaystyle \int   \frac{\hat{x} \cdot \nabla_x u_0}{|\Dx u_0 |} \, d \mathcal{H}^0 (x), 
 2,   
 \int \frac{\nabla_y u_0}{|\Dx u_0 |} \, d \mathcal{H}^0 (x)
 \right)}{\displaystyle \int \frac{\, d \mathcal{H}^0 (x)}{|\Dx u_0 |} }
  \right)  \\
  &  \\
  &\leq a (r, y, t) \left( \int \frac{d \mathcal{H}^0 (x)}{|\Dx u_0 |}   \right)
   f \left( \frac{\left( \displaystyle \int   \frac{\hat{x} \cdot \nabla_x u_0}{|\Dx u_0 |} \, d \mathcal{H}^0 (x), 
 \int \, d \mathcal{H}^0 (x) ,   
 \int \frac{\nabla_y u_0}{|\Dx u_0 |} \, d \mathcal{H}^0 (x)
 \right)}{\displaystyle \int \frac{\, d \mathcal{H}^0 (x)}{|\Dx u_0 |} }
  \right) \\
  &  \\
&= a (r, y, t) \left( \int \frac{d \mathcal{H}^0 (x)}{|\Dx u_0 |}  \right) 
  f \left( \int \hat{x} \cdot \nabla_x u_0 \, d \lambda_{r, y}^t (x), \int  |\Dx u_0 | \, d \lambda_{r, y}^t (x),
  \int  \nabla_y u_0 \, d \lambda_{r, y}^t (x) \right).
\end{split}
 \end{equation}
Thanks to properties (f1) of Definition~\ref{def_admissable integrands}, we can apply Jensen's inequality. Therefore, using also property (f2) of Definition~\ref{def_admissable integrands}, we obtain
\begin{align}
&a (r, y, t) \, ( - \partial_t \mu (r, y, t) )
   f \left( \frac{r \partial_r \xi (r,y, t)}{- \partial_t \mu (r, y, t)}  , 
 \frac{2}{- \partial_t \mu (r, y, t)} , \frac{\partial_y \mu (r,y, t)}{- \partial_t \mu (r, y, t)} \right) \nonumber \\
&\leq a (r, y, t) \left( \int \frac{d \mathcal{H}^0 (x)}{|\Dx u_0 |}  \right) 
 \int f ( \hat{x} \cdot \nabla_x u_0,  | \Dx u_0 |, 
 \nabla_y u_0 )  \, d \lambda_{r, y}^t (x) \label{jensen ineq}  \\
    &= a (r, y, t) \int_{( \partial^{*} \Sigma^{u_0})_{(r,y, t)}} \frac{f (  \hat{x} \cdot \nabla_x u_0,  \Dx u_0, 
 \nabla_y u_0 ) }{|\Dx u_0 |} \, d \mathcal{H}^0 (x), \nonumber
 \end{align}
for $\mathcal{H}^n$-a.e. $(r, y, t) \in B_{v_{\mu}, u_0}$.
From this and \eqref{integral I1}, 
using again Proposition~\ref{coarea formula}, it follows that
\begin{align*}
I_1 &\leq \int_{B_{v_{\mu}, u_0}} a (r, y,  t)
\int_{( \partial^{*} \Sigma^{u_0})_{(r,y, t)}} 
\hspace{-1cm} \frac{f ( \hat{x} \cdot \nabla_x u_0,  \Dx u_0, 
 \nabla_y u_0 ) }{|\Dx u_0 |} \, d \mathcal{H}^0 (x)
\, dr \, dy \, dt \\
&=\int_{\partial^* \Sigma^{u_0} \cap \left( \Phi_{n+1} (B_{v_{\mu}, u_0} \times \mathbb{S}^{1}) \right)}  
\hspace{-1.2cm} \frac{ a ( |x|, y, u_0 ) f ( \hat{x} \cdot \nabla_x u_0,  \Dx u_0, 
 \nabla_y u_0 ) }{\sqrt{1 + |\nabla u_0|^2}} \, d \mathcal{H}^n (x, y, t), 
\end{align*}
thus showing \eqref{estimate on I_1}.

\vspace{.2cm}

\noindent
\textbf{Step 2:} We conclude, showing that
\begin{equation} \label{estimate on I_2}
I_2  \leq \int_{\partial^* \Sigma^{u_0} \cap \left( \Phi_{n+1} \left(
\big( B \setminus B_{v_{\mu}, u_0}
\big)  \times \mathbb{S}^{1}
\right)  \right)}
 \hspace{-2.3cm} \frac{ a (|x|, y, u_0) \, f ( \hat{x} \cdot \nabla_x u_0 (x, y),  \Dx u_0 (x, y), 
 \nabla_y u_0 (x, y) ) }{\sqrt{1 + |\nabla u_0 (x, y)|^2}} \, d \mathcal{H}^n (x, y, t).
\end{equation}
Recalling the definition of $B_{v_{\mu}, u_0}$ and the properties of the sets $G_{v_\mu}$ and $G_{u_0}$, from \eqref{B nice} it follows that
$\mathcal{H}^n (B \setminus B_{v_{\mu}, u_0}) = 0$. Therefore, 
applying \eqref{only tangential non zero} to $v_\mu$ we obtain 
\begin{align*}
I_2 &= \int_{\partial^* \Sigma^{v_\mu} \cap \left( \Phi_{n+1} \left(
\big( B \setminus B_{v_{\mu}, u_0}
\big)  \times \mathbb{S}^{1}
\right) \right)}  
\hspace{-1.5cm}
 \frac{a (|x|, y, t) f (\hat{x} \cdot \nabla_x v_\mu,  \Dx v_\mu , 
 \nabla_y v_\mu )}{\sqrt{1 + |\nabla v_\mu |^2}} \, d \mathcal{H}^n (x, y, t) \\[4pt]
 &= \int_{\partial^* \Sigma^{v_\mu} \cap \left( \Phi_{n+1} \left(
\big( B \setminus B_{v_{\mu}, u_0}
\big)  \times \mathbb{S}^{1}
\right) \right)}  
\hspace{-1.2cm}
 \frac{a (|x|, y, t) f (\hat{x} \cdot \nabla_x v_\mu,  0 , 
 \nabla_y v_\mu )}{\sqrt{1 + |\nabla v_\mu |^2}} \, d \mathcal{H}^n (x, y, t)
 \\[4pt]
 &= \int_{\partial^* \Sigma^{v_\mu} \cap \left( \Phi_{n+1} \left(
\big( B \setminus B_{v_{\mu}, u_0}
\big)  \times \mathbb{S}^{1}
\right) \right)}  
\,  \sup_{h \in \mathbb{N}} \varphi_h (x, y, t)
 \,  d \mathcal{H}^n (x, y, t),
\end{align*}
where for every $h \in \mathbb{N}$ we set
\[
\varphi_h (x, y, t) := a (|x|, y, t ) \frac{w_h \cdot (\hat{x} \cdot \nabla_x v_{\mu} (x, y), 0,  \nabla_y v_{\mu} (x, y) ) 
- f^* (w_h) }{\sqrt{1 + |\nabla v_{\mu} (x, y)|^2}},
\]
and we used \eqref{leg transf twice}, where $\{ w_h \}_{h \in \mathbb{N}}$ is a countable dense subset of
$\{ w \in \R^n: f^* (w) < + \infty \}$. 
Note now that 
\[
|\varphi_1 (x, y, t)| \leq a (|x|, y, t ) \frac{|w_1|  | \nabla v_{\mu} (x, y)| 
+ | f^* (w_1) |}{\sqrt{1 + |\nabla v_{\mu} (x, y)|^2}} \leq  C ( |w_1| + | f^* (w_1) | ),
\]
where $C > 0$ is such that $|a| \leq C$. Then,  
\begin{align*}
&\int_{\partial^* \Sigma^{v_\mu} \cap \left( \Phi_{n+1} \left(
\big( B \setminus B_{v_{\mu}, u_0}
\big)  \times \mathbb{S}^{1}
\right) \right)} | \varphi_1 (x, y, t)| \,  d \mathcal{H}^n (x, y, t) \\[3pt]
&\hspace{.5cm}\leq C ( |w_1| + | f^* (w_1) | ) \mathcal{H}^n \left( \partial^* \Sigma^{v_\mu} \cap \left( \Phi_{n+1} ( B  \times \mathbb{S}^{1}) \right)  \right) \\[3pt]
&\hspace{.5cm}\leq \mathcal{H}^n \left( \partial^* \Sigma^{v_\mu} \cap \left( \Phi_{n+1} ( (A \times \R)  \times \mathbb{S}^{1}) \right)  \right) < \infty.
\end{align*}
Thanks to Remark~\ref{this is now the magic property}, we can apply Lemma~\ref{thanks for this lemma}. 
Therefore, we have 
\begin{align*}
I_2 &= \sup_{H} \left\{ \sum_{h\in H} 
 \int_{\partial^* \Sigma^{v_{\mu}} \cap \Phi_{n+1} (B_h \times \mathbb{S}^{1})} 
 \varphi_h (x, y, t) \, d \mathcal{H}^n (x, y, t) \right\}, 
\end{align*}
where the supremum ranges over all finite sets $H\subset \mathbb{N}$ 
and over all  partitions $\{ B_h \}_{h\in H}$ of $B \setminus B_{u, v_{\mu}}$ composed of pairwise disjoint Borel sets.
Let now $H\subset \mathbb{N}$ be a finite set, and let $h \in H$.
Thanks to Proposition~\ref{derivatives of mu}, applied first to $v_{\mu}$ and then to $u$, and recalling \eqref{normal1}, we have
 \begin{align*}
&  \int_{\partial^* \Sigma^{v_{\mu}} \cap \Phi_{n+1} (B_h \times \mathbb{S}^{1})} 
 \varphi_h (x, y, t) \, d \mathcal{H}^n (x, y, t) \\
 &=  \int_{\partial^* \Sigma^{v_{\mu}} \cap \Phi_{n+1} (B_h \times \mathbb{S}^{1})} a( |x|, y, t )
 \frac{w_h \cdot (\hat{x} \cdot \nabla_x v_{\mu} , 0,  \nabla_y v_{\mu}  ) -  f^* ( w_h)}{\sqrt{1 + |\nabla v_{\mu} |^2}} \, d \mathcal{H}^n (x, y, t) \\
 &= \int_{\partial^* \Sigma^{v_{\mu}} \cap \Phi_{n+1} (B_h \times \mathbb{S}^{1})} a( |x|, y, t ) 
 \left( w_h \cdot   \Big( 
 \hat{x} \hspace{-.05cm} \cdot \hspace{-.05cm} \nu^{\Sigma^{v_\mu}}_x (x, y, t), 0, \nu_y^{\Sigma^{v_\mu}} (x, y, t)
 \Big) \right. \\
 &\left. \hspace{1cm}+ f^* (w_h) \, \nu_t^{\Sigma^{v_\mu}} (x, y, t) \right) 
  \, d \mathcal{H}^n (x, y, t) \\
 &=  
 w_h \cdot   \left(
  \int_{B_h} a( r, y, t ) \, r  \, d D_r \xi_\mu (r, y, t)
 , 0 , \int_{B_h} a( r, y, t ) \,  d D_y \mu (r, y, t)
  \right)  \\
&\hspace{1cm}+ \int_{B_h} a( r, y, t ) f^* (w_h) \, d D_t \mu (r, y, t) \\
&=  \int_{\partial^* \Sigma^{u_0} \cap \Phi_{n+1} (B_h \times \mathbb{S}^{1})} a( |x|, y, t )
 \frac{w_h \cdot (\hat{x} \cdot \nabla_x u_0 , 0,  \nabla_y u_0  ) -  f^* ( w_h)}{\sqrt{1 + |\nabla u_0 |^2}} 
 \, d \mathcal{H}^n (x, y, t) \\
 &\leq \int_{\partial^* \Sigma^{u_0} \cap \left( \Phi_{n+1} (B_h \times \mathbb{S}^{1}) \right)}
 \hspace{-1cm} \frac{a(|x|, y, t)  f (\hat{x} \cdot \nabla_x u_0, 0, 
 \nabla_y u_0) }{\sqrt{1 + |\nabla u_0|^2}} \, d \mathcal{H}^n (x, y, t),
 \end{align*}
where the last inequality follows from \eqref{leg transf twice}. 
From this, it follows that
 \begin{align*}
&\sum_{h\in H}  \int_{\partial^* \Sigma^{v_{\mu}} \cap \Phi_{n+1} (B_h \times \mathbb{S}^{1})} 
 \varphi_h (x, y, t) \, d \mathcal{H}^n (x, y, t) \\
& \leq  \sum_{h\in H}
 \int_{\partial^* \Sigma^{u_0} \cap \left( \Phi_{n+1} (B_h \times \mathbb{S}^{1}) \right)}
 \hspace{-1cm} \frac{a(|x|, y, t)  f (\hat{x} \cdot \nabla_x u_0, 0, 
 \nabla_y u_0) }{\sqrt{1 + |\nabla u_0|^2}} \, d \mathcal{H}^n (x, y, t) \\
 &=  \int_{\partial^* \Sigma^{u_0} \cap \left( \Phi_{n+1} \left(
\big( B \setminus B_{v_{\mu}, u_0}
\big)  \times \mathbb{S}^{1}
\right) \right)} 
 \hspace{-1cm} \frac{ a(|x|, y, t) f (\hat{x} \cdot \nabla_x u_0 ,  0, 
 \nabla_y u_0 ) }{\sqrt{1 + |\nabla u_0 |^2}} \, d \mathcal{H}^n (x, y, t) \\
&=  \int_{\partial^* \Sigma^{u_0} \cap \left( \Phi_{n+1} \left(
\big( B \setminus B_{v_{\mu}, u_0}
\big)  \times \mathbb{S}^{1}
\right) \right)} 
 \hspace{-1cm} \frac{ a(|x|, y, t) f (\hat{x} \cdot \nabla_x u_0,  \Dx u_0, 
 \nabla_y u_0  ) }{\sqrt{1 + |\nabla u_0|^2}} \, d \mathcal{H}^n (x, y, t) ,
\end{align*}
where in the last equality we applied \eqref{only tangential non zero}
to $u_0$ with $R = B \setminus B_{v_{\mu}, u_0}$.
Taking the supremum with respect to $H$ we obtain \eqref{estimate on I_2} which, combined with \eqref{estimate on I_1}, 
allows us to conclude.
\end{proof}

\section{Sufficient conditions for rigidity} \label{sect rigidity}

In this section we give the proof of Theorem~\ref{thm suff conditions}.
We start by considering inequality \eqref{gen polya szego} with $B = \Pi_{n-1} (\Omega) \times \R$:
\begin{equation} \label{gen polya szego full set}
\begin{split}
&\int_{\partial^* \Sigma^{v_\mu} \cap \left( \Phi_{n+1} ((\Pi_{n-1} (\Omega) \times \R)\times \mathbb{S}^{1}) \right)}  
  \hspace{-0.3cm}  \frac{a (|x|, y, v_\mu) f (\hat{x} \cdot \nabla_x v_\mu ,  \Dx v_\mu , 
\nabla_y v_\mu )
}{\sqrt{1 + |\nabla v_\mu |^2}} \, d \mathcal{H}^n (x, y, t)  \\[4pt]
& \leq \int_{\partial^* \Sigma^{u_0} \cap \left( \Phi_{n+1} ((\Pi_{n-1} (\Omega) \times \R)\times \mathbb{S}^{1}) \right)}  
 \hspace{-0.3cm}
   \frac{a (|x|, y, u_0) f (\hat{x} \cdot \nabla_x u_0 ,  \Dx u_0 , 
 \nabla_y u_0 )  }{\sqrt{1 + |\nabla u_0 |^2}} \, d \mathcal{H}^n (x, y, t),
\end{split}
\end{equation}
under the assumption that
\begin{equation} \label{finite integral}
\int_{\partial^* \Sigma^{v_\mu} \cap \left( \Phi_{n+1} ((\Pi_{n-1} (\Omega) \times \R)\times \mathbb{S}^{1}) \right)}  
  \frac{a (|x|, y, v_\mu) f( \hat{x} \cdot \nabla_x v_\mu ,  \Dx v_\mu , 
 \nabla_y v_\mu )}{\sqrt{1 + |\nabla v_\mu |^2}} \, d \mathcal{H}^n (x, y, t) < \infty.
\end{equation}
First of all, we define the set of extremals in \eqref{gen polya szego full set}.
\begin{definition}
Let $n \in \mathbb{N}$ with $n \geq 2$, let $\Omega \subset \R^n$ be open, and let $\mu \in \mathcal{A} (\Pi_{n-1} (\Omega) \times \R)$.
Let $a \in L^{\infty} ((0, \infty) \times \R^{n-2} \times \R)$ with $a > 0$ $\mathcal{H}^n$-a.e.,
let $f \in \mathscr{F}'$, and suppose that \eqref{finite integral} is satisfied.
Then, we set
\[
\mathcal{E} (\mu, \Omega) := \{ u \in W^{1, 1}_{0, \tau} (\Omega): \text{$u$ is $\mu$-distributed and equality holds in } \eqref{gen polya szego full set}
 \}.
\]
\end{definition}

\begin{remark}
Note that, under the assumption \eqref{finite integral}, using \eqref{ineq A large setminus} we have that 
\eqref{gen polya szego full set} is equivalent to \eqref{equality} and to equality in \eqref{ineq omega intro}.
\end{remark}

We start with some necessary conditions for a function $u$ to belong 
to $\mathcal{E} (\mu,\Omega)$.
\begin{proposition} \label{first necessary conds for extremals}
Let $n \in \mathbb{N}$ with $n \geq 2$, let $\Omega \subset \R^n$ be open, and let $\mu \in \mathcal{A} (\Pi_{n-1} (\Omega) \times \R)$.
Let $a \in L^{\infty} ((0, \infty) \times \R^{n-2} \times \R)$ with $a > 0$ $\mathcal{H}^n$-a.e.,
let $f \in \mathscr{F}'$, and suppose that \eqref{finite integral} is satisfied.
Let $u \in \mathcal{E} (\mu, \Omega)$.
Then, for $\mathcal{H}^n$-a.e. $(r, y, t) \in \{ 0< \alpha_\mu < \pi \}$
\begin{equation} \label{connected sections}
(\Sigma^{u_0})_{(r, y, t)} =_{\mathcal{H}^1} \mathbf{B}_{\alpha_\mu (r, y, t)} (p  (r, y, t)) \quad \text{ for some } p  (r, y, t) \in \mathbb{S}^1,
\end{equation}
and the functions
\[
x \longmapsto \hat{x} \cdot \nabla u_0 (x, y),
\qquad
x \longmapsto | \Dx u_0 (x, y)|,  
\qquad
x \longmapsto  \nabla_y u_0 (x, y),
\]
are constant on $\partial^* ( (\Sigma^{u_0})_{(r, y, t)} )$.
\end{proposition}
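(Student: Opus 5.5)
The plan is to revisit the proof of Theorem~\ref{the thing} with $B = \Pi_{n-1}(\Omega)\times\R$ and show that, for an extremal, every inequality used there must be an equality at $\mathcal{H}^n$-a.e. point of $\{0<\alpha_\mu<\pi\}$. The two inequalities in Step~1 are \eqref{first inequality polya} (monotonicity in $\tau$ combined with the isoperimetric bound \eqref{isop ineq circle}) and the Jensen inequality \eqref{jensen ineq}. Strict convexity of $f$ (property (f1')) together with $a>0$ will then force both rigidity statements.

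\textbf{Step 1: localization.} I exhaust $\Pi_{n-1}(\Omega)$ by open sets $A_j\subset\subset\Pi_{n-1}(\Omega)$ and apply Theorem~\ref{the thing} to $B\cap (A_j\times\R)$ and to its complement. The total integrals are finite by \eqref{finite integral} and equality holds in \eqref{gen polya szego full set}, so equality must hold for every Borel subset $B'$; otherwise a strict inequality on $B'$, added to the non-strict inequality on the complement, would give strict inequality overall. By \eqref{B nice}, $\{0<\alpha_\mu<\pi\}$ agrees up to $\mathcal{H}^n$-null sets with $B_{v_\mu,u_0}$ inside $A_j\times\R$. Hence the integrands in the chain \eqref{integral I1}--\eqref{jensen ineq} must coincide $\mathcal{H}^n$-a.e. on $B_{v_\mu,u_0}$, since an integrated inequality between $L^1$ functions that is an equality forces pointwise a.e. equality. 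Finiteness of the right-hand integrand a.e. follows from \eqref{finite integral} and the equality.

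\textbf{Step 2: equality in \eqref{first inequality polya}.} Here we have $a>0$ a.e., and $\int |\Dx u_0|^{-1}\,d\mathcal{H}^0>0$ on $G_{u_0}$ by Proposition~\ref{volpert theorem}(iiia). Equality then forces
\[
f\Bigl(\eta,\tfrac{2}{m},\zeta\Bigr) = f\Bigl(\eta,\tfrac{N}{m},\zeta\Bigr),
\qquad N=\mathcal{H}^0\bigl(\partial^*((\Sigma^{u_0})_{(r,y,t)})\bigr),
\]
with $m=\int|\Dx u_0|^{-1}\,d\mathcal{H}^0$. Since $f\in\mathscr{F}'$, Lemma~\ref{lemma equivalent increasing} says $\tau\mapsto f$ is strictly increasing on $[0,\infty)$, so $N=2$. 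The slice $(\Sigma^{u_0})_{(r,y,t)}$ has measure $2r\alpha_\mu$, so by \eqref{isop eq} (rescaled to $\partial D(r)$) it is $\mathcal{H}^1$-equivalent to an arc $\mathbf{B}_{\alpha_\mu}(p)$. This gives \eqref{connected sections}.

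\textbf{Step 3: equality in Jensen.} The measure $\lambda^t_{r,y}$ is supported on the two points of the slice boundary, both with positive weight. Strict convexity of $f$ means equality in \eqref{jensen ineq} holds only if the vector $(\hat x\cdot\nabla_x u_0,|\Dx u_0|,\nabla_y u_0)$ takes the same value at both points. This is exactly the claimed constancy. I expect the main obstacle to be the bookkeeping in Step~1: justifying that equality of the global integrals propagates to pointwise equality on $B_{v_\mu,u_0}$. This requires that $I_2$ and its counterpart live on $\mathcal{H}^n$-null sets of $(r,y,t)$, which holds because $\mathcal{H}^n(B\setminus B_{v_\mu,u_0})=0$, so the coarea representation of Proposition~\ref{coarea formula} covers all of the relevant region.
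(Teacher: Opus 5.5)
Your proposal is correct and follows the paper's own proof. Equality on every Borel subset forces a.e.\ equality in \eqref{first inequality polya}; strict monotonicity in $\tau$ (Lemma~\ref{lemma equivalent increasing}) together with $a>0$ then yields equality in \eqref{isop ineq circle}, hence the arc via \eqref{isop eq}; and equality in the two-point Jensen inequality \eqref{jensen ineq}, with strict convexity, gives the constancy.

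The obstacle you anticipate in Step~1 is milder than you suggest. The bounds \eqref{estimate on I_1} and \eqref{estimate on I_2} are two inequalities whose sum is a finite equality, so each must itself be an equality. You therefore need no information about $I_2$, which in general need not vanish.
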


\begin{proof}
Since $u \in \mathcal{E} (\mu, \Omega)$, equality in \eqref{gen polya szego full set} holds. Then, thanks to Theorem~\ref{the thing}
equality holds in \eqref{gen polya szego} for every Borel set $B \subset \Pi_{n-1} (\Omega) \times \R$.
In particular, equality holds in \eqref{first inequality polya} for $\mathcal{H}^n$-a.e. $(r, y, t) \in \{ 0< \alpha_\mu < \pi \}$.
Since $a > 0$ $\mathcal{H}^n$-a.e. and since, thanks to Lemma~\ref{lemma equivalent increasing}, 
$\tau \longmapsto f (\eta, \tau, \zeta)$ is strictly increasing in $[0, \infty)$, 
we conclude that equality holds in \eqref{isop ineq circle} for $\mathcal{H}^n$-a.e. $(r, y, t) \in \{ 0 < \alpha_\mu < \pi \}$. 
Therefore, thanks to \eqref{isop eq} we obtain \eqref{connected sections}.

Observe now that for every Borel set $B \subset \Pi_{n-1} (\Omega) \times \R$ we also have equality in Jensen's inequality \eqref{jensen ineq}.
Since $a > 0$ $\mathcal{H}^n$-a.e. and $f$ is strictly convex, the remaining part of the statement follows.
\end{proof}
If $u \in W^{1, 1}_{0, \tau} (\Omega)$ satisfies \eqref{connected sections}, 
we can define the direction function 
$d_{u_0} : \Pi_{n-1} (\Omega) \times \R \to \mathbb{S}^1$ given by
\begin{equation} \label{def direction}
d_{u_0} (r, y, t) := 
\begin{cases}
\displaystyle \frac{1}{2 r \sin \alpha_{\mu}(r, y, t)} \int_{(\Sigma^{u_0})_{(r, y, t)}} \hspace{-.8cm}
\hat{x} \, d \mathcal{H}^1 (x) 
& \text{ if } (r, y, t)  \in \{ 0 < \alpha_\mu < \pi \}, \\
e_1 & \text{ otherwise in } \Pi_{n-1} (\Omega) \times \R.
\end{cases}
\end{equation}
\begin{remark}
If $u \in W^{1, 1}_{0, \tau} (\Omega)$ satisfies \eqref{connected sections} and $d_{u_0}$ is defined by \eqref{def direction}, then a direct calculation shows that
\[
\Sigma^{u_0} \cap \{ 0 < \alpha_\mu < \pi \} =_{\mathcal{H}^{n+1}} \{ (x, y, t) \in \mathbb{R}^2_0 \times \mathbb{R}^{n-2} \times \mathbb{R} : \, 
d_{\mathbb{S}^1} (\hat{x} , d_{u_0} ( | x |, y, t) )  < \alpha_\mu (|x|, y, t)  \}.
\]
\end{remark}
We now give a regularity result for $d_{u}$.
\begin{theorem} \label{regularity d_u}
Let $n \in \mathbb{N}$ with $n \geq 2$, let $\Omega \subset \R^n$ be open, and let $\mu \in \mathcal{A} (\Pi_{n-1} (\Omega) \times \R)$.
Let $u \in W^{1, 1}_{0, \tau} (\Omega)$ be a $\mu$-distributed function satisfying \eqref{connected sections},
let $A \subset \subset \Pi_{n-1} (\Omega)$ be open, let $d > 0$, and let $\delta \in (0, \pi/2)$.
 Then, $d_{u_0}^{\delta} \in BV (A \times (-d, + \infty); \mathbb{R}^2)$, where
\[
d_{u_0}^{\delta} (r, y, t):= \frac{1}{2 r \sin \alpha^{\delta}_{\mu}(r, y, t)} \int_{(\Sigma^{u_0})_{(r, y, t)}} \hat{x} \, d \mathcal{H}^1 (x), \qquad
 \alpha^{\delta}_{\mu} := (\alpha_{\mu} \vee \delta) \wedge (\pi - \delta).
\]
Moreover, the following coarea formula holds. Using the notation $d_{u_0}^{\delta} = ((d_{u_0}^{\delta})_1, (d_{u_0}^{\delta})_2)$, for every $j \in \{ 1, 2 \}$ we have
\begin{equation} \label{coarea d_u}
\begin{split}
&\int_{\mathbb{R}} \mathcal{H}^{n-1} (B \cap \partial^{{\rm e}}
 \{ (d_{u_0}^{\delta})_j > s \})    \, ds  \\
 &= \int_{B} \, |\nabla (d_{u_0}^{\delta})_j | dr \, dy \, dt 
+ \int_{B  \cap S_{(d_{u_0}^{\delta})_j}} \, [ (d_{u_0}^{\delta})_j] \, d \mathcal{H}^{n-1} + | D^c (d_{u_0}^{\delta})_j| (B),
\end{split}
\end{equation}
for every Borel set $B \subset A \times (-d, + \infty)$. 
\end{theorem}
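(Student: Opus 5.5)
The plan is to show that the vector field
\[
M(r,y,t):=\frac{1}{r}\int_{(\Sigma^{u_0})_{(r,y,t)}}\hat{x}\,d\mathcal{H}^1(x)=\int_{\mathbb{S}^1}\chi_{\Sigma^{u_0}}(r\omega,y,t)\,\omega\,d\mathcal{H}^1(\omega)
\]
belongs to $BV(A\times(-d,+\infty);\R^2)$. After that, $d^\delta_{u_0}$ is the product of $M$ with the function $1/(2\sin\alpha^\delta_\mu)$, and we check that this factor is bounded and $BV$. The coarea formula \eqref{coarea d_u} is then the standard Fleming--Rishel coarea formula for scalar $BV$ functions, applied to each component $(d^\delta_{u_0})_j$ and localized to Borel sets $B$. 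In that formula the right-hand side is the decomposition $|D(d^\delta_{u_0})_j|(B)=\int_B|\nabla\cdot|+\int_{B\cap S}[\cdot]\,d\mathcal{H}^{n-1}+|D^c\cdot|(B)$, and we use Federer's theorem to replace $\partial^*$ by $\partial^{\rm e}$ of the superlevel sets. So the content of the proof is the $BV$ regularity.

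\emph{Step 1: $M$ is $BV$.} We repeat the argument of Proposition~\ref{derivatives of mu}, testing against vector fields of the form $\varphi(|x|,y,t)\,\hat{x}$ and $\varphi(|x|,y,t)\,x_\parallel$ combined with a component of $\hat{x}$. For $\varphi\in C^1_c(A\times(-d,+\infty))$ and $j\in\{1,2\}$ we have
\[
\int M_j\,\partial_t\varphi\,dr\,dy\,dt=\int_{\Phi_{n+1}}\chi_{\Sigma^{u_0}}\,\frac{\hat{x}_j}{|x|}\,\partial_t\varphi(|x|,y,t)\,dx\,dy\,dt .
\]
The function $\hat{x}_j/|x|$ does not depend on $t$, so this is $-\int\varphi\,\hat{x}_j|x|^{-1}\nu_t^{\Sigma^{u_0}}\,d\mathcal{H}^n$ on $\partial^*\Sigma^{u_0}$, which is bounded by $c^{-1}P(\Sigma^{u_0};\Phi_{n+1}((A\times\R)\times\mathbb{S}^1))$. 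The same works for $D_y$. For $D_r$ we write $\partial_r[\varphi(|x|,y,t)]\hat{x}_j/|x|$ as a divergence of $\varphi\,\hat{x}_j|x|^{-1}\hat{x}$ minus lower order terms. These terms are bounded because $r\ge c>0$ on $A$ and $\hat{x}_j$ is smooth on $\R^2_0$. This gives $|DM|(A\times(-d,\infty))\le C(\|\mu\|_{L^1}+P(\Sigma^{u_0};\cdot))<\infty$. Integrability of $M$ on $A\times(-d,+\infty)$ follows from $|M|\le\mu/r$ and Proposition~\ref{prop_circular rear and circular sym are connected}.

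\emph{Step 2: the factor.} The map $s\mapsto 1/(2\sin((s\vee\delta)\wedge(\pi-\delta)))$ is Lipschitz and bounded by $1/(2\sin\delta)$. Since $\alpha_\mu\in BV(A\times(-d,+\infty))$ by Proposition~\ref{derivatives of mu}, the chain rule in $BV$ gives $g:=1/(2\sin\alpha^\delta_\mu)\in BV\cap L^\infty$. The product of two bounded $BV$ functions is $BV$ \cite[Example~3.97]{AFP}. Here $M$ is bounded by $2\pi$ but not integrable for $t\to+\infty$ only if $\mu$ is not, and $\mu$ is integrable on that region. So $d^\delta_{u_0}=gM\in BV(A\times(-d,+\infty);\R^2)$.

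\emph{Main obstacle.} The delicate point is Step~1. We must justify that the divergence computations with the non-radial weight $\hat{x}_j$ produce only bounded boundary terms on $\partial^*\Sigma^{u_0}$ and integrable volume terms. This requires care near the region $t\to+\infty$, where we use that $\chi_{\Sigma^{u_0}}$ has integrable support there, as in Proposition~\ref{prop_circular rear and circular sym are connected}. Assumption \eqref{connected sections} is not needed for the $BV$ property itself. It only ensures that $d^\delta_{u_0}$ coincides with the unit direction $d_{u_0}$ on $\{\delta\le\alpha_\mu\le\pi-\delta\}$, which is how the theorem is used afterwards.
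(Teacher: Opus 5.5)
Your proposal is correct and follows the paper's proof essentially step by step: BV regularity of $m_{u_0}$ (your $M$) by testing $\chi_{\Sigma^{u_0}}$ against fields built from $\varphi(|x|,y,t)$ and bounding by the perimeter of $\Sigma^{u_0}$ and $\|\mu\|_{L^1}$, then the Lipschitz chain rule for $1/\sin$ of the truncated angle, the product rule, and the BV coarea formula. In the $D_r$ computation your ``lower order terms'' in fact vanish identically, since $\mathrm{div}_x\bigl(\varphi(|x|,y,t)\,x_j|x|^{-2}\hat{x}\bigr)=x_j|x|^{-2}\,\partial_r\varphi(|x|,y,t)$, which is exactly the identity the paper uses.
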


\begin{proof}
Let $c > 0$ be such that $r \geq c$ for every $(r, y, t) \in A \times (-d, + \infty)$.
To ease the notation, we define the function $m_{u_0}: \Pi_{n-1} (\Omega) \times \mathbb{R} \to  \mathbb{R}^2$ as
\begin{equation} \label{def m}
m_{u_0} (r, y, t) := \frac{1}{r} \int_{(\Sigma^{u_0})_{(r, y, t)}} \hat{x} \, d \mathcal{H}^1 (x), 
\quad \text{ for every } (r, y, t) \in \Pi_{n-1} (\Omega) \times \mathbb{R}.
\end{equation}

\vspace{.2cm}

\noindent
\textbf{Step 1:} We show that $m_{u} \in BV (A \times (-d, + \infty); \mathbb{R}^2)$.
We have 
\begin{align*}
\int_{A \times (-d, + \infty)} |m_{u_0} (r, y, t)| \, dr \, dy \, dt 
&\leq \frac{1}{c}
\int_{A \times (-d, + \infty)} \mathcal{H}^1((\Sigma^{u_0})_{(r, y, t)}) \, dr \, dy \, dt \\
&= \frac{1}{c} \| \mu \|_{L^1 (A \times (-d, + \infty ))}  < \infty,
\end{align*}
where the last inequality follows from Proposition~\ref{prop_circular rear and circular sym are connected}.
This shows that $m_{u_0} \in L^1 (A \times (-d, + \infty); \mathbb{R}^2)$.

Let now $j \in \{ 1, 2 \}$ and, denoting by $(m_{u_0})_j$ the $j$-th component of $m_{u_0}$, let us
show that $D (m_{u_0})_j$ is a bounded Radon measure on $A \times (-d, + \infty)$.
Let $\psi \in C^{1}_c (A \times (-d, + \infty))$.
Then, 
\begin{align*}
&\int_{A \times (-d, + \infty)} \frac{\partial \psi}{\partial r} (r, y, t) (m_{u_0})_j (r, y, t) \, dr \, dy \, dt \\
&= \int_{A \times (-d, + \infty)} \frac{\partial \psi}{\partial r} (r, y, t) 
\left(  \frac{1}{r} \int_{(\Sigma^{u_0})_{(r, y, t)}} \hat{x}_j \, d \mathcal{H}^1 (x) \right)  \, dr \, dy \, dt \\
&= \int_{A \times (-d, + \infty)} \frac{\partial \psi}{\partial r} (r, y, t) 
\left(  \frac{1}{r} \int_{\partial D (r)} \frac{x_j}{r} \chi_{\Sigma^{u_0}} (x, y, t) \, d \mathcal{H}^1 (x) \right)  \, dr \, dy \, dt \\
&= \int_{  \Phi_{n+1} ((A \times (-d, + \infty)) \times \mathbb{S}^{1})  } \frac{x_j}{|x|^2} \frac{\partial \psi}{\partial r} 
(|x|, y, t) 
  \chi_{\Sigma^{u_0}} (x, y, t)   \, dx \, dy \, dt.
\end{align*}
A direct calculation shows that
\[
\textnormal{div}_x \left( \frac{x_j}{|x|^2} \psi (|x|, y, t) \hat{x} \right)
= \frac{x_j}{|x|^2} \frac{\partial \psi}{\partial r} 
(|x|, y, t),
\]
where $\textnormal{div}_x$ denotes the divergence with respect to the variables $(x_1, x_2)$.
Therefore, 
\begin{equation} \label{D_rm_uj}
\begin{split}
&\int_{A \times (-d, + \infty)} \frac{\partial \psi}{\partial r} (r, y, t) (m_{u_0})_j (r, y, t) \, dr \, dy \, dt \\
&= \int_{  \Phi_{n+1} ((A \times (-d, + \infty)) \times \mathbb{S}^{1}) )  } \textnormal{div}_x \left( \frac{x_j}{|x|^2} \psi (|x|, y, t) \hat{x} \right)  \chi_{\Sigma^{u_0}} (x, y, t)   \, dx \, dy \, dt \\
&= - \int_{ \Phi_{n+1} ((A \times (-d, + \infty)) \times \mathbb{S}^{1})  } 
\frac{x_j}{|x|^2} \psi (|x|, y, t) \, \hat{x} \cdot d D_x \chi_{\Sigma^{u_0}} (x, y, t) \\
&= - \int_{ \partial^* \Sigma^{u_0} \cap \Phi_{n+1} ((A \times (-d, + \infty)) \times \mathbb{S}^{1})  } 
\psi (|x|, y, t) \frac{x_j}{|x|^2} \frac{\hat{x} \cdot \nabla_x u_0 (x, y)}{\sqrt{1 + |\nabla u_0 (x, y)|^2}} d \mathcal{H}^n (x, y, t). 
\end{split}
\end{equation}
Taking the supremum over all $\psi \in C^{1}_c (A \times (-d, + \infty))$ with $| \psi| \leq 1$ we obtain that
$D_r (m_{u_0})_j$ is a bounded Radon measure over $A \times (-d, + \infty)$ with 
\begin{align*}
|D_r (m_{u_0})_j | (A \times (-d, + \infty)) 
\leq \frac{1}{c} \mathcal{H}^n ( \partial^* \Sigma^{u_0} \cap \Phi_{n+1} ((A \times (-d, + \infty)) \times \mathbb{S}^{1})  )  < \infty.
\end{align*}
From \eqref{D_rm_uj}, by approximation we obtain that 
\begin{equation} \label{deriv r muj}
D_r (m_{u_0})_j (B) 
= \int_{ \partial^* \Sigma^{u_0} \cap \Phi_{n+1} ( B \times \mathbb{S}^1 )  } 
 \frac{x_j}{|x|^2} \frac{\hat{x} \cdot \nabla_x u_0 (x, y)}{\sqrt{1 + |\nabla u_0 (x, y)|^2}} d \mathcal{H}^n (x, y, t),
\end{equation}
for every Borel set $B \subset A \times (-d, + \infty)$.
In a similar way, one can show that $D_y (m_{u_0})_j$ 
and $D_t (m_{u_0})_j$ are bounded Radon measures
on $A \times (-d, + \infty)$, with
\begin{equation} \label{deriv muj}
\begin{split}
D_y (m_{u_0})_j (B) 
&= \int_{ \partial^* \Sigma^{u_0} \cap \Phi_{n+1} ( B \times \mathbb{S}^1 )  } 
 \frac{x_j}{|x|^2} \frac{\nabla_y u_0 (x, y)}{\sqrt{1 + |\nabla u_0 (x, y)|^2}} d \mathcal{H}^n (x, y, t), \\
D_t (m_{u_0})_j (B) 
&= - \int_{ \partial^* \Sigma^{u_0} \cap \Phi_{n+1} ( B \times \mathbb{S}^1 )  } 
 \frac{x_j}{|x|^2} \frac{1}{\sqrt{1 + |\nabla u_0 (x, y)|^2}} d \mathcal{H}^n (x, y, t),
\end{split}
\end{equation}
for every Borel set $B \subset A \times (-d, + \infty)$.

\vspace{.2cm}

\noindent
\textbf{Step 2:} We conclude. 
First of all, observe that the function $f_\delta : [0, \pi] \to \mathbb{R}$ defined as
\begin{equation} \label{lip function}
f_\delta (s) := \frac{1}{\sin \Big( (s \vee \delta) \wedge (\pi - \delta) \Big)}, 
\end{equation}
is Lipschitz continuous.
Therefore, by the chain rule in $BV$ \cite[Theorem~3.69]{AFP} we have that 
\[
f_\delta (\alpha_\mu) := \frac{1}{ 2\sin \Big( (\alpha_{\mu} \vee \delta) \wedge (\pi - \delta) \Big)} \in BV (A \times (-d, + \infty)).
\]
We can now write $d_{u_0}^{\delta}$ as
\[
d_{u_0}^{\delta} (r, y, t) = f_\delta (\alpha_\mu (r, y, t)) m_{u_0} (r, y, t).
\]
Thanks to Step 1, using again the chain rule in $BV$ (see, in particular, \cite[Example~3.97]{AFP}) we 
have that $d_{u_0}^{\delta} \in BV (A \times (-d, + \infty); \mathbb{R}^2)$.
Finally, let $j \in \{1, 2\}$. Thanks to Step~2 and \cite[Theorem~3.40]{AFP}, formula \eqref{coarea d_u} follows.
\end{proof}
We can now give further necessary conditions for a function $u$ to belong to the set $\mathcal{E} (\mu, \Omega)$ of extremals.
\begin{proposition} \label{nabla d zero}
Let $n \in \mathbb{N}$ with $n \geq 2$, let $\Omega \subset \R^n$ be open, and let $\mu \in \mathcal{A} (\Pi_{n-1} (\Omega) \times \R)$ be such that \eqref{set non empty} holds.
Let $a \in L^{\infty} ((0, \infty) \times \R^{n-2} \times \R)$ with $a > 0$ $\mathcal{H}^n$-a.e.,
and let $f \in \mathscr{F}'$.
Suppose that \eqref{finite integral} is satisfied, and let $u \in \mathcal{E} (\mu, \Omega)$.
Then, the function $d_{u_0}$ defined in \eqref{def direction} is $\mathcal{H}^n$-a.e. 
approximately differentiable in $\{ 0 < \alpha_\mu < \pi \}$ and 
\begin{equation} \label{nabla 0}
\nabla d_{u_0} (r, y, t) = 0  \quad \text{ for $\mathcal{H}^n$-a.e. $(r, y, t)$ in $\{ 0 < \alpha_\mu < \pi \}$.}
\end{equation}
Moreover, denoting the $j$-th component of $d_{u_0}$ by $(d_{u_0})_j$, for every $j =1, 2$ we have
\[
S_{(d_{u_0})_j} \cap \{ 0 < \alpha^{\wedge}_\mu \leq \alpha^{\vee}_\mu < \pi \} 
\subset_{\mathcal{H}^{n-1}} S_{\alpha_\mu} \cap \{ 0 < \alpha^{\wedge}_\mu \leq \alpha^{\vee}_\mu < \pi \}
\quad \text{ and } \quad | D^c (d_{u_0})_j|^+ \ll | D^c \alpha_\mu|,
\]
where $| D^c (d_{u_0})_j |^+$ is the Borel measure given by
\[
| D^c (d_{u_0})_j |^+ (B) := \lim_{\delta \to 0^+} | D^c (d_{u_0}^{\delta})_j| (B), 
\quad \text{ for every Borel set } B \subset  \Pi_{n-1} (\Omega) \times \mathbb{R}.
\]
Finally, if $W$
is a Borel subset of $\{ 0 < \alpha^{\wedge}_\mu \leq \alpha^{\vee}_\mu < \pi \}$
and $K$ is a concentration set for $D^c \alpha_\mu$, then for every $j =1, 2$
\begin{align*}
\int_{-1}^{1} \mathcal{H}^{n-1} (W \cap \partial^{{\rm e}}
 \{ (d_{u_0})_j > s \})    \, ds 
 =  \int_{W  \cap S_{(d_{u_0})_j} \cap S_{\alpha_\mu}} \, [ (d_{u_0})_j] \, d \mathcal{H}^{n-1} 
 + | D^c (d_{u_0})_j|^+ (W \cap K).
\end{align*}

\end{proposition}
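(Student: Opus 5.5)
The plan is to work with the truncated functions $d^\delta_{u_0}$ of Theorem~\ref{regularity d_u}, which are $BV$ on $A\times(-d,+\infty)$, and compute their derivatives explicitly. On $\{\delta<\alpha_\mu<\pi-\delta\}$ we have $d^\delta_{u_0}=d_{u_0}$. Since $u\in\mathcal{E}(\mu,\Omega)$, Proposition~\ref{first necessary conds for extremals} gives that for $\mathcal{H}^n$-a.e. $(r,y,t)\in\{0<\alpha_\mu<\pi\}$ the slice is an arc $\mathbf{B}_{\alpha_\mu}(p(r,y,t))$. It also gives that $\hat x\cdot\nabla_x u_0$, $|\Dx u_0|$ and $\nabla_y u_0$ are constant on its two endpoints $p e^{\pm i\alpha_\mu}$. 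Computing directly,
\[
m_{u_0}=\frac1r\int_{\Sigma^{u_0}_{(r,y,t)}}\hat x\,d\mathcal{H}^1=2\sin\alpha_\mu\,p,
\]
so $d_{u_0}=p$ a.e. there. We then compute the absolutely continuous parts of $Dm_{u_0}$ from \eqref{deriv r muj}–\eqref{deriv muj}, combining Proposition~\ref{coarea formula} with Proposition~\ref{volpert theorem}, exactly as in Proposition~\ref{abs cont parts}. For instance,
\[
\partial_t m_{u_0}=-\frac1r\sum_{x\in\partial^*(\Sigma^{u_0})_{(r,y,t)}}\frac{\hat x}{|\Dx u_0|}=-\frac{1}{r|\Dx u_0|}\bigl(pe^{i\alpha}+pe^{-i\alpha}\bigr)=-\frac{2\cos\alpha_\mu}{r|\Dx u_0|}\,p .
\]
This uses that $|\Dx u_0|$ is constant on the two endpoints, so the result is parallel to $p$. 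The same holds for $\partial_r m$ and $\nabla_y m$, and the identity $\partial_t\mu=-2/|\Dx u_0|$ gives $\nabla m=2\cos\alpha_\mu\,p\otimes\nabla\alpha_\mu$ there. By the $BV$ product/chain rule, $\nabla d^\delta=\nabla(f_\delta(\alpha_\mu)m)=f_\delta'(\alpha_\mu)\nabla\alpha_\mu\, m+f_\delta(\alpha_\mu)\nabla m$. On $\{\delta<\alpha<\pi-\delta\}$ this equals
\[
-\frac{\cos\alpha}{2\sin^2\alpha}\,2\sin\alpha\,p\otimes\nabla\alpha+\frac{1}{2\sin\alpha}\,2\cos\alpha\,p\otimes\nabla\alpha=0 .
\]
Letting $\delta\to0$ gives approximate differentiability and \eqref{nabla 0} a.e. on $\{0<\alpha_\mu<\pi\}$.

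For the jump and Cantor parts, we again use the $BV$ chain rule for $d^\delta=f_\delta(\alpha_\mu)m_{u_0}$ together with the structure of $D^sm_{u_0}$. By \eqref{deriv r muj}–\eqref{deriv muj}, $D^s m_{u_0}$ is the image of the part of $\partial^*\Sigma^{u_0}$ lying over $\mathcal{H}^n$-null sets. Since $u_0\in W^{1,1}$, Proposition~\ref{prop flat and perimeter} and Proposition~\ref{derivatives of mu} show that this part is controlled by $|D^s_t\mu|$, hence by $|D^s\alpha_\mu|$, as in the proof of Proposition~\ref{flat parts for u and vmu}. We would therefore show $|D^sm_{u_0}|\ll|D^s\alpha_\mu|$ on $A\times(-d,\infty)$. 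The main worry is bounding the $x_j/|x|^2$ weights; they are bounded by $1/c$, so this seems fine. Combined with the chain rule, this gives $|D^s d^\delta_j|\ll|D^s\alpha_\mu|$. Splitting into jump and Cantor parts yields $S_{(d^\delta)_j}\subset_{\mathcal{H}^{n-1}}S_{\alpha_\mu}$ and $|D^c(d^\delta)_j|\ll|D^c\alpha_\mu|$. Restricting to $\{0<\alpha^\wedge\le\alpha^\vee<\pi\}$, covering it by the sets $\{\delta<\alpha^\wedge\le\alpha^\vee<\pi-\delta\}$ where $d^\delta$ agrees with $d$ near a.e. point, and letting $\delta\to0$ gives the two inclusions. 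The monotone limit defining $|D^c(d_{u_0})_j|^+$ exists because the sets increase.

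Finally, we apply \eqref{coarea d_u} to $B=W\cap\{\delta<\alpha^\wedge\le\alpha^\vee<\pi-\delta\}$. The absolutely continuous term vanishes by \eqref{nabla 0}. The jump term lives on $S_{d_j}\cap S_{\alpha_\mu}$. The Cantor term is concentrated on $K$ because $|D^c d^\delta_j|\ll|D^c\alpha_\mu|$. Also $|d_j|\le1$, so the $s$-integral reduces to $[-1,1]$. Monotone convergence in $\delta$ then yields the stated formula. We expect the main obstacle to be the singular-part comparison $|D^sm_{u_0}|\ll|D^s\alpha_\mu|$, since it has to handle the fine structure of $\partial^*\Sigma^{u_0}$ over null sets; the key inputs there are Proposition~\ref{prop flat and perimeter} and \eqref{only tangential non zero}.
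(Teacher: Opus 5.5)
Your proposal is correct and follows essentially the paper's argument. The two-point slice structure from Proposition~\ref{first necessary conds for extremals} gives $\nabla m_{u_0}=2\cos\alpha_\mu\,p\otimes\nabla\alpha_\mu$, which is the paper's $\frac{\cos\alpha_\mu}{r\sin\alpha_\mu}\,m_{u_0}\otimes c_{u_0}$. The chain rule for $f_\delta(\alpha_\mu)m_{u_0}$ then cancels $\nabla d^\delta_{u_0}$. The singular parts are handled by controlling $|Dm_{u_0}|$ through $D_t\mu$ via \eqref{Dt mu}, before applying \eqref{coarea d_u} and letting $\delta\to0$.

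In that control step the only input really needed is that $\nu_t^{\Sigma^{u_0}}=-(1+|\nabla u_0|^2)^{-1/2}\neq 0$ $\mathcal{H}^n$-a.e. on $\partial^*\Sigma^{u_0}$, which holds because $u_0\in W^{1,1}$. Your appeal to Proposition~\ref{prop flat and perimeter} and \eqref{only tangential non zero} is therefore unnecessary. Also, your global bound $|D^s m_{u_0}|\ll|D^s\alpha_\mu|$ is slightly stronger than the paper's Step~4, which is restricted to $\{0<\alpha^\wedge_\mu\le\alpha^\vee_\mu<\pi\}$.
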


\begin{proof}
Thanks to Proposition~\ref{volpert theorem} and Proposition~\ref{first necessary conds for extremals}, 
for $\mathcal{H}^n$-a.e. $(r, y, t) \in \{ 0< \alpha_\mu < \pi\}$, 
the set $(\partial^* \Sigma^{u_0})_{(r, y, t)}$ is composed of just two points, and 
the vector function
\[
x \longmapsto  \frac{\left( \hat{x} \cdot \nabla_x u_0 (x, y), \nabla_y u_0 (x, y), - 1 \right)}{| \Dx u_0 (x, y)|}
\]
is constant on $(\partial^* \Sigma^{u_0})_{(r, y, t)}$.
Therefore, for $\mathcal{H}^n$-a.e. $(r, y, t) \in \{ 0< \alpha_\mu < \pi\}$
we can define the vector $c_{u_0} (r, y, t) \in \R^n$ given by
\begin{equation} \label{cool}
c_{u_0} (r, y, t)  : = \frac{\left( \hat{x} \cdot \nabla_x u_0 (x, y), \nabla_y u_0 (x, y), - 1 \right)}{| \Dx u_0 (x, y)|}
= r \nabla \alpha_\mu (r, y, t),
\end{equation}
where $x \in (\partial^* \Sigma^{u_0})_{(r, y, t)}$, and  the last equality follows from Proposition~\ref{abs cont parts}.
We now divide the proof into several steps.

\vspace{.2cm}

\noindent
\textbf{Step 1}: We show that the density of the absolutely continuous part of $D m_{u_0}$ is given by
\[
\nabla m_{u_0} (r, y, t) = \frac{1}{r^2} \int_{( \partial^* \Sigma^{u_0})_{(r, y, t)}} 
   \frac{x \otimes \left( \hat{x} \cdot \nabla_x u_0 (x, y), \nabla_y u_0 (x, y), - 1 \right)}
 { |\Dx u_0 (x, y) |} \, d \mathcal{H}^0 (x),
\]
for $\mathcal{H}^n$-a.e. $(r, y, t)$ in $\{ 0 < \alpha_{\mu} < \pi \}$, where $m_{u_0}$ is defined by \eqref{def m}.

Let $A \subset \subset \Pi_{n-1} (\Omega)$ be open, let $d > 0$, let $G_{u_0}$ be given by Proposition~\ref{volpert theorem}.
 Thanks to \eqref{deriv r muj} and \eqref{deriv muj}, we have that
\begin{equation} \label{derivative m}
D m_{u_0} (B) = \int_{ \partial^* \Sigma^{u_0} \cap \Phi_{n+1} ( B \times \mathbb{S}^1 )  } 
 \frac{x \otimes \left( \hat{x} \cdot \nabla_x u_0 (x, y), \nabla_y u_0 (x, y), - 1 \right)}{|x|^2 \sqrt{1 + |\nabla u_0 (x, y)|^2}} d \mathcal{H}^n (x, y, t),
\end{equation}
for every Borel set $B \subset A \times (-d, + \infty)$.
Since $\mathcal{H}^n ( (\{ 0 < \alpha_\mu   < \pi \} \cap (A \times (-d, + \infty))) \setminus G_{u_0}) = 0$, we have
\begin{align*}
D^a m_{u_0} (B) &= D^a m_{u_0} (B \cap G_{u_0}) \\
&= \int_{ \partial^* \Sigma^{u_0} \cap \Phi_{n+1} ( (B \cap G_{u_0}) \times \mathbb{S}^1 )} 
 \frac{x \otimes \left( \hat{x} \cdot \nabla_x u_0 (x, y), \nabla_y u_0 (x, y), - 1 \right)}
 {|x|^2 \sqrt{1 + |\nabla u_0 (x, y)|^2}} d \mathcal{H}^n (x, y, t) \\
 &=\int_{B \cap G_{u_0}} \frac{1}{r^2} \left( \int_{( \partial^* \Sigma^{u_0})_{(r, y, t)}} 
   \frac{x \otimes \left( \hat{x} \cdot \nabla_x u_0 (x, y), \nabla_y u_0 (x, y), - 1 \right)}
 { |\Dx u_0 (x, y) |} \, d \mathcal{H}^0 (x) \right) \, dr \, dy \, dt \\
 &=\int_{B} \frac{1}{r^2} \left( \int_{( \partial^* \Sigma^{u_0})_{(r, y, t)}} 
   \frac{x \otimes \left( \hat{x} \cdot \nabla_x u_0 (x, y), \nabla_y u_0 (x, y), - 1 \right)}
 { |\Dx u_0 (x, y) |} \, d \mathcal{H}^0 (x) \right) \, dr \, dy \, dt.
\end{align*}
Since $A$ and $d$ are arbitrary, the conclusion follows.

\vspace{.2cm}

\noindent
\textbf{Step 2}: We show that for $\mathcal{H}^n$-a.e. $(r, y, t)$ in $\{ 0 < \alpha_{\mu} < \pi \}$
\begin{equation} \label{abs cont der mu}
\nabla m_{u_0} (r, y, t) = \frac{\cos \alpha_\mu (r, y, t)}{r \sin \alpha_\mu (r, y, t)} \Big( m_{u_0} (r, y, t)  \otimes c_{u_0} (r, y, t) \Big).
\end{equation}
where $m_{u_0}$ is defined by \eqref{def m} and $c_{u_0}$ is given by \eqref{cool}.
Indeed, since $( \partial^* \Sigma^{u_0})_{(r, y, t)}$ is composed 
by two points that are symmetric with respect to $d_{u_0} (r, y, t)$, using Step 1 we obtain that   
\begin{align*}
&\nabla m_{u_0} (r, y, t) 
= \frac{1}{r^2} \int_{( \partial^* \Sigma^{u_0})_{(r, y, t)}} 
   \frac{x \otimes \left( \hat{x} \cdot \nabla_x u_0 (x, y), \nabla_y u_0 (x, y), - 1 \right)}
 { |\Dx u_0 (x, y) |} \, d \mathcal{H}^0 (x) \\
 &= \frac{1}{r^2} \int_{( \partial^* \Sigma^{u_0})_{(r, y, t)}}
   ( x \otimes c_{u_0} (r, y, t) ) \, d \mathcal{H}^0 (x)  
= \frac{1}{r^2} \left(  \int_{( \partial^* \Sigma^{u_0})_{(r, y, t)}} x \, d \mathcal{H}^0 (x) \right) \otimes c (r, y, t) \\
&= \frac{1}{r^2} \left( 2 r \cos \alpha_\mu (r, y, t) \right) \Big( d_{u_0} (r, y, t)  \otimes c_{u_0} (r, y, t) \Big) 
= \frac{2 \cos \alpha_\mu (r, y, t)}{r} \Big( d_{u_0} (r, y, t)  \otimes c_{u_0} (r, y, t) \Big) \\
&= \frac{\cos \alpha_\mu (r, y, t)}{r \sin \alpha_\mu (r, y, t)} \Big( m_{u_0} (r, y, t)  \otimes c_{u_0} (r, y, t) \Big).
\end{align*}

\vspace{.2cm}

\noindent
\textbf{Step 3}: We show \eqref{nabla 0}. Let $\delta \in (0, \pi/2)$.
Note that $d_{u_0}^{\delta} = d_{u_0}$ $\mathcal{H}^n$-a.e. in
$\{ \delta < \alpha_\mu < \pi - \delta \}^{(1)}$. Then, thanks to Theorem~\ref{regularity d_u}
we can write 
\[
\nabla d_{u_0} = \nabla d_{u_0}^{\delta} = \frac{1}{2}\nabla \left( \frac{m_{u_0}}{\sin \alpha^{\delta}_{\mu}}  \right)
= \frac{1}{2}\nabla \left( \frac{m_{u_0}}{\sin \alpha_{\mu}}  \right), \quad \mathcal{H}^n\text{-a.e. in } \{ \delta < \alpha_\mu < \pi - \delta \}^{(1)}.
\]
Thanks to the chain rule in $BV$ (see \cite[Example~3.97]{AFP})
and using Step 2, we have 
that $\mathcal{H}^n$-a.e. in $\{ \delta < \alpha_\mu < \pi - \delta \}^{(1)}$
\begin{align*}
\nabla d_{u_0} 
&= \frac{1}{2} \frac{ \nabla m_{u_0}}{\sin \alpha_{\mu}} - \frac{1}{2} \frac{\cos \alpha_{\mu}}{(\sin \alpha_{\mu})^2}
( m_{u_0} \otimes \nabla \alpha_\mu ) \\
&= \frac{\cos \alpha_\mu}{2 r \sin^2 \alpha_\mu } ( m_{u_0}  \otimes c  )
- \frac{1}{2} \frac{\cos \alpha_{\mu}}{(\sin \alpha_{\mu})^2}
( m_{u_0} \otimes \nabla \alpha_\mu ) = 0,
\end{align*}
where we used \eqref{cool} and \eqref{abs cont der mu}.
This shows that
\[
\nabla d_{u_0} = 0, \qquad \text{$\mathcal{H}^n$-a.e. in $\{ \delta < \alpha_\mu < \pi - \delta \}^{(1)}$.}
\]
Note now that the set $\{ \delta < \alpha_\mu < \pi - \delta \}^{(1)}$ 
is decreasing in $\delta$ and that, thanks to \cite[(2.3) and (2.4)]{ccdpmGAUSS}, we have 
\begin{equation} \label{union over delta}
\bigcup_{\delta \in (0, \pi/2)}  \{ \delta < \alpha_\mu < \pi - \delta \}^{(1)} = \{ 0 < \alpha^{\wedge}_\mu \leq \alpha^{\vee}_\mu < \pi \}.
\end{equation}
Therefore, we obtain that 
\[
\nabla d_{u_0} = 0, \qquad \text{$\mathcal{H}^n$-a.e. in $\{ 0 < \alpha^{\wedge}_\mu \leq \alpha^{\vee}_\mu < \pi \}$.}
\]
Since $\{ 0 < \alpha^{\wedge}_\mu \leq \alpha^{\vee}_\mu < \pi \} =_{\mathcal{H}^n} \{ 0 < \alpha_{\mu} < \pi\}$,  the conclusion follows.

\vspace{.2cm}

\noindent
\textbf{Step 4}: We show that 
\begin{equation} \label{abs cont d m u j}
| D (m_{u_0})_j| \mres \{ 0 < \alpha^{\wedge}_\mu \leq \alpha^{\vee}_\mu < \pi \} 
\ll | D \mu | \mres \{ 0 < \alpha^{\wedge}_\mu \leq \alpha^{\vee}_\mu < \pi \}.
\end{equation}
Let $A \subset \subset \Pi_{n-1} (\Omega)$ be open, and let $d > 0$.

Let now $W \subset \{ 0 < \alpha^{\wedge}_\mu \leq \alpha^{\vee}_\mu < \pi \}$
with $W \subset A \times (-d, +\infty)$ be such that $| D \mu | (W) = 0$.
Then, thanks to \eqref{Dt mu}, 
\begin{align*}
0 = D_t \mu  (W) &= - \int_{ \partial^* \Sigma^{u_0} \cap \Phi_{n+1} ( W \times \mathbb{S}^1 )}  
 \, \frac{1}{\sqrt{1 + |\nabla u_0 (x, y)|^2}} \, d \mathcal{H}^n (x, y, t),
\end{align*} 
which implies
\begin{align} \label{we like it a lot}
\mathcal{H}^n \left( \partial^* \Sigma^{u_0} \cap \Phi_{n+1} ( W \times \mathbb{S}^1 ) \right) = 0.
\end{align}
On the other hand, from \eqref{derivative m}
\begin{align*}
| D_t (m_{u_0})_j  (W) | 
&= 
\left|
 \int_{ \partial^* \Sigma^{u_0} \cap \Phi_{n+1} ( W \times \mathbb{S}^1 )  } 
 \frac{x_j}{|x|^2} \frac{1}{\sqrt{1 + |\nabla u_0 (x, y)|^2}} d \mathcal{H}^n (x, y, t)
\right| \\
&\leq \frac{1}{c} \mathcal{H}^n 
\left( 
\partial^* \Sigma^{u_0} \cap \Phi_{n+1} ( W \times \mathbb{S}^1 )\right) = 0,
\end{align*}
where $c > 0$ is such that $r \geq c$ for every $(r, y, t) \in W$, and where we used \eqref{we like it a lot}.
Arguing in a similar way, one can show that 
\[
D_y (m_{u_0})_j (W) = 0, \quad \text{ and } \quad D_r (m_{u_0})_j (W) = 0.
\]
Since $A$ and $d$ are arbitrary, the conclusion follows.

\vspace{.2cm}

\noindent
\textbf{Step 5}: We show that for every $j = 1, 2$
\begin{equation} \label{fzkfsl}
S_{(d_{u_0})_j} \cap \{ 0 < \alpha^{\wedge}_\mu \leq \alpha^{\vee}_\mu < \pi \} 
\subset_{\mathcal{H}^{n-1}} S_{\alpha_\mu} \cap \{ 0 < \alpha^{\wedge}_\mu \leq \alpha^{\vee}_\mu < \pi \}.
\end{equation}
Indeed, let $j \in \{ 1, 2\}$, and let $\delta \in (0, \pi/2)$. Recall that $d_{u_0}^{\delta}$ is defined as 
\[
d_{u_0}^{\delta} (r, y, t) = f_\delta (\alpha_\mu (r, y, t)) m_{u_0} (r, y, t),
\]
where $f_\delta$ is the Lipschitz function given by \eqref{lip function}.
Then, recalling the definition of $d_{u_0}$, we have 
\begin{align} \label{chain 1}
S_{(d_{u_0})_j} \cap \{ \delta < \alpha_\mu < \pi - \delta \}^{(1)} 
= S_{(d_{u_0}^{\delta})_j} \cap \{ \delta < \alpha_\mu < \pi - \delta \}^{(1)}.  
\end{align}
On the other hand, thanks to the chain rule in $BV$
\begin{align} \label{chain 2}
S_{(d_{u_0}^{\delta})_j} \cap \{ \delta < \alpha_\mu < \pi - \delta \}^{(1)}  
\subset_{\mathcal{H}^{n-1}} (S_{\alpha_\mu} \cup S_{(m_{u_0})_j} ) \cap \{ \delta < \alpha_\mu < \pi - \delta \}^{(1)}. 
\end{align}
Observe now that, thanks to \eqref{abs cont d m u j}, and taking into account that $S_{\mu} = S_{\alpha_\mu}$, we have
\begin{align} \label{chain 3}
S_{(m_{u_0})_j} \cap \{ \delta < \alpha_\mu < \pi - \delta \}^{(1)} 
\subset_{\mathcal{H}^{n-1}} S_{\alpha_\mu} \cap \{ \delta < \alpha_\mu < \pi - \delta \}^{(1)}.
\end{align}
Combining \eqref{chain 1}, \eqref{chain 2}, and \eqref{chain 3}, 
and recalling \eqref{union over delta}, we conclude. 

\vspace{.2cm}

\noindent
\textbf{Step 6}: We show that for every $j = 1, 2$
\begin{equation} \label{cantor abs}
| D^c (d_{u_0})_j|^+ \mres \{ 0 < \alpha^{\wedge}_\mu \leq \alpha^{\vee}_\mu < \pi \} 
 \ll | D^c \alpha_\mu| \mres \{ 0 < \alpha^{\wedge}_\mu \leq \alpha^{\vee}_\mu < \pi \}.
\end{equation}
Let $\delta \in (0, \pi/2)$. Since $d_{u_0}^{\delta} = d_{u_0}$ $\mathcal{H}^n$-a.e. in $\{ \delta < \alpha_\mu < \pi - \delta \}^{(1)}$, thanks 
to \cite[Lemma~2.2]{CagnettiColomboDePhilippisMaggiSteiner}, we have that
\[
D^c (d_{u_0})_j \mres \{ \delta < \alpha_\mu < \pi - \delta \}^{(1)} 
= D^c (d_{u_0}^{\delta})_j \mres \{ \delta < \alpha_\mu < \pi - \delta \}^{(1)}.
\]
On the other hand, by definition of $d_{u_0}^{\delta}$ and using again the chain rule in BV (\cite[Theorem~3.96 and Example~3.97]{AFP})
\begin{equation} \label{fd1} 
\begin{split}
D^c \left((d_{u_0}^{\delta})_j \right)   &= D^c ( f_\delta (\alpha_\mu ) (m_{u_0})_j )
=   D^c ( f_\delta (\alpha_\mu ) ) (m_{u_0})_j 
+   f_\delta (\alpha_\mu ) D^c \left( (m_{u_0})_j  \right)    \\
&=  (m_{u_0})_j  (\nabla f_\delta (\alpha_\mu ))  D^c \alpha_\mu
+   f_\delta (\alpha_\mu ) D^c \left( (m_{u_0})_j  \right). 
\end{split}
\end{equation}
From \eqref{abs cont d m u j} we know that 
\begin{equation} \label{fd2} 
\begin{split}
| D^c \left( (m_{u_0})_j  \right) | \mres \{ \delta < \alpha_\mu < \pi - \delta \}^{(1)} 
&\ll | D^c \mu | \mres \{ \delta < \alpha_\mu < \pi - \delta \}^{(1)} \\
&\ll | D^c \alpha_\mu | \mres \{ \delta < \alpha_\mu < \pi - \delta \}^{(1)}.
\end{split}
\end{equation}
Combining \eqref{fd1} and \eqref{fd2} we then obtain
\[
| D^c \left((d_{u_0}^{\delta})_j \right) | \mres \{ \delta < \alpha_\mu < \pi - \delta \}^{(1)} 
\ll | D^c \alpha_\mu | \mres \{ \delta < \alpha_\mu < \pi - \delta \}^{(1)}.
\]
Thanks to \eqref{union over delta}, this implies that 
\[
| D^c \left((d_{u_0}^{\delta})_j \right) | \mres \{ 0 < \alpha^{\wedge}_\mu \leq \alpha^{\vee}_\mu < \pi \}  
\ll | D^c \alpha_\mu | \mres \{ 0 < \alpha^{\wedge}_\mu \leq \alpha^{\vee}_\mu < \pi \}.
\]
Let now $W \subset \{ 0 < \alpha^{\wedge}_\mu \leq \alpha^{\vee}_\mu < \pi \}$ be a Borel set with 
$| D^c \alpha_\mu | (W) = 0$. From what we have just proved it follows that
\[
| D^c \left((d_{u_0}^{\delta})_j \right) | (W) = 0, \qquad \text{ for every } \delta \in (0, \pi/2).
\] 
Then, 
\[
| D^c (d_{u_0})_j|^+ (W) = \lim_{\delta \to 0^+} | D^c \left((d_{u_0}^{\delta})_j \right) | (W) = 0,
\]
which shows the claim.

\vspace{.2cm}

\noindent
\textbf{Step 7}: We conclude.
Let $A \subset \subset \Pi_{n-1} (\Omega)$ be open, let $d > 0$, let $\delta \in (0, \pi/2)$, and let $j \in \{ 1, 2 \}$.
Since $(d^{\delta}_{u_0})_j = (d_{u_0})_j$ $\mathcal{H}^n$-a.e. on $\{ \delta < \alpha_\mu < \pi - \delta \}^{(1)}$, 
thanks to \eqref{coarea d_u} we have that for every Borel set $W  \subset (A \times (-d, +\infty)) \cap \{ 0 < \alpha^{\wedge}_\mu \leq \alpha^{\vee}_\mu < \pi \}$
\begin{align*}
&\int_{\mathbb{R}} \mathcal{H}^{n-1} (W \cap \{ \delta < \alpha_\mu < \pi - \delta \}^{(1)} \cap \partial^{{\rm e}}
 \{ (d_{u_0})_j > s \})    \, ds \\
& = \int_{W \cap \{ \delta < \alpha_\mu < \pi - \delta \}^{(1)}} \, |\nabla (d_{u_0})_j | dr \, dy \, dt 
+ \int_{W \cap \{ \delta < \alpha_\mu < \pi - \delta \}^{(1)} \cap S_{(d_{u_0})_j} \cap S_{\alpha_\mu}} \, [ (d_{u_0})_j] \, d \mathcal{H}^{n-1} \\
&+ | D^c (d_{u_0})_j| (W \cap \{ \delta < \alpha_\mu < \pi - \delta \}^{(1)}),
\end{align*}
where we also used \eqref{fzkfsl}.
Thanks to \eqref{union over delta}, passing to the limit as $\delta \to 0^+$, we obtain
\begin{align*}
&\int_{\mathbb{R}} \mathcal{H}^{n-1} (W \cap \{ 0 < \alpha^{\wedge}_\mu \leq \alpha^{\vee}_\mu < \pi \} \cap \partial^{{\rm e}}
 \{ (d_{u_0})_j > s \})    \, ds \\
& = \int_{W \cap \{ 0 < \alpha^{\wedge}_\mu \leq \alpha^{\vee}_\mu < \pi \}} \, |\nabla (d_{u_0})_j | dr \, dy \, dt 
+ \int_{W \cap \{ 0 < \alpha^{\wedge}_\mu \leq \alpha^{\vee}_\mu < \pi \} \cap S_{(d_{u_0})_j} \cap S_{\alpha_\mu}} \, [ (d_{u_0})_j] \, d \mathcal{H}^{n-1} \\
&+ | D^c (d_{u_0})_j|^+ (W \cap \{ 0 < \alpha^{\wedge}_\mu \leq \alpha^{\vee}_\mu < \pi \}).
\end{align*}
Let now $K$ be a concentration set for $D^c \alpha_\mu$. Thanks to \eqref{cantor abs}, we obtain
\begin{align*}
&\int_{\mathbb{R}} \mathcal{H}^{n-1} (W \cap \{ 0 < \alpha^{\wedge}_\mu \leq \alpha^{\vee}_\mu < \pi \} \cap \partial^{{\rm e}}
 \{ (d_{u_0})_j > s \})    \, ds \\
& = \int_{W \cap \{ 0 < \alpha^{\wedge}_\mu \leq \alpha^{\vee}_\mu < \pi \}} \, |\nabla (d_{u_0})_j | dr \, dy \, dt 
+ \int_{W \cap \{ 0 < \alpha^{\wedge}_\mu \leq \alpha^{\vee}_\mu < \pi \} \cap S_{(d_{u_0})_j} \cap S_{\alpha_\mu}} \, [ (d_{u_0})_j] \, d \mathcal{H}^{n-1} \\
&+ | D^c (d_{u_0})_j|^+ (W \cap K \cap \{ 0 < \alpha^{\wedge}_\mu \leq \alpha^{\vee}_\mu < \pi \}).
\end{align*}

Since $A$ and $d$ are arbitrary, the conclusion follows. 
\end{proof}
We are now ready to give the proof of Theorem~\ref{thm suff conditions}.
\begin{proof}[Proof of Theorem~\ref{thm suff conditions}]
Let $u \in \mathcal{E} (\mu, \Omega)$.
We want to show that the function $d_{u_0}$ defined in \eqref{def direction} is 
$\mathcal{H}^n$-a.e. constant on $\{ 0 < \alpha_{\mu} < \pi \}$.
Let $j \in \{1, 2\}$ be fixed. 
Thanks to Proposition~\ref{nabla d zero}, for every Borel subset $W$
of $\{ 0 < \alpha^{\wedge}_\mu \leq \alpha^{\vee}_\mu < \pi \}$ we have
\begin{equation} \label{hopefully this is it}
\begin{split}
&\int_{-1}^{1} \mathcal{H}^{n-1} (W \cap \partial^{{\rm e}}
 \{ (d_{u_0})_j > s \})    \, ds  \\
& =  \int_{W  \cap S_{(d_{u_0})_j} \cap S_{\alpha_\mu}} \, [ (d_{u_0})_j] \, d \mathcal{H}^{n-1} 
+ | D^c (d_{u_0})_j|^+ (W \cap K). 
\end{split}
\end{equation}
Suppose now, by contradiction, that $(d_{u_0})_j$ is not $\mathcal{H}^n$-a.e. constant on $\{ 0 < \alpha_\mu < \pi \}$.
Then, there exists a Lebesgue measurable set $I \subset (-1, 1)$
such that $\mathcal{H}^1 (I) > 0$ and, for every $s \in I$, the Borel sets
\[
W^s_+ = \{ (d_{u_0})_j > s \} \cap \{ 0 < \alpha_\mu < \pi \}
\quad \text{ and } \quad
W^s_- = \{ (d_{u_0})_j \leq s \} \cap \{ 0 < \alpha_\mu < \pi \}
\]
define a non trivial Borel partition $\{ W^s_+, W^s_- \}$ of $\{ 0 < \alpha_\mu < \pi \}$.

By assumption, $\{ \alpha^\wedge_\mu = 0 \} \cup \{ \alpha^\vee_\mu = \pi \} \cup S_{\alpha_\mu} \cup K$ 
does not essentially disconnect $\{ 0 < \alpha_\mu < \pi \}$. Then, for every $s \in I$ we have 
\begin{align} \label{ess disconn t} 
\mathcal{H}^{n-1} \left(  \{ 0 < \alpha_\mu < \pi \}^{(1)} 
\cap \partial^{{\rm e}} W^s_+ \cap \partial^{{\rm e}} W^s_- \setminus \left( \{ \alpha^\wedge_\mu = 0 \} \cup \{ \alpha^\vee_\mu = \pi \} \cup S_{\alpha_\mu} \cup K  \right) \right) > 0.
\end{align}
Note now that \begin{align*}
&\{ 0 < \alpha_\mu < \pi \}^{(1)} 
\cap \partial^{{\rm e}} W^s_+ \cap \partial^{{\rm e}} W^s_-
= \{ 0 < \alpha_\mu < \pi \}^{(1)} 
\cap \partial^{{\rm e}} W^s_- \\
&= \{ 0 < \alpha_\mu < \pi \}^{(1)} 
\cap \partial^{{\rm e}} W^s_+
= \{ 0 < \alpha_\mu < \pi \}^{(1)} \cap \partial^{{\rm e}} \{ (d_{u_0})_j > s \}.
\end{align*}
Thanks to last equality and \eqref{ess disconn t}
\begin{align} \label{positive}
\mathcal{H}^{n-1} \left(  \{ 0 < \alpha_\mu < \pi \}^{(1)} 
\cap \partial^{{\rm e}} \{ (d_{u_0})_j > s \} \setminus \left( \{ \alpha^\wedge_\mu = 0 \} \cup \{ \alpha^\vee_\mu = \pi \} \cup S_{\alpha_\mu} \cup K  \right) \right) > 0,
\end{align}
for every $s \in I$.
Let us now consider equality \eqref{hopefully this is it} with $W$ given by
\[
W = \{ 0 < \alpha_\mu < \pi \}^{(1)}  \setminus \left( \{ \alpha^\wedge_\mu = 0 \} \cup \{ \alpha^\vee_\mu = \pi \} \cup S_{\alpha_\mu} \cup K  \right)
\]
(note that $W \subset \{ 0 < \alpha^{\wedge}_\mu \leq \alpha^{\vee}_\mu < \pi \}$). We obtain
\begin{align*}
0&\stackrel{\eqref{positive}}{<}\int_{-1}^{1} \mathcal{H}^{n-1} \left(  \{ 0 < \alpha_\mu < \pi \}^{(1)} 
\cap \partial^{{\rm e}} \{ (d_{u_0})_j > s \} \setminus \left( \{ \alpha^\wedge_\mu = 0 \} \cup \{ \alpha^\vee_\mu = \pi \} \cup S_{\alpha_\mu} \cup K  \right) \right) \, ds \\
&= \int_{W  \cap S_{(d_{u_0})_j} \cap S_{\alpha_\mu}} \, [ (d_{u_0})_j] \, d \mathcal{H}^{n-1} 
+ | D^c (d_{u_0})_j|^+ (W \cap K) = 0,
\end{align*}
where we used the fact that with our choice of $W$ we have $W \cap S_{\alpha_\mu} = W \cap K = \emptyset$
and that, thanks to Proposition~\ref{nabla d zero}, $| D^c (d_{u_0})_j|^+ \ll |D^c \alpha_\mu|$.

This gives a contradiction, and thus shows that $(d_{u_0})_j $ is constant.
\end{proof}

\section*{Acknowledgments}
The authors would like to thank Ivor McGillivray for his valuable comments on a preliminary version of the paper.
F.~C. has been partially supported by PRIN 2022 Project ``Geometric Evolution Problems and Shape Optimization (GEPSO)'', PNRR Italia Domani, financed by European Union via the Program NextGenerationEU, CUP D53D23005820006. 
G.~D. was supported by an EPSRC scholarship under the grant EP/R513362/1 ``Free-Discontinuity Problems and
Perimeter Inequalities under symmetrization''.
F.~C. and M.~P. are also members of the Gruppo Nazionale per l'Analisi
Matematica, la Probabilit\`a e le loro Applicazioni (GNAMPA), which is part of the Istituto Nazionale di Alta Matematica (INdAM).

\def\cprime{$'$}

\nocite{*}

\end{document}